%% file: main.tex
\documentclass[11pt]{article}
\usepackage[a4paper,margin=1in]{geometry}
\usepackage[T1]{fontenc}
\usepackage[utf8]{inputenc}
\usepackage{times}
\usepackage[round,authoryear]{natbib}

\input{math_commands.tex}

\usepackage{url}
\usepackage{graphicx}
\usepackage{amsmath,amssymb,amsthm}
\def\eqref#1{(\ref{#1})}
\usepackage{enumitem}
\usepackage{algorithm}
\usepackage{algpseudocode}
\usepackage{booktabs}
\usepackage{multirow}
\usepackage[colorlinks=true,linkcolor=blue,citecolor=blue,urlcolor=blue]{hyperref}

\newtheorem{theorem}{Theorem}[section]
\newtheorem{lemma}[theorem]{Lemma}
\newtheorem{proposition}[theorem]{Proposition}
\newtheorem{corollary}[theorem]{Corollary}
\newtheorem{definition}[theorem]{Definition}
\newtheorem{assumption}[theorem]{Assumption}
\newtheorem{remark}[theorem]{Remark}

\newcommand{\inner}[2]{\langle #1,\,#2\rangle}
\newcommand{\one}{\mathbf{1}}
\DeclareMathOperator{\proj}{proj}
\DeclareMathOperator{\dist}{dist}
\DeclareMathOperator{\diag}{diag}

\newcommand{\cx}{\preceq_{\mathrm{cx}}}
\DeclareMathOperator{\comp}{comp}

\title{Identifying the Predictable Drift \\ of a Semimartingale from Marginal Laws}

\author{Jakub Mare\v{c}ek$^{1}$, Enrico Biffis$^{2}$, Abigail Langbridge$^{3}$, and Robert Shorten$^{3}$\\[6pt]
\small $^{1}$Department of Computer Science, Czech Technical University in Prague, Czechia\\
\small $^{2}$Department of Finance, Imperial College London, UK\\
\small $^{3}$Dyson School of Design Engineering, Imperial College London, UK\\[4pt]
\small\texttt{jakub.marecek@fel.cvut.cz}, \texttt{e.biffis@imperial.ac.uk},\\
\small\texttt{abigail.langbridge18@imperial.ac.uk}, \texttt{r.shorten@imperial.ac.uk}}
\date{}

\begin{document}

\maketitle

\begin{abstract}
A special semimartingale admits a unique decomposition $X=X_0+M+A$ into a local
martingale $M$ and a predictable finite-variation part $A$. We consider the
identification of $A$ when $X$ is observed only through repeated cross-sections,
that is, through its marginal laws at a succession of times, so that the pairing between observations, and with it the
likelihood of the path, is lost. The estimand is then the projection of the
sampled predictable compensator onto the observable feature filtration, namely
the current state together with whatever randomness is shared across the
population, so that at a fixed diffusion coefficient the marginal flow
identifies the drift only up to a Markovian projection. We estimate it jointly
with couplings between adjacent empirical marginals, constrained so that their
conditional first moment agrees with a parametric drift, the parameter being
identified by a rank condition upon the mean features. If the drift is an
affine functional of an observed lag window, the joint problem is a convex
quadratic programme whose solution is the pseudo-panel regression of
econometrics. Our principal concern is the case, which we believe not to have
been treated before, in which the drift is the output of a \emph{hidden} linear
dynamical system whose dynamics are themselves to be identified from the
marginals. The joint problem is then a bilinear quadratically constrained
programme, which we solve to certified global optimality by spatial branch and
bound; with unpenalised state disturbances and a drift basis growing with the
grid it is NP-hard already in latent dimension one, by reduction from $\ell^1$
rank-one matrix approximation, whereas the complexity of the deterministic
system at fixed latent dimension remains open. A block-coordinate decomposition
offers a cheaper alternative. Its fixed point is available in closed form; it
converges locally $Q$-linearly under either an exact global solution of the
identification sub-problem or the acquiescence condition of Hardt, Ma and
Recht; and it shows that the couplings inform the parameter through the mean
displacements between marginals and through nothing else. For the estimator
itself we obtain rates at a fixed mesh, separated into Monte-Carlo, estimation
and grid contributions.
\end{abstract}

Special semimartingales form the natural structural class for continuous-time
stochastic modelling within the classical It\^o framework
\citep{JacodProtter,JacodHighFrequency,JacodShiryaev}: the unique decomposition
into a local-martingale and a predictable finite-variation component makes them
the canonical domain for the stochastic calculus of mathematical finance
\citep{BeiglbockSchachermayerVeliyev2011}, where martingality is imposed under an
equivalent pricing measure \citep{Delbaen1992,DelbaenSchachermayer1994} but is
only a benchmark under the physical measure \citep{Fama1970}, for which
semimartingality is the appropriate assumption \citep{Rogers1997}. How, then, is
one to identify the predictable drift of an observed process?

When a full sample path is available the question is classical: the drift
enters the likelihood through the Girsanov density \citep[Ch.~7,~17]{liptser2001,liptser2001b},
or through the characteristics for general semimartingales
\citep[Ch.~III--IV]{JacodShiryaev}. Either way it is identified but not consistently estimable from a single
path on a fixed horizon, the laws of $W$ and $W+bt$ on $[0,T]$ being equivalent,
so consistency needs a long time span, and under infill asymptotics only the
volatility and the jump measure are identifiable \citep{AitSahaliaJacod2014}.

The question changes its character when the process is observed only through its
\emph{marginal laws} at successive times: repeated cross-sections, in which
individuals are not followed from one date to the next, or samples whose pairing
across times is unknown. The path is then unavailable, and its likelihood with
it, leaving only the laws of $X_{t_k}$ and $X_{t_{k+1}}$; yet many independent
copies on a fixed horizon is precisely the regime in which a drift \emph{can} be
identified. The present paper develops an identification framework for that
regime, resting on constrained optimal transport between adjacent empirical
marginals: a coupling between the laws at $t_k$ and $t_{k+1}$ is the discrete
analogue of the joint law of $(X_{t_k},X_{t_{k+1}})$, and the requirement that
its conditional first moment agree with a parametric drift is the empirical form
of the Doob decomposition. The estimator recovers the drift from displacements
between marginals, in the spirit of trajectory inference \citep{lavenant2024}.

Settings in finance and insurance fit the framework. A (re)insurer observes at
successive valuation dates the distribution of claims across a book, claims
being unlinked across dates and sharing a persistent unobserved driver such as
claims inflation, which is what the latent state represents. In options markets
the risk-neutral law at each maturity follows from prices by the formula of
\citet{BreedenLitzenberger1978} while the joint law does not; there the drift is
fixed by the pricing measure and our constraint is that of martingale optimal
transport \citep{BeiglbockHLP2013}.

The principal contribution is the Non-Convex Setting of
Section~\ref{sec:setting2}, in which the observed lag window gives way to a
hidden linear state whose dynamics are to be identified from the marginals. We
believe the problem not to have been posed before: trajectory inference fits a
drift field non-parametrically, and the pseudo-panel literature of econometrics
fits an autoregression upon cohort means. Section~\ref{sec:qcqp} writes the
joint problem as a bilinear quadratically constrained programme,
Section~\ref{sec:nphard} says where its hardness resides, and
Section~\ref{sec:numerics} solves it to certified global optimality.
Section~\ref{sec:conv} obtains in closed form the fixed point of a cheaper
block-coordinate estimator, and that closed form explains how little the
transport layer contributes to the parameter, what goes wrong when marginals
are manufactured from a single path by windowing, and why the estimator is no
rival to likelihood methods when paths are observed.

Throughout, $X=(X_t)_{t\in[0,T]}$ is a real-valued special semimartingale on
$(\Omega,\mathcal F,\mathbb F,\mathbb P)$, observed at $n$ equally spaced times
$t_k=k\Delta$, $k=0,\ldots,n-1$, $\Delta=T/(n-1)$, with samples $r_k:=X_{t_k}$
and $\mathcal G_k:=\mathcal F_{t_k}$. Code reproducing every figure accompanies
the submission.

\paragraph{Related work.}
Recovering dynamics from unpaired snapshots is trajectory inference
\citep{Hashimoto2016,Schiebinger2019,Chizat2022,lavenant2024} and data-driven
Schr\"odinger bridging \citep{Pavon2021,Vargas2021,Bunne2023}, which fit a bridge
non-parametrically and read the drift off it; we impose instead a parametric
predictable drift as a linear constraint upon the coupling
(Section~\ref{sec:conv}). The same observation scheme --- repeated
cross-sections with no linkage of individuals across dates --- is the
\emph{pseudo-panel} setting of econometrics \citep{Deaton1985,Moffitt1993}, and
Proposition~\ref{prop:fixedpoint1} places our estimator within it
(Appendix~\ref{app:fixedpoint}); neither literature admits a latent state with
dynamics of its own. PO-MFL \citep{Gu2025} does admit one, but imposes a prior
upon its dynamics rather than identifying them. The M-step of the Non-Convex Setting is a
classical output-error problem
\citep{HoKalman1966,VanOverschee1996,ShumwayStoffer1982,Ljung1999,Hazan2017,Simchowitz2018,OymakOzay2019,SarkarRakhlin2019,TsiamisPappas2019};
we draw on \citet{HardtMaRecht2016} for the acquiescence condition, which is what
yields a \emph{global} guarantee for a first-order method.

\section{Setting}
\label{sec:setting}

Definitions of stochastic basis, adapted, predictable and c\`adl\`ag processes, (local) martingales and (special) semimartingales are collected in Appendix~\ref{app:glossary}; we follow \citet[Ch.~I--III]{protter2005} and \citet[Ch.~I]{JacodShiryaev}. 
The sampled process $(r_k)$ admits the discrete Doob decomposition
$r_k=r_0+M^\Delta_k+A^\Delta_k$, with $M^\Delta$ a $\mathcal G$-martingale and
$A^\Delta$ the unique $\mathcal G$-predictable process of finite variation,
\begin{equation}\label{eq:doob-compensator}
A^{\Delta}_k-A^{\Delta}_{k-1}=\E\bigl[r_k-r_{k-1}\mid\mathcal G_{k-1}\bigr] .
\end{equation}
This increment is the discrete target throughout, and it is consistent with the
continuous-time object: if $A_t=\int_0^ta_s\,ds$ with $a$ bounded and a.s.\
right-continuous at $t$, then $\Delta^{-1}(A^\Delta_k-A^\Delta_{k-1})\to a_t$ in
$L^1(\mathbb P)$ along meshes containing $t$
(Proposition~\ref{prop:mesh}, Appendix~\ref{app:background}). In every
computation $\Delta=1$; the symbol is retained only where the dependence upon the
mesh is itself of interest, as in \eqref{eq:rate-main} and \eqref{eq:sim}.

\begin{assumption}[Well-posedness]\label{ass:wellposed}
$X$ is an $\mathbb F$-special semimartingale with $\E\bigl[\sup_{t\le T}|X_t|\bigr]<\infty$
whose canonical decomposition $X=X_0+M+A$ has $M$ a martingale on $[0,T]$ and
$A_t=\int_0^ta_s\,ds$ for a predictable $a$; the martingale property holds when
$a$ is bounded (Appendix~\ref{app:foundations}), and in particular it suffices
that $X\in\mathcal H^1$ with $A$ absolutely continuous.
\end{assumption}

In discrete time the Doob decomposition of the integrable adapted sequence
$(r_k)$ is unique without further ado, a predictable martingale null at zero
being zero. Assumption~\ref{ass:wellposed} relates it to the continuous-time
drift: it furnishes $a$ and makes $M$ a true martingale, so that
\eqref{eq:doob-compensator} is the $\mathcal G_{k-1}$-projection of
$\int_{t_{k-1}}^{t_k}a_s\,ds$ (Proposition~\ref{prop:mesh}), and it makes $X$
special for any observer coarser than $\mathbb F$ (Appendix~\ref{app:foundations}).
Without specialness the finite-variation part is not unique, and
\eqref{eq:smdisc} would constrain an unidentified object while remaining
perfectly solvable. The assumption thus secures for the \emph{estimand} what
\textup{(T1)--(T2)} of Section~\ref{sec:bcd} and Assumption~\ref{ass:sosc2}
secure for the \emph{solver}.

The drift is defined once and for all by the $\mathbb F$-canonical decomposition,
$a$ being the density of the $\mathbb F$-predictable part $A$. Every other
drift-like object occurring below --- the sampled compensator
\eqref{eq:doob-compensator}, the model $b_k$, the drift implied by a coupling ---
is a \emph{conditional expectation} of that one object upon a coarser
$\sigma$-field, and not a rival definition; the constraints of
Section~\ref{sec:programmes} are therefore moment conditions on projections.
Three $\sigma$-fields occur, nested,
\begin{equation}\label{eq:hierarchy}
\mathcal G_k:=\mathcal F_{t_k}\ \supseteq\ \mathcal H_k\ \supseteq\ \mathcal H_k^N,
\qquad
\int_{t_k}^{t_{k+1}}\!\!\!a_s\,ds\ \xrightarrow{\,\E[\cdot\mid\mathcal G_{k}]\,}\ A^{\Delta}_{k+1}-A^{\Delta}_{k}
\ \xrightarrow{\,\E[\cdot\mid\mathcal H_k]\,}\ b_k
\ \xrightarrow{\,\Pi_N\,}\ \widehat b_k,
\end{equation}
with $\mathcal H_k$ the \emph{feature} $\sigma$-field of
Section~\ref{sec:setting1} and $\mathcal H^N_k$ its image under the grid map. The
first object is the integrated drift, not $a_{t_k}$; and the last arrow is the
nearest-atom map $\Pi_N$, which alters the random variable rather than
conditioning it, so the grid estimand $\widehat b_k$ is the compensator of the \emph{rounded}
process (Lemma~\ref{lem:grid}). The
compensator conditions upon $\mathcal G_k$, which in the marginal-law regime is
not observable; Section~\ref{sec:models} selects the observable $\mathcal H_k$
which is to replace it. The first two arrows are $L^1$-contractions; the
last is not. It depends upon $N$, whence the estimand
is itself $N$-dependent and the grid must be counted part of the statistical
model and not merely of the discretisation.

\section{Two models for the predictable drift}
\label{sec:models}

The identification of $A$ requires a parametrisation, and two are considered
here: in the \emph{Convex Setting} the projected drift is an affine functional of
a finite window of \emph{observed} past values; in the \emph{Non-Convex Setting}
it is the output of a linear dynamical system with an \emph{unobserved} state.
Upon this single choice turns the loss of convexity; hardness, as
Section~\ref{sec:nphard} makes clear, requires rather more.

\subsection{Convex Setting: an autoregressive drift}
\label{sec:setting1}

Before a drift model is written down, it must be said upon what the model is
permitted to depend. Fix for each $k$ a $\sigma$-field
\begin{equation}\label{eq:featurefield}
\mathcal H_k\;=\;\sigma\bigl(X_{t_k}\bigr)\vee\mathcal S_k\ \subseteq\ \mathcal G_k,
\end{equation}
the join of the \emph{individual} coordinate $X_{t_k}$, upon which the estimator
conditions through the grid atom, with a \emph{common} $\sigma$-field
$\mathcal S_k$ carrying whatever is shared across the sampled population at time
$k$. The requirement which governs all that follows is that \emph{every feature
entering $b_k$ be $\mathcal H_k$-measurable}. A regressor lying outside
$\mathcal H_k$ carries a coefficient attached to information upon which the
estimator never conditions: well defined as part of a structural model, but not
identified from this observation scheme, which is what matters here.

With $\mathcal H_k$ fixed, assume the sampled compensator admits a
\emph{Markovian-projection} representation tied to a finite past,
\begin{equation}\label{eq:markov-projection}
\E\bigl[X_{t_{k+1}}-X_{t_k}\mid\mathcal H_k\bigr]\;=\;b_k\bigl(X_{t_k};\theta_\star\bigr),
\qquad
b_k(x;\theta):=\alpha+\beta_1x+\sum_{j=1}^{p-1}\beta_{j+1}\,s_{k-j},
\end{equation}
for some $\theta_\star\in\R^{p+1}$, in the spirit of \citet{Gyongy1986} and
\citet{BrunickShreve2013}, the statistics $s_{k-1},\ldots,s_{k-p+1}$ being
$\mathcal S_k$-measurable. What these may be is settled by the observation scheme
of Section~\ref{sec:ot} and is in no way free. (i) For a \emph{single path},
$\mathcal S_k=\sigma(r_j:j\le k)$ and $s_j=r_j$: the classical autoregression, and
the only case in which ``the past of the process'' is a regressor at all. (ii) For
a \emph{population with a common factor}, individuals are not tracked, so that no
individual past exists; $\mathcal S_k$ is generated by the shared randomness, and
$s_j$ must be a statistic of it, such as the input $u_j$ or a functional of the
flow like $\bar x_j=\E_{\mu_j}[X]$. (iii) For \emph{independent copies},
$\mathcal S_k$ is degenerate, so no \emph{random} shared feature survives --- though
a known deterministic input is measurable with respect to a trivial
$\sigma$-field and can still drive an identifiable transfer function, so it is a
random common factor that is lost, not time dependence.
Appendix~\ref{app:sigmafield} sets out why (ii) may not borrow the regressors of
(i), a tempting error which quietly converts a well-posed estimand into a
meaningless one.

We have here an autoregressive structural model whose parameter is to be
identified from the observations. Its continuous-time reading is supplied by
Proposition~\ref{prop:mesh}: $b_k(x;\theta_\star)=\Delta\,\E[a_{t_k}\mid\mathcal H_k,X_{t_k}=x]+o(\Delta)$
as $\Delta\downarrow0$. We work, however, at a \emph{fixed} mesh, so that the
target is the discrete compensator increment itself and $\Delta$ is absorbed into
$\theta$; the limit asserts that this target approaches the continuous-time drift
as the mesh is refined, not that the two agree at the mesh employed.

\subsection{Non-Convex Setting: a drift driven by a hidden linear state}
\label{sec:setting2}

The lag window in \eqref{eq:markov-projection} is a memory of \emph{finite}
length, whereas a latent factor possessing dynamics of its own --- a slowly
mean-reverting expected return, a hidden regime variable, an unobserved liquidity
state --- gives rise to an impulse response of \emph{infinite} length, and its
truncation at lag $p$ is a misspecification which no choice of $\theta$ will
repair. In the Non-Convex Setting the observed lag window is accordingly replaced
by a hidden state evolving as a linear system,
\begin{equation}\label{eq:lds-affine}
z_{k+1}=A\,z_k+B\,u_k,\qquad
b_k(x;\Theta)=\varphi(x)^\top H\,z_k
\ \ \xrightarrow[\ \varphi(x)=(1,x)^\top\ ]{}\ \
b_k(x;\Theta)=\alpha+\beta x+c^\top z_k ,
\end{equation}
with $z_k\in\R^d$, $u_k$ an observed exogenous input, $\varphi:\R\to\R^{q}$ a
fixed basis and $\Theta=(A,B,H)$, the initial state being fixed at $z_0=0$
throughout (Appendix~\ref{app:lambdaw} discusses a prior upon it). In the affine specialisation on the right,
which is used throughout the numerical work, the latent state shifts the intercept
of an otherwise affine drift; it is the discrete analogue of
$dX_t=(\alpha+\beta X_t+c^\top Z_t)dt+\sigma dW_t$ with $Z$ unobserved.

\begin{remark}[The Convex Setting is the memoryless case of the Non-Convex Setting]\label{rem:I-in-II}
With $d=p-1$, $u_k=s_k$, $B=e_1$, $A$ the nilpotent shift $Ae_j=e_{j+1}$
($Ae_d=0$) and $c=(\beta_2,\ldots,\beta_p)^\top$ one has
$z_k=(s_{k-1},\ldots,s_{k-p+1})^\top$, and \eqref{eq:lds-affine} reduces to
\eqref{eq:markov-projection}. The Convex Setting is thus the sub-model in which
$A$ is \emph{known} and the state is an \emph{observed} function of the data; it
is the hiddenness of the state, and not its dimension, which destroys convexity.
\end{remark}

\begin{remark}[Canonical coordinates do not identify the allocation of poles]
\label{rem:poleswap}
Fixing the gauge removes the $GL(d)$ orbit but not every ambiguity of a mean-only
estimator. With $d=1$, known input and $z_0=m_0=0$, write $z_{k+1}=rz_k+u_k$ and
$m_{k+1}=sm_k+cz_k$ for the population mean, so $\beta=s-1$ and $a=-r$.
Eliminating $z$ gives $m_{k+2}=(r+s)m_{k+1}-rs\,m_k+cu_k$, symmetric in $r$ and
$s$: interchanging them leaves every mean observation unchanged, for every input.
So $(a,\beta,c)=(-0.2,-0.6,c)$ and $(-0.4,-0.8,c)$ are indistinguishable from the
means, and both lie in the acquiescent region used below. This is not a
similarity gauge, the allocation between observed and latent state having
changed; and by Proposition~\ref{prop:fixedpoint1} the estimator sees the means
alone. Claims below about recovering $(a,c,\alpha,\beta)$ are to be read modulo
this ambiguity, which only within-marginal information, or a prior upon the
allocation, could break.
\end{remark}

Since $\Theta$ is identified only up to similarity, we fix the controllable
canonical form of \citet[Sec.~1.7]{HardtMaRecht2016}: $A=\comp(a)$ is the
companion matrix of $p_a(\zeta)=\zeta^d+a_1\zeta^{d-1}+\cdots+a_d$, $B=e_d$, and
\eqref{eq:lds-affine} is parametrised by $(a,c,\alpha,\beta)\in\R^{2d+2}$.
Stability, $\rho(A)<\varrho$, is not a convex condition upon $a$, and we employ
in its place the relaxation
$\mathcal B_\varrho=\{a:\,p_a(\zeta)/\zeta^d\in\mathcal W\ \forall|\zeta|=\varrho\}$
of \citet[Sec.~4]{HardtMaRecht2016}, where
\begin{equation}\label{eq:pacman}
\mathcal W=\bigl\{\zeta\in\mathbb C:\ \mathrm{Re}\,\zeta\ge(1+\tau_0)|\mathrm{Im}\,\zeta|\bigr\}
\ \cap\ \bigl\{\zeta:\ \tau_1\le\mathrm{Re}\,\zeta\le\tau_2\bigr\}
\end{equation}
is a truncated wedge; such $a$ are called \emph{$\varrho$-acquiescent}, and they
satisfy $\rho(\comp(a))<\varrho$. Since $p_a(\zeta)/\zeta^d=1+\sum_ja_j\zeta^{-j}$
is affine in $a$, sampling the circle at $J$ points reduces $\mathcal B_\varrho$
to $4J$ linear inequalities (Appendix~\ref{app:gauge}).

\section{Constrained transport between adjacent marginals}
\label{sec:ot}

Write $\mu_k,\nu_k$ for the empirical laws of the sampled process at $t_k$ and
$t_{k+1}$, projected onto a common atomic grid $\{x_1,\ldots,x_N\}$ by nearest-bin
assignment and identified with probability vectors in $\Delta_{N-1}$
(Appendix~\ref{app:notation}). A coupling $P_k\in\R^{N\times N}_{\ge0}$ with
marginals $(\mu_k,\nu_k)$ is a one-step Markov kernel from the law of $r_k$ to
that of $r_{k+1}$, the discrete analogue of the joint law of
$(X_{t_k},X_{t_{k+1}})$. We identify the drift parameter jointly with a sequence
$\{P_k\}$ of such couplings whose conditional first moment of the increment is
the modelled compensator,
\begin{equation}\label{eq:smcond}
\sum_{j=1}^N P_k[i,j]\,(x_j-x_i)\;=\;b_k(x_i;\Theta)\,\mu_k[i],
\qquad i:\,\mu_k[i]>0,
\end{equation}
the empirical version of \eqref{eq:doob-compensator} at $\Delta=1$, and which
minimise the total cost $\sum_k\inner{C}{P_k}$ for the normalised quadratic
$C_{ij}=(x_i-x_j)^2/\max_{kl}(x_k-x_l)^2$. Writing
$\mathrm{inc}_{ij}:=x_j-x_i$ and $d_k:=(P_k\odot\mathrm{inc})\one$ for the
empirical compensator vector, \eqref{eq:smcond} reads
\begin{equation}\label{eq:smdisc}
d_k[i]\;=\;b_k(x_i;\Theta)\,\mu_k[i]\qquad\text{for every $i$ with $\mu_k[i]>0$.}
\end{equation}

Unpaired marginals determine no joint law, whereas the conditional moment in
\eqref{eq:smcond} is a functional of one, so a joint law must be posited before
the constraint can be written down. At the population level the Fr\'echet class
of couplings with the two marginal laws is the partially identified set for the
one-step joint law, and the transportation polytope $\Pi(\mu_k,\nu_k)$ is its
sample analogue; from it \eqref{eq:smdisc} cuts the elements consistent with the
model, what remains is of order $N^2$, and it is the entropy which makes the
selection, the cost being constant upon that set
(Proposition~\ref{prop:epsinvariant}). The coupling is thus a modelling
assumption, and infeasibility at finite $M$ speaks of the empirical marginals,
not of the population (Appendix~\ref{app:feasibility}).

The pair index runs over $\mathcal P=\{p-1,\ldots,n-2\}$,
$n_{\text{pairs}}=|\mathcal P|$; $x^-_k,x^+_k$ are the extreme atoms charged by
$\nu_k$, so under any coupling the conditional mean increment out of $x_i$ lies
in $[x^-_k-x_i,\,x^+_k-x_i]$.

The marginals may arise in three ways, upon which the randomness of the flow
$(\mu_k)_k$, and hence the content of $\mathcal S_k$ in \eqref{eq:featurefield},
depends. With \emph{independent copies} the empirical marginals converge as
$M\to\infty$ to the deterministic law of $X_{t_k}$, $\mathcal S_k$ is degenerate,
and by Section~\ref{sec:setting1}(iii) there is no dynamic model left to fit.
With a \emph{common factor} the flow is random and measurable with respect to the
filtration of the factor, which is precisely $\mathcal S_k$; this is the scheme we
adopt, and the only one in which the Non-Convex Setting has content, the hidden
state \emph{being} the shared randomness. Under \emph{rolling windows} the flow
carries the randomness of a single path; we retain it as a diagnostic, but it
carries an aggregation bias which does not vanish with the sample size
(Section~\ref{sec:conv}). It follows that $M$ and $n$ are not interchangeable:
the M-step has $K=n_{\text{pairs}}$ residuals whatever $M$ may be, and $M$ enters
only through the Monte-Carlo error of Theorem~\ref{thm:rate}, so $M=600$ below is
a noise level and not a sample size. Appendix~\ref{app:notation} gives both
schemes and the conventions for the grid and the cost.

\section{The two optimisation problems}
\label{sec:programmes}

\subsection{Convex Setting: a jointly convex programme}
\label{sec:lp}

In the Convex Setting, the identification problem in $(\theta,\{P_k\})$ is a single
penalised convex programme, the discrete predictable-compensator
constraint~\eqref{eq:smdisc} being relaxed by an $\ell^1$ slack:
\begin{equation}\label{eq:lpprimal}
\begin{aligned}
\min_{\theta,\{P_k\},\{s^\pm_k\}}\ \ &
\sum_{k\in\mathcal{P}}\inner{C}{P_k}
+\lambda_{\text{drift}}\!\sum_{k\in\mathcal{P}}\!\one^\top(s^+_k+s^-_k)
+\lambda_{\text{reg}}\bigl(\alpha^2+\|\beta\|^2\bigr)\\[-1pt]
\text{s.t.}\ \ &(P_k\odot\mathrm{inc})\one-b_k(\mathrm{grid};\theta)\odot\mu_k=s^+_k-s^-_k,\\[-1pt]
&P_k\ge0,\ P_k\one=\mu_k,\ P_k^\top\one=\nu_k,\ s^+_k,s^-_k\ge0
\qquad (k\in\mathcal P).
\end{aligned}
\end{equation}
The Tikhonov term is small ($\lambda_{\text{reg}}=10^{-4}$ by default) and serves
to keep the problem strictly convex in $\theta$; letting
$\lambda_{\text{reg}}\to 0$ recovers a pure linear programme, and letting
$\lambda_{\text{drift}}\to\infty$ imposes \eqref{eq:smdisc} exactly whenever it is
feasible, which is why the slack is there. The programme is \emph{jointly} convex
in $(\theta,\{P_k\})$, with $(p+1)+n_{\text{pairs}}(N^2+2N)$ variables, so that
its global optimum is attainable by an interior-point solver; it is its
$\mathcal O(nN^2)$ size which occasions the decomposition of
Section~\ref{sec:bcd}.

\subsection{Non-Convex Setting: a quadratically constrained programme}
\label{sec:qcqp}

In the Non-Convex Setting, the drift entering \eqref{eq:smdisc} is no longer affine
in the unknowns; it is the output $\varphi(x_i)^\top Hz_k$ of the hidden state,
and the state itself obeys $z_{k+1}=Az_k+Bu_k$. Both couplings are
\emph{bilinear}, and the complete problem becomes
\begin{equation}\label{eq:qcqp}
\begin{aligned}
\min_{\Theta,\{z_k\},\{w_k\},\{P_k\},\{s^\pm_k\}}\ \ &
\sum_{k\in\mathcal P}\inner{C}{P_k}
+\lambda_{\text{drift}}\!\sum_{k\in\mathcal P}\!\one^\top(s^+_k+s^-_k)
+\lambda_{\text{reg}}\|\Theta\|^2+\lambda_w\sum_k\|w_k\|^2\\[-1pt]
\text{s.t.}\ \ & (P_k\odot\mathrm{inc})\one-\bigl(\Phi Hz_k\bigr)\odot\mu_k=s^+_k-s^-_k,
\qquad z_{k+1}=Az_k+Bu_k+w_k,\\[-1pt]
& P_k\ge0,\ P_k\one=\mu_k,\ P_k^\top\one=\nu_k,\ s^\pm_k\ge0\ \ (k\in\mathcal P),
\quad a\in\mathcal B_\varrho,\ z_0=0 .
\end{aligned}
\end{equation}
Here $\Phi$ stacks the basis rows $\varphi(x_i)^\top$, $w_k$ is an optional state
innovation with weight $\lambda_w\ge0$ ($w\equiv0$ giving the deterministic LDS),
and $a\in\mathcal B_\varrho$ abbreviates the $4J$ inequalities of
\eqref{eq:pacman}. The whole of the non-convexity resides in the
$\mathcal O(K(d^2+d))$ bilinear terms $Az_k$ and $Hz_k$ across the horizon --- in
companion form $K(d+1)$ of them --- the $\mathcal O(nN^2)$
transport variables, the marginals and the slacks being linear; and it is this
structure which spatial branch and bound exploits. We solve \eqref{eq:qcqp} with
Gurobi \citep{gurobi} under \texttt{NonConvex=2}, obtaining a solution which is
globally optimal together with a certificate of optimality, up to numerical
precision.

The innovation weight $\lambda_w$ is a matter of modelling: $\lambda_w=\infty$ is
the deterministic system \eqref{eq:lds-affine}, $\lambda_w=0$ the case of
Theorem~\ref{thm:nphard}, and the interior the filtering model, in which a Kalman
smoother eliminates the state (Appendix~\ref{app:lambdaw}). In the experiments
$z_0=0$.

\subsection{Hardness of the complete problem in the Non-Convex Setting}
\label{sec:nphard}

The gulf between the two settings is not an artefact of the formulation. Write
$\mathrm{OPT}_{\mathrm{NC}}$ for the value of \eqref{eq:qcqp} at
$\lambda_{\text{reg}}=\lambda_w=0$.

\begin{theorem}[NP-hardness]\label{thm:nphard}
Deciding whether $\mathrm{OPT}_{\mathrm{NC}}\le V$ for a given rational $V$ is
NP-hard, already for $d=1$ and $\lambda_w=0$, for the quadratic transport cost,
for a drift basis $\varphi$ whose dimension grows with the grid, and even when
the supplied pairs are the consecutive marginals of a \emph{single} chain upon a
common grid of polynomial size --- whereas on the same data
problem~\eqref{eq:lpprimal} of the Convex Setting is a convex quadratic programme
of the same size.
\end{theorem}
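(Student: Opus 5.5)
We reduce from $\ell^1$ rank-one matrix approximation, which the abstract names as the source problem: given a rational matrix $Y\in\mathbb Q^{m\times K}$ and a threshold $V'$, decide whether $\min_{h\in\R^m,\,z\in\R^K}\|Y-hz^\top\|_1\le V'$. This problem is NP-hard by the result of Gillis and Vavasis on $\ell^1$ low-rank approximation; we take that result as the external input. The reduction uses $d=1$ and $\lambda_w=0$. With unpenalised innovations $w_k$, the state recursion $z_{k+1}=Az_k+Bu_k+w_k$ places no restriction on the sequence $(z_k)_{k\in\mathcal P}$; only $z_0=0$ is pinned, and it lies outside $\mathcal P$ once $p\ge2$. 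The constraint $a\in\mathcal B_\varrho$ can be satisfied by any fixed acquiescent $a$ and so becomes inert. What remains of the drift is $\Phi H z_k$ with $H\in\R^{q\times1}$ free and $z_k$ a free scalar per pair. We take $\varphi$ to be the indicator basis on $m$ designated atoms, so $q=m$ grows with the grid, and $\Phi Hz_k$ is then an arbitrary rank-one pattern $h z_k$ on those atoms. This is the rank-one structure $hz^\top$.

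The next task is to make the transport layer reproduce $Y$ exactly. For each column $k$ we build marginals $\mu_k,\nu_k$ on a common polynomial grid such that, at each designated atom $x_i$, the coupling is forced (or forced up to cost-irrelevant freedom) to carry the conditional mean increment $Y_{ik}$. Each atom $x_i$ gets a private pair of target atoms placed far from every other block, so that any mass leaving a block costs more than the whole budget $V$; this decoupling is the gadget. Within its block, $\mu_k[i]$ is fixed, and $d_k[i]$ is pinned to $Y_{ik}\mu_k[i]$ by the marginal constraints. One way to pin it is to let $\nu_k$ place the image of $x_i$ on a single atom $x_i+Y_{ik}$, which requires the rationals $Y_{ik}$ to lie on the grid after scaling.

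Once $d_k$ is pinned, the slack satisfies $\one^\top(s^+_k+s^-_k)=\sum_i|Y_{ik}-h_iz_k|\mu_k[i]$. Taking $\mu_k[i]$ uniform on the designated atoms gives a common factor. The transport cost $\sum_k\inner{C}{P_k}$ is then a constant $c_0$ independent of $(H,z)$. Hence
\[
\mathrm{OPT}_{\mathrm{NC}}=c_0+\lambda_{\text{drift}}\,\mu_\star\min_{h,z}\|Y-hz^\top\|_1,
\]
and we set $V:=c_0+\lambda_{\text{drift}}\mu_\star V'$. The atoms which are not designated carry basis value $\varphi=0$; we give them zero drift and pair them trivially.

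For the single-chain requirement, the pairs must be consecutive marginals $\nu_k=\mu_{k+1}$. This forces each column's target law to serve as the next column's source law. We would handle it by interleaving blocks: the target atoms of column $k$ serve as the source atoms for column $k+1$, using disjoint spatial regions per column. Atoms of column $k$ not involved in column $k+1$ are made non-designated and carried with zero displacement. The blow-up in grid size is polynomial in $mK$ and the bit length of $Y$.

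We expect this chaining and pinning to be the main obstacle. It must be verified that no cheaper cross-block coupling exists, which means bounding the inter-block distances so that moving mass across blocks costs more than $V$ even after the normalisation of $C$. It must also be verified that non-designated atoms cannot absorb slack.

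The contrast with the Convex Setting is then immediate. On the same data, $b_k$ is affine in $\theta$ with observed regressors, so \eqref{eq:lpprimal} has a linear constraint set and a convex quadratic objective. Its size equals that of \eqref{eq:qcqp} minus the $\mathcal O(K)$ state variables. It is therefore a convex QP solvable in polynomial time, and the hardness resides in the bilinear products $Hz_k$.
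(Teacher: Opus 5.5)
Your skeleton matches the paper's: reduce from $\ell^1$ rank-one approximation, take $d=1$ and $\lambda_w=0$ to free the $z_k$, and use an indicator basis so the drift matrix is $hz^\top$. The two steps you defer, however, are where the proof lives, and the arguments you sketch for them do not work.

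\textbf{Pinning.} The marginal constraints fix $d_k$ only once cross-block mass has been excluded. Between two uniform laws on $m$ atoms the feasible set is a scaled Birkhoff polytope, so nothing is pinned by the marginals alone. Your criterion for excluding cross-block mass, that leaving a block ``costs more than the whole budget $V$'', cannot hold.
\begin{itemize}
\item Moving mass $\epsilon$ off the intended plan raises the cost by only $\mathcal O(\epsilon)$.
\item The slack is $D$-Lipschitz in $P$, with $D$ the grid diameter, so it can fall by as much as $\lambda_{\text{drift}}D\epsilon$.
\end{itemize}
Whether a deviation pays is therefore a comparison of rates. No choice of inter-block distances settles it, because $C$ is normalised by the squared diameter.

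What is needed is the paper's Step~1. Show that the intended plan is the monotone matching, the unique optimal vertex of $\Pi(\mu_k,\nu_k)$, with every other vertex worse by at least some $\sigma>0$ (each inversion undone lowers the cost by at least $\sigma$). This gives $\inner{C}{P}-\inner{C}{P^\star_k}\ge\tfrac{\sigma}{2}\|P-P^\star_k\|_1$. Then $\lambda_{\text{drift}}$ must be \emph{chosen} as part of the instance, below $\sigma/(2D)$; the paper takes $\sigma/(4D)$. This makes $P^\star_k$ the inner minimiser uniformly in $\Theta$.

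You treat $\lambda_{\text{drift}}$ as given. For large $\lambda_{\text{drift}}$ the optimiser trades transport cost for slack by exchanging mass between blocks, and your formula for $\mathrm{OPT}_{\mathrm{NC}}$ fails.

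\textbf{The chain.} The basis $\varphi$ is the same at every pair; only $z_k$ depends on $k$. So an atom cannot be designated for one pair and ``made non-designated'' for another.
\begin{itemize}
\item A designated atom left idle during pair $k$ contributes $\lambda_{\text{drift}}\mu|h_iz_k|$, which depends on the unknowns.
\item If each column occupies its own region, the passage between regions is itself a transport step. Its displacements carry the offset, so you fit $Y$ plus a constant matrix rather than $Y$.
\end{itemize}
The paper's device is to interleave \emph{reset} pairs. The chain is $\mathsf B_1,\mathsf A,\mathsf B_1,\mathsf A,\mathsf B_2,\ldots,\mathsf A,\mathsf B_K$, with $\mathsf A$ uniform on $x_i=4(i-1)+1$ and $\mathsf B_r$ uniform on $x_i+\mathsf M[r,i]$. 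The basis $\varphi$ is the indicator basis of the atoms of $\mathsf A$, which are $\equiv1\pmod 4$, while the atoms of every $\mathsf B_r$ are even. Hence the model drift vanishes on every reset pair for every $\Theta$, and the reset slack is the constant $\tfrac1n\sum_i|\mathsf M[r,i]|=1$. An alternative fix would be to let $\varphi_i$ be the indicator of the whole trajectory $x_i+\sum_{j<k}\mathsf M[j,i]$ of row $i$, with rows spaced more than $2K$ apart; then every pair encodes.

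\textbf{Source problem and grid size.} Either repair needs the reduction to start from \emph{sign} matrices, for which Gillis and Vavasis prove hardness. With general rational $Y$ on an equally spaced grid, the number of atoms is only pseudo-polynomial in the bit length, contrary to your count. The $\pm1$ entries are also what keep displaced atoms off the designated ones and make every matched displacement of unit length.
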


The proof (Appendix~\ref{app:nphard}) proceeds by reduction from $\ell^1$-norm
rank-one matrix approximation of a \emph{sign} matrix, shown NP-hard by
\citet{GillisVavasis2018}, in three steps which may be of some independent
interest. The transport block is \emph{pinned} by choosing marginals whose
monotone matching is the unique optimal vertex, so that the plans remain optimal
uniformly in $\Theta$ once $\lambda_{\text{drift}}$ falls below an explicit
threshold; what then remains is a weighted $\ell^1$ fit of the prescribed
displacements by $\bigl(b_k(x_i;\Theta)\bigr)_{k,i}$; and a latent state of
dimension $d$ confines that matrix to rank at most $d$; interleaved \emph{reset}
pairs make the construction a single chain of marginals.
Remark~\ref{rem:hardness-scope} delimits the statement: for the fixed affine
basis of \eqref{eq:lds-affine} the programme at $\lambda_w=0$ is a linear
programme for every $d$, and at $\lambda_w=\infty$ with $d$ fixed the complexity
is open.

The experiments of Section~\ref{sec:numerics} use the fixed affine basis at
$\lambda_w=\infty$ and are therefore \emph{not} instances of the hard family.
What branch and bound answers there is the bilinear non-convexity of
$z_{k+1}=\comp(a)z_k+e_du_k$ coupled to $c^\top z_k$, a genuine obstruction to
convexity and the reason the M-step needs a global method, and not the hardness
proved above.

\section{A block-coordinate decomposition}
\label{sec:bcd}

Both programmes carry $\mathcal O(nN^2)$ transport variables, and \eqref{eq:qcqp}
carries them into every node of the branch and bound. The decomposition set out
here keeps them out of the parameter problem. It is not a solver for
\eqref{eq:lpprimal} or \eqref{eq:qcqp}: it imposes \eqref{eq:smdisc} exactly
rather than through a slack and regularises the transport entropically, and is
therefore a distinct estimator, whose fixed point Section~\ref{sec:conv}
identifies. Its E-step is indifferent to which setting is in force, so that the
non-convexity is confined to the M-step.

Adding the entropy $\eps\mathsf H(P_k)=-\eps\sum_{ij}P_k[i,j]\log P_k[i,j]$ to each
pairwise cost and treating \eqref{eq:smdisc} as a hard constraint gives the
discrete Schr\"odinger bridge between $(\mu_k,\nu_k)$ with prescribed first
conditional moment \citep{Follmer1988,Leonard2014}, with Lagrangian
\begin{equation}\label{eq:lagr}
\mathcal{L}_k(P_k,\Gamma_k;\Theta)
=\inner{C}{P_k}-\eps\mathsf H(P_k)
+\!\!\!\sum_{i:\,\mu_k[i]>0}\!\!\!\Gamma_k[i]
\Bigl(\sum_j P_k[i,j](x_j-x_i)-b_k(x_i;\Theta)\mu_k[i]\Bigr),
\end{equation}
the marginal constraints being imposed by Sinkhorn scalings. The
\textbf{E-step} is the saddle point of $\mathcal L_k$ in $(P_k,\Gamma_k)$ at
fixed $\Theta$; the \textbf{M-step} revises $\Theta$ from the compensators
$d_k$. Two facts govern the E-step. With the marginals and the row means fixed the
quadratic cost is constant, so that the plan is the maximum-entropy element of
the feasible set for every $\eps>0$ (Proposition~\ref{prop:epsinvariant}); and
the tower property forces $\sum_i\mu_k[i]b_k(x_i;\Theta)=\E_{\nu_k}[X]-\E_{\mu_k}[X]$ upon any
coupling with those marginals, so the E-step projects not upon $b_k$ but upon
$\tilde b_k=b_k+\delta_k$,
\begin{equation}\label{eq:delta}
\delta_k=\bigl(\E_{\nu_k}[X]-\E_{\mu_k}[X]\bigr)-\sum_i\mu_k[i]\,b_k(x_i;\Theta),
\end{equation}
clipped into the reachable range $[x^-_k-x_i,\,x^+_k-x_i]$. Two regularity
hypotheses recur below: \textup{(T1)} the marginals are strictly positive on
their support, and \textup{(T2)} the clipped target lies strictly inside the
reachable range in every active row, which the clipping enforces; neither
guarantees that the constraint sets intersect, which is a convex-order condition
(Remark~\ref{rem:feasibility}). The projection interleaves log-domain Sinkhorn
rescalings with an exact row-wise Newton step upon $\Gamma$
(Appendix~\ref{app:impl}).

In the Convex Setting the M-step is the closed-form weighted least squares
\begin{equation}\label{eq:wls}
\theta^{(\ell+1)}=\argmin_\theta\!\!\sum_{k\in\mathcal P,\,i:\mu_k[i]>0}\!\!
\mu_k[i]\Bigl(\frac{d_k[i]}{\mu_k[i]}-b_k(x_i;\theta)\Bigr)^2+\lambda_\theta\|\theta\|^2
=(X^\top WX+\lambda_\theta I)^{-1}X^\top Wy,
\end{equation}
a regression of the empirical predictable density $y[k,i]=d_k[i]/\mu_k[i]$ upon
the features $\phi_{ki}=(1,x_i,s_{k-1},\ldots,s_{k-p+1})$ with weights
$W=\diag(\mu_k[i])$. In the Non-Convex Setting the same criterion is no longer a
linear regression, $z_k$ being unobserved and generated by the unknowns:
\begin{equation}\label{eq:mstep2}
\min_{a,c,\alpha,\beta}\ \
\sum_{k\in\mathcal P}\sum_{i:\mu_k[i]>0}\mu_k[i]
\Bigl(y_k[i]-\alpha-\beta x_i-c^\top z_k\Bigr)^2
\ \ \text{s.t.}\ \ z_{k+1}=\comp(a)z_k+e_du_k,\ \ a\in\mathcal B_\varrho .
\end{equation}
The sum over atoms separates (Appendix~\ref{app:mstep2}),
\begin{equation}\label{eq:mstep2-reduced}
\eqref{eq:mstep2}\;=\;\min_{a,c,\alpha,\beta}\ \sum_{k\in\mathcal P}
\bigl(\alpha+\beta\bar x_k+c^\top z_k-\bar y_k\bigr)^2
\;+\;\bigl(Q_2\beta^2+Q_1\beta+Q_0\bigr),
\end{equation}
with $\bar x_k=\sum_i\mu_k[i]x_i$, $\bar y_k=\one^\top d_k$ and $Q_2,Q_1,Q_0$
second moments of the grid and of $y$, so that the sub-problem has $2d+2$
parameters and $K=n_{\text{pairs}}$ residuals whatever $N$ may be. Introducing
$s_k=c^\top z_k$ confines every bilinear term to the $K(d+1)$ equalities
$z_{k+1}=\comp(a)z_k+e_du_k$, $s_k=c^\top z_k$. We solve
\eqref{eq:mstep2-reduced} \textbf{(a) globally}, by spatial branch and bound;
\textbf{(b) by projected gradient}, Algorithm~1 of \citet{HardtMaRecht2016}; or
\textbf{(c) from a Ho--Kalman realisation} \citep{HoKalman1966} of the Markov
parameters of $u\mapsto\bar y$ (Appendix~\ref{app:mstep2solvers}).

\section{Fixed points and convergence of the decomposition}
\label{sec:conv}

\label{sec:llm}\label{sec:outer}
An exact E-step with no clipping active gives
$\one^\top d_k=\gamma_k:=\E_{\nu_k}[X]-\E_{\mu_k}[X]$, the \emph{mean
displacement} between adjacent marginals, and gives it for \emph{every} plan
with the prescribed marginals. The coupling therefore influences the M-step only
in so far as it fails to be feasible, and the E-step enters through
$(\bar x_k,\gamma_k)$ and nothing besides.

\begin{proposition}[Fixed points]\label{prop:fixedpoint1}\label{prop:fixedpoint2}
Write $\bar\phi_k=\sum_i\mu_k[i]\phi_{ki}$ and $Q=\sum_k\operatorname{Var}_{\mu_k}(X)$.
With $\lambda_\theta=0$, $\bar\theta$ is a fixed point in the Convex Setting if
and only if it is the ordinary least squares regression \eqref{eq:fixedpoint1} of
the mean displacements upon the mean features; and $\bar\Theta$ is a fixed point
of a single-valued globally solved M-step in the Non-Convex Setting if and only
if it is the selected global minimiser of $J(\Theta)+Q(\beta-\bar\beta)^2$ subject
to the constraints of
\begin{equation}\label{eq:fixedpoint2}
J(\Theta)=\sum_{k\in\mathcal P}
\bigl(\alpha+\beta\bar x_k+c^\top z_k-\gamma_k\bigr)^2,
\quad z_{k+1}=\comp(a)z_k+e_du_k,\ \ a\in\mathcal B_\varrho .
\end{equation}
In neither case do the couplings $\{P_k\}$ appear.
\end{proposition}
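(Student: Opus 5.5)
The proof rests on one observation and one algebraic identity. The observation is the one recorded just before the statement. Take an exact E-step at parameter $\Theta$ with no clipping active, under (T1)--(T2). Then the row constraint \eqref{eq:smdisc} holds with the shifted target $\tilde b_k=b_k+\delta_k$. Summing over $i$ and using $P_k^\top\one=\nu_k$, $P_k\one=\mu_k$ gives
\[
\one^\top d_k=\sum_{ij}P_k[i,j](x_j-x_i)=\gamma_k
\]
for every admissible plan. More precisely, $d_k[i]=\mu_k[i]\bigl(b_k(x_i;\Theta)+\delta_k\bigr)$, so $y_k[i]=b_k(x_i;\Theta)+\delta_k$ is itself an explicit function of $\Theta$ and $\gamma_k$. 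Plan: (i) write the M-step objective in terms of $y$ through the separation \eqref{eq:mstep2-reduced}; (ii) substitute this explicit $y$; (iii) read off the fixed-point condition.

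\textbf{Step 1 (separation).} For fixed $k$, expand
\[
\sum_i\mu_k[i]\bigl(y_k[i]-\alpha-\beta x_i-s_k\bigr)^2
\]
about the $\mu_k$-means. The cross term vanishes, leaving
\[
\bigl(\bar y_k-\alpha-\beta\bar x_k-s_k\bigr)^2+\sum_i\mu_k[i]\bigl((y_k[i]-\bar y_k)-\beta(x_i-\bar x_k)\bigr)^2 .
\]
Here $\bar y_k=\one^\top d_k=\gamma_k$ by the observation.

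\textbf{Step 2 (substitution).} At the E-step output for $\bar\Theta$ we have $y_k[i]-\bar y_k=\bar\beta(x_i-\bar x_k)$, since $\alpha$, $c^\top z_k$ and $\delta_k$ are row-constant and $b_k$ is affine in $x$ with slope $\bar\beta$. The within-marginal term therefore equals $(\beta-\bar\beta)^2\operatorname{Var}_{\mu_k}(X)$, and summing over $k$ gives $Q(\beta-\bar\beta)^2$. So the M-step objective evaluated at the E-step output of $\bar\Theta$ is exactly $J(\Theta)+Q(\beta-\bar\beta)^2$, with $J$ as in \eqref{eq:fixedpoint2}. It depends on the couplings only through $\gamma_k$ and $\bar x_k$, which are functions of the marginals alone. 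This proves the last sentence of the statement.

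\textbf{Step 3 (Non-Convex Setting).} Being a fixed point means that $\bar\Theta$ is returned by the single-valued global M-step applied to this objective. That is, $\bar\Theta$ is the selected global minimiser of $J(\Theta)+Q(\beta-\bar\beta)^2$ under the constraints of \eqref{eq:fixedpoint2}. This is a tautological equivalence once Steps 1--2 are in place, because the constraint set does not depend on the E-step.

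\textbf{Step 4 (Convex Setting).} The features $s_{k-j}$ are row-constant, so the same computation gives
\[
\sum_k\bigl(\gamma_k-\bar\phi_k^\top\theta\bigr)^2+Q(\beta_1-\bar\beta_1)^2 .
\]
This is strictly convex in $\beta_1$ when $Q>0$ and quadratic in $\theta$. Its minimiser is a fixed point if and only if it equals $\bar\theta$. Differentiate at $\theta=\bar\theta$: the penalty has zero gradient there, so the normal equations reduce to
\[
\sum_k\bar\phi_k\bigl(\gamma_k-\bar\phi_k^\top\bar\theta\bigr)=0 .
\]
These are the OLS normal equations \eqref{eq:fixedpoint1}. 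Conversely, an OLS solution satisfies them and hence is a minimiser, unique under the rank condition on the mean features.

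\textbf{Main obstacle.} The delicate step is Step 2's claim that the E-step returns $d_k[i]=\mu_k[i]\tilde b_k(x_i)$ exactly. This requires the E-step to be feasible and the clipping to be inactive, which is where (T1)--(T2) and the convex-order feasibility remark enter; the statement implicitly assumes this regime. I would state it as a standing hypothesis and argue that only then is the reduction exact. Otherwise clipping perturbs $y$ row-wise, and the couplings re-enter through the clipped rows.
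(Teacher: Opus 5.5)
Your proposal is correct and follows the paper's argument. You substitute the exact, unclipped E-step output $y_k[i]=b_k(x_i;\Theta)+\delta_k$, which gives $\bar y_k=\gamma_k$ and $y_k[i]-\bar y_k=\bar\beta(x_i-\bar x_k)$, into the separated M-step objective; this yields $J(\Theta)+Q(\beta-\bar\beta)^2$ and makes the Non-Convex case immediate. In the Convex Setting the paper instead substitutes into the closed-form update $\theta'=\theta+H_\theta^{-1}\sum_k\bar\phi_k\delta_k$, which is the same first-order condition you obtain, and your standing hypothesis of no active clipping is the one the paper also relies on.
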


In the population scheme $\gamma_k=\bar\phi_k^\top\theta_\star$ exactly, so the
estimator of the Convex Setting is Fisher consistent when the mean-feature Gram
matrix is non-singular, whereas under windowing it
carries an aggregation bias which does not vanish with $n$
(Corollary~\ref{cor:consistency}). The regression \eqref{eq:fixedpoint1} is the pseudo-panel estimator of
\citet{Deaton1985} and \citet{Moffitt1993} with the population as one cohort
(Appendix~\ref{app:fixedpoint}); of $KN$ moment conditions it draws upon $K$,
the E-step having annihilated the within-marginal variation, so the shape of
$\mu_k$ governs the rate but never the limit. It is the second half of the proposition which makes the Non-Convex
Setting tractable by decomposition: the sub-problem is \emph{precisely} the
identification problem of \citet{HardtMaRecht2016} for the scalar output
$\gamma_k$, so that Ho--Kalman, $\mathcal B_\varrho$ and branch and bound apply
without modification.

As to convergence, at an interior feasible point satisfying \textup{(T1)--(T2)}
the E-step is a cyclic projection upon three sets affine in $P_k$ and converges
locally linearly at a rate $c_E<1$ (Theorem~\ref{thm:estep}). In the Convex
Setting the outer loop is locally $Q$-linear with a contraction $\kappa_0<1$
given in closed form, to the fixed point of the inexact map, which lies within
$\mathcal O(c_E^{L_{\mathrm{inner}}})$ of $\bar\theta$ for $L_{\mathrm{inner}}$
inner sweeps (Theorem~\ref{thm:outer}). With a hidden state
the M-step is no longer a fixed linear operator and is invariant under $GL(d)$,
so everything is read in the canonical coordinates of Appendix~\ref{app:gauge}.
Solved to global optimality it is again locally $Q$-linear
(Theorem~\ref{thm:outer2}) under Assumption~\ref{ass:sosc2}, which requires a
\emph{separation} between the global minimiser and every other local one, failing
which the iteration map is discontinuous; for an acquiescent system solved by
projected gradient the rate is preserved at $\mathcal O(\varepsilon^{-2})$
gradients (Theorem~\ref{thm:outer2-hmr}), though the weak quasi-convexity of
\citet{HardtMaRecht2016} holds for their idealised risk and is not claimed for
the criterion actually minimised. Remark~\ref{rem:poleswap} shows that a latent
pole and the observed-state pole may be interchanged without altering any mean,
so that Assumption~\ref{ass:sosc2} can fail for the generating model of
Section~\ref{sec:numerics} and parameter errors there are read modulo that
allocation.

Under hypotheses \textup{(R1)--(R6)} of Appendix~\ref{app:rates}, which define
the pseudo-true $\theta_\star(\Delta)$, the residual variance $\varsigma^2$ and
the truncated mass $\tau_R$,
\begin{equation}\label{eq:rate-main}
\|\hat\theta-\theta_\star(\Delta)\|=
\mathcal O_p\bigl(\Delta^{-1}M^{-1/2}\bigr)+\mathcal O_p\bigl(\varsigma K^{-1/2}\bigr)
+\mathcal O\bigl(\Delta^{-1}(h^{\,r}+\tau_R)\bigr),
\end{equation}
the three terms being Monte-Carlo, estimation and grid error, with $r=1$ in
general and $r=s+1$ when the conditional marginal densities are of class $C^{s}$.
The factors $\Delta^{-1}$ belong to the velocity normalisation of the joint limit
and are absent at the fixed mesh employed here; the bound upon
$\|\hat\Theta-\Theta_\star\|$, the mesh term $\mathcal O(\Delta)$ arising only
when a continuous-time coefficient is insisted upon, and the asymptotic normality
are in Appendix~\ref{app:rates}.

\section{Numerical experiments}
\label{sec:numerics}

We generate data by the Euler--Maruyama recursion of an It\^o diffusion whose
drift carries a hidden factor,
\begin{equation}\label{eq:sim}
X_{k+1}=X_k+\bigl(\alpha+\beta X_k+c^\top Z_k\bigr)\Delta+\sigma\sqrt{\Delta}\,\xi_k,
\qquad Z_{k+1}=\comp(a)Z_k+e_d u_k,\qquad \xi_k\sim N(0,1),
\end{equation}
the recursion being adopted on purpose, since the exact transition of the
diffusion would have increment slope $e^{\beta\Delta}-1=-0.330$ rather than
$-0.4$, and it is the coefficients of the \emph{discrete} model that we seek. We take $u$ observed, $d=2$,
$\sigma=0.35$, $\Delta=1$, $(\alpha,\beta,c)=(0,-0.4,(0.4,0.4))$, and the
common-factor scheme: $M=600$ copies driven by one factor path, observed at
$n=25$ times without pairing, giving $K=24$ pairs upon a grid of $N=14$ atoms,
with $\eps=0.05$ in the E-step. Here $a=(-0.6,0.08)$ is $\varrho$-acquiescent
and $a=(-1.2,0.35)$ is not. The figures and the remaining tables are in
Appendix~\ref{app:extra}, each naming the script which produces it.

On instances small enough for \eqref{eq:qcqp} to be closed ($N=10$, $K=11$,
$1361$ variables) spatial branch and bound returns a certified global optimum
in under a second, and the parameter so obtained lies within $0.05$--$0.15$ of
the truth (Table~\ref{tab:joint}). Upon the non-acquiescent
system the restriction to $\mathcal B_\varrho$ costs two orders of magnitude in
the objective, and projected gradient fails to solve even the restricted problem
on two instances in five, a good many random restarts coming to rest above the
certified value (Figure~\ref{fig:solvers}). The complete programme and the
decomposition differ in their distance from the truth by at most $0.011$, and
Figure~\ref{fig:runtime} compares their running times as $N$ grows.

\input{Figures/tab_forecast.tex}

What the identification of the dynamics is worth is best seen out of sample
(Table~\ref{tab:forecast}). Having been fitted upon the first
$K_{\mathrm{tr}}$ pairs, our estimator predicts the drift over the following
twelve steps from the observed inputs alone, the latent state being carried
forward through $(\hat a,\hat c)$. A per-step method has no forecast to offer
beyond persistence, the repetition of its last field, and
PO-MFL itself cannot enter, since it needs the snapshots it would be asked to
predict; what can enter is its prior, an AR(1) fitted to the observable residual
and run forward, which with a finite impulse response of $u$
(Remark~\ref{rem:I-in-II}) is the competitor that deserves to be taken
seriously. Our
error falls by a factor of nearly four as $K_{\mathrm{tr}}$ grows from $24$ to
$96$ while none of them improves; at $24$ the M-step settles, on some
seeds, upon the interchanged allocation of poles of Remark~\ref{rem:poleswap},
which the longer record resolves, and at $96$ our error is less than a sixth of
what the per-step method attains when it is handed the very snapshot whose
drift it is asked to predict.
Table~\ref{tab:pooling} of Appendix~\ref{app:compare} shows the same pooling of
information across the horizon in sample, against $M$ and $K$;
Appendix~\ref{app:extra} records the decomposition upon this instance.

In sample, the hidden state earns its place against the finite-memory
alternatives, namely lags of $\bar x$ and a finite impulse response of the
input $u$, the latter being by Remark~\ref{rem:I-in-II} the proper
observed-feature competitor: the latent model reaches a given residual with fewer parameters,
and the more slowly the factor mixes the greater the saving
(Figure~\ref{fig:appendix}(b,c)). Appendix~\ref{app:compare} sets the estimator
beside surrogates of Waddington-OT \citep{Schiebinger2019}, of the path-space
mean-field Langevin estimator \citep{Chizat2022} and of PO-MFL \citep{Gu2025},
each receiving the same snapshots upon the same grid and each scored against
the known drift, with running times (Table~\ref{tab:main}). There we improve upon the two that carry no latent state by a factor of
three at $N=100$, whereas PO-MFL is granted a prior upon the latent dynamics (!)
where we must identify them, and against it the comparison is close, $0.026$
against $0.020$ with overlapping seed ranges; a misspecification of that prior's
innovation scale by a factor of two removes its advantage.

The couplings, idle for the parameter, are far from idle as transition kernels:
compared row by row with the true transition law, the drift-constrained plans lie
within $0.022$--$0.024$ in $W_1$, the unconstrained entropic plan between the same
marginals at $0.065$--$0.168$, a factor of three to eight uniformly over seeds,
grids and regularisations (Table~\ref{tab:kernel}, Appendix~\ref{app:kernel}).

\section{Conclusion}\label{sec:concl}

The recovery of the predictable drift from marginal laws is a convex quadratic
programme, and at bottom a pseudo-panel regression, when the drift is an affine
functional of an observed lag window. When it is the output of a hidden linear
system whose dynamics are themselves to be identified, a problem which we
believe to be new, it is a bilinear programme which spatial branch and bound
closes at the sizes considered here, and which is NP-hard in latent dimension
one once the state disturbances are unpenalised and the drift basis grows with
the grid. The decomposition is a cheap alternative, its local rate in the
latter case resting upon a separation assumption without convex counterpart;
the transport layer is idle for the estimate
(Proposition~\ref{prop:fixedpoint1}) yet returns the better kernels
(Appendix~\ref{app:kernel}).

\bibliographystyle{plainnat}
\bibliography{references}

\clearpage
\appendix

\section{Stochastic analysis background}\label{app:glossary}
\label{app:stochastic-analysis}

A \emph{stochastic basis} is a quadruple
$(\Omega,\mathcal F,\mathbb F,\mathbb P)$,
where $(\Omega,\mathcal F,\mathbb P)$ is a probability space and
where $\mathbb F:=(\mathcal F_t)_{t\in[0,T]}$ denotes a filtration, that is, an increasing family of sub-\(\sigma\)-fields
of \(\mathcal F\):
\[
\mathcal F_s\subseteq \mathcal F_t\subseteq \mathcal F,
\qquad 0\le s\le t\le T.
\]
Throughout the paper we take the filtration to be right-continuous, meaning that
\[
\mathcal F_t=\bigcap_{u>t}\mathcal F_u,
\qquad 0\le t<T.
\]
The \(\sigma\)-field \(\mathcal F_t\) represents the information available up to time
\(t\). Accordingly, martingale, local-martingale, predictability, and semimartingale
properties are always understood relative to the chosen stochastic basis.

A process $X=(X_t)_{t\in[0,T]}$ on this basis is
said to be $\mathbb F$-\emph{adapted} if $X_t$ is $\mathcal F_t$-measurable for every
$t\in [0,T]$ (often written $X_t\in \mathcal F_t$). A process is $\mathbb F$-\emph{predictable} if it is measurable with respect to the
predictable sigma-field $\mathcal P$ on $\Omega\times [0,T]$, generated by the
left-continuous adapted processes; in particular, left-continuous
adapted processes are predictable. A \emph{càdlàg}, or RCLL, process is a process whose sample paths are right-continuous
and admit left limits.
A process $X$ is an $\mathbb F$-\emph{martingale} if it is an adapted integrable process such that
\[
\mathbb E[X_s \mid \mathcal F_t]=X_t, \qquad 0\leq t\leq s\leq T.
\]
Equivalently, conditional on the information available at time $t$, the future
value of the process has expectation equal to its present value. A \emph{local martingale} on \([0,T]\) is an adapted process \(M\) for which there exists
a sequence of stopping times \((\tau_k)_k\) with \(\tau_k\uparrow T\) a.s. such that \(M^{\tau_k}\) is a martingale on \([0,T]\) for every \(k\).
A real-valued, càdlàg $\mathbb F$-adapted process $X$
is a \emph{semimartingale} with respect to \(\mathbb F\) if it admits a decomposition
\[
X_t=X_0+M_t+A_t,\qquad t\in[0,T],
\]
where \(M\) is an \(\mathbb F\)-local martingale and \(A\) is an \(\mathbb F\)-adapted
process of finite variation. 
In this paper, we focus on the \emph{special semimartingale} case, in which such a decomposition can be chosen with \(A\) predictable. See \cite{JacodShiryaev} for background and for the refinement of this structure in terms of predictable characteristics for Itô
semimartingales. For the purposes of the present paper, the essential point is the
special semimartingale separation between conditionally centred local-martingale
innovations and a predictable finite-variation component.

\section{Projections, and well-posedness of the estimand}
\label{app:foundations}

\paragraph{The hierarchy \eqref{eq:hierarchy}.}
Each arrow is a conditional expectation, so the three compose by the tower
property and each is a contraction on $L^1$, and on $L^2$ when the relevant
variable is square-integrable. The first coarsens continuous time to
the mesh: $A^\Delta$ is the discrete predictable projection of the increments of
$A^{\mathbb G}$, and Proposition~\ref{prop:mesh} is what connects it back to
$a_t$ as $\Delta\downarrow0$. The second replaces $\mathcal G_k$, which is not
observable in the marginal-law regime, by the feature $\sigma$-field
$\mathcal H_k$ of Appendix~\ref{app:sigmafield}. The third applies the grid map
$\Pi_N$. Because composition of projections is again a projection, there is a
single estimand throughout and no risk of two constraints referring to
incompatible objects --- a risk that is real if one imposes conditions indexed by
$\sigma$-fields that are not nested, since the feasible set can then be empty for
reasons having nothing to do with the data.

The $N$-dependence deserves emphasis. The estimator targets
$\E[a_{t_k}\mid\mathcal H^N_k]$, not $\E[a_{t_k}\mid\mathcal H_k]$, so refining
the grid changes the estimand and not only the numerical error. Reported
quantities are therefore comparable across methods at fixed $N$ and across $N$
only up to that bias.

\paragraph{Why Assumption~\ref{ass:wellposed} is the right hypothesis.}
Two things are needed of the estimand, and the assumption supplies both. First,
the object \eqref{eq:doob-compensator} must exist and be unique: in discrete time
this asks only that $(r_k)$ be integrable, since the predictable part of a
discrete Doob decomposition is $\mathcal G_{k-1}$-measurable by construction and
therefore unique. Second, it must be the projection of the continuous-time
drift and not of something else. Writing
$r_k-r_{k-1}=(M_{t_k}-M_{t_{k-1}})+(A_{t_k}-A_{t_{k-1}})$, the first bracket has
zero conditional mean given $\mathcal G_{k-1}=\mathcal F_{t_{k-1}}$ precisely when
$M$ is a true martingale on the sampling grid, which a local martingale need not
be; and the second is integrable when $A$ has integrable variation. Both hold
for $X\in\mathcal H^1$ \citep[Ch.~IV]{protter2005}, whose canonical
decomposition has $M\in\mathcal H^1$ and $\E\int_0^T|dA_s|<\infty$. Without
specialness the decomposition is not unique, the finite-variation part being
defined only up to a local martingale of finite variation, and no estimand is
singled out. The assumption is stated on $X$ and its $\mathbb F$-decomposition,
not on the sampling scheme, and therefore survives any further coarsening of the
observation times.
Stricker's theorem \citep{Stricker1977}, which passes semimartingality to a
genuine continuous-time subfiltration to which $X$ is adapted, is not what is
needed here: the sampled $\sigma$-fields $\mathcal G_k=\mathcal F_{t_k}$ shrink
nothing, and the discrete decomposition is unique for any integrable adapted
sequence.

\section{Discrete-time observation scheme and Doob decomposition}
\label{app:background}
\label{app:discrete-sampling}

We assume the c\`adl\`ag special semimartingale $X=(X_t)_{t\in[0,T]}$ is observed at $n$ equally spaced times
\[
0=t_0<t_1<\cdots<t_{n-1}=T,\qquad t_k=k\Delta,\qquad \Delta:=T/(n-1).
\]
Set $r_k:=X_{t_k}$ for $k=0,\ldots,n-1$ and let $\mathcal G_k:=\mathcal F_{t_k}$ denote the discretely sampled filtration. The discrete-time process $(r_k)_{k=0}^{n-1}$ is integrable whenever $X$ is of class (D) on $[0,T]$, in particular whenever $X\in\mathcal H^1$ \citep[Ch.~IV]{protter2005},
and on the discrete time index it admits a \emph{Doob decomposition}
\begin{equation}\label{eq:doob-discrete}
r_k\;=\;r_0+M^{\Delta}_k+A^{\Delta}_k,\qquad
A^{\Delta}_k-A^{\Delta}_{k-1}\;=\;\E\!\bigl[r_k-r_{k-1}\mid\mathcal G_{k-1}\bigr],\qquad k=1,\ldots,n-1,
\end{equation}
where $M^{\Delta}$ is a $\mathcal G$-martingale with $M^{\Delta}_0=0$ and $A^{\Delta}$ is the unique $\mathcal G$-predictable process of finite variation with $A^{\Delta}_0=0$. Equation~\eqref{eq:doob-discrete} restates \eqref{eq:doob-compensator}; it is the discrete-time analogue of the continuous-time decomposition $X_t=X_0+M_t+A_t$ and reduces to it as the mesh refines (Proposition~\ref{prop:mesh}). From now on $\Delta=1$, any rescaling being absorbed into the drift parameters.

\begin{proposition}[Mesh-refinement consistency]\label{prop:mesh}
Suppose $A$ is absolutely continuous in time with $\mathbb F$-predictable
density $a$, that is $A_t=\int_0^t a_s\,ds$ with $a$ bounded (or uniformly
integrable), and let $t\in[0,T)$ be a time at which $a$ is a.s.\
right-continuous. Along meshes containing $t$ as a node $t_{k-1}$,
\[
\Delta^{-1}\bigl(A^{\Delta}_k-A^{\Delta}_{k-1}\bigr)\;=\;\E\!\Bigl[\Delta^{-1}\!\int_{t}^{t+\Delta}a_s\,ds\;\Bigm|\;\mathcal F_{t}\Bigr]\;\longrightarrow\;a_{t}\quad\text{in }L^1(\mathbb P)\text{ as }\Delta\downarrow 0 .
\]
If $a$ admits a c\`adl\`ag version the statement holds at every $t$; in general it holds for Lebesgue-a.e.\ $t$.
\end{proposition}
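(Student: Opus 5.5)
The plan is to establish the displayed identity first and then prove convergence by splitting the error into two pieces.

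\emph{The identity.} Take $t=t_{k-1}$ and $\mathcal G_{k-1}=\mathcal F_t$. By \eqref{eq:doob-discrete},
\[
A^\Delta_k-A^\Delta_{k-1}=\E\bigl[(M_{t+\Delta}-M_t)+(A_{t+\Delta}-A_t)\mid\mathcal F_t\bigr].
\]
Under Assumption~\ref{ass:wellposed}, $M$ is a true martingale on $[0,T]$, so the first bracket has zero conditional mean. The second bracket is $\int_t^{t+\Delta}a_s\,ds$, which is integrable because $a$ is bounded or uniformly integrable. Dividing by $\Delta$ gives the equality in the statement.

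\emph{The convergence.} Write
\[
\E\Bigl[\Delta^{-1}\!\int_t^{t+\Delta}a_s\,ds\Bigm|\mathcal F_t\Bigr]-a_t=\E\bigl[Y_\Delta\mid\mathcal F_t\bigr],\qquad Y_\Delta:=\Delta^{-1}\!\int_t^{t+\Delta}(a_s-a_t)\,ds.
\]
This uses that $a_t$ is $\mathcal F_t$-measurable, which holds because $a$ is predictable, hence adapted. Conditional expectation is an $L^1$-contraction, so it suffices to show $\E|Y_\Delta|\to0$.

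Almost-sure right-continuity of $a$ at $t$ gives $Y_\Delta\to0$ pathwise: for any $\eta>0$, once $\Delta$ is small, $|a_s-a_t|<\eta$ on all of $[t,t+\Delta]$. If $a$ is bounded by $K$, then $|Y_\Delta|\le 2K$ and dominated convergence closes the argument. In the uniformly integrable case, Fubini gives $\E|Y_\Delta|\le\Delta^{-1}\int_t^{t+\Delta}\E|a_s-a_t|\,ds$. The family $\{Y_\Delta\}$ is itself uniformly integrable, being dominated in the convex-order sense by averages of the uniformly integrable family $\{|a_s|+|a_t|\}$. Vitali's theorem then replaces dominated convergence, and this is the step requiring the most care.

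\emph{The final clause.} If $a$ has a càdlàg version, it is right-continuous at every $t$ pathwise. Since $A$ depends on $a$ only up to $ds\otimes d\mathbb P$-null sets, we may use that version, and the argument applies at each $t$.

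In general, I would handle a.e.\ $t$ by approximation in $L^1(\Omega\times[0,T])$. Alternatively, apply the Lebesgue differentiation theorem pathwise: for a.e.\ $\omega$, a.e.\ $t$ is a Lebesgue point of $s\mapsto a_s(\omega)$, giving $\Delta^{-1}\int_t^{t+\Delta}|a_s-a_t|\,ds\to0$. Fubini on $\Omega\times[0,T]$ then shows that for Lebesgue-a.e.\ $t$ this holds for $\mathbb P$-a.e.\ $\omega$. The same dominated/Vitali argument applies to $\E|Y_\Delta|\le\E\bigl[\Delta^{-1}\int_t^{t+\Delta}|a_s-a_t|\,ds\bigr]$.

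The main obstacle is exchanging the Fubini null set with the choice of $t$: the exceptional $\omega$-set must not depend on $\Delta$. Using the pathwise Lebesgue-point statement as $\Delta\downarrow0$ along all $\Delta$, rather than a countable sequence, avoids this.
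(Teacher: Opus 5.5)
Your proposal is correct and follows essentially the same route as the paper's proof. Both obtain the identity from the true-martingale property of $M$, then use pathwise right-continuity with dominated convergence and the $L^1$-contraction of conditional expectation, and finish the a.e.\ $t$ clause by pathwise Lebesgue differentiation. The differences are minor: the paper derives martingality of $M$ from $\E\sup_{t\le T}|X_t|<\infty$ and boundedness of $a$ instead of citing Assumption~\ref{ass:wellposed}, and your Vitali step makes explicit the uniformly integrable variant, which the paper's dominated-convergence sentence covers only for bounded $a$.
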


\emph{Proof in Appendix~\ref{app:proofs}.}

\section{Discretisation, marginals and observation schemes}

\paragraph{The three regimes in detail.}
Write $X^{(m)}$, $m=1,\dots,M$, for the copies. Under independent copies the
$X^{(m)}$ are i.i.d., $\mu_k\to\mathrm{Law}(X_{t_k})$ weakly a.s., and the limit
is deterministic: conditioning on $\mathcal S_k$ is conditioning on a trivial
$\sigma$-field, and the estimator recovers
$\E[a_{t_k}\mid X_{t_k}]$, the Gy\"ongy projection, with no room for lagged
structure. Under a common factor $Z$ the copies are only \emph{conditionally}
i.i.d.\ given $\sigma(Z)$; $\mu_k\to\mathrm{Law}(X_{t_k}\mid Z)$, a random
measure, and $\mathcal S_k\subseteq\sigma(Z_j:j\le k)$. Under rolling windows
there is one trajectory and $\mu_k$ is a window average along it, so
$\mathcal S_k=\sigma(r_j:j\le k)$ and the flow is a functional of the path.

The asymptotics separate accordingly. In the common-factor scheme the data
handed to the M-step are the $K$ pairs $(\bar x_k,\gamma_k)$, each measured with
error $O_p(M^{-1/2})$. Increasing $M$ shrinks that error; it does not add
residuals. Since \eqref{eq:fixedpoint2} has $2d+2$ unknowns and $K$ residuals,
consistency for $(a,c)$ is a statement about $n\to\infty$ along a single
realisation of the factor, and therefore requires an ergodicity hypothesis on
$Z$ --- more precisely, upon the pair formed by $Z$ and the population
conditional mean of $X$, which is what the mean features are functions of.
Proposition~\ref{prop:ergodic} verifies it for the generating model
\eqref{eq:sim} of the experiments, initialisation included; what remains a
hypothesis there is persistent excitation, \textup{(R5)}, which is checked
numerically and not proved.

\label{app:notation}
\label{sec:notation}

\paragraph{Symbols with two meanings.} $M$ denotes the local-martingale part of
$X$ in Section~\ref{sec:setting} and Appendices~\ref{app:glossary}--\ref{app:foundations}
and the number of copies everywhere else; $A$ denotes the predictable part of
$X$ in those places and the state matrix of \eqref{eq:lds-affine} elsewhere;
$\mathsf H(P)$ is the entropy of a coupling, $H$ the output matrix of
\eqref{eq:lds-affine} and $\mathcal H_k$ the feature $\sigma$-field; the sign
matrix of Appendix~\ref{app:nphard} is written $\mathsf M$.

Let $(r_k)_{k=0}^{n-1}=(X_{t_k})_{k=0}^{n-1}$ be the discrete samples of Section~\ref{app:discrete-sampling}. Denote the rolling empirical laws of the sampled process over a window of length $w$ by
\[
\mu_k\;=\;\frac{1}{w}\sum_{j=k-w+1}^{k}\delta_{r_j},
\qquad
\nu_k\;=\;\frac{1}{w}\sum_{j=k-w+2}^{k+1}\delta_{r_j},
\]
where, for a probability measure $\eta$ on $\R$, $\Pi_N\eta:=\sum_{i=1}^N\eta(B_i)\,\delta_{x_i}$ denotes its projection onto the common atomic grid $\{x_1,\ldots,x_N\}\subset\R$ by nearest-bin assignment and $B_i$ denotes the cell of points closer to $x_i$ than to any other atom. We identify $\Pi_N\eta$ with the probability vector $(\eta(B_i))_{i\le N}\in\Delta_{N-1}$ and write $\mu_k,\nu_k$ for the projected laws throughout.

\paragraph{Empirical adjacent marginals.} 
For each valid index $k\ge p-1$, 
the projected rolling laws $\mu_k=\Pi_N\bigl(w^{-1}\sum_{j=k-w+1}^{k}\delta_{r_j}\bigr)$ and $\nu_k=\Pi_N\bigl(w^{-1}\sum_{j=k-w+2}^{k+1}\delta_{r_j}\bigr)$ are
empirical surrogates for the laws of $X_{t_k}$ and $X_{t_{k+1}}$, respectively, under the assumption of approximate local stationarity over the window.
We write $x_k^-:=\min\{x_j:\nu_k[j]>0\}$ and $x_k^+:=\max\{x_j:\nu_k[j]>0\}$ for the extreme atoms charged by $\nu_k$: under any coupling with second marginal $\nu_k$, the conditional mean increment from atom $x_i$ lies in $[x_k^--x_i,\,x_k^+-x_i]$, which reduces to $[x_1-x_i,\,x_N-x_i]$ when $\nu_k$ charges every atom.

\paragraph{Observation scheme.}
The estimators take as input the empirical marginals
$(\mu_k,\nu_k)_{k\in\mathcal P}$ and, through the lagged features in $b_k$,
summary statistics of the past; two schemes produce such marginals. In the
\emph{population} scheme $\mu_k$ is the empirical law of $X_{t_k}$ across $M$
copies of the process whose pairing across times is unknown or ignored. This is
the setting of trajectory inference and the one in which the estimators are
consistent (Section~\ref{sec:conv}). In the \emph{rolling-window} scheme
the marginals are formed from consecutive blocks of a single path under an
assumption of local stationarity. We retain it for the simulation study, where a
single path is the natural object, but Section~\ref{sec:conv} shows that it
introduces a window-aggregation bias that does not vanish with the sample size,
so it should be read as a diagnostic device rather than as the intended use of
the method.

\paragraph{State grid.} A shared grid of $N$ equally spaced atoms is chosen on $[\min_k r_k-10^{-8},\max_k r_k+10^{-8}]$. The transport cost is the normalised quadratic cost $C_{ij}=(x_i-x_j)^2/\max_{kl}(x_k-x_l)^2$, $C\in\R^{N\times N}$; with the time unit $\Delta=1$ this is the natural empirical quadratic-variation cost for the increment $r_{k+1}-r_k$.

\paragraph{Index set of consecutive pairs.} The set of valid source indices is
\[
\mathcal{P}\;=\;\{p-1,p,\ldots,n-2\},\qquad n_{\text{pairs}}\;=\;|\mathcal{P}|.
\]
Each $k\in\mathcal P$ corresponds to one consecutive pair $(t_k,t_{k+1})$ of observation times, and the coupling $P_k$ is the discrete analogue of the law of $(X_{t_k},X_{t_{k+1}})$.

\paragraph{Increment matrix.} The signed-increment matrix $\mathrm{inc}\in\R^{N\times N}$ is defined by $\mathrm{inc}_{ij}=x_j-x_i$. The empirical predictable-compensator increment vector at pair $k$ is the row-sum
\begin{equation}\label{eq:dk}
d_k \;=\; (\,P_k \odot \mathrm{inc}\,)\,\one\;\in\;\R^N,
\end{equation}
i.e.\ $d_k[i]$ is the $\mu_k$-weighted conditional expectation of the increment given $r_k=x_i$ implied by the coupling $P_k$.

\paragraph{Centred increment and discrete martingale condition.}
For a target drift $b\in\R^N$ define $h_b\in\R^{N\times N}$ by $h_b[i,j]=(x_j-x_i)-b[i]$. The predictable-compensator constraint~\eqref{eq:smdisc} is then the row-wise discrete martingale condition
\begin{equation}\label{eq:discrete-martingale}
\sum_j P_k[i,j]\,h_{b_k}[i,j]\;=\;0\qquad\text{for every $i$ with $\mu_k[i]>0$,}
\end{equation}
i.e.\ the centred increment $r_{k+1}-r_k-b_k(r_k;\theta)$ has zero conditional expectation under $P_k$ on every observed atom -- exactly the empirical realisation of the discrete Doob decomposition~\eqref{eq:doob-discrete} on the grid.

\section{The feature \texorpdfstring{$\sigma$}{sigma}-field, in detail}
\label{app:sigmafield}

This appendix expands \eqref{eq:featurefield}. The point is narrow but it
decides whether the parameter $\theta_\star$ of \eqref{eq:markov-projection}
exists, so it is worth setting out slowly.

\paragraph{What the estimator conditions on.}
The constraint \eqref{eq:smdisc} is imposed row by row: for each active atom
$x_i$ it equates the conditional mean increment out of $x_i$ under $P_k$ with
$b_k(x_i;\theta)$. The conditioning is therefore on the event
$\{\Pi_N X_{t_k}=x_i\}$ together with whatever the index $k$ carries. Nothing
else is available: the coupling has marginals $(\mu_k,\nu_k)$ and no memory of
which sample moved where at earlier times. So the $\sigma$-field the estimator
actually conditions on is $\mathcal H^N_k=\sigma(\Pi_NX_{t_k})\vee\mathcal S_k$,
and the model may only use features measurable with respect to it.

\paragraph{Why the single-path regressors cannot be borrowed.}
In the single-path scheme the samples at time $k$ are $r_{k-w+1},\ldots,r_k$,
successive values of one trajectory, and the lag $r_{k-j}$ is a number known at
time $k$: it is $\mathcal S_k$-measurable, and \eqref{eq:markov-projection} is
the classical autoregression. In the population scheme the samples at time $k$
are $X^{(1)}_{t_k},\ldots,X^{(M)}_{t_k}$, one per copy, and the pairing across
times is unavailable by hypothesis. Then:
\begin{itemize}[topsep=2pt,itemsep=2pt,leftmargin=1.4em]
\item the individual past $X^{(m)}_{t_{k-j}}$ is not $\mathcal H_k$-measurable
--- indeed the estimator cannot even name the index $m$, since $\mu_k$ is an
unordered empirical measure --- so a coefficient on it is undefined;
\item writing ``$r_{k-j}$'' is then not shorthand for anything: there is no
distinguished trajectory whose past could be substituted.
\end{itemize}
What survives is the part of the history that is \emph{common} to the
population: the exogenous input $u_j$ if one is observed, the latent factor if
the model posits one, or a functional of the marginal flow such as
$\bar x_j=\E_{\mu_j}[X]$, which is $\mathcal S_k$-measurable because it is a
statistic of $\mu_j$ and the whole flow is shared. Substituting $s_j=\bar x_j$
turns the lag model into a dependence on the recent \emph{history of the
population}, which is a different --- and, in this observation scheme, the only
available --- statement.

\paragraph{Consequences.}
Three follow. First, the design matrix $X$ of \eqref{eq:wls} must be assembled
from the individual coordinate $x_i$ and from $\mathcal S_k$-measurable columns;
the reference implementation does exactly this, using $\bar x_j$ or $u_j$
according to the scheme. Second, Remark~\ref{rem:I-in-II} is exact rather than
approximate: the Convex Setting is the Non-Convex Setting with $A$ a known
nilpotent shift driven by $s_k$, so the honest finite-memory baseline against a
latent factor driven by $u$ is a finite-impulse-response filter \emph{of $u$},
which is what Section~\ref{sec:numerics} reports. Third, when
$\mathcal S_k$ is degenerate --- independent copies, no common factor --- the
lag coefficients $\beta_2,\ldots,\beta_p$ multiply deterministic functions of
time and the model has no dynamic content; that regime is discussed in
Section~\ref{sec:ot}.

\begin{remark}[Markovian projection]
For $p=1$ and $\mathcal H_k=\sigma(X_{t_k})$, \eqref{eq:markov-projection} is
the projection of \citet{Gyongy1986}, which yields a Markovian It\^o process
with the same one-dimensional marginal laws as $X$. For $p>1$ it is the
mimicking theorem of \citet{BrunickShreve2013} for drifts depending on path
functionals, here the lag window. Both are statements that a coarser model
reproduces the marginal flow; read in the direction we need them, they say that
the marginal flow does \emph{not} determine more than the projection, which is
the identification limit of Section~\ref{sec:concl}.
\end{remark}

\section{Gauge, canonical form and the relaxation of the spectral radius}
\label{app:gauge}

\paragraph{Identifiability and gauge.}
For any invertible $T\in\R^{d\times d}$ the parameters
$(TAT^{-1},TB,HT^{-1},Tz_0)$ generate exactly the same drift sequence, so
$\Theta$ is identified only up to similarity. The identified object is the
transfer function $G(\zeta)=H(\zeta I-A)^{-1}B$ together with the free response
$H A^k z_0$, equivalently the Markov parameters $HA^{k}B$. Following
\citet[Sec.~1.7]{HardtMaRecht2016} we remove the gauge by fixing the
controllable canonical form: $A=\comp(a)$ is the companion matrix of
$p_a(\zeta)=\zeta^{d}+a_1\zeta^{d-1}+\cdots+a_d$ and $B=e_d$, so that
$\Theta$ is parametrised by $(a,c,\alpha,\beta)\in\R^{2d+2}$ in
\eqref{eq:lds-affine}. All statements about convergence \emph{of the
parameters} below are understood in these coordinates; statements about the
drift itself are gauge free.

\paragraph{Stability.}
A predictable drift that neither explodes nor is driven by an explosive latent
factor requires $\rho(A)<1$. The set $\{a:\rho(\comp(a))<\varrho\}$ is not convex.
We use instead the convex relaxation $\mathcal B_\varrho$ of
\citet[Sec.~4]{HardtMaRecht2016}: with
\begin{equation}
\mathcal W\;=\;\bigl\{\zeta\in\mathbb C:\ \mathrm{Re}\,\zeta\ \ge\ (1+\tau_0)\,|\mathrm{Im}\,\zeta|\bigr\}
\ \cap\ \bigl\{\zeta:\ \tau_1\le \mathrm{Re}\,\zeta\le\tau_2\bigr\},
\end{equation}
a truncated wedge in the complex plane, one sets
$\mathcal B_\varrho=\{a\in\R^d:\ p_a(\zeta)/\zeta^{d}\in\mathcal W\ \ \forall |\zeta|=\varrho\}$
and calls the system \emph{$\varrho$-acquiescent} when $a\in\mathcal B_\varrho$.
Since $p_a(\zeta)/\zeta^{d}=1+\sum_{j\le d}a_j\zeta^{-j}$ is affine in $a$,
$\mathcal B_\varrho$ is an intersection of half-planes indexed by the circle,
hence convex, and $a\in\mathcal B_\varrho$ implies $\rho(\comp(a))<\varrho$
\citep[Lem.~4.2]{HardtMaRecht2016}. Discretising the circle at $J$ points turns
$\mathcal B_\varrho$ into $4J$ linear inequalities in $a$, which we add verbatim
to the mathematical programmes of Section~\ref{sec:programmes} and use as the
projection set of the gradient method of Section~\ref{sec:bcd}.

\section{Feasibility of the drift constraint}
\label{app:feasibility}

The equality-constrained version of \eqref{eq:smdisc} can be infeasible on a
finite grid even when the continuous-time problem is feasible, which is why the
$\ell^1$ slack is there. Two mechanisms operate. The first is a boundary
effect of the atomisation: under any coupling with marginal $\mu_k$ the leftmost
atom $x_1$ can only send mass to the right and the rightmost atom $x_N$ only to
the left, so a monotone affine target $b(x)=\alpha+\beta x$ with $\beta>0$ makes
the constraint at $i=N$ infeasible whatever $\theta$ is. The second is the
support of $\nu_k$: an active atom can only send mass to $\mathrm{supp}\,\nu_k$,
so the constraint at $i$ is infeasible whenever
$b(x_i)\notin[x_k^--x_i,\,x_k^+-x_i]$. The global-balance shift of
\eqref{eq:delta} addresses the first but not the second;
Remark~\ref{rem:feasibility} characterises exact feasibility.

\begin{remark}[Exact feasibility of the drift constraint]\label{rem:feasibility}
Fix $k$ and an active atom $x_i$. Under any coupling with second marginal
$\nu_k$ the conditional mean increment out of $x_i$ lies in
$[x_k^--x_i,\;x_k^+-x_i]$, so the equality~\eqref{eq:smdisc} is feasible at $i$
only if $b_k(x_i;\theta)$ belongs to that interval; and
feasibility at every active atom jointly requires in addition the tower
identity $\sum_i\mu_k[i]b_k(x_i;\theta)=\E_{\nu_k}[X]-\E_{\mu_k}[X]$ holds. The two conditions together are necessary and not sufficient. Write
$m_i:=x_i+b_k(x_i;\theta)$ for the proposed conditional means and
$\eta_k:=\sum_i\mu_k[i]\,\delta_{m_i}$ for their law. A coupling with marginals
$(\mu_k,\nu_k)$ and the prescribed row means exists if and only if
$\eta_k\cx\nu_k$ in the convex order --- $\sum_i\mu_k[i]f(m_i)\le\sum_j\nu_k[j]f(x_j)$
for every convex $f$ --- by Strassen's theorem, a coupling of $\eta_k$ and
$\nu_k$ with $\E[Y\mid M]=M$ being exactly a martingale coupling; upon the line,
given equal means, it suffices to check
$\sum_i\mu_k[i](m_i-t)_+\le\sum_j\nu_k[j](x_j-t)_+$ at the finitely many knots
$t$ of the two supports. The interval condition is the case $f=(\cdot-x^+_k)_+$
and its mirror image; the tower identity is the case $f=\pm x$; what the two miss
is the capacity of the destination marginal. Upon $(-1,0,1)$ with
$\mu=(0.45,0.10,0.45)$, $\nu=(0.05,0.90,0.05)$ and $m=(-0.9,0,0.9)$ every target
is interior and the means balance, yet
$\sum_i\mu_im_i^2=0.729>0.1=\sum_j\nu_jx_j^2$, which conditional Jensen forbids.
Both necessary conditions are used below: the interval is enforced by the clipping step
of the E-step (which is exactly assumption~(T2) of
Lemma~\ref{lem:transv}), the identity by the global-balance shift
\eqref{eq:delta}; neither, nor both together, guarantees
$\mathcal A\cap\mathcal B\cap\mathcal D\ne\emptyset$, and the local convergence
statements of Appendix~\ref{app:inherited} are conditional upon that
intersection being non-empty (Appendix~\ref{app:discussion}).
The
criterion is checked numerically upon every E-step of the reported runs in
Appendix~\ref{app:bign}.
\end{remark}

The $\ell^1$ slack of \eqref{eq:lpprimal} restores feasibility uniformly in
$\theta$, at the price of a piecewise-linear penalty in the residual
$d_k[i]-b_k(x_i;\theta)\mu_k[i]$.

\section{The algorithm in full}
\label{app:alg}

\begin{algorithm}[H]
\footnotesize
\caption{Block-coordinate identification of the predictable drift (both settings)}\label{alg:bcd}
\begin{algorithmic}[1]
\State \textbf{Input:} marginals $(\mu_k,\nu_k)_{k\in\mathcal P}$; inputs $(u_k)$ (Non-Convex Setting); cost $C$; increments $\mathrm{inc}$; order $p$ resp.\ $d$; $\eps$, $\lambda$, tolerances.
\State Initialise $\Theta$; $P_k\leftarrow$ entropic coupling of $(\mu_k,\nu_k)$; $\Gamma_k\leftarrow0$.
\For{$\ell=1,\ldots,L$}
  \For{each pair $k\in\mathcal P$} \Comment{E-step: identical in both settings}
    \State $\tilde b_k\leftarrow\mathrm{clip}\bigl(b_k(\mathrm{grid};\Theta)+\delta_k\bigr)$ by \eqref{eq:delta}; \ \ $(P_k,\Gamma_k)\leftarrow\proj_{\mathcal D}\proj_{\mathcal B}\proj_{\mathcal A}$ warm-started at $\Gamma_k$; \ \ $d_k\leftarrow(P_k\odot\mathrm{inc})\one$
  \EndFor
  \State \textbf{M-step:} Convex Setting --- $\theta\leftarrow$ weighted least squares \eqref{eq:wls} (convex, closed form); Non-Convex Setting --- $\Theta\leftarrow$ global solution of the QCQP \eqref{eq:mstep2-reduced}, or projected gradient on $\mathcal B_\varrho$.
  \State Test convergence.
\EndFor
\State \Return $\Theta$, $\{P_k\}$.
\end{algorithmic}
\end{algorithm}

\section{Implementation of the E-step}
\label{app:impl}

\subsubsection*{Reformulation as a discrete martingale condition.}
With centred increment $h^{(k)}_{ij}=(x_j-x_i)-b_k(x_i;\Theta)$, the
predictable-compensator constraint becomes the row-wise discrete martingale
condition $\sum_j P_k[i,j]h^{(k)}_{ij}=0$: the empirical statement that
$r_{k+1}-r_k-b_k(r_k;\Theta)$ is, conditionally on the discrete past, a
martingale increment under $P_k$. The Kullback--Leibler projection of the
entropic kernel onto this affine slice is parameterised by one Lagrange
multiplier $\Gamma_k[i]$ per active row,
\begin{equation}\label{eq:dualcouple}
P_k[i,j] \;=\;
u[i]\,v[j]\,\exp\!\Bigl(-\tfrac{1}{\eps}\bigl(C_{ij}+\Gamma_k[i]\,h^{(k)}_{ij}\bigr)\Bigr),
\end{equation}
where $u,v$ are the Sinkhorn scalings enforcing the marginal constraints.
Equation~\eqref{eq:dualcouple} is the closed form of the saddle point of
$\mathcal L_k$ and coincides with the Radon--Nikodym density of the entropic
Schr\"odinger bridge with prescribed first conditional moment of the increment.

\subsubsection*{Global-balance correction.}
The candidate drift need not satisfy the compatibility identity
$\sum_i\mu_k[i]b_k(x_i;\Theta)=\E_{\nu_k}[X]-\E_{\mu_k}[X]$ that the tower
property forces on any coupling with marginals $(\mu_k,\nu_k)$; without the
shift $\delta_k$ of \eqref{eq:delta} the drift manifold $\mathcal D$ may miss
the marginal slice $\mathcal A\cap\mathcal B$ altogether and the alternating
projection oscillates. The clipping into $[x_k^--x_i,\,x_k^+-x_i]$ is what
enforces assumption~(T2) of Lemma~\ref{lem:transv}, and in the reference
implementation it is applied with a small relative margin so that $\Gamma$ stays
finite.

\subsubsection*{Sweep budget and the residual we monitor.}
Any plan with the prescribed marginals satisfies
$\one^\top d_k=\E_{\nu_k}[X]-\E_{\mu_k}[X]=\gamma_k$ identically, whatever drift
target it is projected against; the deviation
$\mathrm{res}_k:=|\one^\top d_k-\gamma_k|$ is therefore a pure measure of how far the
Sinkhorn scalings are from enforcing $P_k\one=\mu_k$, $P_k^\top\one=\nu_k$, and
it is exactly the quantity the M-step consumes
(Proposition~\ref{prop:fixedpoint1}). It is also the diagnostic that matters,
because a drift target outside the achievable set makes
$\mathcal A\cap\mathcal B\cap\mathcal D$ empty and the alternating projection
then cannot enforce the marginals either.

The cyclic projection is run for at most $2000$ sweeps with an early exit at a
KKT residual of $10^{-10}$. Well-conditioned instances exit long before the cap
--- at $N=14$, $\eps=0.05$ the residual reaches $10^{-10}$ in $43$ sweeps ---
but the budget matters in the stiff regime $\eps\approx\eps^\star$ on a fine
grid, where the entropic kernel is as narrow as the transition kernel it models.
There $\max_k \mathrm{res}_k/\|\gamma\|_\infty$ falls as $42.5\%$, $12.0\%$, $3.3\%$,
$1.4\%$, $0.4\%$ at $60$, $200$, $500$, $1000$, $2000$ sweeps ($N=100$,
$\eps=\eps^\star$), so $2000$ is the first budget that brings it below the $1\%$
we require. An $\eps$-scaling continuation, warm-starting $\Gamma$ from a larger
$\eps$, would reach the same accuracy more cheaply.

\subsubsection*{Log-domain Sinkhorn and Newton update of $\Gamma$.}
Inside the projection (\texttt{equality\_drift\_projection} in \texttt{code/core.py}), each
outer iteration interleaves
\begin{itemize}[topsep=2pt,itemsep=1pt]
\item \emph{Sinkhorn-style row/column rescalings} (in log-space, with
soft-marginal exponent $\lambda$):
\[
\log u \leftarrow (1-\eta)\log u+\eta\bigl(\log\mu - \log\textstyle\sum_j e^{\log K_{ij}+\log v_j}\bigr),
\qquad \eta=\tfrac{\lambda}{1+\lambda},
\]
\[
\log v \leftarrow (1-\eta)\log v+\eta\bigl(\log\nu - \log\textstyle\sum_i e^{\log K_{ij}+\log u_i}\bigr),
\]
the retained term $(1-\eta)\log u$ being what makes this an under-relaxation of
the exact row projection rather than a soft-marginal update: dropping it would
leave a fixed point with $u_i(Kv)_i=\mu_i^\eta(Kv)_i^{1-\eta}\ne\mu_i$, an
unbalanced-transport iteration with a different feasible set. The reference
implementation retains it.
where $\log K_{ij}=-C_{ij}/\eps - \Gamma[i]\,h_{ij}/\eps$.
\item \emph{Row-wise Newton update of $\Gamma$} that solves
$\sum_j h_{ij}\,P[i,j]=0$ in one scalar unknown $\Delta\Gamma_i$ per
row by Newton's method on the smooth, monotone, $1$-D residual:
\[
\Delta\Gamma_i
\;=\;\eps\,\frac{\sum_j h_{ij}\,e^{\,L_{ij}}}{\sum_j h_{ij}^2\,e^{\,L_{ij}}},
\qquad L_{ij}=\log u_i+\log K^{(0)}_{ij}-\tfrac{\Gamma_i h_{ij}}{\eps}+\log v_j,
\]
(\texttt{code/core.py}). The denominator is the
weighted second moment of $h$, strictly positive whenever the row
support is non-degenerate; Newton therefore converges quadratically in
$\sim 5$ steps.
\end{itemize}

\section{The innovation weight \texorpdfstring{$\lambda_w$}{lambda-w}}
\label{app:lambdaw}

\paragraph{The one-parameter family, and the initial condition.}
The weight $\lambda_w$ is a matter of modelling and not a constant to be tuned; it
places \eqref{eq:qcqp} within a one-parameter family whose two endpoints are the
cases treated here. At $\lambda_w=\infty$, that is $w\equiv0$, one has the
deterministic system \eqref{eq:lds-affine}, whose complexity at fixed $d$ remains
open; at $\lambda_w=0$, Theorem~\ref{thm:nphard} asserts NP-hardness. The interior
$\lambda_w\in(0,\infty)$ is the filtering model, and is better conditioned than
either extreme: the objective is then strongly convex in $z$ for given
$(a,c,\alpha,\beta)$, so that a Kalman smoother eliminates the state exactly and
leaves a non-convex problem in $2d+2$ variables with a $C^1$ value function, which
supplies the uniqueness half of Assumption~\ref{ass:sosc2}. A prior
$z_0\sim N(m_0,P_0)$ --- so that $\mathcal F_0$ is not trivial, as it cannot be
when the factor possesses a distribution rather than a value --- adds a Tikhonov
term and makes the estimator a posterior mode, under which even a noiseless state
yields a non-degenerate problem of learning. In the
experiments $z_0=0$.

For $\lambda_w\in(0,\infty)$ the reduced objective \eqref{eq:mstep2-reduced}
acquires the term $\lambda_w\sum_k\|z_{k+1}-\comp(a)z_k-e_du_k\|^2$ and becomes,
for fixed $(a,c,\alpha,\beta)$, a strictly convex quadratic in
$z=(z_0,\dots,z_K)$ with block-tridiagonal Hessian. Its minimiser is the
Rauch--Tung--Striebel smoother of the linear-Gaussian model with state matrix
$\comp(a)$, observation $\gamma_k-\alpha-\beta\bar x_k=c^\top z_k$, process
covariance $\lambda_w^{-1}I$ and observation variance $1$; it costs
$\mathcal O(Kd^3)$ and is exact. Eliminating $z$ gives the prediction-error
objective
\begin{align*}
J(a,c,\alpha,\beta)\;=\;\bigl(Q_2\beta^2+Q_1\beta+Q_0\bigr)
\;+\;\min_{z}\Bigl\{&\textstyle\sum_k(\alpha+\beta\bar x_k+c^\top z_k-\gamma_k)^2\\[-2pt]
&+\lambda_w\textstyle\sum_k\|z_{k+1}-\comp(a)z_k-e_du_k\|^2\Bigr\},
\end{align*}
which by Danskin's theorem is continuously differentiable in
$(a,c,\alpha,\beta)$ with gradient evaluated at the smoothed state, and whose
inner minimiser is unique --- the half of Assumption~\ref{ass:sosc2} that must
otherwise be assumed. Three regimes are then worth separating in an empirical
study: $\lambda_w\to\infty$ recovers the deterministic recursion as a hard
constraint; $\lambda_w\to0$ makes $z$ free and recovers the rank-constrained fit
of Theorem~\ref{thm:nphard}; and intermediate $\lambda_w$ trades the two, with
the bilinearity in $(a,z)$ surviving throughout, so global solution still
requires spatial branch and bound. Whether the interior is easier than the
$\lambda_w=0$ endpoint --- the hardness proof needs the disturbances free --- is
open, and is the natural next complexity question.

\section{The M-step of the Non-Convex Setting: reduction, and solvers}
\label{app:mstep2solvers}
\label{app:mstep2}

\paragraph{Reduction to per-pair statistics.}
Write $y_k[i]=d_k[i]/\mu_k[i]$, $\bar x_k=\sum_i\mu_k[i]x_i$,
$\bar y_k=\sum_i\mu_k[i]y_k[i]=\one^\top d_k$ and $s_k=c^\top z_k$. Expanding
the square in \eqref{eq:mstep2} and using $\sum_i\mu_k[i]=1$,
\begin{align*}
&\sum_i\mu_k[i]\bigl(y_k[i]-\alpha-\beta x_i-s_k\bigr)^2\\
&\qquad=\bigl(\alpha+\beta\bar x_k+s_k-\bar y_k\bigr)^2
+\underbrace{\textstyle\sum_i\mu_k[i]\bigl(y_k[i]-\beta x_i\bigr)^2
-\bigl(\bar y_k-\beta\bar x_k\bigr)^2}_{\text{free of }\alpha,s_k},
\end{align*}
and summing over $k$ gives \eqref{eq:mstep2-reduced} with
\[
Q_2=\sum_k\Bigl(\sum_i\mu_k[i]x_i^2-\bar x_k^2\Bigr),\qquad
Q_1=-2\sum_k\Bigl(\sum_i\mu_k[i]y_k[i]x_i-\bar y_k\bar x_k\Bigr),
\]
\[
Q_0=\sum_k\Bigl(\sum_i\mu_k[i]y_k[i]^2-\bar y_k^2\Bigr).
\]
The three constants are the within-marginal variances and covariance of the
grid and of the empirical predictable density; they depend on the couplings but
not on $\Theta$, and only $Q_2,Q_1$ affect the minimiser, through $\beta$
alone. The resulting programme has $2d+2+(K+1)d+K$ variables and $K(d+1)$
bilinear equalities, independently of $N$.

\paragraph{(a) Global branch and bound.}
The programme handed to Gurobi is
\[
\min_{a,c,\alpha,\beta,z,s}\ \sum_{k}w_k\bigl(\alpha+\beta\bar x_k+s_k-\bar y_k\bigr)^2
+Q_2\beta^2+Q_1\beta
\ \ \text{s.t.}
\begin{cases}
z_{k+1,j}=z_{k,j+1}, & j<d,\\
z_{k+1,d}=-\sum_{j}a_j z_{k,d-j}+u_k,\\
s_k=\sum_j c_j z_{k,j},\qquad z_0=0,\\
Ga\le h\ \ (4J\text{ rows}),
\end{cases}
\]
with box bounds $|a_j|,|c_j|,|\alpha|,|\beta|\le5$ and $|z_{k,j}|\le25$ (the
bounds matter: spatial branch and bound needs finite ranges to build McCormick
envelopes for the products $a_jz_{k,d-j}$ and $c_jz_{k,j}$). Only the two
families of bilinear equalities are non-convex, $K(d+1)$ of them; we set
\texttt{NonConvex=2} and \texttt{MIPGap}$=10^{-8}$. For $d=2$, $K=24$ this is
$80$ variables and $74$ constraints and closes in one to two seconds.

\paragraph{(b) Projected gradient.}
Gradients of the loss with respect to $(a,c,\alpha,\beta)$ are obtained by
back-propagation through the state recursion: the adjoint satisfies
$g_{z_k}=2\varpi_k\varrho_kc+A^\top g_{z_{k+1}}$, with $\varpi_k$ the weight of
pair $k$ (unity throughout, since $\bar y_k=\one^\top d_k$ already carries the
mass) and $\varrho_k$ the scalar residual --- neither is the state innovation
$w_k$ of \eqref{eq:qcqp} nor the sample $r_k$ of Section~\ref{sec:setting} --- and
$\partial J/\partial a$ is the
reversed last row of $\sum_kg_{z_{k+1}}z_k^\top$ with a sign change. We use a
decaying step $\eta_t=\eta_0/(1+t/500)$ and, after each step, the projection
onto $\mathcal B_\varrho$ described next. The iterate with the smallest full
loss is returned, so a diverging run (which happens without the projection on
non-acquiescent systems) reports its best point rather than an overflow.

\paragraph{Projection onto $\mathcal B_\varrho$.}
Sampling $|\zeta|=\varrho$ at $J$ points, $p_a(\zeta)/\zeta^d=1+\sum_ja_j\varrho^{-j}e^{-\mathrm ij\omega}$
gives $\mathrm{Re}=1+\sum_ja_j\varrho^{-j}\cos(j\omega)$ and
$\mathrm{Im}=-\sum_ja_j\varrho^{-j}\sin(j\omega)$, both affine in $a$, so
$\mathcal W$ of \eqref{eq:pacman} becomes the four families of half-spaces
$\mathrm{Re}\mp(1+\tau_0)\mathrm{Im}\ge0$, $\mathrm{Re}\ge\tau_1$,
$\mathrm{Re}\le\tau_2$. We use $\varrho=0.98$, $\tau_0=0.15$, $\tau_1=0.2$,
$\tau_2=3$ and $J=32$. The projection is Dykstra's algorithm on those
half-spaces, preceded by a feasibility test that makes the common case free;
inside the gradient loop we use the vectorised simultaneous (Cimmino) variant,
which returns a point of $\mathcal B_\varrho$ close to the argument at a fraction
of the cost.

\paragraph{(c) Spectral initialisation.}
The first $L$ Markov parameters of $u\mapsto\bar y$ are estimated by the
least-squares finite-impulse-response fit $\min_m\|\Phi_um-\bar y\|^2$ with
$\Phi_u$ the lower-triangular Toeplitz matrix of $u$; the Hankel matrix
$H_{ij}=m_{i+j}$ is formed, truncated to rank $d$ by its singular value
decomposition, and split as $H_{\!\mathcal H}=\mathcal O\mathcal R$ with
$\mathcal O=U_d\Sigma_d^{1/2}$, $\mathcal R=\Sigma_d^{1/2}V_d^\top$; then
$A=\mathcal O^\dagger H^{\uparrow}\mathcal R^\dagger$, $B=\mathcal R_{:,1}$,
$H=\mathcal O_{1,:}$, where $H^{\uparrow}$ is the shifted Hankel matrix. The
characteristic polynomial of $A$ gives $a$, which is then projected onto
$\mathcal B_\varrho$ before being used as a warm start.

\section{The inherited rates, in full}
\label{app:inherited}

\subsection*{The E-step, in both settings}

Nothing here depends on the drift model: the E-step sees $\Theta$ only through
the vector $b_k(\mathrm{grid};\Theta)\in\R^N$. In the Kullback--Leibler geometry
it is a cyclic Bregman projection of one coupling onto the two marginal sets and
the drift set $\mathcal D$, which are affine in the probability
coordinates $P$ --- not in $L=\log P$, where a row constraint reads
$\sum_je^{L_{ij}}=\mu_i$ --- hence smooth embedded submanifolds of the positive
cone (Lemma~\ref{lem:smooth}),  and pairwise transversal at an interior
intersection point provided \textup{(T1)} the marginals are strictly positive on
their support and \textup{(T2)} $\tilde b[i]\in(x^-_k-x_i,\,x^+_k-x_i)$ in every
active row, which the clipping enforces.

\begin{theorem}[Local linear convergence of the E-step]\label{thm:estep}
Let $\bar P\in\mathcal S:=\mathcal A\cap\mathcal B\cap\mathcal D$ be an interior
solution at which \textup{(T1)--(T2)} hold. There are $\mathcal N\ni\bar P$ and
$c_E\in(0,1)$, determined by the $W$-orthogonal projections onto the
tangent spaces of the three sets at $\bar P$, where $W=\diag(1/\bar P_{ij})$ is
the Hessian of the entropy, and for every $c'\in(c_E,1)$ a constant $L$, with
$\dist(P^{(\ell)},\mathcal S)\le L\,(c')^{\ell}\dist(P^{(0)},\mathcal S)$
for $P^{(0)}\in\mathcal N$.
\end{theorem}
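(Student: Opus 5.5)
The approach is to read the E-step as cyclic Bregman (Kullback--Leibler) projection onto three smooth submanifolds and to import the local linear convergence theory for alternating and cyclic projections onto transversally intersecting manifolds (Lewis--Malick; Lewis--Luke--Malick; Andersson--Carlsson). We transfer it from the Euclidean to the Bregman setting by a local change of metric.

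\emph{Step 1: geometry.} Work in the open positive cone $\mathcal U=\{P:\,P_{ij}>0\}$ restricted to the support pattern fixed by (T1). There $\mathcal A=\{P\one=\mu_k\}$, $\mathcal B=\{P^\top\one=\nu_k\}$ and $\mathcal D=\{\sum_jP_{ij}h_{ij}=0,\ i\ \text{active}\}$ are affine slices of $\mathcal U$, hence smooth embedded submanifolds. This is Lemma~\ref{lem:smooth}, whose linear-algebra content is only that the defining maps have full rank.

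\emph{Step 2: transversality at $\bar P$.} We show that the normal spaces $N_{\mathcal A},N_{\mathcal B},N_{\mathcal D}$ (row vectors, column vectors, and rows times $h_{i\cdot}$) are linearly independent, modulo the one known redundancy $\one^\top\mu=\one^\top\nu$, which both $\mathcal A$ and $\mathcal B$ share. Concretely, suppose $f_i+g_j+\gamma_ih_{ij}=0$ on the support. Using (T2), each active row of $h$ takes both signs on $\mathrm{supp}\,\nu_k$, so $h_{i\cdot}$ is not constant along the row. Comparing two rows then forces $\gamma_i=0$ and $f,g$ constant. This is Lemma~\ref{lem:transv}, and it is where (T1)--(T2) enter.

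\emph{Step 3: passage to the Bregman iteration.} Near $\bar P$ the KL projection onto each affine set agrees to second order with the $W$-orthogonal projection, where $W=\nabla^2(-\mathsf H)(\bar P)=\diag(1/\bar P_{ij})$. Each Bregman projection is therefore a $C^1$ map whose derivative at $\bar P$ is the $W$-orthogonal projector $\Pi_{\mathcal X}$ onto $T_{\bar P}\mathcal X$. The composite map $F=\proj_{\mathcal D}\proj_{\mathcal B}\proj_{\mathcal A}$ fixes $\mathcal S$ and has derivative $\Pi_{\mathcal D}\Pi_{\mathcal B}\Pi_{\mathcal A}$.

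By transversality, $T\mathcal S=T\mathcal A\cap T\mathcal B\cap T\mathcal D$. On the $W$-orthogonal complement of $T\mathcal S$ the norm of $\Pi_{\mathcal D}\Pi_{\mathcal B}\Pi_{\mathcal A}$ is some $c_E<1$, by the standard Friedrichs-angle argument for products of orthogonal projectors with trivial common intersection on that complement. Fix $c'>c_E$. A Taylor expansion of $F$ about the nearest point of $\mathcal S$, which is $C^1$ because $\mathcal S$ is a manifold, gives $\dist(F(P),\mathcal S)\le c'\dist(P,\mathcal S)$ on a small neighbourhood $\mathcal N$. We also need the iterates to stay in a slightly larger neighbourhood, which follows since the displacements form a geometric series. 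Together these yield the stated bound, with the constant $L$ absorbing the norm equivalence between the $W$-norm and the Euclidean norm.

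\emph{Main obstacle.} The hard point is Step 3's claim that the Bregman projection is $C^1$ with derivative equal to the $W$-orthogonal projector. The inexact realisation by Sinkhorn scalings and a Newton step on $\Gamma$ complicates this further. I would first prove it by the implicit function theorem applied to the dual equations: on $\mathcal D$ the projection is $P\odot\exp(-\Gamma_ih_{ij}/\eps)$ with $\Gamma$ solving a scalar equation per row, whose derivative is nonzero by (T2). On $\mathcal A$ and $\mathcal B$ the projection is an exact row or column rescaling. I would then treat the under-relaxed Sinkhorn updates as a perturbation with the same fixed point and derivative.
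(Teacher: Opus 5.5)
Your Steps 1 and 3 are, in substance, the paper's proof. The derivative of each KL projection at $\bar P$ is the $W$-orthogonal projector onto the tangent space; the paper reads this directly off the KKT condition $\log P-\log Q\in T_{\mathcal S}^{\perp}$ rather than off row-wise dual equations, but the content is the same. The product of the three projectors contracts strictly off $T_{\mathcal A}\cap T_{\mathcal B}\cap T_{\mathcal D}$. Finally, a Taylor expansion about the nearest point of $\mathcal S$, together with the geometric-series argument, keeps the iterates in $\mathcal N$.

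Step 2 is false as stated, and you should delete it rather than repair it. With $\mathcal D$ imposed on every active row, as in Lemma~\ref{lem:smooth} and the implementation, take $\gamma_i\equiv1$, $f_i=x_i+\tilde b_i$ and $g_j=-x_j$. Then $f_i+g_j+\gamma_ih_{ij}=0$ identically. Comparing two rows forces $\gamma_i=\gamma_{i'}$, not $\gamma_i=0$: \textup{(T2)} gives only non-constancy of $h_{i\cdot}$. So besides the mass redundancy there is a second one, the tower property, and the three normal spaces are not independent. This is Remark~\ref{rem:redundant}; Lemma~\ref{lem:transv} holds only after one row of $\mathcal D$ is dropped.

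Nothing in your argument actually needs Step 2. Because $\mathcal A,\mathcal B,\mathcal D$ are affine in $P$, $T\mathcal S=T_{\mathcal A}\cap T_{\mathcal B}\cap T_{\mathcal D}$ at any interior point of $\mathcal S$ with no transversality at all. As your own Friedrichs-angle sentence says, a finite product of orthogonal projectors has norm $<1$ on the complement of the intersection of their ranges, whatever the angles. The paper's proof accordingly uses neither Lemma~\ref{lem:transv} nor the Lewis--Luke--Malick theorem. Drop the ``transfer by change of metric'' framing as well: equivalent divergences need not have the same projections, and your Step 3 does not in fact rely on it.

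\textup{(T2)} enters legitimately only where you place it at the end, making the scalar dual equation in $\Gamma_i$ non-degenerate. Your last sentence is also wrong: the under-relaxed marginal step does not share the derivative of the exact one. Its derivative is $\tfrac{\lambda}{1+\lambda}\Pi^W_{T_{\mathcal A}}+\tfrac{1}{1+\lambda}I$. In any case, the theorem concerns only the exact cyclic projection.
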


Conditions \textup{(T1)--(T2)} ensure the local regularity used in the
proof but do not themselves imply $\mathcal S\ne\emptyset$; joint feasibility is
characterised in Remark~\ref{rem:feasibility}.
Appendix~\ref{app:llm} gives the geometry, and Appendix~\ref{app:proofs}
the argument, which is carried out in the Kullback--Leibler geometry at $\bar P$
and not by transfer from the Euclidean theorem of
\citet[Thm.~5.2]{LewisLukeMalick2009}. The theorem concerns the exact
cyclic projection, which is what the implementation performs, without
acceleration.

\subsection*{The outer loop}

Let $F$ be one E-step followed by one M-step, $\bar\Theta$ a fixed point. In the Convex Setting the M-step is the closed-form weighted least squares
\eqref{eq:wls}, so
$F$ is $C^1$ wherever the E-step is --- which (T1)--(T2) guarantee --- and
Ostrowski's theorem gives local $Q$-linear convergence with
$\kappa\le\kappa_0+\mathcal O(c_E^{L_{\mathrm{inner}}})+\mathcal O(\eps)$ towards the
fixed point $\bar\theta_L$ of the inexact map, which lies within the same distance
of $\bar\theta$, where
$\kappa_0$ is the contraction of the exact block map, computed in closed form by
Proposition~\ref{prop:nominal}, and the second term is the inexactness of
$L_{\mathrm{inner}}$ inner sweeps (Theorem~\ref{thm:outer},
Appendix~\ref{app:outer1}). Sweeping harder drives that term down but leaves
$\kappa_0$, which therefore dominates in practice.

\section{Alternating projection: the geometry behind the E-step rate}
\label{app:llm}

\subsection{Geometric setting}

Throughout this section $\theta$ is fixed, the pair index $k$ is suppressed, and we work in the relative interior of the coupling simplex
$\Delta_{N\times N}=\{P\in\R^{N\times N}_{\ge 0}:\one^\top P\one=1\}$.
Define
\begin{align}
\mathcal{A} &= \bigl\{P\in\Delta_{N\times N}\,:\,P\one=\mu\bigr\},\label{eq:A}\\
\mathcal{B} &= \bigl\{P\in\Delta_{N\times N}\,:\,P^\top\one=\nu\bigr\},\label{eq:B}\\
\mathcal{D} &= \Bigl\{P\in\Delta_{N\times N}\,:\,\textstyle\sum_j P[i,j]\,h_{ij}=0\;\forall i\;\text{with}\;\mu_i>0\Bigr\},\label{eq:D}
\end{align}
where $h_{ij}=(x_j-x_i)-\tilde b[i]$ is the centred increment with the
clipped target drift $\tilde b$.

The three constraint sets are affine slices \emph{in probability coordinates}:
each of $P\one=\mu$, $P^\top\one=\nu$ and $\sum_jP[i,j]h_{ij}=0$ is linear in
$P$. They are not affine in $\log P$, a row constraint reading
$\sum_j e^{L_{ij}}=\mu_i$ there, and they should not be described as e-flat
submanifolds in $L$-space. What the exponential family $P=\exp(L)/Z$ with
$L=\log K^{(0)}+\alpha\one^\top+\one\beta^\top-\Gamma h$ does describe is the set
of \emph{dual} potentials traced out by the projections; the Sinkhorn and Newton
updates move within that family, and each is the exact KL projection onto one of
the primal affine slices.

\begin{lemma}[Smoothness of the constraint manifolds]\label{lem:smooth}
Each of $\mathcal{A}$, $\mathcal{B}$, $\mathcal{D}$ is a smooth
embedded submanifold of $\Delta_{N\times N}\cap\mathrm{int}\,\R^{N\times N}_{>0}$
of codimension $N-1$, $N-1$ and $N_+$ respectively, where
$N_+=|\{i:\mu_i>0\}|$; the codimensions are those of the three sets
taken separately, and $\mathcal D$ is here taken upon all $N_+$ active rows, as
the implementation does. Their tangent spaces at any interior point are
the linear subspaces of $\R^{N\times N}$ defined by the corresponding
row-, column-, or row-drift constraints, respectively.
\end{lemma}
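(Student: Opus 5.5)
Each of the three sets is the intersection of the open set $U:=\Delta_{N\times N}\cap\mathrm{int}\,\R^{N\times N}_{>0}$ with an affine subspace of $\R^{N\times N}$. I would therefore prove the lemma by the regular value theorem, in its affine form: a linear map of constant rank. Work inside the affine hyperplane $\{\one^\top P\one=1\}$, which has tangent space $T_0=\{V:\one^\top V\one=0\}$. The open subset $U$ of this hyperplane is a smooth manifold of dimension $N^2-1$.

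For each set I would write it as $U\cap\{F(P)=c\}$ with $F$ linear, and compute the rank of $F$ restricted to $T_0$.

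\textbf{Set $\mathcal A$.} Here $F(P)=P\one\in\R^N$. On $T_0$ the image of $F$ is $\{v:\one^\top v=0\}$, because $\one^\top P\one$ is already fixed at $1$. Hence the rank is $N-1$. Surjectivity onto this subspace is immediate: take $V=ve_1^\top$.

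\textbf{Set $\mathcal B$.} The argument is symmetric, with $F(P)=P^\top\one$, giving codimension $N-1$.

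\textbf{Set $\mathcal D$.} Here $F(P)_i=\sum_jP_{ij}h_{ij}$ for $i$ in the active set $I_+$. I must show that $V\mapsto(\sum_jV_{ij}h_{ij})_{i\in I_+}$ maps $T_0$ onto $\R^{N_+}$. By (T2), in each active row $h_{i\cdot}$ takes both signs. So I can choose $j_1,j_2$ with $h_{ij_1}\neq h_{ij_2}$. The perturbation $V=e_i(e_{j_1}-e_{j_2})^\top$ lies in $T_0$ and produces $e_i$ times the nonzero scalar $h_{ij_1}-h_{ij_2}$. The rows are independent, so the rank is $N_+$.

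In each case the level set of a constant-rank linear map, intersected with the open set $U$, is a relatively open piece of an affine subspace of $T_0$ of the stated codimension. It is therefore a smooth embedded submanifold whenever it is nonempty. Its tangent space at every point is $\ker F\cap T_0$, which is exactly the linear subspace cut out by the corresponding homogeneous row, column or row-drift constraints. The normalisation $\one^\top V\one=0$ is automatic in the first two cases. In the drift case it is an extra constraint, but it is transverse to the drift constraints, as the perturbations above show. So the codimension count is unaffected.

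The only step with any content is the rank of $\mathcal D$. There one needs each active row of $h$ to be non-constant, and this is what (T2) provides. I would state explicitly that this is where (T2) enters: if $\tilde b[i]$ sat at an endpoint of the reachable range while the support of $\nu$ was a single atom, row $i$ could degenerate. I would also note that the codimensions are those of each set taken separately, as the lemma says; their joint intersection is a separate matter (transversality, Lemma~\ref{lem:transv}).
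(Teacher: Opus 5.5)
Your proof is correct and follows the paper's route: each set is the level set of an affine map on the open simplex, and its codimension is the rank of the linear part restricted to $T_0=\{V:\one^\top V\one=0\}$. You go further than the paper by actually computing those ranks and by tracking the normalisation in the tangent space of $\mathcal D$, which the paper's one-line appeal to the constant-rank theorem leaves implicit.

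One correction: \textup{(T2)} is not needed for the rank of $\mathcal D$. Since $h_{ij_1}-h_{ij_2}=x_{j_1}-x_{j_2}\neq0$ for any $j_1\neq j_2$, whatever $\tilde b$ may be, no row can degenerate on the full positive cone, and the single-atom scenario in your last paragraph does not arise there. \textup{(T2)} bears only on whether $\mathcal D$ meets $\mathrm{int}\,\R^{N\times N}_{>0}$ at all, just as $\mu,\nu>0$ does for $\mathcal A$ and $\mathcal B$.
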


\emph{Proof in Appendix~\ref{app:proofs}.}

\subsection{Sinkhorn--Dykstra as alternating projection}

In KL geometry, one Sinkhorn $u$-update is the exact KL projection of the current coupling onto $\mathcal{A}$; one $v$-update is the exact projection onto $\mathcal{B}$; and one Newton $\Gamma$-update is the exact KL projection onto $\mathcal{D}$. The row-marginalisation of \eqref{eq:dualcouple} in the dual coordinates is monotone in $\Gamma_i$, and the Newton step $\Delta\Gamma_i=\eps\,f_i/f''_i$ is the Bregman projection step. The combined inner iteration is therefore the cyclic alternating Bregman projection
\begin{equation}\label{eq:apit}
P^{(\ell+1)}\;=\;\proj_{\mathcal{D}}\bigl(\proj_{\mathcal{B}}\bigl(\proj_{\mathcal{A}}(P^{(\ell)})\bigr)\bigr),
\end{equation}
where $\proj_{\mathcal{S}}$ denotes KL projection onto $\mathcal{S}$. The fixed points are $\mathcal{A}\cap\mathcal{B}\cap\mathcal{D}$, i.e.\ entropic transport plans with prescribed marginals and prescribed predictable-compensator increment -- the drift-constrained Schr\"odinger bridges of Section~\ref{sec:bcd}.

\begin{remark}
Strictly speaking the soft-marginal exponent $\lambda/(1+\lambda)<1$
gives an \emph{averaged} (under-relaxed) projection
$\tfrac{\lambda}{1+\lambda}\proj_{\mathcal{S}}+\tfrac{1}{1+\lambda}I$.
Since averaged projections inherit the rate of Theorem~\ref{thm:estep} (with a softer
constant: the derivative of the under-relaxed step is
$\tfrac{\lambda}{1+\lambda}\Pi^W_{T}+\tfrac1{1+\lambda}I$, which still has norm
one only upon $T$), we treat $\lambda$ as the exact-projection limit
$\lambda\to\infty$ in what follows and absorb the extra factor into
the contraction constant.
\end{remark}

\subsection{Super-regularity and transversality}

\begin{definition}[\protect{\cite[Def.~4.3]{LewisLukeMalick2009}}]
A closed set $S\subset\R^n$ is \emph{super-regular} at
$\bar x\in S$ if for every $\eta>0$ there exists $\delta>0$ such
that
\[
\inner{y}{z-x}\;\le\;\eta\,\|y\|\,\|z-x\|
\]
for all $x,z\in S\cap B(\bar x,\delta)$ and all
$y\in N^p_S(x)$, where $N^p_S$ denotes the proximal normal cone.
Equivalently, the angle between proximal normals at $x$ and chords
$(z-x)$ is uniformly close to $\pi/2$ locally.
\end{definition}

Smooth submanifolds are super-regular at every point: this is
immediate from the definition because tangent vectors are normal to
normal vectors. By Lemma~\ref{lem:smooth}, each of $\mathcal{A}$,
$\mathcal{B}$, $\mathcal{D}$ is super-regular at any
$\bar P\in\mathcal{A}\cap\mathcal{B}\cap\mathcal{D}$.

\begin{definition}[Transversality]
Two closed sets $A,B\subset\R^n$ are \emph{strongly regular} (or
\emph{transversal}) at $\bar x\in A\cap B$ if
$N_A(\bar x)\cap(-N_B(\bar x))=\{0\}$. The
\emph{constant of regularity} is
\[
c_{AB}\;=\;\max\bigl\{\inner{u}{-v}:\,u\in N_A(\bar x),v\in N_B(\bar x),\|u\|=\|v\|=1\bigr\}\;<\;1.
\]
\end{definition}

For smooth manifolds, $N_S(\bar x)$ is the orthogonal complement
of $T_S(\bar x)$; transversality reduces to
$T_A(\bar x)+T_B(\bar x)=\R^n$.

\begin{lemma}[Transversality of the OT-drift system]\label{lem:transv}
Let $\bar P\in\mathcal{A}\cap\mathcal{B}\cap\mathcal{D}$ be an
interior coupling (i.e.\ $\bar P_{ij}>0$ for all $i,j$ with
$\mu_i,\nu_j>0$). Suppose further that
\begin{enumerate}[label=\textup{(T\arabic*)},topsep=2pt,itemsep=1pt,leftmargin=3em]
\item all marginals are strictly positive, $\mu_i>0$ and $\nu_j>0$;
\item the centred-increment matrix $h$ has, for every active row $i$,
both positive and negative entries (i.e.\
$\tilde b[i]\in(x^-_k-x_i,\,x^+_k-x_i)$, which is guaranteed by the
clipping step in the E-step).
\end{enumerate}
Here, for the purpose of this lemma only, $\mathcal D$ is taken on any $N_+-1$ of the active rows, the
remaining one being implied on $\mathcal A\cap\mathcal B\cap\mathcal D$ by the
tower property once the balance identity holds; the codimension in Lemma~\ref{lem:smooth} is then $N_+-1$.
Remark~\ref{rem:redundant} explains why the implementation may, and
should, keep all $N_+$ rows. Then the pairwise tangent sums
$T_{\mathcal{A}}(\bar P)+T_{\mathcal{B}}(\bar P)$,
$T_{\mathcal{A}}(\bar P)+T_{\mathcal{D}}(\bar P)$ and
$T_{\mathcal{B}}(\bar P)+T_{\mathcal{D}}(\bar P)$ all equal the
tangent space of $\Delta_{N\times N}$ at $\bar P$. Consequently the
three manifolds are pairwise transversal at $\bar P$ and the joint
intersection regularity constant
\[
c_{\mathcal{A}\mathcal{B}\mathcal{D}}
\;=\;\max_{\substack{u\in N_{\mathcal{A}\mathcal{B}}(\bar P)\\v\in N_{\mathcal{D}}(\bar P)\\\|u\|=\|v\|=1}}\inner{u}{-v}
\;<\;1,
\]
where $N_{\mathcal{A}\mathcal{B}}=N_\mathcal{A}+N_\mathcal{B}$ is the
normal cone to $\mathcal{A}\cap\mathcal{B}$ (which is itself a smooth
manifold of codimension $2(N-1)$).
\end{lemma}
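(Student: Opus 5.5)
The plan is to work in the affine hull of the coupling simplex, whose tangent space is $V=\{E\in\R^{N\times N}:\one^\top E\one=0\}$. By Lemma~\ref{lem:smooth}, the tangent spaces at $\bar P$ are
\[
T_{\mathcal A}=\{E\in V:E\one=0\},\qquad T_{\mathcal B}=\{E\in V:E^\top\one=0\},\qquad T_{\mathcal D}=\Bigl\{E\in V:\textstyle\sum_jE_{ij}h_{ij}=0,\ i\in R\Bigr\},
\]
where $R$ is the chosen set of $N_+-1$ active rows and $i_0$ is the omitted row. A sum of tangent spaces is all of $V$ if and only if the corresponding normal spaces, taken in $\R^{N\times N}$ modulo the normal $\one\one^\top$ of the simplex, meet only trivially. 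This criterion does not depend on whether one uses the Euclidean inner product or the entropy metric $W=\diag(1/\bar P_{ij})$, because sums of subspaces are metric-free. Interiority of $\bar P$ is needed only so that the three sets are manifolds at $\bar P$ with these tangent spaces. The normal spaces are spanned, respectively, by $a\one^\top$, by $\one b^\top$, and by $\diag(g)h$ with $g$ supported on $R$.

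I will prove the single strongest statement, $N_{\mathcal A}+N_{\mathcal B}$ versus $N_{\mathcal D}$, and read the three pairwise claims off it as special cases: set $g=0$ for the pair $(\mathcal A,\mathcal B)$, $b=0$ for $(\mathcal A,\mathcal D)$, and $a=0$ for $(\mathcal B,\mathcal D)$. So suppose that
\[
g_i h_{ij}=a_i+b_j+c\qquad\text{for all } i,j,
\]
with $g_{i_0}=0$ and some constant $c$.

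First take the omitted row $i=i_0$. The left-hand side vanishes there, so $b_j=-a_{i_0}-c$ for every $j$; hence $b$ is constant. Next take any row $i\in R$. The relation now says that $g_ih_{ij}$ does not depend on $j$. Assumption (T2) gives $h_{ij}$ both signs across $j$; together with (T1), which makes every $x_j$ charged, this means $j\mapsto h_{ij}=x_j-x_i-\tilde b[i]$ is non-constant. The only way $g_ih_{ij}$ can then be constant in $j$ is $g_i=0$. Substituting $g=0$ back, every row gives $a_i+b_j+c=0$ for all $j$, so $a$ is constant as well. Consequently the element $a\one^\top+\one b^\top$ is a multiple of $\one\one^\top$, which is zero in $V^\perp$-quotient terms. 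The pairwise sums therefore equal $V$, and moreover $T_{\mathcal A\cap\mathcal B}+T_{\mathcal D}=V$. The step $(\mathcal A,\mathcal B)$ uses only the observation that $a\one^\top=-\one b^\top+c\one\one^\top$ forces $a$ and $b$ to be constant.

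For the constant: $N_{\mathcal A\cap\mathcal B}\cap(-N_{\mathcal D})=\{0\}$ within $V$, and unit spheres in these finite-dimensional spaces are compact. The maximum of $\inner{u}{-v}$ is therefore attained. If it equalled $1$, we would have $u=-v\neq0$, contradicting the trivial intersection; hence $c_{\mathcal A\mathcal B\mathcal D}<1$. That $\mathcal A\cap\mathcal B$ has codimension $2(N-1)$ is the $g=0$ case of the same computation, read as the independence of the row and column constraints modulo the total-mass constraint.

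The main obstacle is the role of the omitted row. With all $N_+$ rows kept, the relation $g_ih_{ij}=b_j+c$ admits the nontrivial solution $g_i\equiv g$ whenever all proposed means $x_i+\tilde b[i]$ coincide. More generally, the tower identity makes one drift row redundant on $\mathcal A\cap\mathcal B$. This is exactly what the device of omitting $i_0$ removes, and the argument above is where it is used.
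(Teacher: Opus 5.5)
Your proof is correct and follows essentially the same route as the paper. You identify the normal spaces as $a\one^\top$, $\one b^\top$ and $\diag(g)h$ with $g$ vanishing on the omitted row $i_0$; use row $i_0$ to force the column vector to be constant; use (T2) to kill $g$ on the remaining rows; conclude by compactness of the unit spheres; and read off the pairwise claims by switching off one block. The one difference is that you carry the simplex normal $\one\one^\top$ explicitly through the constant $c$, which is a slightly more careful version of the paper's brief remark that (T1) supplies the pairwise sums ``modulo the mass constraint''.
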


\emph{Proof in Appendix~\ref{app:proofs}.}

\subsection{Lewis--Luke--Malick theorem and application, for reference}
The theorem below is recorded because it remains the tool for the
inequality-bounded variant of the projection discussed in
Appendix~\ref{app:discussion}, whose feasible set is not affine. The proof of
Theorem~\ref{thm:estep} given in Appendix~\ref{app:proofs} does not use it: the
three sets being affine in $P$, the local rate follows from the geometry of
$W$-orthogonal projections directly, and the transversality constant
$c_{\mathcal{A}\mathcal{B}\mathcal{D}}$ of Lemma~\ref{lem:transv} is not claimed
to be the Kullback--Leibler contraction factor.

\begin{theorem}[\protect{Lewis, Luke, Malick \cite[Thm.~5.2]{LewisLukeMalick2009}}]
\label{thm:llm}
Let $A,B$ be closed subsets of $\R^n$, super-regular at
$\bar x\in A\cap B$, and strongly regular at $\bar x$ with regularity
constant $c<1$. Let $c'\in(c,1)$. Then there exist $\delta>0$ and
$L>0$ such that for any starting point $x_0\in B(\bar x,\delta)$ the
alternating-projection iteration
$x_{\ell+1}\in\proj_A(\proj_B(x_\ell))$ is well-defined and
\[
\dist(x_\ell,A\cap B)\;\le\;L\,(c')^\ell\,\dist(x_0,A\cap B),
\qquad \ell=0,1,2,\ldots
\]
The same conclusion holds for the cyclic projection $x_{\ell+1}\in\proj_{S_1}\circ\cdots\circ\proj_{S_m}(x_\ell)$ over a finite collection of closed sets that are pairwise super-regular and collectively strongly regular at $\bar x\in\bigcap_j S_j$.
\end{theorem}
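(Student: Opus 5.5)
The plan follows the original argument of \citet{LewisLukeMalick2009}. The core is a one-step estimate on consecutive displacements. Once that holds, the rest is the usual geometric-series bookkeeping.

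Write the iterates as a single sequence $x_0,x_1,x_2,\ldots$ that alternates membership: $x_{2m+1}\in\proj_B(x_{2m})$ and $x_{2m+2}\in\proj_A(x_{2m+1})$. The key step is to show that, for every $c''\in(c,c')$, there is $\delta>0$ such that whenever three consecutive iterates lie in $B(\bar x,\delta)$,
\[
\|x_{j+2}-x_{j+1}\|\;\le\;c'\,\|x_{j+1}-x_j\| .
\]
I would prove it as follows, taking for definiteness $x_j\in A$, $x_{j+1}\in\proj_B(x_j)$ and $x_{j+2}\in\proj_A(x_{j+1})$. Projection gives $x_j-x_{j+1}\in N^p_B(x_{j+1})$ and $x_{j+1}-x_{j+2}\in N^p_A(x_{j+2})$. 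I then split
\[
\|x_{j+1}-x_{j+2}\|^2=\inner{x_{j+1}-x_{j+2}}{x_{j+1}-x_j}+\inner{x_{j+1}-x_{j+2}}{x_j-x_{j+2}} .
\]
The second term pairs a proximal normal to $A$ at $x_{j+2}$ with the chord $x_j-x_{j+2}$ of $A$. Super-regularity of $A$ therefore bounds it by $\eta\|x_{j+1}-x_{j+2}\|\,\|x_j-x_{j+2}\|$.

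For the first term I need strong regularity at nearby points, not only at $\bar x$. I would obtain it by contradiction from the outer semicontinuity of the limiting normal cones: if unit normals $u_m\in N_A(a_m)$ and $v_m\in N_B(b_m)$ with $a_m,b_m\to\bar x$ had $\inner{u_m}{-v_m}>c''$, a subsequential limit would violate the definition of $c$. This gives $\inner{u}{-v}\le c''\|u\|\|v\|$ locally, so the first term is at most $c''\|x_{j+1}-x_{j+2}\|\,\|x_{j+1}-x_j\|$. Dividing through and using $\|x_j-x_{j+2}\|\le\|x_j-x_{j+1}\|+\|x_{j+1}-x_{j+2}\|$ yields
\[
(1-\eta)\|x_{j+2}-x_{j+1}\|\le(c''+\eta)\|x_{j+1}-x_j\| .
\]
Choosing $\eta$ small makes the ratio at most $c'$. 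The case with the roles of $A$ and $B$ exchanged is symmetric.

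Next comes an induction showing the iterates never leave the ball. Start with $x_0\in B(\bar x,\delta')$ for $\delta'\ll\delta$. Then $\|x_1-x_0\|\le\|x_0-\bar x\|$, and the partial sums of $\sum_j (c')^j\|x_1-x_0\|$ keep every $x_\ell$ within $\delta'\bigl(1+2/(1-c')\bigr)<\delta$ of $\bar x$. This also shows the projections are well defined: they are nonempty by closedness, and any selection will do. The sequence is then Cauchy with limit $x_\infty$, which lies in both $A$ and $B$ by closedness. From
\[
\|x_\ell-x_\infty\|\le\sum_{j\ge\ell}\|x_{j+1}-x_j\|\le\frac{(c')^\ell}{1-c'}\,\|x_1-x_0\|
\]
and $\|x_1-x_0\|=\dist(x_0,B)\le\dist(x_0,A\cap B)$, we get the claim with $L=1/(1-c')$. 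If $x_0\notin A$, one extra step is absorbed into $L$.

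The main obstacle is the cyclic extension to $m$ sets. There the one-step estimate has to compare a normal to $S_i$ with the accumulated displacement through all the other sets, and pairwise angles no longer suffice. I would first try to reduce it to the two-set case via the product-space trick: take the diagonal $\Delta=\{(x,\ldots,x)\}$ and $S_1\times\cdots\times S_m$. Collective strong regularity is equivalent to transversality of these two sets, and super-regularity passes to the product. This rigorously handles averaged projections. For the cyclic order itself I expect to need the direct inductive estimate of the same paper, chaining the $m$ normal/chord inequalities within one sweep, and that is the step I would check most carefully.
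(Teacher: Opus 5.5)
Your two-set argument is correct. It is in substance the argument of \citet{LewisLukeMalick2009}, which the paper records without proof: take proximal normals from the two projections, split $\|x_{j+1}-x_{j+2}\|^2$, apply super-regularity to the chord term and a local angle bound to the other. The angle bound needs $N_A,N_B$ in the definition of $c$ to be limiting normal cones, which is the source's convention. The one difference is that you use super-regularity of \emph{both} sets to contract at every half-step, giving $(c')^2$ per cycle. With only one set super-regular, as in the cited theorem, the other projection is merely non-expansive and the contraction comes once per cycle.

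The genuine gap is the cyclic clause for $m\ge3$ sets. You leave it open, and neither route you sketch closes it. The product-space trick does not reach it: alternating projection between the diagonal and $S_1\times\cdots\times S_m$ is the \emph{averaged} map $y\mapsto\frac1m\sum_i\proj_{S_i}(y)$. The cited paper treats alternating projections for two sets and averaged projections for several, so it has no cyclic estimate to borrow. Nor can your half-step estimate be chained. With $y_j\in S_i$, $y_{j+1}\in\proj_{S_{i+1}}(y_j)$, $y_{j+2}\in\proj_{S_{i+2}}(y_{j+1})$, the vector $y_j-y_{j+2}$ is not a chord of $S_{i+2}$, so super-regularity controls nothing in the second term of your split. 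Worse, the displacement contraction is false even for three planes in $\R^3$ with independent normals. A point $y_j\in(S_i\cap S_{i+1})\setminus S_{i+2}$, which exists arbitrarily close to $\bar x$, gives $\|y_{j+1}-y_j\|=0<\|y_{j+2}-y_{j+1}\|$. No potential built from consecutive displacements can work.

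What does work is to measure progress by the distance to a point of the intersection. For $p\in S$ and $y$ near $\bar x$, super-regularity of $S$ gives
\[
\|\proj_S(y)-p\|^2\;\le\;\|y-p\|^2-\|y-\proj_S(y)\|^2+2\eta\,\|y-\proj_S(y)\|\,\|\proj_S(y)-p\| ,
\]
an almost firm non-expansiveness towards points of $S$. Absorb the cross term and sum over one sweep $y=y_0,y_1,\ldots,y_m=y^+$, with $p$ a nearest point of $\bigcap_jS_j$ to $y$. This gives $\|y^+-p\|^2\le(1-\eta)^{-m}\|y-p\|^2-\sum_i\|y_i-y_{i-1}\|^2$. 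The triangle inequality gives $\max_i\dist(y,S_i)^2\le m\sum_i\|y_i-y_{i-1}\|^2$.

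The missing ingredient is a local error bound $\dist(y,\bigcap_jS_j)\le\kappa\max_i\dist(y,S_i)$ near $\bar x$. This is exactly where collective strong regularity enters. Write $D$ for the diagonal and $\Sigma:=S_1\times\cdots\times S_m$ in $(\R^n)^m$. Their transversality yields local linear regularity of the pair, by the Mordukhovich criterion. Applying it at $(y,\ldots,y)$ and using $\dist((y,\ldots,y),\Sigma\cap D)=\sqrt m\,\dist(y,\bigcap_jS_j)$ gives the bound.

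Together these give $\dist(y^+,\bigcap_jS_j)^2\le\bigl((1-\eta)^{-m}-1/(m\kappa^2)\bigr)\dist(y,\bigcap_jS_j)^2$, a contraction for small $\eta$. The bookkeeping that keeps the iterates near $\bar x$ is as in your two-set case. Note that this rate is governed by $\kappa$ and $m$. To assert ``every $c'\in(c,1)$'' in the cyclic case, you must first say what $c$ means for a collection of sets and relate it to $\kappa$; the statement does not define it.
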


\section{Proofs: mesh refinement and the transport step}
\label{app:proofs}

\paragraph{Proof of Proposition~\ref{prop:mesh}.}
The first equality is immediate from \eqref{eq:doob-compensator} and the integral form of $A$, once $M$ is seen to be a true martingale on $[0,T]$: with $a$
bounded, $\sup_{t\le T}|M_t|\le\sup_{t\le T}|X_t|+|X_0|+T\|a\|_\infty$, which is
integrable by Assumption~\ref{ass:wellposed}, so the local martingale $M$ is
dominated by an integrable variable and is uniformly integrable, whence its
conditional increments vanish.
Right-continuity of $a$ at $t$ gives $\Delta^{-1}\int_t^{t+\Delta}a_s\,ds\to a_t$ a.s., dominated convergence upgrades this to $L^1$, and conditional expectation is an $L^1$-contraction. For the last claim apply Lebesgue's differentiation theorem pathwise.

\paragraph{Proof of Lemma~\ref{lem:smooth}.}
Each set is the level set of an affine map on the open positive cone:
row sums, column sums, and row-drifts. The Jacobians of these maps are
the corresponding selection operators, which are surjective onto their
images for $\mu,\nu>0$ on the active rows. The constant-rank theorem
gives the manifold structure and tangent space.

\paragraph{Proof of Lemma~\ref{lem:transv}.}
The three-set constant lives in normal space. At an interior $\bar P$ we may
write $N_{\mathcal A}=\{a\one^\top:a\in\R^N\}$,
$N_{\mathcal B}=\{\one c^\top:c\in\R^N\}$ and
\[
N_{\mathcal D}=\{\diag(\gamma)h:\ \gamma\ \text{supported on the $N_+-1$ rows defining }\mathcal D\},
\]
as well as $N_{\mathcal A\mathcal B}=N_{\mathcal A}+N_{\mathcal B}$. Suppose that
$a\one^\top+\one c^\top=\diag(\gamma)h$ with $\gamma$ so supported, and let $i_0$
denote the omitted active row, so that $\gamma_{i_0}=0$. Row $i_0$ reads
$a_{i_0}\one^\top+c^\top=0$, whence $c=-a_{i_0}\one$ and
$a\one^\top+\one c^\top=(a-a_{i_0}\one)\one^\top$. Row $i$ then reads
$(a_i-a_{i_0})\one^\top=\gamma_ih_{i\cdot}$; by \textup{(T2)} the row
$h_{i\cdot}$ takes both signs and is therefore not constant, so $\gamma_i=0$ and
$a_i=a_{i_0}$. Upon the remaining rows $\gamma_i=0$ holds by construction and
$a_i=a_{i_0}$ follows at once. Both sides vanish, so
$N_{\mathcal A\mathcal B}(\bar P)\cap\bigl(-N_{\mathcal D}(\bar P)\bigr)=\{0\}$,
and since the unit spheres are compact,
$c_{\mathcal{A}\mathcal{B}\mathcal{D}}<1$. The pairwise statements follow from the
same computation with one of $a$, $c$ absent, \textup{(T1)} supplying
$T_{\mathcal A}+T_{\mathcal B}=\R^{N\times N}$ modulo the mass constraint.

\begin{remark}\label{rem:redundant}
The restriction of $\mathcal D$ to $N_+-1$ rows is essential to the
constant $c_{\mathcal{A}\mathcal{B}\mathcal{D}}$, and to nothing else. Were $\mathcal D$
taken upon all $N_+$ active rows, the choice $\gamma=\one$ would give
$\diag(\gamma)h=h=\one x^\top-(x+\tilde b)\one^\top$, a row-constant plus a
column-constant matrix and hence an element of
$N_{\mathcal A}+N_{\mathcal B}$; the intersection above would contain $h\ne0$, and
$c_{\mathcal{A}\mathcal{B}\mathcal{D}}$ would equal $1$,  so that the
Lewis--Luke--Malick route to Theorem~\ref{thm:estep} would be closed. The redundancy is the tower property
once again: given both marginals,
$\sum_i\sum_jP[i,j](x_j-x_i)=\E_\nu[x]-\E_\mu[x]$ holds identically, so that when the balance identity holds
a one row constraint is implied by the
others. The proof of Theorem~\ref{thm:estep} given below is indifferent to
the choice: the intersection $\mathcal S$ is the same for both versions of
$\mathcal D$, and so is its tangent space, since upon $T_{\mathcal A}\cap T_{\mathcal B}$
the $N_+$ tangent row-drift constraints sum to
$\sum_jx_j\sum_i\dot P_{ij}-\sum_i(x_i+\tilde b_i)\sum_j\dot P_{ij}=0$; the
contraction argument therefore applies verbatim with all $N_+$ rows, and the
implementation, which projects upon every active row, is correct as it stands.
The two versions differ only when the clipping has disturbed the balance
identity, and then they are different problems: with $N_+-1$ rows the
iteration converges to a plan which violates the omitted row by exactly the
balance defect, whereas with all rows the intersection is empty and the
iteration stalls at a best-effort compromise. This is a reason to keep every row
and to test feasibility (Remark~\ref{rem:feasibility}) rather than to omit a
constraint and assume it.
\end{remark}

\paragraph{Proof of Theorem~\ref{thm:estep}.}
A caution before the argument. \citet{LewisLukeMalick2009} treat \emph{Euclidean}
alternating projections, and local equivalence of the Kullback--Leibler
divergence to a squared Euclidean distance does not by itself transfer their
theorem to KL projections: equivalent divergences need not have the same
projections. We therefore argue directly in the $W$-geometry at the
positive feasible point $\bar P$, with $W:=\diag(1/\bar P_{ij})$ the Hessian of
the negative entropy $P\mapsto\sum_{ij}P_{ij}\log P_{ij}$, and write
$\langle u,v\rangle_W:=\sum_{ij}u_{ij}v_{ij}/\bar P_{ij}$, $\|\cdot\|_W$ for the
corresponding norm and $\Pi^W_T$ for the $W$-orthogonal projection onto a
subspace $T$. The argument is in three steps and uses only that
$\mathcal A,\mathcal B,\mathcal D$ are affine in $P$ (Lemma~\ref{lem:smooth}); it
does not use Lemma~\ref{lem:transv}, nor Theorem~\ref{thm:llm}.

\emph{Step 1: the derivative of one KL projection.} Let $\mathcal S$ be one of
the three sets, $\mathcal S=\{P:\Lambda P=\lambda\}$ with $\Lambda$ linear, and
$T_{\mathcal S}=\ker\Lambda$ its tangent space. For $Q$ in the positive cone the
KL projection $\Pi_{\mathcal S}(Q):=\argmin_{P\in\mathcal S}\mathrm{KL}(P\,\|\,Q)$
exists, is unique by strict convexity, and by the KKT conditions is the point of
$\mathcal S$ with $\log P-\log Q\in\operatorname{range}\Lambda^\top=T_{\mathcal S}^{\perp}$,
the Euclidean orthogonal complement; the implicit function theorem makes
$Q\mapsto\Pi_{\mathcal S}(Q)$ smooth near $\bar P$, where $\Pi_{\mathcal S}(\bar P)=\bar P$.
Differentiating $\log P-\log Q\in T_{\mathcal S}^\perp$ at $Q=\bar P$ in a
direction $\dot Q$ gives $W(\dot P-\dot Q)\in T_{\mathcal S}^\perp$, that is
$\langle\dot P-\dot Q,v\rangle_W=0$ for every $v\in T_{\mathcal S}$, while
$\dot P\in T_{\mathcal S}$ since $P$ stays in $\mathcal S$. Hence
\[
D\Pi_{\mathcal S}(\bar P)=\Pi^W_{T_{\mathcal S}} .
\]

\emph{Step 2: one cycle contracts off the common tangent space.} Let
$\Phi:=\Pi_{\mathcal D}\circ\Pi_{\mathcal B}\circ\Pi_{\mathcal A}$ be the cycle
\eqref{eq:apit}; $\Phi$ fixes every point of
$\mathcal S=\mathcal A\cap\mathcal B\cap\mathcal D$ and is smooth near $\bar P$,
and by the chain rule
\[
J:=D\Phi(\bar P)=\Pi^W_{T_{\mathcal D}}\,\Pi^W_{T_{\mathcal B}}\,\Pi^W_{T_{\mathcal A}} .
\]
Set $T:=T_{\mathcal A}\cap T_{\mathcal B}\cap T_{\mathcal D}$, the tangent space
of $\mathcal S$ at $\bar P$. Then $J=I$ upon $T$, and $J$ maps $T^{\perp_W}$ into
itself, each factor being self-adjoint for $\langle\cdot,\cdot\rangle_W$ and
fixing $T$. If $v\in T^{\perp_W}$ has $\|Jv\|_W=\|v\|_W$, then each of the three
projections preserved the $W$-norm of its argument, which for an orthogonal
projection means that it acted as the identity: $v\in T_{\mathcal A}$, then
$v\in T_{\mathcal B}$, then $v\in T_{\mathcal D}$, so $v\in T\cap T^{\perp_W}=\{0\}$.
By compactness of the unit sphere of $T^{\perp_W}$,
\[
c_E:=\max\bigl\{\|Jv\|_W:\ v\in T^{\perp_W},\ \|v\|_W=1\bigr\}<1 .
\]
No transversality is used: a product of orthogonal projections onto finitely
many subspaces of a finite-dimensional space contracts strictly off their
intersection, whatever the angles between them. Transversality would make
$c_E$ computable from those angles; without it the constant is what the display
says and no more.

\emph{Step 3: from the derivative to the iterates.} $\mathcal S$ is an affine
set, hence a smooth manifold with the same tangent space $T$ at every point.
For $P$ near $\bar P$ let $s\in\mathcal S$ be its $W$-nearest point and
$v:=P-s\in T^{\perp_W}$, so $\dist_W(P,\mathcal S)=\|v\|_W$. Since $\Phi(s)=s$ and
$\Phi$ is $C^1$,
$\Phi(P)=s+J_sv+o(\|v\|_W)$ with $J_s:=D\Phi(s)$, and $s\mapsto J_s$ is
continuous with $J_{\bar P}=J$; hence
$\dist_W(\Phi(P),\mathcal S)\le\|J_sv\|_W+o(\|v\|_W)\le c'\,\dist_W(P,\mathcal S)$
for any $c'\in(c_E,1)$, upon a neighbourhood $\mathcal N$ of $\bar P$ small
enough that $\|J_s|_{T^{\perp_W}}\|_W$ and the remainder together stay below
$c'$. Iterating gives $\dist_W(P^{(\ell)},\mathcal S)\le(c')^{\ell}\dist_W(P^{(0)},\mathcal S)$
as long as the iterates remain in $\mathcal N$, which they do once
$P^{(0)}$ is close enough to $\bar P$, the distance to $\mathcal S$ decreasing
and the iterates moving by at most a constant times that distance. The
Euclidean distance and the square root of the Kullback--Leibler divergence are
each equivalent to $\|\cdot\|_W$ upon $\mathcal N$, the cone being open, which
supplies the constant $L$ of the statement. \qed

Three remarks upon the scope. First, the argument is for the exact cyclic
projection: finite Newton solves, the under-relaxed marginals of
Appendix~\ref{app:impl} and any acceleration are perturbations of $\Phi$ and are
not covered, although the under-relaxed step, whose derivative is
$\tfrac{\lambda}{1+\lambda}\Pi^W_T+\tfrac1{1+\lambda}I$, contracts off $T$ by
the same reasoning. Second, since the three sets are affine in $P$, cyclic
I-projections converge globally to the I-projection of the starting kernel onto
$\mathcal S$ whenever $\mathcal S$ contains a strictly positive point
\citep{Csiszar1975}; the local statement above is what supplies a rate. Third,
the argument gives nothing when $\mathcal S=\emptyset$, which is why the
feasibility question of Remark~\ref{rem:feasibility} precedes it.

\section{The outer loop in the Convex Setting}
\label{app:outer1}

Let $F$ denote one E-step followed by one M-step and let $\bar\theta$ be a fixed
point. In the Convex Setting the M-step is the closed-form weighted least squares
\eqref{eq:wls}, so $F$ is $C^1$ wherever the E-step is, which is guaranteed by
(T1)--(T2) through implicit differentiation of \eqref{eq:dualcouple}
(Lemma~\ref{lem:diffe}).

\begin{assumption}\label{ass:sosc}
$B+\lambda_\theta I\succ0$, where $B=\sum_k\bar\phi_k\bar\phi_k^\top$ is the
mean-feature Gram matrix; at $\lambda_\theta>0$ this holds unconditionally, and at
$\lambda_\theta=0$ it is the Gram condition of
Proposition~\ref{prop:fixedpoint1}.
\end{assumption}

\begin{theorem}[Convex Setting]\label{thm:outer}
Under Assumption~\ref{ass:sosc} and \textup{(T1)--(T2)} there are a
neighbourhood $\mathcal N_\theta\ni\bar\theta$ and $\kappa\in(0,1)$ with
$\|\theta^{(\ell)}-\bar\theta_L\|\le\kappa^{\ell}\|\theta^{(0)}-\bar\theta_L\|$, and
\[
\kappa\;\le\;\kappa_0\;+\;\mathcal O\bigl(c_E^{L_{\mathrm{inner}}}\bigr)\;+\;\mathcal O(\eps),
\qquad
\kappa_0:=\rho\bigl(F_\infty'(\bar\theta)\bigr),
\]
where $F_\infty$ is the block map formed with an \emph{exact} E-step,
$L_{\mathrm{inner}}$ is the number of inner sweeps per outer iteration, and $c_E$
is the rate of Theorem~\ref{thm:estep}. Here, $\bar\theta_L$ is the fixed
point of the map actually run, and
$\|\bar\theta_L-\bar\theta\|=\mathcal O(c_E^{L_{\mathrm{inner}}})+\mathcal O(\eps)$.
\end{theorem}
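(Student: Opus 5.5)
The plan is to apply Ostrowski's theorem to the block map and to treat inexactness of the inner loop and the entropic regularisation as perturbations. First, write the exact block map $F_\infty=\mathcal M\circ\mathcal E_\infty$. Here $\mathcal E_\infty(\theta)$ returns the compensator vectors $d_k$ of the exact drift-constrained Schr\"odinger bridge at target $\tilde b_k(\theta)$, and $\mathcal M$ is the weighted least squares \eqref{eq:wls}. Under Assumption~\ref{ass:sosc}, $\mathcal M$ is an affine map of $(d_k)$ with bounded operator $(X^\top WX+\lambda_\theta I)^{-1}X^\top W$, the weights $\mu_k[i]$ being data and not iterates.

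Under (T1)--(T2) the saddle point \eqref{eq:dualcouple} is a nondegenerate solution of a smooth system in $(u,v,\Gamma)$. The row Newton denominator $\sum_j h_{ij}^2P_{ij}>0$ by (T2), and the Sinkhorn block is nondegenerate modulo the usual scaling gauge by (T1). Lemma~\ref{lem:diffe} (implicit function theorem) then makes $\theta\mapsto d_k$ of class $C^1$ near $\bar\theta$; the clipping is inactive in a neighbourhood by strictness in (T2). Hence $F_\infty$ is $C^1$, and $\kappa_0=\rho(F_\infty'(\bar\theta))$ is well defined. I would take from Proposition~\ref{prop:nominal} that $\kappa_0<1$. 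If this must be argued directly, the tower identity shows that $\one^\top d_k=\gamma_k$ is independent of $\theta$, so $F_\infty'$ acts only through within-row variation. The fixed-point characterisation of Proposition~\ref{prop:fixedpoint1} then exhibits $F_\infty'(\bar\theta)$ as a contraction of the form $(B+\lambda_\theta I+R)^{-1}R$ with $R\succeq0$ the within-marginal part, whose spectral radius is $<1$ when $B+\lambda_\theta I\succ0$.

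Second, the map actually run is $F_L=\mathcal M\circ\mathcal E_L$, where $\mathcal E_L$ performs $L_{\mathrm{inner}}$ cyclic sweeps warm-started at the previous plan. By Theorem~\ref{thm:estep}, $\|\mathcal E_L-\mathcal E_\infty\|_{C^0}=\mathcal O(c_E^{L_{\mathrm{inner}}})$ on a neighbourhood. To get the same bound in $C^1$, I would differentiate the sweep recursion: its derivative is a product of $W$-projections, which contracts at rate $c_E$ off $T$ by Step 2 of that proof. The $\mathcal O(\eps)$ term accounts for dependence of the linearisation on $\eps$ through $W$ and the gap between the entropic and $\eps\to0$ kernels. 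I regard this term as the least sharp piece and would simply carry it as a uniform $C^1$ perturbation bound.

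Then $\|F_L'-F_\infty'\|\le\mathcal O(c_E^{L_{\mathrm{inner}}})+\mathcal O(\eps)$. Since the spectral radius is upper semicontinuous, and a norm adapted to $F_\infty'(\bar\theta)$ realises it up to any slack, we obtain $\kappa\le\kappa_0+\mathcal O(c_E^{L_{\mathrm{inner}}})+\mathcal O(\eps)<1$ for $L_{\mathrm{inner}}$ large and $\eps$ small. Existence and locality of $\bar\theta_L$ follow from the contraction mapping theorem on a ball around $\bar\theta$. The standard perturbation estimate $\|\bar\theta_L-\bar\theta\|\le(1-\kappa)^{-1}\|F_L(\bar\theta)-\bar\theta\|$ gives the stated distance. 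Ostrowski, or the contraction directly in the adapted norm, yields $Q$-linear convergence of $\theta^{(\ell)}$ to $\bar\theta_L$; the norm constant is absorbed by choosing $\mathcal N_\theta$ and by the adapted norm.

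The main obstacle is the uniform $C^1$ control of the inexact E-step, which needs derivative estimates and not only the distance bound of Theorem~\ref{thm:estep}. I would handle it by viewing each sweep as a smooth map whose Jacobian at the fixed set is $J=\Pi^W_{T_{\mathcal D}}\Pi^W_{T_{\mathcal B}}\Pi^W_{T_{\mathcal A}}$. Powers $J^{L}$ converge to $\Pi_T$ at rate $c_E^{L}$, and continuity of $J_s$ in $s$ extends this to the neighbourhood.
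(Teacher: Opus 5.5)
Your argument has the paper's skeleton: Ostrowski on the exact block map $F_\infty$, $\kappa_0<1$ from Proposition~\ref{prop:nominal}, the truncated inner loop treated as a perturbation, and a displacement bound for $\bar\theta_L$. It differs at the decisive step, and to its advantage.

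The paper takes from Theorem~\ref{thm:estep} only the sup-norm bound $\|F_L-F_\infty\|_\infty=\mathcal O(c_E^{L_{\mathrm{inner}}})$. It then asserts that a standard perturbation of a contraction makes $F_L$ a local contraction. A $C^0$ bound gives, in an adapted norm, $\|F_L(\theta)-\bar\theta\|\le\kappa_0\|\theta-\bar\theta\|+\mathcal O(c_E^{L_{\mathrm{inner}}})$. That yields entry of the iterates into an $\mathcal O(c_E^{L_{\mathrm{inner}}})$-ball about $\bar\theta$, and it locates any fixed point of $F_L$. It does not yield contractivity of $F_L$, uniqueness of $\bar\theta_L$, or $Q$-linear convergence to it, since a uniformly small perturbation of a contraction need not contract. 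Your uniform $C^1$ control of $F_L-F_\infty$ is exactly what that step needs; the price is differentiating the sweep recursion, which the paper never does.

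One part simplifies. By Proposition~\ref{prop:nominal}, $F_\infty$ is affine, because the exact E-step returns $d_k=\mu_k\odot\tilde b_k$ whichever plan it selects. So Lemma~\ref{lem:diffe} is not needed for $F_\infty$, and all the smoothness work concerns $F_L$.

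Several details need repair.

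\emph{The derivative estimate.} $J^{L}$ tends to $\Pi^W_T$, not to zero. You must therefore add that the compensator map annihilates $T$: for $\dot P\in T_{\mathcal A}\cap T_{\mathcal D}$ one has $\sum_j\dot P_{ij}(x_j-x_i)=\sum_j\dot P_{ij}h_{ij}+\tilde b[i]\sum_j\dot P_{ij}=0$. Also, the sum accumulated when differentiating $L$ sweeps gives $\mathcal O(L_{\mathrm{inner}}(c')^{L_{\mathrm{inner}}})$ for any $c'>c_E$, which is the slack already present in Theorem~\ref{thm:estep}.

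\emph{The warm start.} Warm-starting from the previous plan makes $F_L$ a map of $(\theta,P)$, not of $\theta$. Either start each E-step from a fixed kernel, as the paper tacitly does, or treat the joint map modulo its neutral directions along $\mathcal S$. In the latter case its fixed points have $P\in\mathcal S$, so the bias $\bar\theta_L-\bar\theta$ there even vanishes.

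\emph{The $\mathcal O(\eps)$ term.} Your account of it is mistaken. By Proposition~\ref{prop:epsinvariant} the exact E-step limit is the maximum-entropy point of $\mathcal S$ for every $\eps$. Hence $F_\infty$, $\bar P$, $W$ and $c_E$ are all free of $\eps$, and no $\eps\to0$ kernel enters. The paper attributes the term to residual infeasibility of the computed plan. Under exact feasibility you may simply drop it, since the bound is an upper bound.

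\emph{The clipping.} Strictness in \textup{(T2)} is a property of the clipped target, which the clip itself enforces, so it does not make the clip inactive. You need the unclipped regime at $\bar\theta$, as Proposition~\ref{prop:nominal} assumes.

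\emph{The norm.} A norm-equivalence constant cannot be absorbed by shrinking $\mathcal N_\theta$. The constant-free $Q$-linear bound holds in the $H_\theta$-norm, where $F_\infty'=H_\theta^{-1}V$ is self-adjoint with norm exactly $\kappa_0$. By contrast, $\|H_\theta^{-1}V\|_2$ generally exceeds $\kappa_0$ and need not be below one.
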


The two roles must be kept apart. The quantity $c_E$ is the rate at which the
cyclic projection approaches its own limit \emph{at a fixed parameter}; it
governs how inexact the inner solve is, and enters additively. The contraction
of the outer iteration is $\kappa_0$, the spectral radius of the derivative of
the exact block map, and it is a matter of parametric sensitivity, not of
algorithmic speed. Sweeping harder drives the second term down but leaves
$\kappa_0$ where it is --- and $\kappa_0$ is bounded away from zero, as the next
proposition shows.

\begin{proposition}[The nominal contraction]\label{prop:nominal}
Write $B=\sum_k\bar\phi_k\bar\phi_k^\top$, $V=\sum_k\operatorname{Cov}_{\mu_k}(\phi_k)$,
$q=\sum_k\bar\phi_k\gamma_k$ and $H_\theta=B+V+\lambda_\theta I$. In the Convex
Setting with an exact, unclipped E-step the block map is
\[
F_\infty(\theta)=H_\theta^{-1}\bigl(V\theta+q\bigr),\qquad
F_\infty'(\theta)=H_\theta^{-1}V ,
\]
its fixed points solve $(B+\lambda_\theta I)\theta=q$, and if
$B+\lambda_\theta I\succ0$ the unique fixed point is the \emph{ridge} regression
$\bar\theta=(B+\lambda_\theta I)^{-1}q$ upon the mean features, approached as a
contraction in the $H_\theta$-norm with factor
$\kappa_0=\lambda_{\max}(H_\theta^{-1/2}VH_\theta^{-1/2})<1$.
\end{proposition}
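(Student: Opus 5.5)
The plan is to write the exact, unclipped E-step in closed form as a map of $\theta$. Then compose it with the weighted least squares \eqref{eq:wls}, and read off an affine map whose linear part is $H_\theta^{-1}V$.

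\emph{First step.} With an exact E-step at target $\tilde b_k=b_k+\delta_k$, every active row satisfies \eqref{eq:smdisc} exactly. So $y_k[i]=d_k[i]/\mu_k[i]=\phi_{ki}^\top\theta+\delta_k$, where $\delta_k=\gamma_k-\bar\phi_k^\top\theta$ by \eqref{eq:delta}, using $\sum_i\mu_k[i]b_k(x_i;\theta)=\bar\phi_k^\top\theta$. Hence the E-step returns
\[
y_k[i]=(\phi_{ki}-\bar\phi_k)^\top\theta+\gamma_k ,
\]
independently of which plan in $\mathcal A\cap\mathcal B\cap\mathcal D$ is selected. This is the observation preceding Proposition~\ref{prop:fixedpoint1}. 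Nonemptiness of that intersection is taken as part of "exact", per Remark~\ref{rem:feasibility}.

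\emph{Second step.} Substitute into the normal equations of \eqref{eq:wls} at the new iterate $\theta'$:
\[
\Bigl(\sum_k\sum_i\mu_k[i]\phi_{ki}\phi_{ki}^\top+\lambda_\theta I\Bigr)\theta'=\sum_k\sum_i\mu_k[i]\phi_{ki}y_k[i].
\]
Use $\sum_i\mu_k[i]\phi_{ki}\phi_{ki}^\top=\bar\phi_k\bar\phi_k^\top+\operatorname{Cov}_{\mu_k}(\phi_k)$, so the left matrix is $H_\theta$. On the right, $\sum_i\mu_k[i]\phi_{ki}(\phi_{ki}-\bar\phi_k)^\top=\operatorname{Cov}_{\mu_k}(\phi_k)$ and $\sum_i\mu_k[i]\phi_{ki}\gamma_k=\bar\phi_k\gamma_k$. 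This gives $F_\infty(\theta)=H_\theta^{-1}(V\theta+q)$. $H_\theta$ is invertible since $B+\lambda_\theta I\succ0$ and $V\succeq0$. Differentiating gives $F_\infty'=H_\theta^{-1}V$. A fixed point satisfies $H_\theta\theta=V\theta+q$, that is $(B+\lambda_\theta I)\theta=q$, which has a unique solution, the ridge regression.

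\emph{Contraction.} Write $F_\infty(\theta)-\bar\theta=H_\theta^{-1}V(\theta-\bar\theta)$. In the $H_\theta$-norm this operator is similar to $S=H_\theta^{-1/2}VH_\theta^{-1/2}$, which is symmetric and positive semidefinite. Its $H_\theta$-operator norm therefore equals $\lambda_{\max}(S)$. Finally $S\prec I$: $H_\theta-V=B+\lambda_\theta I\succ0$ gives $v^\top Vv<v^\top H_\theta v$ for $v\ne0$, so $\lambda_{\max}(S)<1$.

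The only delicate point is the first step, and it is a bookkeeping check rather than a real obstacle. One must confirm that "unclipped" makes $\delta_k$ enter additively and identically in every active row, so that the plan-dependence disappears entirely. Inactive rows with $\mu_k[i]=0$ carry zero weight in \eqref{eq:wls} and are harmless.
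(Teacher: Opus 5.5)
Your proposal is correct and follows the paper's proof essentially line for line: substitute the exact unclipped response $y_k[i]=\phi_{ki}^\top\theta+\delta_k$, with $\delta_k=\gamma_k-\bar\phi_k^\top\theta$, into the weighted least squares to obtain $V\theta+q$ on the right-hand side, subtract the fixed-point equation, and read the contraction off the symmetric matrix $H_\theta^{-1/2}VH_\theta^{-1/2}$, whose spectrum lies below one exactly when $H_\theta-V=B+\lambda_\theta I\succ0$. One small refinement: the formula for $F_\infty$ and the fixed-point equation $(B+\lambda_\theta I)\theta=q$ need only the invertibility of $H_\theta$, which the M-step already presupposes, and not $B+\lambda_\theta I\succ0$; the distinction matters when $B$ is singular but $H_\theta$ is not, which is exactly the case where a whole line of fixed points exists.
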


\begin{proof}
Substituting the exact unclipped response $y_{ki}=\phi_{ki}^\top\theta+\delta_k$
with $\delta_k=\gamma_k-\bar\phi_k^\top\theta$ into \eqref{eq:wls} gives
$\sum_{ki}\mu_k[i]\phi_{ki}y_{ki}=(B+V)\theta+q-B\theta=V\theta+q$, whence the
displayed map. Subtracting the fixed-point equation gives
$F_\infty(\theta)-\bar\theta=H_\theta^{-1}V(\theta-\bar\theta)$, an operator
represented in the $H_\theta$-norm by the symmetric positive semi-definite matrix
$H_\theta^{-1/2}VH_\theta^{-1/2}$, whose eigenvalues lie strictly below one
precisely when $H_\theta-V=B+\lambda_\theta I\succ0$.
\end{proof}

Two points deserve emphasis. The derivative is $H_\theta^{-1}V$ and not
$H_\theta^{-1}(V+\lambda_\theta I)$: the term $-\lambda_\theta H_\theta^{-1}\theta$
in passing from \eqref{eq:wls} to the update is harmless at $\lambda_\theta=0$
and material otherwise, and the limit is then the ridge and not the ordinary
least-squares regression. And the relevant positivity is that of $B+\lambda_\theta I$, not of
the full weighted Gram matrix $H_\theta$: at $\lambda_\theta=0$ one may have
$H_\theta\succ0$ while $B$ is singular, in which case a whole line of fixed
points exists and no contraction to a specified one is possible. With
$\lambda_\theta>0$ the map contracts even when $B$ is singular.

This is worth stating because it explains what is seen in
Section~\ref{sec:numerics}: the outer iterate falls by a large factor at the
first step and then settles, the settling being $\kappa_0$ and not a failure of
the inner solver. For the affine $\phi=(1,x)^\top$ at $\lambda_\theta=0$ the expression
collapses to a quantity one may read off the data,
\[
\kappa_0\;=\;\frac{\bar\sigma^2}{\bar\sigma^2+s^2_{\bar x}},
\]
the mean within-marginal variance over that plus the variance across $k$ of the
marginal means: the outer map contracts quickly when the means move a great deal
relative to the spread of each snapshot, and slowly when they do not.
{At the data of Section~\ref{sec:numerics}, upon the grid of $N=14$ atoms
in use there, the two quantities are $\bar\sigma^2=0.206$ and
$s^2_{\bar x}=1.40$, giving $\kappa_0=0.128$; the closed form agrees with
$\lambda_{\max}(H_\theta^{-1/2}VH_\theta^{-1/2})$ computed from $B$ and $V$
directly. Upon a grid of $100$ atoms the within-marginal variance loses its
binning inflation of $h^2/12\approx0.026$ and the figures are $0.181$, $1.40$
and $0.114$, the values one also obtains from the copies themselves without any
grid. The exact block map thus removes some $87\%$ of the error at every outer
step: quick, but not the near-constancy which a single-step fall by a factor of
several hundred would require, which needs $\kappa_0\approx10^{-3}$. What is seen
in Section~\ref{sec:numerics} is of another origin, recorded in
Appendix~\ref{app:fixedpoint}.

It also shows the two constants are governed by different
quantities --- $\kappa_0$ by the ratio of within-marginal covariance to total
second moment, $c_E$ by the normal-cone angle of the three constraint sets --- so
that no amount of sweeping makes the outer iteration superlinear.
\subsection*{The M-step of the Convex Setting in full}

For fixed $\{P_k\}$, the empirical predictable-compensator vector is $d_k[i]=(P_k\odot\mathrm{inc})\one_i$, cf.~\eqref{eq:dk}. The M-step solves the weighted least-squares problem
\begin{equation}
\theta^{(\ell+1)}
\;=\;\argmin_\theta \sum_{k\in\mathcal P,\,i:\mu_k[i]>0}
\mu_k[i]\,\Bigl(\frac{d_k[i]}{\mu_k[i]} - b_k(x_i;\theta)\Bigr)^2
\;+\;\lambda_\theta\,\|\theta\|^2,
\end{equation}
with feature row $\phi_{ki}=[1,x_i,s_{k-1},\ldots,s_{k-p+1}]$ and weight $\mu_k[i]$. This is the closed-form weighted least-squares update
$\theta^{(\ell+1)}=(X^\top W X+\lambda_\theta I)^{-1}X^\top W y$, where $X$ stacks the feature rows, $W=\mathrm{diag}(\mu_k[i])$ and $y[k,i]=d_k[i]/\mu_k[i]$ is the empirical predictable density. The optimisation step here outputs the Markovian-projection coefficients of \eqref{eq:markov-projection} regressed against the empirical compensator densities and is the empirical analogue of an $L^2$ regression of the predictable density onto the lag features. An optional exponential-moving-average step $\theta\leftarrow(1-m)\theta_{\text{raw}}+m\theta_{\text{prev}}$ damps oscillations induced by Sinkhorn numerical noise.

\begin{lemma}[Differentiability of the inner-projection map]\label{lem:diffe}
Under (T1)--(T2) of Lemma~\ref{lem:transv}, the map $\theta\mapsto\{P_k(\theta)\}$ that returns the exact Schr\"odinger-bridge projection at parameter $\theta$ is continuously differentiable in a neighbourhood of $\bar\theta$, with Jacobian computable by implicit differentiation of the KKT system
\[
P_k[i,j]=u_i v_j \exp\!\Bigl(-\tfrac{1}{\eps}\bigl(C_{ij}+\Gamma_i\,h_{ij}^{(k)}(\theta)\bigr)\Bigr),
\quad h_{ij}^{(k)}(\theta)=(x_j-x_i)-b_k(x_i;\theta).
\]
\end{lemma}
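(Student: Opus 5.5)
The plan is to apply the implicit function theorem to the KKT system of the E-step, regarded as a smooth equation in the dual variables $(\log u,\log v,\Gamma)$ with $\theta$ as a parameter. For fixed $k$ write $F(\log u,\log v,\Gamma;\theta)=0$ for the collection of row-marginal equations $\sum_jP_k[i,j]=\mu_k[i]$, column-marginal equations $\sum_iP_k[i,j]=\nu_k[j]$, and row-drift equations $\sum_jP_k[i,j]h^{(k)}_{ij}(\theta)=0$ on active rows, with $P_k$ given by the displayed exponential form. Every entry of $P_k$ is a composition of exponentials with functions affine in $(\log u,\log v,\Gamma)$ and in $b_k(x_i;\theta)$, which is affine (Convex Setting) or polynomial (Non-Convex Setting) in the parameter. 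Hence $F$ is $C^\infty$ jointly in all its arguments. The exact Schrödinger-bridge projection at $\bar\theta$ supplies a solution point, and (T1) guarantees that all $P_k[i,j]>0$, so we work in the open positive cone.

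The key step is to show that the partial Jacobian $D_{(\log u,\log v,\Gamma)}F$ is invertible once the gauge has been removed. Two gauge redundancies must be quotiented out first: the scaling $(u,v)\mapsto(cu,v/c)$, and the dependence among the constraints, since the total masses of $\mu$ and $\nu$ coincide and the tower property makes one drift row implied. To fix these, I would pin $\log v_N=0$ and drop one column equation; the drift row can be kept, since the parametrisation has exactly one $\Gamma_i$ per row. The resulting Jacobian is, up to sign and a factor $1/\eps$, the Gram matrix $G^\top\diag(\bar P)G$, where the columns of $G$ are the normal directions $e_i\one^\top$, $\one e_j^\top$ and $e_ih_{i\cdot}$. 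This is the Hessian of the strictly concave dual, restricted to the gauge-fixed coordinates. It is positive definite precisely when these normal generators are linearly independent modulo the gauge. That independence is the content of the normal-cone calculation in the proof of Lemma~\ref{lem:transv}: a relation $a\one^\top+\one c^\top=\diag(\gamma)h$ forces $\gamma=0$ by (T2), because each row $h_{i\cdot}$ is non-constant, and it forces $a,c$ to be constants cancelling each other, which is exactly the gauge.

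I expect the main obstacle to be the redundancy highlighted in Remark~\ref{rem:redundant}. With all $N_+$ drift rows, $\gamma=\one$ gives $h\in N_{\mathcal A}+N_{\mathcal B}$, so the Jacobian is singular and the multipliers $\Gamma$ are determined only up to the shift $\Gamma\mapsto\Gamma+t\one$, paired with a compensating change in $(u,v)$. I would treat this in one of two ways. The first is to use the $N_+-1$ row version of Lemma~\ref{lem:transv}, fixing one $\Gamma_{i_0}=0$. The second is to note that $P_k$ itself is invariant along this one-dimensional orbit, so that one applies the implicit function theorem on the quotient and then observes that $P_k$ descends to a well-defined $C^1$ function of $\theta$.

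It must also be checked that the balance identity persists as $\theta$ varies. It does, because the shift $\delta_k$ of \eqref{eq:delta} is added, and $\delta_k$ is affine in $b_k$. Near $\bar\theta$ the clipping is inactive by the strict inequality in (T2) together with continuity, so $\tilde b_k$ is smooth in $\theta$. The implicit function theorem then yields $C^1$ dual variables, and hence $C^1$ $P_k(\theta)$. Their Jacobian is $-(D_{\mathrm{dual}}F)^{-1}D_\theta F$, substituted into the exponential form. Since $k$ ranges over a finite set, the joint statement for $\{P_k\}$ follows.
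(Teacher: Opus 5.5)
Your proposal is correct and takes the paper's route: implicit differentiation of the KKT system in the dual variables, with invertibility of the dual Jacobian supplied by the normal-cone computation in the proof of Lemma~\ref{lem:transv} on $N_+-1$ drift rows. It is also more explicit than the paper's two-line proof about the gauge, the redundant row of Remark~\ref{rem:redundant}, and the need for the balance shift \eqref{eq:delta}.

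One inference should be repaired: (T2) is what the clipping enforces, so it does not imply that the clip is inactive near $\bar\theta$. You need instead to assume that the unclipped target $b_k+\delta_k$ lies strictly inside the reachable range at $\bar\theta$, which is the regime the displayed KKT system (with no clip in $h^{(k)}$) presupposes; continuity then keeps the clip inactive nearby.
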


\emph{Proof in Appendix~\ref{app:proofs}.}

\begin{corollary}[Rate of the composite iteration]\label{cor:joint}
The combined inner-outer scheme contracts with factor
\[
\kappa_{\text{total}}\;=\;\kappa_0\;+\;\mathcal O\bigl(c_E^{L_{\text{inner}}}\bigr)
\]
up to an additive inexactness of the same order, where $\kappa_0$ is the nominal contraction of Proposition~\ref{prop:nominal} and $L_{\text{inner}}$ is the number of inner sweeps per outer iteration. At $L_{\text{inner}}=15$ and $c_E\approx 0.7$ one has $c_E^{15}\approx 4.7\cdot 10^{-3}$, so the rate is dominated by $\kappa_0$; this is consistent with the outer loop settling within $5$--$15$ iterations on the instances of Section~\ref{sec:numerics}, which run the E-step to a residual of $10^{-10}$ (Appendix~\ref{app:impl}) and so make the inexactness negligible. The second term can be made negligible, whereas $\kappa_0$ cannot: the composite rate is bounded below by the nominal one however hard the inner loop is driven.
\end{corollary}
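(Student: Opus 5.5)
The statement to prove is Corollary~\ref{cor:joint}. It follows from Theorem~\ref{thm:outer} together with Proposition~\ref{prop:nominal}, so the plan is a perturbation argument around the exact block map $F_\infty$. It has two halves: an upper bound on the rate, and a lower bound showing that the nominal contraction cannot be beaten.

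\textbf{Upper bound.} Write the map actually run as $F_L=F_\infty+R_L$. Here $R_L$ is the error from stopping the E-step after $L_{\text{inner}}$ sweeps.
\begin{enumerate}[leftmargin=1.4em]
\item By Theorem~\ref{thm:estep}, warm-started iterates lie within $\mathcal O(c_E^{L_{\text{inner}}})$ of the exact plan $P_k(\theta)$, uniformly for $\theta$ in a neighbourhood $\mathcal N_\theta$ of $\bar\theta$.
\item The M-step \eqref{eq:wls} is a fixed linear map of the compensators $d_k$. Hence $\|R_L\|_{C^0}=\mathcal O(c_E^{L_{\text{inner}}})$ on $\mathcal N_\theta$.
\item Lemma~\ref{lem:diffe} makes the map $C^1$. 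Differentiating the cycle $\Phi$ composed $L_{\text{inner}}$ times, whose derivative off $T$ is bounded by $(c')^{L_{\text{inner}}}$ (Step~2 of the proof of Theorem~\ref{thm:estep}), gives the same order for $\|R_L'\|$.
\item Therefore $\rho(F_L'(\bar\theta_L))\le\rho(F_\infty'(\bar\theta))+\|F_\infty'(\bar\theta_L)-F_\infty'(\bar\theta)\|+\|R_L'\|$.
\item Proposition~\ref{prop:nominal} gives the first term, $\kappa_0<1$. The $C^1$ dependence and $\|\bar\theta_L-\bar\theta\|=\mathcal O(c_E^{L_{\text{inner}}})$ from Theorem~\ref{thm:outer} make the other two terms of that order. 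This yields $\kappa_{\text{total}}$.
\item The additive inexactness is the fixed-point displacement $\bar\theta_L-\bar\theta$. It is obtained from $(I-F_\infty')^{-1}R_L$, using $\|F_\infty'\|_{H_\theta}\le\kappa_0<1$.
\end{enumerate}

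\textbf{Lower bound.} The claim that $\kappa_0$ cannot be beaten is the main obstacle, since a perturbation might in principle lower the spectral radius. Here I would argue as follows.
\begin{enumerate}[leftmargin=1.4em]
\item By Proposition~\ref{prop:nominal}, $F_\infty'=H_\theta^{-1}V$ is similar to a symmetric matrix with top eigenvalue $\kappa_0$.
\item Eigenvalues of symmetric-similar matrices are Lipschitz under perturbation (Weyl, in the $H_\theta$-norm). Hence $\rho(F_L')\ge\kappa_0-\mathcal O(c_E^{L_{\text{inner}}})$.
\item Taking the iterate along the top eigenvector, the error decays no faster than $(\kappa_0-o(1))^\ell$. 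So $\kappa_0$ is a lower bound in the limit $L_{\text{inner}}\to\infty$, i.e.\ the composite rate is bounded below by the nominal one up to terms vanishing with $L_{\text{inner}}$.
\end{enumerate}

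\textbf{Numerical assertions.} These are arithmetic checks. First, $0.7^{15}\approx4.7\cdot10^{-3}$. Second, the figure $\kappa_0=0.128$ quoted after Proposition~\ref{prop:nominal} gives $\kappa_0^{\ell}\le10^{-10}$ only after roughly eleven steps. This is consistent with the observed $5$--$15$ outer iterations.
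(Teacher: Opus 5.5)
Your proposal is correct and follows the paper's route. The corollary is Theorem~\ref{thm:outer} read with the closed form of Proposition~\ref{prop:nominal}: the exact block map $F_\infty$ is perturbed by an inner-solve error of size $\mathcal O(c_E^{L_{\mathrm{inner}}})$, and the perturbed fixed point is located through the factor $(1-\kappa_0)^{-1}$. You go beyond the paper in two places, both to the good.

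\emph{The contraction modulus of $F_L$.} The paper bounds only $\|F_L-F_\infty\|_\infty$. That yields the one-step inequality with an additive term, but it does not by itself make $F_L$ a contraction of modulus $\kappa_0+\mathcal O(c_E^{L_{\mathrm{inner}}})$ with a unique fixed point. Your $C^1$ bound on $R_L$ is what that claim requires.

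\emph{The lower bound.} The paper only asserts that sweeping cannot beat $\kappa_0$. You prove $\rho(F_L')\ge\kappa_0-\mathcal O(c_E^{L_{\mathrm{inner}}})$, and you rightly read the lower bound as holding up to terms that vanish with $L_{\mathrm{inner}}$. That is all that can be true at finite $L_{\mathrm{inner}}$.

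A few repairs are needed.

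\emph{The eigenvalue bound.} Weyl's inequality needs a self-adjoint perturbation, and $R_L'$ is not $H_\theta$-self-adjoint. What you want follows instead from $F_\infty'=H_\theta^{-1}V$ being $H_\theta$-self-adjoint. Bauer--Fike discs together with a continuity argument along $F_\infty'+tR_L'$, $t\in[0,1]$, then keep some eigenvalue of $F_\infty'+R_L'$ within a dimension-dependent multiple of $\|R_L'\|_{H_\theta}$ of $\kappa_0$.

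\emph{Your step~4.} It holds only with norms taken in $H_\theta$, where $\|F_\infty'\|_{H_\theta}=\kappa_0$. Since $F_\infty$ is affine, its middle term is in fact zero.

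\emph{Your step~3.} The derivative must also pass through the $\theta$-dependence of $\mathcal D$ via $\tilde b$, not only through $\Phi^{L_{\mathrm{inner}}}$ in its argument. Because $\Phi$ is the identity on $T$, the tangential part of $\partial_\theta P^{(L_{\mathrm{inner}})}-\partial_\theta P^{(\infty)}$ is not contracted. It is controlled only by summing the geometrically decaying forcing. This gives $\mathcal O((c')^{L_{\mathrm{inner}}})$ for any $c'>c_E$ rather than $c_E^{L_{\mathrm{inner}}}$, which is in any case all that Theorem~\ref{thm:estep} supplies in your step~1.

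\emph{Warm starts.} With $\Gamma_k$ warm-started across outer iterations, the map actually run acts on $(\theta,\Gamma)$. Writing it as $F_L(\theta)$ therefore presumes a fixed inner initialisation; the paper makes the same simplification.

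\emph{The numerical check.} The figure $10^{-10}$ is the E-step tolerance, not an outer one. The outer stopping rule reported for the decomposition is a relative change of $10^{-4}$, which $\kappa_0=0.128$ reaches in about five steps. That is equally consistent with the $5$--$15$ quoted.
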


\paragraph{Proof of Lemma~\ref{lem:diffe}.}
$h_{ij}^{(k)}$ is affine in $\theta$, hence smooth. The implicit KKT system (Sinkhorn marginal equations plus predictable-compensator equations) defines $(u,v,\Gamma)$ smoothly as long as the Jacobian of the constraints w.r.t.\ $(u,v,\Gamma)$ is invertible at $\bar P$, which is the non-degeneracy condition that $T_\mathcal{A}+T_\mathcal{B}+T_\mathcal{D}$ span the relevant tangent space -- precisely Lemma~\ref{lem:transv}.

\paragraph{Proof of Theorem~\ref{thm:outer}.}
By Lemma~\ref{lem:diffe} the inner-projection map is $C^1$ near $\bar\theta$; denote its Jacobian $J_E\in\R^{N^2 n_{\text{pairs}}\times(p+1)}$. The drift-identification step is the WLS solver~\eqref{eq:wls}, whose closed form is $\theta\mapsto H_\theta^{-1}\,(X^\top W y(\theta))$, a $C^1$ map of the empirical compensator densities $\{d_k\}=\{(P_k\odot\mathrm{inc})\one\}$, and hence of $\theta$ through $J_E$.

Apply the chain rule to the \emph{exact} block map: $F_\infty'(\bar\theta)=J_MJ_E$,
where $J_M$ is the M-step Jacobian and $J_E$ is the sensitivity of the
\emph{limit} of the inner projection with respect to $\theta$. Note that $J_E$ is
rectangular, so it has no spectral radius; it is bounded in operator norm by
Lemma~\ref{lem:diffe}, and it is not $c_E$ that bounds it. The rate $c_E$ of
Theorem~\ref{thm:estep} is the speed at which the cyclic projection approaches
that limit at a fixed $\theta$, which is a different matter altogether: the
algorithmic rate of an inner solver does not bound the parametric derivative of
the solution it converges to. Ostrowski's theorem applied to $F_\infty$ gives a
local contraction with any constant exceeding
$\kappa_0=\rho(F_\infty'(\bar\theta))$, which Proposition~\ref{prop:nominal}
computes in closed form.

For the iteration actually run, write $F_L$ for the map obtained from
$L_{\mathrm{inner}}$ sweeps. By Theorem~\ref{thm:estep} the inner iterate is
within $\mathcal O(c_E^{L_{\mathrm{inner}}})$ of its limit, uniformly upon a
neighbourhood, so $\|F_L-F_\infty\|_{\infty}=\mathcal O(c_E^{L_{\mathrm{inner}}})$
there. A standard perturbation of a contraction then gives, for
$\Theta^{(\ell+1)}=F_L(\Theta^{(\ell)})$,
\[
\|\theta^{(\ell+1)}-\bar\theta\|\;\le\;\bigl(\kappa_0+\mathcal O(c_E^{L_{\mathrm{inner}}})\bigr)\|\theta^{(\ell)}-\bar\theta\|
\;+\;\mathcal O(c_E^{L_{\mathrm{inner}}}),
\]
so that $F_L$ is itself a local contraction of modulus
$\kappa_0+\mathcal O(c_E^{L_{\mathrm{inner}}})$ with a unique fixed point
$\bar\theta_L$ in the neighbourhood, at which the displayed inequality forces
$(1-\kappa_0)\|\bar\theta_L-\bar\theta\|=\mathcal O(c_E^{L_{\mathrm{inner}}})$.
Convergence is geometric to $\bar\theta_L$ and not to $\bar\theta$. The further
$\mathcal O(\eps)$ is the displacement of the computed E-step limit from the
exact one where the constraints are not met exactly --- by
Proposition~\ref{prop:epsinvariant} an exactly feasible plan does not depend upon
$\eps$ --- and it likewise moves $\bar\theta_L$ and not the rate. The inexactness is therefore additive and can be
made negligible by sweeping; the contraction $\kappa_0$ cannot.

\section{Proofs: convergence with a hidden state}
\label{app:proofs2}

\begin{assumption}[Regularity of the M-step in the Non-Convex Setting]\label{ass:sosc2}
At the M-step data generated by the fixed point $\bar\Theta$, the programme
\eqref{eq:mstep2-reduced} has $\bar\Theta$ as its unique global minimiser, LICQ
and strict complementary slackness hold for the active constraints of
$\mathcal B_\varrho$ there, the second-order sufficient condition holds, and the
minimiser is \emph{separated}, in the sense
that every other local minimiser has value at least
$\bar J+\Delta_{\mathrm{sep}}$ with $\Delta_{\mathrm{sep}}>0$.
\end{assumption}

The two parts of the argument are these. Robinson's strong
regularity together with the classical sensitivity theorem
\citep{Robinson1980,FiaccoMcCormick1968} render the localised solution map
single-valued and $C^1$; and the separation $\Delta_{\mathrm{sep}}$ places the
\emph{global} minimiser upon that branch in a neighbourhood of the fixed-point
data, so that Ostrowski's theorem applies. In the absence of separation the global
minimiser may pass from one branch to another and the iteration map is
discontinuous. This is a genuine feature of globally solved non-convex
sub-problems, and not an artefact of the argument.

\begin{theorem}[Acquiescent systems and inexact M-steps]\label{thm:outer2-hmr}
Suppose in addition that $\bar a\in\mathcal B_\varrho$ and that the M-step is
solved by projected gradient \citep[Alg.~1]{HardtMaRecht2016}. Then \textup{(i)} the \emph{idealised} risk of \citet[Lem.~3.3]{HardtMaRecht2016}
--- the infinite-horizon transfer-function risk under white-noise excitation and
without static regressors --- is $\tau$-weakly-quasi-convex upon
$\mathcal B_\varrho$, so upon \emph{that} objective the only stationary point is
the global minimiser and projected gradient attains it at the \emph{sub}linear
rate $\mathcal O(1/T)$. The criterion \eqref{eq:mstep2-reduced} actually solved is
a finite-sample one with an arbitrary input and with $(\alpha,\beta)$ fitted
alongside $(a,c)$; we do \emph{not} claim the property for it, and spurious
stationary points are observed (Appendix~\ref{app:extra}). And
\textup{(ii)} the outer iteration keeps its $Q$-linear rate once the inner
accuracy obeys $\eta_\ell\le(\kappa-\kappa_0)\|\Theta^{(\ell)}-\bar\Theta\|$, at
a total cost of $\mathcal O(\varepsilon^{-2})$ gradients rather than
$\mathcal O(\log(1/\varepsilon))$ (Appendix~\ref{app:proofs2}).
\end{theorem}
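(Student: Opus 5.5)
The statement has two parts that need different tools. Part (i) is a transfer of a known property to a restricted setting. Part (ii) is an inexact-Ostrowski argument.

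For \textup{(i)}, the plan is to reduce to \citet[Lem.~3.3 and Sec.~4]{HardtMaRecht2016}. In the idealised risk, white-noise excitation, an infinite horizon and no static regressors make the risk equal to $\|G_{\bar\Theta}-G_\Theta\|_{H_2}^2$ in the controllable canonical form of Appendix~\ref{app:gauge}. For $a\in\mathcal B_\varrho$ the denominator $p_a(\zeta)/\zeta^d$ lies in the wedge $\mathcal W$ on $|\zeta|=\varrho$. Their argument then gives $\tau$-weak quasi-convexity with $\tau$ determined by $(\tau_0,\tau_1,\tau_2)$, and the property is inherited verbatim. Weak quasi-convexity gives $\langle\nabla f(\Theta),\Theta-\bar\Theta\rangle\ge\tau(f(\Theta)-f(\bar\Theta))$. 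This rules out stationary points other than the minimiser. Since $\mathcal B_\varrho$ is convex and contains $\bar a$, projection is non-expansive towards $\bar\Theta$. The standard telescoping argument for projected (stochastic) gradient with step $\eta_t$ on a smooth objective then gives $f(\Theta_T)-f(\bar\Theta)=\mathcal O(1/T)$. This rate requires smoothness on $\mathcal B_\varrho$, which holds because the transfer function is analytic there, and it is stated for the idealised objective only. The disclaimer for \eqref{eq:mstep2-reduced} needs no proof: it is supported by the counterexamples recorded in Appendix~\ref{app:extra}.

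For \textup{(ii)}, the plan is a perturbation of the exact block map. By Theorem~\ref{thm:outer2} (exact global M-step under Assumption~\ref{ass:sosc2}), the exact map $F_\infty$ is $C^1$ near $\bar\Theta$ with spectral radius $\kappa_0<1$. So for any $\kappa\in(\kappa_0,1)$ there is a norm and a neighbourhood in which $\|F_\infty(\Theta)-\bar\Theta\|\le\kappa_0'\|\Theta-\bar\Theta\|$ for some $\kappa_0'$ just above $\kappa_0$. The actual step is $\Theta^{(\ell+1)}=F_\infty(\Theta^{(\ell)})+e_\ell$ with $\|e_\ell\|\le\eta_\ell$. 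The triangle inequality and the hypothesis $\eta_\ell\le(\kappa-\kappa_0)\|\Theta^{(\ell)}-\bar\Theta\|$ then give $\|\Theta^{(\ell+1)}-\bar\Theta\|\le\kappa\|\Theta^{(\ell)}-\bar\Theta\|$, after absorbing the $\kappa_0'-\kappa_0$ slack into the choice of norm and neighbourhood. The iterates therefore stay in the neighbourhood, and convergence is $Q$-linear.

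For the cost, reaching accuracy $\varepsilon$ takes $\mathcal O(\log(1/\varepsilon))$ outer steps. At step $\ell$ the required inner accuracy is proportional to $\kappa^\ell$. To pass from objective suboptimality to parameter accuracy, I would invoke the second-order sufficient condition of Assumption~\ref{ass:sosc2}, which supplies local quadratic growth, so $\|\Theta-\Theta^\ast\|\lesssim\sqrt{f-f^\ast}$. If the inner method converges at the $\mathcal O(1/T)$ rate of \textup{(i)}, parameter accuracy $\eta$ costs $\mathcal O(\eta^{-2})$ gradients. Summing $\kappa^{-2\ell}$ over $\ell\le\log_{1/\kappa}(1/\varepsilon)$ is a geometric series dominated by its last term, giving $\mathcal O(\varepsilon^{-2})$ in total.

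The main obstacle is exactly this last point. Part (i) certifies the $\mathcal O(1/T)$ rate only for the idealised risk, not for \eqref{eq:mstep2-reduced}. So the inner-cost claim must be framed conditionally: one assumes projected gradient attains the accuracy $\eta_\ell$, since the rate itself comes from the idealised surrogate. Alternatively one restricts to a neighbourhood where the second-order sufficient condition makes the actual criterion locally strongly convex, so that the inner method converges there. I would try the local route first, warm-starting each M-step at $\Theta^{(\ell)}$, which lies in the basin.
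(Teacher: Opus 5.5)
Your proposal is correct and follows the paper's proof essentially step for step. Part (i) appeals to Lemma~3.3 of Hardt, Ma and Recht and to their projected-gradient bound. Part (ii) is the triangle inequality against the local contraction of Theorem~\ref{thm:outer2}, and the cost argument uses quadratic growth from Assumption~\ref{ass:sosc2} to give $\mathcal O(\eta_\ell^{-2})$ gradients per M-step, then a geometric sum dominated by its last term. Your caveat is one the paper's proof passes over, though the text after the theorem concedes it: the $\mathcal O(1/T)$ rate is certified only for the idealised risk, so the cost bound is conditional. Your local, warm-started route is what the paper's closing remark uses to obtain the sharper $\mathcal O(\log^2(1/\varepsilon))$.
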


The gap between the idealised risk and the criterion actually minimised is what
the branch-and-bound solver exists to close, and part \textup{(ii)} should be read
as conditional upon an outer contraction holding and upon inner accuracy being
measured in the parameter metric. The linear rate is thus
preserved, though the guarantee is of a different character, and acquiescence is
restrictive, failing at $d=2$ for the poles $\{0.7,0.5\}$.

\begin{theorem}[Exact global M-step]\label{thm:outer2}
Under \textup{(T1)--(T2)} and Assumption~\ref{ass:sosc2}, if the M-step is
solved to global optimality then there are $\mathcal N\ni\bar\Theta$ and
$\kappa\in(0,1)$ with
$\|\Theta^{(\ell)}-\bar\Theta\|\le\kappa^{\ell}\|\Theta^{(0)}-\bar\Theta\|$,
for the exact iteration, and $\kappa$ admits the same decomposition
$\kappa\le\kappa_0+\mathcal O(c_E^{L_{\mathrm{inner}}})+\mathcal O(\eps)$ as in
Theorem~\ref{thm:outer}, the iteration with $L_{\mathrm{inner}}$ inner sweeps
converging to the fixed point $\bar\Theta_L$ of the inexact map at that distance
from $\bar\Theta$, with $\kappa_0$ now the spectral radius of the exact block map
built upon the \emph{non-linear} least-squares solution map.
\end{theorem}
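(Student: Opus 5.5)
The plan mirrors the proof of Theorem~\ref{thm:outer}. Only the M-step changes: it is now the global solution map of the non-linear least-squares problem \eqref{eq:mstep2-reduced}, and its smoothness has to be established rather than read off a closed form. The argument has three steps.

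\emph{Step 1 (E-step).} Under (T1)--(T2), Lemma~\ref{lem:diffe} makes the exact E-step a $C^1$ map near $\bar\Theta$. Its proof uses only the fact that $h^{(k)}$ depends smoothly on the drift vector $b_k(\mathrm{grid};\Theta)$. In the Non-Convex Setting that vector is polynomial in $(a,c,\alpha,\beta)$, through the recursion $z_{k+1}=\comp(a)z_k+e_du_k$ with $z_0=0$. Hence $\Theta\mapsto\{P_k(\Theta)\}$, and therefore the data $(\bar x_k,\bar y_k,Q_2,Q_1,Q_0)$ handed to the M-step, are $C^1$ in $\Theta$ near $\bar\Theta$.

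\emph{Step 2 (M-step as a $C^1$ map).} Treat the data $\xi=(\bar y_k,Q_1,Q_2)$ as a parameter of \eqref{eq:mstep2-reduced}. This is a smooth objective subject to the finitely many linear inequalities $Ga\le h$ defining $\mathcal B_\varrho$, and the state recursion is eliminated by substitution. Assumption~\ref{ass:sosc2} supplies LICQ, strict complementarity and SOSC at $\bar\Theta$ for the data $\bar\xi$ of the fixed point. Robinson's strong regularity together with the Fiacco--McCormick sensitivity theorem then gives a neighbourhood $U$ of $\bar\xi$ and a $C^1$ map $\xi\mapsto\Theta_{\mathrm{loc}}(\xi)$ of strict local minimisers with $\Theta_{\mathrm{loc}}(\bar\xi)=\bar\Theta$.

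The separation clause is what turns this local branch into the global one. By continuity in $\xi$ of the objective values, uniformly on the compact box in which the M-step is posed, we can shrink $U$ so that on it:
\begin{itemize}[topsep=2pt,itemsep=1pt,leftmargin=1.4em]
\item the value along the branch stays below $\bar J+\Delta_{\mathrm{sep}}/3$;
\item every point outside a small ball around the branch has value above $\bar J+2\Delta_{\mathrm{sep}}/3$.
\end{itemize}
Hence the global minimiser over the whole box equals $\Theta_{\mathrm{loc}}(\xi)$, and the single-valued globally solved M-step coincides on $U$ with a $C^1$ map.

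This is the step I expect to be the main obstacle. Outside the ball, the lower bound has to come from the separation of \emph{local minimisers} at $\bar\xi$, and converting that into a bound on values requires a growth argument. I would argue by contradiction with a compactness step: take a sequence $\xi_n\to\bar\xi$ whose global minimisers lie outside the ball, pass to a limit point, and observe that it is a global minimiser at $\bar\xi$ distinct from $\bar\Theta$. This contradicts the uniqueness part of Assumption~\ref{ass:sosc2}.

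\emph{Step 3 (contraction).} Compose the two steps: $F_\infty$ is $C^1$ near $\bar\Theta$, with derivative $J_MJ_E$, where $J_M$ is obtained by implicit differentiation of the KKT system of the active set. Ostrowski's theorem gives local $Q$-linear convergence for every $\kappa>\kappa_0=\rho(F_\infty'(\bar\Theta))$, provided $\kappa_0<1$. I would argue $\kappa_0<1$ as follows. By Proposition~\ref{prop:fixedpoint2}, $\bar\Theta$ is the strict global minimiser of $J+Q(\beta-\bar\beta)^2$, so linearising the fixed-point equation gives $F'_\infty=(\nabla^2_{\mathrm{red}}(J+Q\beta^2))^{-1}\,Q\,e_\beta e_\beta^\top$ on the reduced tangent space. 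This is the analogue of $H_\theta^{-1}V$ in Proposition~\ref{prop:nominal}, and its spectral radius lies below one by SOSC of the fixed-point problem.

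Finally, for $L_{\mathrm{inner}}$ sweeps and $\eps>0$, Theorem~\ref{thm:estep} bounds the perturbation $\|F_L-F_\infty\|=\mathcal O(c_E^{L_{\mathrm{inner}}})+\mathcal O(\eps)$ uniformly near $\bar\Theta$. The M-step is locally Lipschitz by Step 2, so the perturbation passes through it with at most a constant factor. The perturbed-contraction argument of Theorem~\ref{thm:outer} then yields the fixed point $\bar\Theta_L$ at that distance from $\bar\Theta$ and the stated bound on $\kappa$. All parameters are read in the canonical coordinates of Appendix~\ref{app:gauge}.
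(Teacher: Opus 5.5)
Your Steps~1--2 follow the paper's proof: Lemma~\ref{lem:diffe} for the E-step, then Robinson's strong regularity with the Fiacco--McCormick theorem for a $C^1$ local branch $\Theta_{\mathrm{loc}}$, then a selection argument placing the global minimiser on that branch, and finally Ostrowski with additive inexactness.

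The two bullets in Step~2 are not right as stated. By continuity, points just outside a small ball about $\bar\Theta$ have values close to $\bar J$, not above $\bar J+2\Delta_{\mathrm{sep}}/3$. The compactness argument you substitute is sound, however. A limit of global minimisers at $\xi_n\to\bar\xi$ is a global minimiser at $\bar\xi$, so by uniqueness the global minimisers at nearby data lie in any prescribed ball about $\bar\Theta$. Local uniqueness of the KKT pair under strong regularity then identifies them with $\Theta_{\mathrm{loc}}(\xi)$. This uses only uniqueness, joint continuity and the compact box, not separation. It is tidier than the paper's route, which tracks the values of ``other local minimisers'' as the data are perturbed.

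The genuine gap is in Step~3. With an exact unclipped E-step one has $Q_1=-2Q\beta_{\mathrm{old}}$. The M-step is therefore the proximal step $\Theta_{\mathrm{old}}\mapsto\argmin\{J(\Theta)+Q(\beta-\beta_{\mathrm{old}})^2\}$. On the reduced space (which contains $e_\beta$, since $\mathcal B_\varrho$ constrains only $a$) its derivative is
\[
F_\infty'(\bar\Theta)=M^{-1}p\,e_\beta e_\beta^\top,\qquad M:=H_J+p\,e_\beta e_\beta^\top,\qquad p:=2Q,
\]
where $H_J$ is the reduced Hessian of $J$ \emph{alone}; your formula drops the factor $2$. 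The one nonzero eigenvalue is $\lambda=p\,e_\beta^\top M^{-1}e_\beta=p\,\|M^{-1/2}e_\beta\|^2$.

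Assumption~\ref{ass:sosc2} gives SOSC for \eqref{eq:mstep2-reduced}, that is, $M\succ0$. Given $M\succ0$, the matrix $H_J=M-p\,e_\beta e_\beta^\top$ is:
\begin{itemize}
\item positive definite if and only if $\lambda<1$;
\item singular if and only if $\lambda=1$;
\item indefinite if and only if $\lambda>1$.
\end{itemize}
So SOSC of the M-step loss still permits $\kappa_0\ge1$: a proximal-point step is repelled from a saddle of $J$ that the proximal term merely convexifies. This is the distinction Proposition~\ref{prop:nominal} draws between $H_\theta\succ0$ and $B+\lambda_\theta I\succ0$. Remark~\ref{rem:latentscope} likewise warns that curvature of the M-step loss does not transfer to $J$.

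The paper does not derive $\kappa_0<1$ from Assumption~\ref{ass:sosc2}; its proof simply posits it (``under the stated smallness''). To close your argument you must do the same, or add the hypothesis that the reduced Hessian of $J$ itself is positive definite at $\bar\Theta$. Under that hypothesis your corrected formula gives the sharp bound $\kappa_0=p\,e_\beta^\top M^{-1}e_\beta<1$, which would improve on the paper's bare assumption.
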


\paragraph{Proof of Theorem~\ref{thm:outer2}.}
Write $E:\Theta\mapsto\{P_k(\Theta)\}$ for the exact E-step and
$\Psi:(\bar x,\bar y)\mapsto\argmin\eqref{eq:mstep2-reduced}$ for the M-step,
so that the block map is $F=\Psi\circ\Lambda\circ E$ with
$\Lambda\{P_k\}=(\bar x_k,\one^\top d_k,\textstyle\sum_id_k[i]x_i)_k$ the (linear)
statistics map --- the third component being what $Q_1$ of
\eqref{eq:mstep2-reduced} depends upon --- of
Section~\ref{sec:bcd}. Lemma~\ref{lem:diffe} gives $E\in C^1$ near
$\bar\Theta$ under (T1)--(T2), and $\Lambda$ is linear, so it suffices to show
that $\Psi$ is single-valued and $C^1$ near the fixed-point data
$(\bar x,\bar y)$, and that the solver returns its value.

The programme \eqref{eq:mstep2-reduced} is a smooth non-linear programme in
$(a,c,\alpha,\beta,z)$ with the linear inequality constraints of
$\mathcal B_\varrho$ and the smooth equality constraints $z_{k+1}=\comp(a)z_k+e_du_k$.
Assumption~\ref{ass:sosc2} states LICQ and the second-order sufficient
condition at $\bar\Theta$. By Robinson's theorem \citep{Robinson1980} the
associated generalised equation is strongly regular, hence the KKT system can
be solved for the primal-dual pair as a Lipschitz function of the data, and by
the classical sensitivity theorem \citep[Thm.~2.3.2]{FiaccoMcCormick1968}, or
by the implicit function theorem applied to the KKT system on the active set,
this localised solution map $\Psi_{\mathrm{loc}}$ is continuously
differentiable on a neighbourhood $\mathcal U$ of $(\bar x,\bar y)$.

It remains to know that the \emph{global} minimiser coincides with
$\Psi_{\mathrm{loc}}$ on a possibly smaller neighbourhood. Let $J(\Theta;\xi)$
denote the objective at data $\xi=(\bar x,\bar y)$. $J$ is jointly continuous
and, on the compact set to which the box constraints confine $\Theta$,
uniformly so; hence the value function
$\xi\mapsto\min_\Theta J$ and the value restricted to a neighbourhood of any
other local minimiser are continuous. By the separation hypothesis, at
$\xi=\bar\xi$ every local minimiser other than $\bar\Theta$ has value at least
$\bar J+\Delta_{\mathrm{sep}}$; by uniform continuity there is $\varepsilon>0$
such that for $\|\xi-\bar\xi\|<\varepsilon$ all such values remain above
$\bar J+\Delta_{\mathrm{sep}}/2$ while
$J(\Psi_{\mathrm{loc}}(\xi);\xi)<\bar J+\Delta_{\mathrm{sep}}/4$. Hence the global
minimiser at $\xi$ is $\Psi_{\mathrm{loc}}(\xi)$, and a solver certifying
global optimality returns it.

Therefore $F_\infty$ is $C^1$ near $\bar\Theta$ with
$F_\infty'(\bar\Theta)=\Psi_{\mathrm{loc}}'\,\Lambda\,E'$, in which $E'$ is the
sensitivity of the \emph{limit} of the inner projection and is bounded in
operator norm, not by $c_E$. Writing
$\kappa_0=\rho(F_\infty'(\bar\Theta))<1$ under the stated smallness, Ostrowski's
theorem gives local $Q$-linear convergence of the exact iteration with any rate
exceeding $\kappa_0$; the iteration actually run differs from it by
$\mathcal O(c_E^{L_{\mathrm{inner}}})$, which enters additively exactly as in the
proof of Theorem~\ref{thm:outer}, so that the convergence is to the fixed
point of the inexact map, at distance
$\mathcal O(c_E^{L_{\mathrm{inner}}})+\mathcal O(\eps)$ from $\bar\Theta$. \hfill$\square$

\paragraph{Proof of Theorem~\ref{thm:outer2-hmr}.}
(i) is \citet[Lem.~3.3]{HardtMaRecht2016} applied to
\eqref{eq:fixedpoint2}: on $\mathcal B_\varrho$ the idealised risk of an
$\varrho$-acquiescent system is $\tau$-weakly quasi-convex, i.e.\
$\inner{\nabla J(\Theta)}{\Theta-\bar\Theta}\ge\tau\bigl(J(\Theta)-J(\bar\Theta)\bigr)$,
and for such functions projected gradient descent with a fixed step on an
$\ell$-smooth objective satisfies
$\min_{t\le T}J(\Theta_t)-J(\bar\Theta)=\mathcal O(1/(\tau^2 T))$
\citep[App.~A]{HardtMaRecht2016}; in particular every stationary point in
$\mathcal B_\varrho$ is a global minimiser, which is the property we use.

(ii) Let $\Theta^{(\ell+1)}=F(\Theta^{(\ell)})+e_\ell$ with
$\|e_\ell\|\le\eta_\ell$ the inexactness of the inner solve, and let $F$ be a
local contraction with factor $\kappa_0$ as in Theorem~\ref{thm:outer2}. Then
\[
\|\Theta^{(\ell+1)}-\bar\Theta\|\le\|F(\Theta^{(\ell)})-F(\bar\Theta)\|+\eta_\ell
\le\kappa_0\|\Theta^{(\ell)}-\bar\Theta\|+\eta_\ell ,
\]
so $\eta_\ell\le(\kappa-\kappa_0)\|\Theta^{(\ell)}-\bar\Theta\|$ gives
$\|\Theta^{(\ell+1)}-\bar\Theta\|\le\kappa\|\Theta^{(\ell)}-\bar\Theta\|$ and
the claim follows by induction.

(iii) Assumption~\ref{ass:sosc2} implies quadratic growth of $J$ around
$\bar\Theta$, so an inner accuracy $\eta_\ell$ in the parameter metric
corresponds to an objective accuracy of order $\eta_\ell^2$; by (i) this
requires $T_\ell=\mathcal O(\tau^{-2}\eta_\ell^{-2})$ gradient steps. With
$\eta_\ell\asymp\kappa^{\ell}$ this is
$T_\ell=\mathcal O(\kappa^{-2\ell})$, and $\sum_{\ell\le L}T_\ell$ is dominated
by its last term, which is $\mathcal O(\varepsilon^{-2})$ for
$\kappa^{L}=\varepsilon$. Under local strong convexity the inner projected
gradient iteration is itself locally $Q$-linear with some factor $q<1$, so
$T_\ell=\mathcal O(\log(1/\eta_\ell)/\log(1/q))=\mathcal O(\ell)$ and
$\sum_{\ell\le L}T_\ell=\mathcal O(L^2)=\mathcal O(\log^2(1/\varepsilon))$.
\hfill$\square$

\section{Proofs: the fixed point of the block iteration}

\label{app:fixedpoint}

\paragraph{Consistent, but not efficient.}
The programme imposes one moment condition per active atom per pair, $KN$ in all
($336$ at the $N=14$, $K=24$ of Appendix~\ref{app:mstep2solvers}), and one might
expect the estimator to use them. It does not, and the mechanism is not that the
M-step ignores the within-marginal variation but that the E-step
\emph{annihilates} it: projecting onto $\mathcal D$ leaves
\begin{equation}\label{eq:annihilation}
y_k[i]-b_k(x_i;\Theta)=\delta_k\qquad\text{for every active }i,
\end{equation}
a residual constant in $i$. Hence
$\operatorname{Cov}_{\mu_k}(y_k,X)=\beta\operatorname{Var}_{\mu_k}(X)$ exactly,
the terms $Q_2\beta^2+Q_1\beta$ of \eqref{eq:mstep2-reduced} merely reproduce the
current $\beta$, and the shape of $\mu_k$ enters only through the preconditioner
$H_\theta$, affecting the rate but never the limit. In GMM terms the $KN$
conditions carry the information of $K$ scalars. The estimator is therefore
consistent, $\gamma_k=\bar\phi_k^\top\theta_\star$ making the surviving
condition correctly specified, with the asymptotic variance of a
$K$-observation regression however large $M$ and $N$ may be; and the loss falls
upon $\beta$, which enters $\bar\phi_k$ only through $\bar x_k$.

\paragraph{Proof of Proposition~\ref{prop:fixedpoint1}, Convex Setting.}
The fixed-point equation of the Convex Setting is, in full,
\begin{equation}\label{eq:fixedpoint1}
\bar\theta=\Bigl(\sum_{k}\bar\phi_k\bar\phi_k^\top\Bigr)^{-1}\sum_{k}\bar\phi_k\gamma_k ,
\end{equation}
whenever the Gram matrix is invertible.

Substituting $d_k[i]/\mu_k[i]=\phi_{ki}^\top\theta+\delta_k$ into \eqref{eq:wls}
gives $\theta'=\theta+H_\theta^{-1}\sum_{k,i}\mu_k[i]\phi_{ki}\delta_k
=\theta+H_\theta^{-1}\sum_k\bar\phi_k\delta_k$, and by \eqref{eq:delta},
$\delta_k=\gamma_k-\bar\phi_k^\top\theta$. Hence $\theta'=\theta$ if and only
if $\sum_k\bar\phi_k(\gamma_k-\bar\phi_k^\top\theta)=0$.

\begin{corollary}[Consistency and the windowing bias]\label{cor:consistency}
In the population scheme, $\mu_k,\nu_k$ are the laws of $X_{t_k},X_{t_{k+1}}$,
so $\gamma_k=\E[b_k(X_{t_k};\theta_\star)]=\bar\phi_k^\top\theta_\star$ exactly
and \eqref{eq:fixedpoint1} returns $\theta_\star$: the estimator is Fisher
consistent. In the rolling-window scheme, $\gamma_k=(r_{k+1}-r_{k-w+1})/w$ is a
telescoping window difference and $\bar\phi_k$ is a window average of features,
so \eqref{eq:fixedpoint1} is a regression of time-aggregated increments on
time-aggregated features. The resulting aggregation bias is
$\mathcal O(1)$ in $w$ and does not vanish as $n\to\infty$; the
rolling-window scheme should be read as a diagnostic device, not as the
intended use of the method.
\end{corollary}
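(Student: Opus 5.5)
Two claims need proof: Fisher consistency in the population scheme, and a non-vanishing aggregation bias under rolling windows. Both rest on the fixed-point characterisation \eqref{eq:fixedpoint1} of Proposition~\ref{prop:fixedpoint1}. That characterisation says the limit of the Convex Setting is the ordinary least squares regression of the mean displacements $\gamma_k$ on the mean features $\bar\phi_k$. It therefore suffices to compute $\gamma_k$ in each scheme and insert it into \eqref{eq:fixedpoint1}.

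\emph{Population scheme.} Take $\mu_k,\nu_k$ to be the population laws of $X_{t_k}$ and $X_{t_{k+1}}$, with the $\mathcal S_k$-measurable features held fixed. Then
\[
\gamma_k=\E[X_{t_{k+1}}-X_{t_k}\mid\mathcal S_k].
\]
I would apply the tower property to the Markovian-projection representation \eqref{eq:markov-projection}, using $\mathcal S_k\subseteq\mathcal H_k$:
\[
\gamma_k=\E\bigl[\E[X_{t_{k+1}}-X_{t_k}\mid\mathcal H_k]\bigm|\mathcal S_k\bigr]=\E[b_k(X_{t_k};\theta_\star)\mid\mathcal S_k].
\]
Since $b_k$ is affine in $\phi_{ki}$, this equals $\bar\phi_k^\top\theta_\star$, because the lag statistics $s_{k-j}$ are constant under $\mu_k$. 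Substituting into \eqref{eq:fixedpoint1} gives
\[
\bar\theta=\Bigl(\sum_k\bar\phi_k\bar\phi_k^\top\Bigr)^{-1}\Bigl(\sum_k\bar\phi_k\bar\phi_k^\top\Bigr)\theta_\star=\theta_\star,
\]
provided the Gram matrix is invertible, which is Assumption~\ref{ass:sosc}. One caveat must be stated: this is an identity at the population marginals. Grid projection $\Pi_N$ and finite $M$ perturb it, and those errors belong to the rates, not to Fisher consistency.

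\emph{Rolling windows.} From the definitions of $\mu_k,\nu_k$ in Appendix~\ref{app:notation}, the window sums telescope:
\[
\gamma_k=\frac{1}{w}\sum_{j=k-w+2}^{k+1}r_j-\frac{1}{w}\sum_{j=k-w+1}^{k}r_j=\frac{r_{k+1}-r_{k-w+1}}{w}=\frac{1}{w}\sum_{j=k-w+1}^{k}(r_{j+1}-r_j).
\]
Similarly, $\bar\phi_k$ is the window average of $(1,r_j,s_{k-1},\dots)$. Write each increment as $r_{j+1}-r_j=\phi_j^\top\theta_\star+\epsilon_j$ with the martingale difference $\epsilon_j$. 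The lag features, however, are evaluated at $k$ and not at $j$. The regression residual is then $\gamma_k-\bar\phi_k^\top\theta_\star$, which equals a window average of $\sum_{l\ge2}\beta_l(s_{j-l+1}-s_{k-l+1})$ plus averaged noise. By ergodicity, the normal equations converge to the population moment $\E[\bar\phi_k\,\mathrm{residual}]$. This moment is generically nonzero: the lag mismatch correlates with $\bar\phi_k$, and for $p=1$ the averaged noise correlates with the averaged level $\bar x_k$, since $r_j$ inside the window contains earlier $\epsilon$'s.

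The main obstacle is proving that the bias does not vanish. I would exhibit it in the simplest case, $p=1$ with an AR(1) process $r_{j+1}-r_j=\alpha+\beta r_j+\epsilon_j$. Here $\mathrm{Cov}(\bar x_k,\bar\epsilon_k)$ can be computed explicitly from the stationary autocovariances. It is of order $1$ for fixed $w\ge2$ and is independent of $n$, so the probability limit of $\bar\beta$ differs from $\beta$ by a fixed amount. This counterexample suffices for the "does not vanish" claim. The general $\mathcal O(1)$-in-$w$ statement then follows from the same covariance expression, which is bounded uniformly in $w$.
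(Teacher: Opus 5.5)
Your population half is the paper's argument. The tower property over $\mathcal S_k\subseteq\mathcal H_k$ gives $\gamma_k=\bar\phi_k^\top\theta_\star$, and substituting into \eqref{eq:fixedpoint1} with a non-singular Gram matrix returns $\theta_\star$. For the rolling-window half the paper gives only the telescoping identity and then asserts the bias, so your $p=1$ AR(1) witness supplies an argument the paper lacks. It is correct: the residual is exactly $\bar\epsilon_k$, and for $w\ge2$ it is correlated with $\bar x_k$ because $r_k$ contains $\epsilon_{k-1},\ldots,\epsilon_{k-w+1}$.

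Your final sentence does not follow, however. The slope bias is the ratio $\operatorname{Cov}(\bar x_k,\bar\epsilon_k)/\operatorname{Var}(\bar x_k)$, and $\operatorname{Var}(\bar x_k)\to0$ like $1/w$, so a uniform bound on the covariance bounds nothing. Worse, the covariance itself decays like $1/w$, so on its own it would suggest the bias vanishes as $w$ grows.

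Compute numerator and denominator together instead. Take the stationary case $|\rho|<1$, with $\rho=1+\beta$, $\sigma^2=\operatorname{Var}\epsilon_j$ and $S_w=\sum_{d=1}^{w-1}(w-d)\rho^{d-1}$. Then
\[
\operatorname{Cov}(\bar x_k,\bar\epsilon_k)=\frac{\sigma^2S_w}{w^2},\qquad
\operatorname{Var}(\bar x_k)=\frac{\sigma^2(w+2\rho S_w)}{w^2(1-\rho^2)},\qquad
\operatorname{plim}\hat\beta-\beta=\frac{(1-\rho^2)S_w}{w+2\rho S_w}.
\]
This ratio vanishes at $w=1$ and equals $-\beta/2$ at $w=2$. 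It is strictly positive for every $w\ge2$, because the terms of $S_w$ are either all positive or alternate with decreasing magnitude. As $w\to\infty$ it tends to $-\beta$, so the windowed slope collapses to zero. That is the precise sense of $\mathcal O(1)$ in $w$ for this example: the bias is bounded and also bounded away from zero, not merely bounded.
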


\paragraph{Relation to pseudo-panel estimation.}
The fixed point \eqref{eq:fixedpoint1} will be familiar to econometricians. When a
population is observed through repeated cross-sections and individuals cannot be
followed, \citet{Deaton1985} proposed grouping the individuals into cohorts,
replacing each individual by the cohort mean at each date, and treating the
sequence of cohort means as a panel --- a \emph{pseudo-panel}. \citet{Moffitt1993}
extended the device to dynamic models, in which the individual lagged dependent
variable is unavailable, and showed that what is identified from a time series of
cross-sections is the dynamics of the cohort means together with whatever
aggregate regressors are common to the cohort; \citet{Verbeek2008} surveys the
literature. Our setting is the single-cohort case of theirs, and three of our
statements are pseudo-panel statements under another name.

First, the closed form \eqref{eq:fixedpoint1} regresses the mean displacement
$\gamma_k=\E_{\nu_k}[X]-\E_{\mu_k}[X]$ upon the mean feature $\bar\phi_k$: it is
Deaton's between-cohort regression with one cohort and $K$ dates. The
transport layer and the $KN$ moment conditions add nothing to it, exactly as
the within-cohort variation adds nothing to Deaton's estimator once the cohort
means are formed. Second, the feature $\sigma$-field of
Section~\ref{sec:setting1} --- that an individual lag is not a regressor and only
a common statistic $s_j$ is --- is Moffitt's identification argument: a model
with an individual lagged dependent variable is not identified from repeated
cross-sections, and what may be fitted is its aggregate counterpart, which is
why the lag of $\bar x_j$ or the input $u_j$ appears in
\eqref{eq:markov-projection} and not $r_{k-j}$. Third, the Monte-Carlo term
$\mathcal O_p(M^{-1/2})$ of Theorem~\ref{thm:rate}, with $M$ read as a noise level
and not as a sample size, is the cell-size errors-in-variables problem which
occupies the second half of \citet{Deaton1985}: the cohort means are estimated
from finitely many individuals, their sampling error enters the regressors, and
consistency is a statement about the number of dates, here $K$, and not about
the number of individuals. Deaton's measurement-error correction, applied to
$\bar\phi_k$, would be the analogue here of an adjustment for finite $M$; we
have not pursued it, the term being dominated by the estimation term in our
experiments.

What the present paper adds to that literature is the identification of the
\emph{estimand} as a projection of the predictable compensator of a
semimartingale (Section~\ref{sec:setting}), the latent-state parametrisation
and its complexity (Sections~\ref{sec:setting2}--\ref{sec:nphard}), and the
couplings themselves, which the pseudo-panel literature does not construct and
which Appendix~\ref{app:kernel} shows to be informative as transition kernels
even though they are inert for the parameter.

\paragraph{The proximal term, and the implementation.}
The proximal term of Proposition~\ref{prop:fixedpoint1} is not decoration: the exact E-step returns
$y_k[i]=\gamma_k+\beta_{\mathrm{old}}(x_i-\bar x_k)$, so the M-step of
\eqref{eq:mstep2-reduced} minimises $J$
with a proximal penalty upon $\beta$ alone, and every fixed point is a stationary
point of $J$ but not conversely.
The reference implementation, however, solves the reduced problem
without that penalty: \texttt{bcd.fit\_bcd\_lds} passes only $(\bar x_k,\bar y_k)$
to the M-step solver, and the terms $Q_2\beta^2+Q_1\beta+Q_0$ are evaluated
afterwards, to report the objective. By the proposition the fixed points of the
two M-steps coincide, so no estimate in this paper depends upon the choice;
what differs is the path to them. Without the penalty the M-step depends upon
the E-step through $(\bar x_k,\gamma_k)$ alone, so that after one exact E-step
it is already at its fixed point and the outer iteration has nothing left to do
but absorb the inexactness of the E-step. This is the fall by a factor of
several hundred at the first outer step in Section~\ref{sec:numerics}. It is a
property of the reduced M-step, not of the map $F_\infty$ of
Proposition~\ref{prop:nominal}, whose contraction $\kappa_0=0.128$ governs the
weighted least squares of the Convex Setting, which does retain the
within-marginal terms; with the penalty in place the slope $\beta$ would
contract at a comparable rate and the first step would be less dramatic.

\paragraph{Proof of Proposition~\ref{prop:fixedpoint1}, Non-Convex Setting.}
Immediate from $\bar y_k=\one^\top d_k=\gamma_k$ and
\eqref{eq:mstep2-reduced}; the $\beta$-only terms $Q_2\beta^2+Q_1\beta+Q_0$
are the within-marginal variance of $y$ and do not affect stationarity in
$(a,c,\alpha)$.

\section{Rates in the joint limit}
\label{app:rates}

Proposition~\ref{prop:fixedpoint1} identifies the estimand; it does not say at
what rate the estimator approaches it. We record here the missing statement. The
four sources of error are separated deliberately, since they are governed by four
different quantities and are balanced by the experimenter in four different ways:
the number of copies $M$, the number of adjacent pairs $K=n_{\text{pairs}}$, the
grid spacing $h$ and the mesh $\Delta$.

\subsection*{Hypotheses}

Throughout this section $\Pi_N$ denotes the nearest-atom projection of
Appendix~\ref{app:notation} onto a uniform grid of spacing $h$, and
$\mathcal Z:=\sigma(Z_k,u_k:k\ge0)$ the $\sigma$-field generated by the common
factor. We write $\E_{\mathcal Z}$ for $\E[\,\cdot\mid\mathcal Z\,]$.

\begin{itemize}[leftmargin=2.2em,itemsep=2pt,topsep=3pt]
\item[\textup{(R1)}] \emph{Observation scheme.} Conditionally on $\mathcal Z$ the
copies $X^{(1)},\dots,X^{(M)}$ are independent and identically distributed, each
satisfying $dX^{(m)}_t=a^{(m)}_t\,dt+\sigma\,dW^{(m)}_t$ with
$a^{(m)}_t=a(X^{(m)}_t,Z_{\lfloor t/\Delta\rfloor})$ and with independent driving
Brownian motions, themselves independent of $\mathcal Z$.
\item[\textup{(R2)}] \emph{Moments, binning and truncation.}
$\sup_{t\le T}\E[X_t^4]<\infty$ and $\sup_{t\le T}\E|a_t|<\infty$. The grid is
the uniform partition of $[-R,R]$ of spacing $h$, with $N=\Theta(R/h)$ atoms and
with mass falling outside $[-R,R]$ assigned to the nearer endpoint. We keep the
two approximations separate: the \emph{binning} error, of order $h$, and the
\emph{truncation} error 
$\tau_R:=\sup_k\bigl(\E_{\mathcal Z}[(|X_{t_k}|-R)_+]+h\,\mathbb P_{\mathcal Z}(|X_{t_k}|>R)\bigr)$, 
which is not zero for a law of unbounded support --- as the Gaussian transitions
of \eqref{eq:sim} have --- however far $R$ be taken. For an integer $r\ge1$ we
assume, when $r\ge2$, that the conditional densities $f_k$ of $X_{t_k}$ given
$\mathcal Z$ possess $r-1$ derivatives, integrable over the line, with
$\sup_k\|\partial^{r-1}f_k\|_{L^1}<\infty$ almost surely; for $r=1$ nothing is
assumed. Two conventions deserve notice. First, $N=\Theta(h^{-1})$ only for a
\emph{fixed} window; an expanding window has $N=\Theta(R/h)$, and the cost of
driving $\tau_R$ down is paid in $N$. Second, a grid fitted to the observed
sample would be random, and its truncation
error would require a tail analysis of its own which we do not give. The fine-grid study of
Appendix~\ref{app:bign}, which is the experiment offered in illustration of
Theorem~\ref{thm:rate}, accordingly uses a window $[-R,R]$ fixed before the data
are drawn, so that its grid is deterministic and the theorem applies to it as
stated; the remaining experiments, which make no appeal to that theorem, fit the
grid to the observed sample.
\item[\textup{(R3)}] \emph{Regularity of the drift within a mesh interval.} There
is $L_a<\infty$ with
$\E\bigl|\E[a_s-a_{t_k}\mid\mathcal F_{t_k}]\bigr|\le L_a(s-t_k)$ for every $k$
and every $s\in[t_k,t_{k+1}]$. The restriction to a single interval is
deliberate: the factor of \eqref{eq:sim} is piecewise constant and jumps at the
mesh points, so a bound global in $t\le s$ would fail across a boundary, and the
one-step argument of Lemma~\ref{lem:mesh} needs no more than the local form.
\item[\textup{(R4)}] \emph{Two targets, and the gap between them.} With
$\gamma^\infty_k:=\E_{\mathcal Z}[X_{t_{k+1}}]-\E_{\mathcal Z}[X_{t_k}]$ and
$\bar\phi^\infty_k$ the corresponding exact mean features, let
\[
\theta_\star(\Delta):=\argmin_\theta\E\bigl[(\Delta^{-1}\gamma^\infty_k-\bar\phi_k^{\infty\top}\theta)^2\bigr],
\qquad
\zeta_k:=\Delta^{-1}\gamma^\infty_k-\bar\phi_k^{\infty\top}\theta_\star(\Delta),
\]
\[
\varsigma^2:=\E[\zeta_k^2].
\]
This is the \emph{discrete-time} pseudo-true parameter at mesh $\Delta$, and it
is what the estimator approaches; the mesh is already inside it, and inside
$\zeta_k$. Whenever a continuous-time coefficient $\theta_0$ is the object of
interest one must in addition assume
\begin{equation}\label{eq:meshgap}
\bigl\|\theta_\star(\Delta)-\theta_0\bigr\|\;\le\;L_\theta\,\Delta
\end{equation}
for $\Delta$ below some $\Delta_0$. For \eqref{eq:sim}, whose data are generated
by the Euler recursion itself, the discrete model holds exactly,
$\varsigma=0$, $\theta_\star(\Delta)=\theta_0$ and \eqref{eq:meshgap} is trivial.
For data generated by the \emph{exact} transition of the corresponding
Ornstein--Uhlenbeck equation the picture is otherwise: the finite-interval
conditional mean is not $\Delta$ times the instantaneous drift but
$(e^{\beta\Delta}-1)x+q_kF_\beta(\Delta)$ with $F_b(\Delta)=(e^{b\Delta}-1)/b$,
so that $\varsigma$ need not vanish at fixed $\Delta$ and the slope of
$\theta_\star(\Delta)$ is $\Delta^{-1}(e^{\beta\Delta}-1)=\beta+\tfrac12\beta^2\Delta+\mathcal O(\Delta^2)$;
\eqref{eq:meshgap} then holds, with $L_\theta$ read off from that expansion, but
it is a hypothesis about the generator and not a consequence of the estimator.
It is for this reason that the assertion ``exact specification of
\eqref{eq:markov-projection} makes $\varsigma=0$'' must be read of the
\emph{discrete} model at the mesh in use.
\item[\textup{(R5)}] \emph{Persistent excitation.}
$\E\|\bar\phi^\infty_k\|^4<\infty$ and $\lambda_{\min}\bigl(G\bigr)\ge\lambda_0>0$,
where $G:=\E[\bar\phi^\infty_k\bar\phi_k^{\infty\top}]$; and the
sequence $(\bar\phi^\infty_k)_k$ is stationary and ergodic, so that
$\widehat G_K:=K^{-1}\sum_k\bar\phi^\infty_k\bar\phi^{\infty\top}_k\to G$.
When the mesh is allowed
to vary, $\lambda_0$ is required to hold \emph{uniformly in} $\Delta$ ---
a demand which Remark~\ref{rem:infill-fails} shows is not automatic.
\item[\textup{(R6)}] \emph{The score.} The array
$\eta_k:=\bar\phi^\infty_k\zeta_k$ is strictly stationary, centred, and strongly
mixing with $\sum_j\alpha_\eta(j)^{\delta/(2+\delta)}<\infty$ and
$\E\|\eta_0\|^{2+\delta}<\infty$ for some $\delta>0$. This is stated of the score
and not of the factor. Assuming it of $(Z_k,u_k)$ alone would not do:
$\bar\phi^\infty_k$ is the conditional mean of a driven diffusion, hence a
functional of the whole past of the factor together with the initial condition,
and a function of the entire past of a mixing process need not inherit its rate.
The initialisation $z_0=0$ of the experiments is likewise not a stationary start;
Proposition~\ref{prop:ergodic}(ii) supplies the transient analysis, the effect
of the initialisation decaying geometrically and altering neither the law of
large numbers nor the central limit theorem, so that no burn-in is required. 
\end{itemize}

\paragraph{The geometry of \textup{(R5)}.}
The second requirement of \textup{(R5)} is the \emph{persistent excitation} of
classical system identification, and it is worth saying what it asks
geometrically. For a unit vector $v$ one has
$v^\top Gv=\E\bigl[(v^\top\bar\phi_k^\infty)^2\bigr]$, the mean square of the
projection of the mean-feature vector upon the direction $v$; and
$\lambda_{\min}(G)$ is the least such quantity. The hypothesis therefore says
that the cloud of feature vectors $\{\bar\phi_k^\infty\}$ has width in
\emph{every} direction --- that it is not contained, even asymptotically, in any
slab about a hyperplane through the origin. Were it so contained, with normal
$v$, then $v^\top\bar\phi_k^\infty\equiv0$ and the parameter combination
$v^\top\theta$ would alter no prediction: it would be unidentified, and no amount
of data would recover it.

The condition is not a technicality attached to consistency alone. In the
decomposition \eqref{eq:ratedecomp} the whole error is premultiplied by
$G_K^{-1}$, so $\lambda_{\min}(G)$ is the constant standing in front of all four
terms of \eqref{eq:jointrate}; halving it doubles the Monte-Carlo, estimation,
grid and mesh contributions alike. Three ways of losing it are worth recording. 
Under \emph{independent copies} the flow of marginals is deterministic and
$\mathcal S_k$ is degenerate. This does not by itself make $G$ singular, a
deterministic curve of means being free to vary with $k$ and a known input free
to drive it; what it does is to make the lag coordinates a deterministic
sequence, so that any exact recursion that sequence satisfies renders them
linearly dependent, and excitation becomes fortuitous rather than structural.
This is Section~\ref{sec:setting1}(iii): it is a random common factor that is
lost, not time dependence.
If the marginal flow is \emph{stationary}, so that $\bar x_k$ does not vary with
$k$, the intercept and the $\bar x$ coordinate are collinear and $\beta$ is
unidentified however rich the spatial spread of each $\mu_k$ may be; identifying
a state-dependent drift requires temporal variation in the cross-sectional mean,
not merely variation within a marginal. And if the factor is left to
\emph{decay without renewal} --- the input switched off after a non-zero initial
excitation, so that $Z_k=\comp(a)^kz_0\to0$
geometrically --- its coordinate contributes a vanishing amount and $G$ is
singular in the direction of $c$. It is excitation, and not hiddenness alone,
that the Non-Convex Setting requires.

The stability region already in use supplies the factor with \emph{some}
geometric rate of mixing, though --- and this is the point of the counterexample
below --- not with the rate $\varrho$ itself.

\begin{lemma}[Acquiescence implies geometric mixing]\label{lem:mixing}
Let $a\in\mathcal B_\varrho$ with $\varrho<1$ and let $(u_k)$ be independent and
identically distributed with $\E|u_0|^{4}<\infty$ and with a component absolutely
continuous with respect to Lebesgue measure --- without which a stable
autoregression need not be strongly mixing at all \citep{Andrews1984}. Then the
stationary solution of $Z_{k+1}=\comp(a)Z_k+e_du_k$ is geometrically ergodic and
strongly mixing: there are $C<\infty$ and $\varrho_0\in(0,1)$, \emph{depending
upon the law of $u_0$ as well as upon $a$}, with $\alpha(j)\le C\varrho_0^{\,j}$.
\end{lemma}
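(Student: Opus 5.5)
The plan is to recast the recursion $Z_{k+1}=\comp(a)Z_k+e_du_k$ as a Markov chain on $\R^d$ and verify the standard drift and minorisation conditions of Meyn--Tweedie, from which geometric ergodicity follows. Geometric ergodicity of a stationary Harris chain then gives $\beta$-mixing, hence $\alpha$-mixing, at a geometric rate. Stability comes first and is cheap. By \citet[Lem.~4.2]{HardtMaRecht2016}, $a\in\mathcal B_\varrho$ gives $\rho(A)<\varrho<1$ for $A=\comp(a)$. Hence $\|A^k\|\le C_A\bar\varrho^{\,k}$ for any $\bar\varrho\in(\rho(A),1)$. The stationary solution is therefore $Z_k=\sum_{j\ge0}A^je_du_{k-1-j}$, which converges almost surely and in $L^4$ because $\E|u_0|^4<\infty$. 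Its law $\pi$ is the unique invariant distribution.

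For the Foster--Lyapunov drift I would take $V(z)=1+z^\top Pz$, where $P\succ0$ solves the Lyapunov equation $A^\top PA-P=-I$. Then
\[
\E[V(Z_{k+1})\mid Z_k=z]\le\lambda V(z)+b
\]
for some $\lambda<1$ and $b<\infty$, using only $\E u_0^2<\infty$. Consequently every sublevel set $\{V\le c\}$ is compact.

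The main obstacle is showing that compact sets are small. A single step only moves the chain along the line $z\mapsto Az+e_du$, so its transition law is singular on $\R^d$. I would instead use the $d$-step transition:
\[
Z_{k+d}=A^dZ_k+\mathcal C(u_k,\dots,u_{k+d-1})^\top,\qquad \mathcal C=[A^{d-1}e_d,\dots,e_d].
\]
The matrix $\mathcal C$ is the controllability matrix, and it is invertible because the controllable canonical form is controllable by construction. Write the law of $u_0$ as $p\,g\,dx+(1-p)\nu'$ with $p>0$ and $g$ a density. Then with probability at least $p^d$ all $d$ inputs are drawn from $g$. On that event $\mathcal C(u)$ has a density $g^{\otimes d}\circ\mathcal C^{-1}/|\det\mathcal C|$, translated by $A^dz$.

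I expect the delicate point to be a uniform lower bound as $z$ ranges over a compact set $K$. A merely integrable $g$ need not be bounded below on any ball. I would first pick a Lebesgue point region: since $g\,\mathbb 1_{\{g>\epsilon\}}$ has positive mass for small $\epsilon$, its $d$-fold product pushed forward by $\mathcal C$ has a convolution with itself that contains an open ball on which the density is bounded below. This is Steinhaus' theorem. The convolution can be arranged by using $2d$ steps instead of $d$. Translating by $A^{2d}z$ for $z\in K$ moves the ball continuously, and compactness of $A^{2d}K$ then yields a common minorising measure after a further two-block argument. If this fails, I would fall back on assuming $g$ lower semicontinuous, i.e. a $T$-chain argument via Meyn--Tweedie Prop.~6.2.8, under which compact sets are petite directly. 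Together with $\pi$-irreducibility and aperiodicity, which also follow from the absolutely continuous component, this makes $K$ petite.

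Meyn--Tweedie Thm.~15.0.1 then gives $V$-geometric ergodicity:
\[
\|P^j(z,\cdot)-\pi\|_V\le C V(z)\varrho_0^{\,j}.
\]
Integrating against $\pi$, which has $\pi(V)<\infty$, bounds the $\beta$-mixing coefficients of the stationary chain by $C'\varrho_0^{\,j}$, and $\alpha(j)\le\beta(j)$. The constants $\varrho_0$ and $C$ come from the minorisation constant and the drift pair $(\lambda,b)$. Both depend on $p$, $g$ and the moments of $u_0$, not only on $a$. This is the qualification in the statement, and I would remark that nothing forces $\varrho_0\le\varrho$.
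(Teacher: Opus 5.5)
Your skeleton is the paper's route: a quadratic Lyapunov function, petiteness of compacts through the $d$-step controllability map, Meyn--Tweedie Thm.~15.0.1, and then $\beta(j)\le\int\|P^j(z,\cdot)-\pi\|_{TV}\,\pi(dz)\le C\pi(V)\varrho_0^{\,j}$. The paper's proof of this lemma is a one-line appeal to that machinery, written out for the augmented chain in the proof of Proposition~\ref{prop:ergodic}. The step that fails as written is the one you flag.

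\emph{The Steinhaus step.} On the event that all inputs are drawn from $g$, the $2d$-step noise is $A^d\mathcal C U_1+\mathcal C U_2$. Its law is $(A^d)_\#\lambda_g*\lambda_g$, where $\lambda_g$ is the law of $\mathcal C U$ on that event; it is not $\lambda_g*\lambda_g$. The set $\mathcal B_\varrho$ permits $a_d=0$; indeed $a=0$ is used as an admissible acquiescent coefficient in Appendix~\ref{app:nphard}. Then $A$ is singular, the first factor has no density, and Steinhaus yields nothing.
\begin{itemize}
\item For $a=0$, every $P^n(z,\cdot)$ with $n\ge d$ is the law of $\mathcal C U$ itself. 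If $g$ is the normalised indicator of a fat Cantor set, that law has a density bounded below on no ball.
\item For $d=2$ and $a=(a_1,0)$ with $a_1\ne0$ small, the last input always enters along $e_2$ and all the others within $\operatorname{range}A$. The same obstruction therefore persists at every $n$, while the kernel now depends on $z$.
\end{itemize}

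\emph{The common minorant.} Even for invertible $A$, compactness of $A^{2d}K$ does not make the translated balls overlap. You would need $n$ with $\sup_{z\in K}|A^nz|$ below half the radius. The common minorant is then the mixture over the contribution of the earlier inputs, which does not depend on $z$.

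\emph{The fallback.} Assuming a lower semicontinuous $g$ proves a weaker lemma than the one stated.

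\emph{The repair.} No lower bound on $g$ is needed.
\begin{itemize}
\item Let $f$ be the density of $\mathcal C U$ on the event that all $d$ inputs come from $g$. Then $P^d(z,B)\ge T(z,B):=p^d\int_Bf(y-A^dz)\,dy$.
\item By continuity of translation in $L^1$, $|T(z,B)-T(z',B)|\le p^d\|f(\cdot-A^d(z-z'))-f\|_{L^1}\to0$ as $z'\to z$, for every integrable $g$. So the chain is a T-chain with sampling distribution $\delta_d$.
\item Take $s$ in the support of the law of $u_0$, and set $x^\dagger:=(I-A)^{-1}e_ds$. In a norm with $\|A\|_*<1$, inputs within a small $\eta$ of $s$ keep a small $\|\cdot\|_*$-ball $O$ about $x^\dagger$ forward invariant and draw any starting point into it.
\item Hence $x^\dagger$ is reachable, the chain is $\psi$-irreducible, and compact sets are petite by the Meyn--Tweedie theory of T-chains. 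Petiteness is all that their drift condition (V4) asks.
\item The same ball gives aperiodicity. Shrink $O$ so that $T(\cdot,D)>0$ on $O$ for some cyclic class $D$. Then $P^{n+d}(x,D)\ge\int_OP^n(x,dy)\,T(y,D)>0$ for every $n\ge1$ and $x\in O$, which no period larger than one allows.
\end{itemize}
This is what the paper's ``$\psi$-irreducible aperiodic T-chain'' in the proof of Proposition~\ref{prop:ergodic} amounts to. With it, the rest of your argument stands, including the remark that $\varrho_0$ need not lie below $\varrho$.
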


\begin{proof}
By Appendix~\ref{app:gauge}, $a\in\mathcal B_\varrho$ entails
$\rho(\comp(a))<\varrho<1$, so $Z$ is a stable vector autoregression driven by
independent innovations with an absolutely continuous component; such a chain is
geometrically ergodic, and geometric ergodicity implies absolute regularity,
hence strong mixing, at a geometric rate.
\end{proof}

\begin{remark}[The rate is not the spectral radius]\label{rem:mixrate}
One might expect $\alpha(j)\le C\varrho^{\,j}$, with the wedge parameter itself
as the mixing rate. That is not so: the contraction of the conditional mean is
one thing, the decay of dependence quite another, and the
latter is governed by the innovation law as much as by $a$. Take the scalar
recursion $Z_{k+1}=0.1\,Z_k+u_k$ in stationarity, with
\[
u_k\ \sim\ 0.9\,\delta_0+0.1\,N(0,1)
\]
independent. The innovations have every moment and an absolutely continuous
component, and $a=-0.1$ lies in $\mathcal B_{0.5}$. The stationary law of $Z_0$
is continuous and symmetric --- condition upon the first non-zero Gaussian
innovation in the infinite-past series. Upon the event that the next $j$
innovations all vanish, of probability $0.9^{\,j}$, one has $Z_j=0.1^{\,j}Z_0$,
so that $\operatorname{sgn}Z_j=\operatorname{sgn}Z_0$; upon the complementary event the intervening noise
is symmetric and independent of $Z_0$, contributing a non-negative amount. Hence
$\E[\operatorname{sgn}Z_0\,\operatorname{sgn}Z_j]\ge0.9^{\,j}$, and taking the events $\{Z_0>0\}$ and
$\{Z_j>0\}$,
\[
\alpha(j)\ \ge\ \tfrac14\,0.9^{\,j} .
\]
No finite $C$ makes $\alpha(j)\le C(0.5)^{\,j}$. The chain is still geometrically
mixing, at a slower rate; what fails is the passage from control of the spectral
radius to the \emph{same} advertised rate, and \textup{(R6)} is accordingly
stated of the score directly.
\end{remark}

\subsection*{The four errors, separately}

\begin{lemma}[Binning and truncation]\label{lem:grid}
Under \textup{(R2)}, for every $k$,
\[
\bigl|\E_{\mathcal Z}[\Pi_NX_{t_k}]-\E_{\mathcal Z}[X_{t_k}]\bigr|
\;\le\;C_r\,h^{\,r}\;+\;\E_{\mathcal Z}\bigl[(|X_{t_k}|-R)_+\bigr]
\qquad\text{almost surely,}
\]
with $C_1=\tfrac12$ and, for $r\ge2$,
$C_r=2(2\pi)^{-r}\zeta(r)\sup_k\|\partial^{r-1}f_k\|_{L^1}$; the last term is the
boundary correction due to the clip and is absent when no mass lies outside
$[-R,R]$. 
\end{lemma}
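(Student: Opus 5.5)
Write $Y=X_{t_k}$ and $\bar Y=\mathrm{clip}_{[-R,R]}(Y)$. Since $\Pi_N Y=\Pi_N\bar Y$, I would split the error as
\[
\E_{\mathcal Z}[\Pi_NY]-\E_{\mathcal Z}[Y]=\E_{\mathcal Z}[\Pi_N\bar Y-\bar Y]+\E_{\mathcal Z}[\bar Y-Y],
\]
and bound the two terms separately.

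\textbf{Truncation term.} We have $|\bar Y-Y|=(|Y|-R)_+$. This gives the boundary correction directly, and it vanishes when there is no mass outside $[-R,R]$. Everything else is conditional on $\mathcal Z$, and (R1)–(R2) supply the conditional law of $Y$ and its density $f_k$.

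\textbf{Case $r=1$.} For every point of $[-R,R]$ the nearest atom lies within $h/2$. Hence $|\Pi_N\bar Y-\bar Y|\le h/2$ pointwise, and taking conditional expectations gives $C_1=\tfrac12$.

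\textbf{Case $r\ge2$.} This is the substantive step. I would write the rounding error as a periodic function of the position. Inside the grid, $\Pi_Ny-y=-h\,s(y/h)$, where $s$ is the centred sawtooth (the atoms are shifted to integer multiples of $h$). Its Fourier series is
\[
s(u)=\sum_{m\ge1}\frac{(-1)^{m+1}}{\pi m}\sin(2\pi m u),
\]
up to the sign convention fixed by where the atoms sit. Then
\[
\E_{\mathcal Z}[\Pi_NY-Y]=-h\int s(y/h)f_k(y)\,dy=-h\sum_{m\ge1}\frac{(-1)^{m+1}}{\pi m}\,\mathrm{Im}\,\widehat f_k(2\pi m/h),
\]
up to boundary contributions. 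Integrating by parts $r-1$ times gives $|\widehat f_k(\omega)|\le\|\partial^{r-1}f_k\|_{L^1}|\omega|^{-(r-1)}$. The $m$-th term is therefore bounded by
\[
\frac{h}{\pi m}\Bigl(\frac{h}{2\pi m}\Bigr)^{r-1}\|\partial^{r-1}f_k\|_{L^1}=2(2\pi)^{-r}h^r m^{-r}\|\partial^{r-1}f_k\|_{L^1}.
\]
Summing over $m$ produces $\zeta(r)$, which is exactly the stated $C_r$. This is the Euler–Maclaurin/Sheppard-type argument. The interchange of sum and integral is justified by the $L^1$ integrability of $f_k$ together with $r\ge2$, since the series $\sum_m m^{-r}$ then converges absolutely.

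\textbf{Main obstacle.} The difficulty is reconciling the periodic representation with the finite grid and the clip. The identity $\Pi_Ny-y=-h\,s(y/h)$ holds only on $[-R,R]$, while the density argument uses the whole line. My plan is to apply the Fourier bound to the unclipped variable with the infinite periodic grid $\Pi_\infty$, and then compare: $\Pi_NY$ and $\Pi_\infty Y$ differ only on $\{|Y|>R\}$. On that event I would account for the discrepancy through the clip term. I expect this to need care about the half-cell at the endpoints; if the constant does not close, I would absorb the leftover $h\,\mathbb P_{\mathcal Z}(|Y|>R)$ into $\tau_R$, as the definition of $\tau_R$ anticipates. Finally, the bound holds almost surely because it holds for each realisation of $\mathcal Z$ with the supremum over $k$.
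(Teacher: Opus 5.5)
Your plan is the paper's proof: the same split into the clip error $\E_{\mathcal Z}[\bar Y-Y]$ and the periodic rounding error, the pointwise $h/2$ bound for $r=1$, and, for $r\ge2$, the sawtooth's Fourier series integrated against the density of the \emph{unclipped} variable using $|\hat f_k(\omega)|\le\|\partial^{r-1}f_k\|_{L^1}|\omega|^{-(r-1)}$, which gives exactly $C_r$. The leftover $h\,\mathbb P_{\mathcal Z}(|Y|>R)$ you anticipate is genuinely there (the paper's argument also ends with it as a ``third term'', which is why $\tau_R$ in \textup{(R2)} contains it), so for $r\ge2$ both proofs deliver the bound with $\tau_R$ rather than the displayed two-term one; and the term-by-term integration should be justified by the uniform boundedness of the sawtooth's Fourier partial sums plus dominated convergence, not by absolute convergence of $\sum_m m^{-r}$.
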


\begin{proof}
Write $\Pi_N=\Pi^{\mathrm{per}}_h\circ\kappa_R$, where $\kappa_R$ is the clip to
$[-R,R]$ and $\Pi^{\mathrm{per}}_h$ is nearest-atom rounding upon the whole line
at spacing $h$; the two agree upon $[-R,R]$ and the clip is what the grid
endpoints do. Then
$|\E_{\mathcal Z}[\kappa_R(X)-X]|\le\E_{\mathcal Z}[(|X|-R)_+]$, which is the
second term, and it remains to bound
$g_h(x):=\Pi^{\mathrm{per}}_hx-x$, the $h$-periodic sawtooth of amplitude $h/2$
and mean zero over a period, in expectation under the law of $\kappa_R(X_{t_k})$.
For $r=1$ the bound is $|g_h|\le h/2$. For $r\ge2$ we bound instead under the law
of $X_{t_k}$ itself, whose density $f_k$ is by \textup{(R2)} integrable upon the
line with $r-1$ integrable derivatives --- this is why the periodic extension,
and not a function supported upon $[-R,R]$, is the right object to expand:
$g_h(x)=\sum_{m\ne0}c_me^{2\pi imx/h}$ with $|c_m|\le h/(2\pi|m|)$, so that
$\E_{\mathcal Z}[g_h(X_{t_k})]=\sum_{m\ne0}c_m\,\overline{\hat f_k(m/h)}$ and
$|\hat f_k(\xi)|\le\|\partial^{r-1}f_k\|_{L^1}(2\pi|\xi|)^{-(r-1)}$, whence
\[
\bigl|\E_{\mathcal Z}[g_h(X_{t_k})]\bigr|
\;\le\;\frac{h^{\,r}\,\|\partial^{r-1}f_k\|_{L^1}}{(2\pi)^{r}}\sum_{m\ne0}|m|^{-r}
\;=\;\frac{2\zeta(r)}{(2\pi)^{r}}\|\partial^{r-1}f_k\|_{L^1}\,h^{\,r} ;
\]
it remains to pass from the law of $X_{t_k}$ to that of
$\kappa_R(X_{t_k})$. The two agree on $\{|X|\le R\}$, and on its complement both
$g_h(X)$ and $g_h(\kappa_RX)$ lie in $[-h/2,h/2]$, so
$|\E_{\mathcal Z}[g_h(X)-g_h(\kappa_RX)]|\le h\,\mathbb P_{\mathcal Z}(|X|>R)$,
which is the third term. A Lipschitz argument would not do here, $g_h$ having
jumps of size $h$ at the cell boundaries: a point just beyond $R$ can change the
rounding error by $h$ while its tail excess is arbitrarily small. Two cautions. The order $h^{\,s+1}$ for a $C^s$ density
requires the derivative to be integrable upon the line, not merely smooth
locally; and when that $L^1$ bound is random after conditioning upon the factor,
the unconditional statement carries the corresponding probabilistic
qualification. \qedhere
\end{proof}

\begin{lemma}[Mesh error]\label{lem:mesh}
Under \textup{(R2)} and \textup{(R3)},
$\bigl\|\E[X_{t_{k+1}}-X_{t_k}\mid\mathcal G_k]-\Delta\,a_{t_k}\bigr\|_{L^1}
\le\tfrac12L_a\Delta^2$ for every $k$.
\end{lemma}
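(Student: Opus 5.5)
The plan is to reduce the conditional increment to the conditional integrated drift and then bound that integral by (R3). First, the martingale part is discarded exactly as in the proof of Proposition~\ref{prop:mesh}. Under Assumption~\ref{ass:wellposed}, $M$ is a true martingale on $[0,T]$, so $\E[M_{t_{k+1}}-M_{t_k}\mid\mathcal G_k]=0$. Hence
\[
\E[X_{t_{k+1}}-X_{t_k}\mid\mathcal G_k]=\E\Bigl[\int_{t_k}^{t_{k+1}}a_s\,ds\Bigm|\mathcal F_{t_k}\Bigr].
\]
Under (R1) the local martingale part is $\sigma W$, a genuine martingale, so this step does not even need boundedness of $a$. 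The moment bound of (R2) supplies the integrability needed for the conditional expectations to be defined.

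Second, the interchange of conditional expectation and time integral needs justification. Since $\sup_t\E|a_t|<\infty$ by (R2), the process $a$ is integrable on $\Omega\times[t_k,t_{k+1}]$, and the conditional Fubini theorem gives
\[
\E\Bigl[\int_{t_k}^{t_{k+1}}a_s\,ds\Bigm|\mathcal F_{t_k}\Bigr]=\int_{t_k}^{t_{k+1}}\E[a_s\mid\mathcal F_{t_k}]\,ds
\]
almost surely, taking a jointly measurable version of $(s,\omega)\mapsto\E[a_s\mid\mathcal F_{t_k}](\omega)$, for instance via the optional projection. Subtracting $\Delta a_{t_k}=\int_{t_k}^{t_{k+1}}a_{t_k}\,ds$, where $a_{t_k}$ is $\mathcal F_{t_k}$-measurable because $a$ is predictable, the difference becomes $\int_{t_k}^{t_{k+1}}\E[a_s-a_{t_k}\mid\mathcal F_{t_k}]\,ds$.

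Third, take $L^1$ norms and apply Minkowski's integral inequality, which is Tonelli on $|\cdot|$. This bounds the norm by $\int_{t_k}^{t_{k+1}}\E\bigl|\E[a_s-a_{t_k}\mid\mathcal F_{t_k}]\bigr|\,ds$. By (R3), stated only within the single interval $[t_k,t_{k+1}]$, this is at most $\int_{t_k}^{t_{k+1}}L_a(s-t_k)\,ds=\tfrac12L_a\Delta^2$.

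The main obstacle is the measure-theoretic justification of the Fubini interchange and of the joint measurability of the conditional expectation in $s$. I would handle it by working with the optional projection of $a$. An alternative avoids the issue entirely: show the $L^1$ bound by duality, testing against bounded $\mathcal F_{t_k}$-measurable $\xi$ with $|\xi|\le1$. Then ordinary Fubini applies to $\E[\xi\int(a_s-a_{t_k})\,ds]$, and each slice is bounded by (R3). This duality route is the one I would actually write out, since it needs only $\E\int|a_s|\,ds<\infty$.
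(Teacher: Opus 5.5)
Your proposal is correct and follows the paper's own argument. You drop the martingale increment, write the conditional increment as $\int_{t_k}^{t_{k+1}}\E[a_s\mid\mathcal G_k]\,ds$, subtract $\Delta a_{t_k}$, and integrate the bound $L_a(s-t_k)$ of \textup{(R3)} over the single mesh interval; the duality argument you give for the Fubini interchange soundly supplies a step the paper leaves implicit, and it needs only $\E\int_{t_k}^{t_{k+1}}|a_s|\,ds<\infty$ from \textup{(R2)}.
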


\begin{proof}
The local-martingale part has vanishing conditional mean, so
$\E[X_{t_{k+1}}-X_{t_k}\mid\mathcal G_k]=\int_{t_k}^{t_{k+1}}\E[a_s\mid\mathcal G_k]\,ds$;
subtract $\Delta a_{t_k}$ and apply \textup{(R3)} under the integral.
\end{proof}

\begin{remark}[Why the exponent is one and not one-half]\label{rem:sqrtdelta}
Had one bounded $\E|a_s-a_{t_k}|$ rather than
$\E|\E[a_s-a_{t_k}\mid\mathcal G_k]|$, an affine drift evaluated along a diffusion
would have yielded only $\mathcal O(\sqrt{s-t_k})$, and the mesh error would have
been $\mathcal O(\sqrt\Delta)$. Conditioning removes the diffusive fluctuation,
and it is for this reason that \textup{(R3)} is stated in conditional form. For
the model \eqref{eq:sim} the hypothesis holds with
$L_a=|\beta|\sup_t\E|a_t|$, since $Z$ is constant over a mesh interval and
$\E[X_s-X_t\mid\mathcal F_t]=\int_t^s\E[a_v\mid\mathcal F_t]\,dv$.
\end{remark}

\begin{lemma}[Monte-Carlo error]\label{lem:mc}
Under \textup{(R1)} and \textup{(R2)},
$\bigl(K^{-1}\sum_k(\bar x^M_k-\E_{\mathcal Z}X_{t_k})^2\bigr)^{1/2}=\mathcal O_p(M^{-1/2})$,
and the same holds for the mean features.
\end{lemma}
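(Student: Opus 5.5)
The plan is to condition on $\mathcal Z$ throughout and reduce the claim to a conditional variance bound for each $k$. Write $\bar x^M_k=M^{-1}\sum_{m=1}^M X^{(m)}_{t_k}$ for the empirical mean before binning. The binned version differs from it by the rounding error, and that error is controlled separately by Lemma~\ref{lem:grid}. By \textup{(R1)}, conditionally on $\mathcal Z$ the summands $X^{(m)}_{t_k}$ are i.i.d.\ with conditional mean $\E_{\mathcal Z}X_{t_k}$. Hence
\[
\E_{\mathcal Z}\bigl[(\bar x^M_k-\E_{\mathcal Z}X_{t_k})^2\bigr]=M^{-1}\operatorname{Var}_{\mathcal Z}(X_{t_k})\le M^{-1}\E_{\mathcal Z}[X_{t_k}^2].
\]

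Next I average over $k$ and take the unconditional expectation. This gives
\[
\E\Bigl[K^{-1}\sum_k(\bar x^M_k-\E_{\mathcal Z}X_{t_k})^2\Bigr]\le M^{-1}\sup_{t\le T}\E[X_t^2],
\]
and the right side is finite by the fourth-moment hypothesis in \textup{(R2)}, since $L^4$ bounds $L^2$. Markov's inequality then shows that the mean square is $\mathcal O_p(M^{-1})$, and taking square roots gives $\mathcal O_p(M^{-1/2})$. The constant is uniform in $K$ because the bound is a supremum over $k$.

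For the mean features, the entries of $\bar\phi_k$ are the intercept, which is exact; the coordinate $\bar x_k$, already treated; and common statistics $s_j$. When $s_j=u_j$ these are observed and carry no error. When $s_j=\bar x_j$ they are again empirical means at another time index, covered by the same bound after a shift of index. A finite number of such coordinates, namely $p$ or $d+2$, contributes at most a constant factor.

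The one point needing care is that the quantity is squared and averaged over $k$, not bounded uniformly in $k$. A supremum over $k$ would cost a $\log K$ factor, or require higher moments, so I will keep the statement in the averaged form rather than attempt a uniform one. I also need the unconditional second moment to be finite, so that conditional Chebyshev passes to the unconditional statement; the tower property with \textup{(R2)} gives this. Those are the only places where an assumption enters.
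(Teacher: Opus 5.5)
Your proposal is correct and follows the paper's proof. Conditionally on $\mathcal Z$ the copies are i.i.d., so the conditional second moment of $\bar x^M_k-\E_{\mathcal Z}X_{t_k}$ is $\operatorname{Var}_{\mathcal Z}(X_{t_k})/M$, and averaging over $k$ under the moment bound of \textup{(R2)} gives the claim. You also spell out what the paper leaves implicit: the tower-property and Markov step that turns this into an $\mathcal O_p$ statement, and the coordinate-by-coordinate check of the mean features.
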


\begin{proof}
Conditionally on $\mathcal Z$ the summands within a given $k$ are independent with
common variance $\operatorname{Var}_{\mathcal Z}(X_{t_k})$, so the conditional
second moment of $\bar x^M_k-\E_{\mathcal Z}X_{t_k}$ is
$\operatorname{Var}_{\mathcal Z}(X_{t_k})/M$; average over $k$ and use
\textup{(R2)}.
\end{proof}

\begin{lemma}[Estimation error]\label{lem:est}
Under \textup{(R4)} and \textup{(R6)},
$\bigl\|K^{-1}\sum_k\eta_k\bigr\|=\mathcal O_p(\varsigma K^{-1/2})$ and
$K^{-1/2}\sum_k\eta_k\Rightarrow\mathcal N(0,\Omega)$ with
$\Omega=\sum_{j\in\mathbb Z}\operatorname{Cov}(\eta_0,\eta_j)$, the series
converging absolutely.
\end{lemma}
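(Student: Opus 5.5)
The plan is to read Lemma~\ref{lem:est} as a direct application of a central limit theorem for strongly mixing stationary sequences to the score $\eta_k=\bar\phi^\infty_k\zeta_k$. The order-of-magnitude statement will then follow from a variance computation. No structure of the transport problem or of the hidden state enters: everything is carried by hypothesis \textup{(R6)}, together with the definition of $\zeta_k$ in \textup{(R4)}.

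\textbf{Centring.} Before any limit theorem, check that $\eta_k$ is indeed centred. This is the first-order condition of the population least-squares problem defining $\theta_\star(\Delta)$ in \textup{(R4)}. Differentiating $\E[(\Delta^{-1}\gamma^\infty_k-\bar\phi_k^{\infty\top}\theta)^2]$ at its minimiser gives $\E[\bar\phi^\infty_k\zeta_k]=0$. The minimiser exists and is unique because $G\succ0$ by \textup{(R5)}, although centring is also simply assumed in \textup{(R6)}.

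\textbf{Long-run covariance.} Apply Davydov's covariance inequality for $\alpha$-mixing variables with finite $(2+\delta)$-th moments, coordinatewise:
\[
\|\operatorname{Cov}(\eta_0,\eta_j)\|\le C\,\alpha_\eta(j)^{\delta/(2+\delta)}\,\|\eta_0\|_{2+\delta}^2 .
\]
The summability assumed in \textup{(R6)} then makes $\Omega=\sum_j\operatorname{Cov}(\eta_0,\eta_j)$ absolutely convergent. From the same bound,
\[
\E\Bigl\|K^{-1}\textstyle\sum_k\eta_k\Bigr\|^2=K^{-1}\sum_{|j|<K}(1-|j|/K)\operatorname{tr}\operatorname{Cov}(\eta_0,\eta_j)\le K^{-1}\operatorname{tr}\Omega_{\mathrm{abs}},
\]
where $\Omega_{\mathrm{abs}}$ denotes the sum of absolute values. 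Chebyshev's inequality gives $\mathcal O_p(K^{-1/2})$.

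\textbf{The factor $\varsigma$.} To exhibit $\varsigma$ explicitly, bound $\|\eta_0\|_{2+\delta}\le\|\bar\phi^\infty_0\|_{p}\,\|\zeta_0\|_{q}$ by Hölder. This yields a constant times $\varsigma$ only if the higher moment of $\zeta$ is comparable to its second moment, for instance under a uniform ratio $\|\zeta_0\|_{q}\lesssim\varsigma$. I expect this to be the main obstacle: as literally stated, \textup{(R6)} controls $\eta$ and not the scale of $\zeta$. I would first try the normalisation $\eta_k=\varsigma\,\tilde\eta_k$ with $\tilde\eta$ satisfying \textup{(R6)} with constants free of $\varsigma$. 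In the degenerate case $\varsigma=0$ we have $\zeta_k=0$ a.s., so $\eta\equiv0$ and the bound is trivial. This is exactly the exactly specified Euler model.

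\textbf{Weak convergence.} Invoke Ibragimov's CLT, in the multivariate form obtained via Cramér--Wold. Each projection $v^\top\eta_k$ inherits stationarity, the mixing rate and the moment condition. The theorem yields $K^{-1/2}\sum_kv^\top\eta_k\Rightarrow\mathcal N(0,v^\top\Omega v)$ whenever $v^\top\Omega v>0$, and convergence to $0$ otherwise. Assembling the projections gives $\mathcal N(0,\Omega)$.
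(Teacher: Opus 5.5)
Your proposal is correct and is essentially the paper's own one-line proof: $\eta_k$ is centred by the first-order condition defining $\theta_\star(\Delta)$ in \textup{(R4)}, $\Omega$ converges absolutely by Davydov's inequality under the summability and $2+\delta$ moment of \textup{(R6)}, and the limit law is Ibragimov's central limit theorem. You add the Cram\'er--Wold reduction and a direct Chebyshev bound for the $\mathcal O_p$ rate, which the paper leaves to follow from the CLT. On the explicit factor $\varsigma$, which the paper's proof does not address either: at a fixed data-generating process it costs nothing, since for $\varsigma>0$ it is absorbed into the $\mathcal O_p$ constant and for $\varsigma=0$ one has $\eta\equiv0$. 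Your normalisation $\eta_k=\varsigma\tilde\eta_k$ is therefore needed only if the bound must hold uniformly as $\varsigma$ varies, for instance along the infill sequence of Theorem~\ref{thm:rate}.
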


\begin{proof}
Hypothesis \textup{(R6)} is precisely what Ibragimov's central limit theorem for
strongly mixing sequences requires, and the summability of
$\alpha_\eta(j)^{\delta/(2+\delta)}$ together with the $2+\delta$ moment gives
the absolute convergence of the covariance series by Davydov's inequality;
$\eta_k$ is centred by the definition of $\theta_\star(\Delta)$ in
\textup{(R4)}. \qedhere
\end{proof}

\begin{remark}[What \textup{(R6)} does and does not follow from]\label{rem:scorecond}
At a fixed $\Delta>0$, \textup{(R2)} and conditional Jensen do give a fourth
moment of $\Delta^{-1}\gamma^\infty_k$, hence of $\zeta_k$ once
$\theta_\star(\Delta)$ is finite, and Cauchy--Schwarz then gives the
\emph{second} moment of the score,
$\E\|\eta_k\|^2\le(\E\|\bar\phi^\infty_k\|^4)^{1/2}(\E|\zeta_k|^4)^{1/2}$. That
is not the $2+\delta$ moment a strong-mixing central limit theorem needs, nor is
it uniform as $\Delta$ varies, and it says nothing at all of the mixing of
$\eta$. The honest course is to assume \textup{(R6)} of the score, or to verify
it for an explicitly augmented state under a stated input and noise model; we do
the former, and, for the generating model of the experiments, the latter
as well: there the relevant score is a quadratic function of the
finite-dimensional state $(m_k,Z_k)$ formed by the population conditional mean
and the factor, and Proposition~\ref{prop:ergodic} verifies \textup{(R6)}
directly, together with the ergodicity of the mean features which \textup{(R5)}
asks for.
\end{remark}

\begin{proposition}[Ergodicity of the generating model]\label{prop:ergodic}
Let the data be generated by \eqref{eq:sim} at a fixed mesh $\Delta$, with
$|1+\beta\Delta|<1$, $\rho(\comp(a))<1$, and $(u_k)$ independent and identically
distributed, independent of the Brownian increments, with $\E|u_0|^{4+\delta}<\infty$
for some $\delta>0$ and with a law possessing a component absolutely continuous
with respect to Lebesgue measure. Write $m_k:=\E_{\mathcal Z}[X_{t_k}]$ for the
population conditional mean, $\tilde m_k:=m_k+\alpha/\beta$ for its centring, and
$S_k:=(\tilde m_k,Z_k)\in\R^{1+d}$. Then
\[
S_{k+1}=FS_k+Gu_k,\qquad
F=\begin{pmatrix}1+\beta\Delta&\Delta c^\top\\0&\comp(a)\end{pmatrix},\qquad
G=\begin{pmatrix}0\\e_d\end{pmatrix},
\]
\[
\rho(F)=\max\{|1+\beta\Delta|,\rho(\comp(a))\}<1 .
\]
Suppose the pair $(F,G)$ controllable; since $(\comp(a),e_d)$ is controllable,
this holds if and only if $c^\top\bigl((1+\beta\Delta)I-\comp(a)\bigr)^{-1}e_d\ne0$,
that is, unless the transfer function from the input to $c^\top Z$ has a zero at
the pole $1+\beta\Delta$ of the mean recursion.
\begin{enumerate}[label=\textup{(\roman*)},topsep=2pt,itemsep=1pt,leftmargin=3em]
\item $S$ possesses a unique stationary law $\pi$, with $\int|s|^{4+\delta}\,d\pi<\infty$,
and the stationary chain is geometrically ergodic and strongly mixing:
$\alpha_S(j)\le C\varrho_0^{\,j}$ for some $C<\infty$ and $\varrho_0\in(0,1)$
depending upon $a$, $\beta$, $\Delta$, $c$ and the law of $u_0$.
\item Let $S^\circ$ be the stationary chain driven by the same innovations and
$S$ the chain from any $S_0$ with $\E|S_0|^{4+\delta}<\infty$, in particular from
the $z_0=0$, $m_0=\E[X_0]$ of the experiments. Then $S_k-S^\circ_k=F^k(S_0-S^\circ_0)$,
and for every $g:\R^{1+d}\to\R^q$ with $|g(s)-g(s')|\le L(1+|s|+|s'|)\,|s-s'|$ ---
which covers every polynomial of degree at most two ---
\[
\frac1K\sum_{k\le K}\bigl\{g(S_k)-g(S^\circ_k)\bigr\}=\mathcal O_p(K^{-1}),
\qquad
\frac1{\sqrt K}\sum_{k\le K}\bigl\{g(S_k)-g(S^\circ_k)\bigr\}=o_p(1).
\]
\item Consequently the mean features $\bar\phi^\infty_k$, which are affine in
$S_k$ --- $(1,m_k)$ in the Convex Setting, $(1,m_k,z_k(a))$ with $z_k(a)=Z_k$
at the true $a$ in the Non-Convex Setting, and $(1,m_k,\ldots,m_{k-p+1})$ under
a lag window --- form a stationary ergodic sequence under $\pi$, and
$\widehat G_K\to G$ in probability from the initialisation of the experiments;
and any score $\eta_k=g(S_k,\ldots,S_{k-p})$ polynomial of degree at most two in
finitely many lags and centred under $\pi$ satisfies \textup{(R6)}, with
$\alpha_\eta(j)\le C\varrho_0^{\,j-p}$ and $\E\|\eta_0\|^{2+\delta/2}<\infty$.
\end{enumerate}
\end{proposition}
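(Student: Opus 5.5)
I would begin with the linear recursion for $S_k$, then read off (i) from standard Markov-chain theory for stable linear systems, (ii) from a deterministic coupling argument, and (iii) from composition.

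\emph{The recursion for $S_k$.} Conditioning \eqref{eq:sim} on $\mathcal Z$ kills the Brownian term, since the $\xi_k$ are independent of $\mathcal Z$ by \textup{(R1)}. This gives $m_{k+1}=(1+\beta\Delta)m_k+\alpha\Delta+\Delta c^\top Z_k$. Centring by $\alpha/\beta$ removes the constant, and stacking this with $Z_{k+1}=\comp(a)Z_k+e_du_k$ yields $S_{k+1}=FS_k+Gu_k$. Because $F$ is block upper triangular, its spectrum is the union of $\{1+\beta\Delta\}$ and the spectrum of $\comp(a)$, which gives the displayed value of $\rho(F)$.

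For the controllability claim I would use the PBH test. At eigenvalues of $\comp(a)$ it reduces to controllability of $(\comp(a),e_d)$. At $\lambda=1+\beta\Delta$, write a left eigenvector as $(1,w^\top)$. Then $w^\top(\lambda I-\comp(a))=\Delta c^\top$, and $w^\top e_d=0$ is exactly the vanishing of $c^\top(\lambda I-\comp(a))^{-1}e_d$. If $\lambda$ is itself an eigenvalue of $\comp(a)$, this case needs a separate word.

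\emph{Part (i).} The stationary law is the law of $\sum_{j\ge0}F^jGu_{-j}$. This series converges in $L^{4+\delta}$ because $\|F^j\|\le C\bar\rho^{\,j}$ for some $\bar\rho<1$ and $u$ has $4+\delta$ moments. Geometric ergodicity would then follow from a Meyn--Tweedie argument:
\begin{itemize}
\item The drift condition holds with $V(s)=1+|s|^{4+\delta}$ in an adapted norm, where $F$ contracts.
\item Controllability of $(F,G)$ together with the absolutely continuous component of $u_0$ makes the $(1+d)$-step transition law have an absolutely continuous part. The map $(u_k,\dots,u_{k+d})\mapsto F^{d+1}s+\sum_jF^{d-j}Gu_{k+j}$ has full rank, so compact sets are small and the chain is $\psi$-irreducible and aperiodic (invoking \citet{Andrews1984}-type reasoning for the necessity of the density).
\item Geometric ergodicity of a stationary chain gives $\beta$-mixing, hence $\alpha$-mixing, at a geometric rate, exactly as in Lemma~\ref{lem:mixing}.
\end{itemize}

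\emph{Part (ii).} The two chains share innovations, so subtracting their recursions gives $S_k-S_k^\circ=F^k(S_0-S_0^\circ)$ exactly. Applying the local Lipschitz bound,
\[
\sum_k\bigl|g(S_k)-g(S_k^\circ)\bigr|\le L\sum_k\bigl(1+|S_k|+|S_k^\circ|\bigr)\,C\bar\rho^{\,k}\,|S_0-S_0^\circ| .
\]
The right-hand side has finite expectation, by Hölder and uniform $L^2$ bounds on $S_k$ and $S_k^\circ$ inherited from the $4+\delta$ moments. The sum is therefore $\mathcal O_p(1)$, which gives both displayed rates after dividing by $K$ and by $\sqrt K$.

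\emph{Part (iii).} The mean features are affine images of $(S_k,\dots,S_{k-p+1})$. In the Non-Convex Setting $z_k(a)=Z_k$ at the true $a$, and in the Convex Setting one simply drops coordinates. A measurable function of finitely many lags of a stationary ergodic geometrically mixing chain has the same properties, with mixing coefficients shifted by the lag window, which gives $\alpha_\eta(j)\le\alpha_S(j-p)$. Birkhoff's theorem gives $\widehat G_K\to G$ for the stationary chain, and (ii) with $g$ quadratic transfers the limit to the chain started at $z_0=0$.

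For the score, a degree-two polynomial in $S$ has moments of order $(4+\delta)/2=2+\delta/2$. Centring under $\pi$ is assumed, so \textup{(R6)} holds with $\delta/2$ in place of $\delta$. Summability of $\alpha^{\delta'/(2+\delta')}$ is automatic for geometric rates.

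The main obstacle is part (i). The drift condition is routine; the delicate points are $\psi$-irreducibility and aperiodicity, which require converting controllability plus a merely partial density of $u_0$ into a minorisation on compact sets. I would handle this by conditioning on the event that all of $d+1$ consecutive innovations fall in their absolutely continuous components. On that event the image law has a positive density on an open set by the full-rank linear map, and I would check that this open set is reached from any compact set uniformly.
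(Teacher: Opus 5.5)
Your proposal is correct and follows the paper's proof essentially step for step: the conditioned recursion with its block-triangular spectrum; the Meyn--Tweedie drift function $1+\|s\|_\ast^{4+\delta}$ in an adapted norm, with controllability and the absolutely continuous component supplying irreducibility; the shared-innovation identity $S_k-S^\circ_k=F^k(S_0-S^\circ_0)$ summed against the local Lipschitz bound for (ii); and the lag-shifted bound $\alpha_\eta(j)\le\alpha_S(j-p)$ with the $2+\delta/2$ moment for (iii). Your explicit series for $\pi$ and your PBH check of the controllability criterion are small additions, and the statement does indeed leave the coincident-eigenvalue case of that criterion loose. The one step to repair is your hands-on minorisation, because an absolutely continuous component need not have a density bounded below on any open set (the normalised indicator of a fat Cantor set, pushed through the controllability matrix, vanishes on a dense open set); instead, take the continuous component of the $(d+1)$-step kernel given by continuity of translation in $L^1$, which makes $S$ a $T$-chain, and obtain $\psi$-irreducibility and aperiodicity from the globally attracting reachable state $(I-F)^{-1}Gw_0$, with $w_0$ in the support of the innovation law, which is the Meyn--Tweedie argument the paper cites.
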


\begin{proof}
The recursion for $m_k$ follows by conditioning \eqref{eq:sim} upon $\mathcal Z$:
$m_{k+1}=m_k+\Delta(\alpha+\beta m_k+c^\top Z_k)$, whence the block form of $F$
after centring, and $\rho(F)$ is read off the block triangle. (i) $S$ is a
stable linear state-space model driven by independent innovations whose law has
an absolutely continuous component; controllability of $(F,G)$ makes it a
$\psi$-irreducible aperiodic $T$-chain \citep[Ch.~4 and~6--7]{MeynTweedie2009},
and with a norm $\|\cdot\|_\ast$ in which $\|F\|_\ast<1$, which exists since
$\rho(F)<1$, the function $V(s)=1+\|s\|_\ast^{4+\delta}$ satisfies the geometric
drift condition $\E[V(S_{k+1})\mid S_k=s]\le\lambda V(s)+b$ with $\lambda<1$, by
the moment bound upon $u_0$. Geometric ergodicity follows
\citep[Thm.~15.0.1]{MeynTweedie2009}, and geometric ergodicity implies
$\beta$-mixing, hence strong mixing, at a geometric rate; the moment of $\pi$ is
the stationary form of the drift inequality. (ii) Subtracting the two
recursions leaves $F^k(S_0-S^\circ_0)$, with $\|F^k\|_\ast\le\|F\|_\ast^k=:r^k$.
For $g$ as stated, $|g(S_k)-g(S^\circ_k)|\le L(1+|S_k|+|S^\circ_k|)\,Cr^k|S_0-S^\circ_0|$,
and $\sum_kr^k(1+|S_k|+|S^\circ_k|)|S_0-S^\circ_0|$ has finite expectation, so
the sum $\sum_k\{g(S_k)-g(S^\circ_k)\}$ converges absolutely almost surely and is
$\mathcal O_p(1)$; dividing by $K$ and by $\sqrt K$ gives the two claims. (iii)
A measurable function of $(S_k,\ldots,S_{k-p})$ is strongly mixing with
$\alpha_\eta(j)\le\alpha_S(j-p)$ for $j>p$; the moment follows from
$\E\|\eta_0\|^{2+\delta/2}\le C\,\E(1+|S_0|)^{4+\delta}$ under $\pi$; the ergodic
theorem and Ibragimov's central limit theorem apply to the stationary chain, and
(ii) transfers both to the chain started at the initialisation of the
experiments.
\end{proof}

At the parameters of Section~\ref{sec:numerics}, $1+\beta\Delta=0.6$,
$\comp(a)$ has eigenvalues $0.2$ and $0.4$, the input is Gaussian, and
$c^\top(0.6I-\comp(a))^{-1}e_d\ne0$, so the hypotheses hold; the two
indistinguishable parameter points of Remark~\ref{rem:poleswap} are exactly
where the controllability condition is at risk, a pole of the mean recursion
being cancelled against a zero of the factor's transfer function.

\subsection*{The theorem}

\begin{theorem}[Joint rate]\label{thm:rate}
Assume \textup{(T1)--(T2)} and \textup{(R1)--(R6)}, and let $M,K\to\infty$ and
$h,R^{-1}\to0$, with $\lambda_0$ of \textup{(R5)} and the mixing and moment
constants of \textup{(R6)} holding uniformly along the sequence. Then the fixed
point $\hat\theta$ of the block iteration in the Convex Setting satisfies, at a
fixed mesh $\Delta$,
\begin{equation}\label{eq:jointrate}
\|\hat\theta-\theta_\star(\Delta)\|
\;=\;\underbrace{\mathcal O_p\bigl(\Delta^{-1}M^{-1/2}\bigr)}_{\text{Monte Carlo}}
\;+\;\underbrace{\mathcal O_p\bigl(\varsigma K^{-1/2}\bigr)}_{\text{estimation}}
\;+\;\underbrace{\mathcal O\bigl(\Delta^{-1}(h^{\,r}+\tau_R)\bigr)}_{\text{grid}} ,
\end{equation}
and if in addition $\Delta\to0$ along the sequence with \eqref{eq:meshgap} in
force and with $\lambda_0$, the mixing and the moment constants held uniformly in
$\Delta$, then
\begin{equation}\label{eq:jointrate-infill}
\|\hat\theta-\theta_0\|
\;=\;\mathcal O_p\bigl(\Delta^{-1}M^{-1/2}\bigr)
\;+\;\mathcal O_p\bigl(\varsigma K^{-1/2}\bigr)
\;+\;\mathcal O\bigl(\Delta^{-1}(h^{\,r}+\tau_R)\bigr)
\;+\;\underbrace{\mathcal O(\Delta)}_{\text{mesh}} ,
\end{equation}
provided $\Delta^{-1}M^{-1/2}\to0$ and $\Delta^{-1}(h^{\,r}+\tau_R)\to0$. The
fourth term is the gap \eqref{eq:meshgap} between the two targets and appears
only in \eqref{eq:jointrate-infill}; it is \emph{not} a further discretisation
residual added to the first three, the mesh being already inside
$\theta_\star(\Delta)$ and $\zeta_k$. The factors $\Delta^{-1}$ are those of
numerical differentiation: $\gamma_k$ is itself of order $\Delta$, and it is
$\Delta^{-1}\gamma_k$ which estimates a velocity, so an error in $\gamma_k$ is
magnified by $\Delta^{-1}$ in the estimate. The estimand of this appendix is
accordingly the \emph{velocity} parameter, and not the discrete $\theta$ of
Section~\ref{sec:setting1}, into which $\Delta$ was absorbed; the two differ by a
factor $\Delta$. At the fixed mesh $\Delta=1$ used throughout
Sections~\ref{sec:setting}--\ref{sec:numerics} the estimand reverts to the
discrete one, the $\Delta^{-1}$ factors go with it, \eqref{eq:jointrate} is the
operative statement, and the mesh term is absent altogether, the discrete
recursion \eqref{eq:sim} approximating no continuous-time drift. In the Non-Convex Setting let
$\Theta^\star:=\argmin_{\mathcal B_\varrho\times\mathcal C}J_\infty$ be the
\emph{identified set} of the population objective $J_\infty$ of
\eqref{eq:fixedpoint2}, which by Remark~\ref{rem:poleswap} need not be a
singleton. If Assumption~\ref{ass:sosc2} holds \emph{for $J_\infty$} at every
point of $\Theta^\star$ --- so that $\Theta^\star$ is finite, its points being
isolated minimisers within a compact box --- if $J_\infty$ exceeds its minimum by
$\Delta_{\mathrm{sep}}>0$ outside a neighbourhood of $\Theta^\star$, and if the
parameter is confined to a compact box $\mathcal B_\varrho\times\mathcal C$ upon
which the sensitivities $\partial_\Theta$ are uniformly bounded, then the same
bound holds for $\dist(\hat\Theta,\Theta^\star)$, in canonical coordinates, with
constants depending also upon the smallest reduced-Hessian eigenvalue over
$\Theta^\star$ and upon $\Delta_{\mathrm{sep}}$. When an identifying convention
--- within-marginal information, or a prior upon the allocation of poles ---
makes $\Theta^\star$ the singleton $\{\Theta_\star\}$, this is ordinary
consistency for $\Theta_\star$ at the same rate.
See
Remark~\ref{rem:latentscope} for what that proviso costs.
\end{theorem}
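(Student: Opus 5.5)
The plan is to start from Proposition~\ref{prop:fixedpoint1}. With an exact, unclipped E-step, the fixed point of the Convex Setting is the least-squares regression \eqref{eq:fixedpoint1} of the empirical mean displacements $\gamma_k$ on the empirical mean features $\bar\phi_k$. I then compare this with the population regression that defines $\theta_\star(\Delta)$. In velocity normalisation, $\hat\theta=\widehat G_K^{-1}K^{-1}\sum_k\bar\phi_k\,\Delta^{-1}\gamma_k$, where $\widehat G_K=K^{-1}\sum_k\bar\phi_k\bar\phi_k^\top$ is built from the gridded, finite-$M$ quantities. The target satisfies $\Delta^{-1}\gamma^\infty_k=\bar\phi^{\infty\top}_k\theta_\star(\Delta)+\zeta_k$. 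Subtracting gives the decomposition I will refer to as \eqref{eq:ratedecomp}:
\[
\hat\theta-\theta_\star(\Delta)=\widehat G_K^{-1}\Bigl(K^{-1}\textstyle\sum_k\eta_k+K^{-1}\sum_k\bigl[\bar\phi_k\Delta^{-1}\gamma_k-\bar\phi^\infty_k\Delta^{-1}\gamma^\infty_k\bigr]-(\widehat G_K-\widehat G^\infty_K)\theta_\star(\Delta)\Bigr),
\]
with $\eta_k=\bar\phi^\infty_k\zeta_k$. The middle and last terms are differences between the observed and the exact mean statistics.

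\textbf{Bounding the terms.} First, $\widehat G^\infty_K\to G$ by \textup{(R5)}, or by Proposition~\ref{prop:ergodic}(iii) from the initialisation used in the experiments. The perturbation $\widehat G_K-\widehat G^\infty_K$ will be shown to vanish. Together these make $\|\widehat G_K^{-1}\|\le 2/\lambda_0$ with probability tending to one, uniformly along the sequence. Second, Lemma~\ref{lem:est} gives $K^{-1}\sum_k\eta_k=\mathcal O_p(\varsigma K^{-1/2})$, which is the estimation term. Third, every remaining discrepancy splits as $\bar x^M_k-\E_{\mathcal Z}X_{t_k}$ plus the grid bias $\E_{\mathcal Z}[\Pi_NX_{t_k}]-\E_{\mathcal Z}X_{t_k}$, and likewise for $\nu_k$ and for the $\bar x$ coordinates of $\bar\phi_k$. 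Lemma~\ref{lem:mc} bounds the first piece by $\mathcal O_p(M^{-1/2})$ in root-mean-square over $k$. Lemma~\ref{lem:grid} bounds the second uniformly by $C_rh^r+\tau_R$.

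The differences are bilinear in the errors and the exact features, so Cauchy--Schwarz over $k$ applies, using the fourth moments in \textup{(R2)} and \textup{(R5)}. Because $\gamma_k$ enters divided by $\Delta$, the Monte-Carlo and grid terms pick up a factor $\Delta^{-1}$. The grid bias in $\gamma_k$ is a difference of two such biases, so it is at most twice the bound. This yields \eqref{eq:jointrate}. For \eqref{eq:jointrate-infill}, I add the triangle inequality with \eqref{eq:meshgap}. The proviso that $\Delta^{-1}M^{-1/2}\to0$ and $\Delta^{-1}(h^r+\tau_R)\to0$ is exactly what keeps $\widehat G_K$ invertible. Uniformity of $\lambda_0$ and of the mixing constants in $\Delta$ lets the $\mathcal O_p$ constants pass along the sequence.

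\textbf{Non-Convex Setting.} By Proposition~\ref{prop:fixedpoint2}, $\hat\Theta$ globally minimises $J_K$, the empirical version of \eqref{eq:fixedpoint2}, apart from the proximal $\beta$ term, which vanishes at a fixed point. I will argue as an M-estimator near a finite identified set. The first step is uniform convergence on the compact box: $\sup_\Theta|K^{-1}J_K-J_\infty|\to0$, with deviations of the same three orders as above, because the $\Theta$-sensitivities are uniformly bounded and $z_k(a)$ is a uniformly stable linear filter of the input on $\mathcal B_\varrho$. The separation margin $\Delta_{\mathrm{sep}}$ then forces $\hat\Theta$ into a neighbourhood of some point of $\Theta^\star$. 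Near that point, Assumption~\ref{ass:sosc2} (second-order sufficiency with LICQ and strict complementarity) makes the localised KKT solution map Lipschitz in the data $(\bar x_k,\gamma_k)$, as in the proof of Theorem~\ref{thm:outer2}. Linearising the score then reproduces the decomposition, with the reduced Hessian playing the role of $\widehat G_K$.

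\textbf{Main obstacle.} The hardest step is the estimation term in the non-convex case, specifically showing that the score $\partial_\Theta$ of the residuals at a point of $\Theta^\star$ is centred and mixing. This is because $z_k(a)$ depends on the whole input history. I would handle it by appealing to Proposition~\ref{prop:ergodic}(iii) on the augmented state $(S_k,\partial_az_k)$, which is again a stable linear system, with truncation of the infinite past controlled geometrically. I also expect the Cauchy--Schwarz bookkeeping to need care where the Monte-Carlo error and the grid error interact in the cross terms.
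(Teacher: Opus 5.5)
Your proposal follows the paper's proof essentially step for step. The shared steps are: the exact identity \eqref{eq:ratedecomp} premultiplied by an inverse Gram matrix bounded by $2/\lambda_0$ with probability tending to one; Lemmas~\ref{lem:est}, \ref{lem:mc} (with Cauchy--Schwarz) and~\ref{lem:grid} for the three sums, with the $\Delta^{-1}$ inflation of the errors in $\gamma_k$; the triangle inequality with \eqref{eq:meshgap} for the infill case; and, in the Non-Convex Setting, uniform convergence on the compact box, separation, and a local second-order expansion at the nearest point of $\Theta^\star$. Your variant of the decomposition isolates $\eta_k=\bar\phi^\infty_k\zeta_k$ exactly as \textup{(R6)} defines it, and you attend to centring and mixing of the latent score; both are, if anything, more careful than the paper.

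One inference must be repaired. A proximal term that vanishes at the fixed point does not make that point a global minimiser of $J_K$. By Proposition~\ref{prop:fixedpoint2}, a fixed point of the penalised M-step only minimises $J_K+Q(\beta-\hat\beta)^2$, and is in general merely stationary for $J_K$, so the separation argument does not reach it. You must therefore take the unpenalised reduced M-step, as the paper's proof tacitly does and as the implementation does; its output after one exact E-step is $\argmin J_K$ itself.

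A smaller point: the proviso $\Delta^{-1}M^{-1/2}\to0$, $\Delta^{-1}(h^{\,r}+\tau_R)\to0$ is what makes the $\Delta^{-1}$-inflated error and cross terms negligible. It is not what keeps $\widehat G_K$ invertible, because the feature errors carry no factor $\Delta^{-1}$.
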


\begin{remark}[Uniform conditioning under infill is not automatic]\label{rem:infill-fails}
Statement~\eqref{eq:jointrate-infill} asks that $\lambda_{\min}(G)\ge\lambda_0$
hold uniformly as $\Delta\downarrow0$. One might think that the factor of \eqref{eq:sim} supplies this, its
recursion being specified in the index $k$ rather than as the discretisation of a
continuous-time process. It does not, for the design is built not from the
factor but from the \emph{conditional mean}, which does inherit the mesh. Let $Z_k$ be independent,
centred, of unit variance, and let the diffusion have drift $\beta X+cZ_k$ with
$\beta<0$; the conditional mean then obeys
\[
m_{k+1}=e^{\beta\Delta}m_k+cF_\beta(\Delta)Z_k,\qquad F_\beta(\Delta)=\frac{e^{\beta\Delta}-1}{\beta},
\qquad
\operatorname{Var}(m_k)=\frac{c^2F_\beta(\Delta)^2}{1-e^{2\beta\Delta}}\;\sim\;\frac{c^2\Delta}{-2\beta}.
\]
For the features $(1,m_k)$ the smallest population Gram eigenvalue is therefore of
order $\Delta$, not bounded away from zero; and the mean process acquires
persistence $e^{\beta\Delta}\to1$ in observation-index time although the factor
is independent across indices. One may of course posit a triangular array with
uniform design and score controls, but it must be verified for the scaling at
hand. Where the physical dynamics are held fixed, the effective horizon and the
slowness of the mean process cannot be waved away, and it is
\eqref{eq:jointrate}, at fixed $\Delta$, upon which we rely.
\end{remark}

\begin{proof}
\emph{Convex Setting.} We work from an exact algebraic identity rather than from
any convergence theorem for the transport layer. Write $\hat y_k:=\Delta^{-1}\gamma_k$
for the \emph{measured velocity} and $\hat\phi_k$ for the measured mean feature
--- the empirical quantities actually computed, from $M$ copies upon a grid of
spacing $h$ over $[-R,R]$. Proposition~\ref{prop:fixedpoint1} is stated for the
regression of $\gamma_k$, whose coefficient is $\Delta$ times the velocity
coefficient; we work throughout in the velocity parametrisation, and record that
this rescaling applies to $\theta$ and not to the dynamic coefficients $a$ of the
Non-Convex Setting, which do not scale with $\Delta$, so that $\Theta$ is not a
uniform scalar rescaling of the discrete parameter. In that parametrisation the
fixed point is
\[
\hat\theta=(\widehat G_K+\lambda_K I)^{-1}\frac1K\sum_k\hat\phi_k\hat y_k,
\qquad
\widehat G_K=\frac1K\sum_k\hat\phi_k\hat\phi_k^\top ,
\]
with $\lambda_K=\lambda_\theta/K\ge0$ the normalised ridge, $\lambda_K=0$
recovering \eqref{eq:fixedpoint1}. For \emph{any} fixed target $\theta$,
\begin{equation}\label{eq:ratedecomp}
\hat\theta-\theta
\;=\;(\widehat G_K+\lambda_KI)^{-1}
\Bigl\{\frac1K\sum_k\hat\phi_k\bigl(\hat y_k-\hat\phi_k^\top\theta\bigr)-\lambda_K\theta\Bigr\},
\end{equation}
an identity, with no hypothesis whatever. Take first $\theta=\theta_\star(\Delta)$
and decompose the residual as
\[
\hat y_k-\hat\phi_k^\top\theta_\star(\Delta)
\;=\;\zeta_k\;+\;e^{\mathrm{mc}}_k\;+\;e^{\mathrm{gr}}_k ,
\]
where $e^{\mathrm{mc}}_k$ collects the replacement of $\E_{\mathcal Z}$ by the
average over the $M$ copies and $e^{\mathrm{gr}}_k$ the replacement of the exact
marginals by their binned and clipped projections. There is no fourth term: the
mesh is inside $\zeta_k$ by \textup{(R4)}.
By the ergodic theorem, which the stationarity and ergodicity of the
features in \textup{(R5)} licenses, and \textup{(R5)}, $\widehat G_K\to G$ in probability and
$\lambda_{\min}(G)\ge\lambda_0$, so
$\|(\widehat G_K+\lambda_KI)^{-1}\|\le2/\lambda_0$ with probability tending to
one; the perturbation of $\hat\phi_k$ itself is of order $M^{-1/2}+h^{\,r}+\tau_R$
by Lemmas~\ref{lem:grid} and~\ref{lem:mc}, and enters \eqref{eq:ratedecomp} only
through terms already present or of higher order. The three sums are then bounded
in turn: the first by Lemma~\ref{lem:est}; the second by Cauchy--Schwarz with
Lemma~\ref{lem:mc}, since $(K^{-1}\sum_k\|\hat\phi_k\|^2)^{1/2}=\mathcal O_p(1)$;
the third by Lemma~\ref{lem:grid}, the grid error of $\gamma_k$ being the
difference of two such terms. Both are errors in $\gamma_k$ and are multiplied by
$\Delta^{-1}$ upon passing to $\Delta^{-1}\gamma_k$, which is the source of the
$\Delta^{-1}$ factors; and the ridge contributes $\lambda_K\|\theta_\star\|$,
absent at $\lambda_K=0$. This is \eqref{eq:jointrate}. For
\eqref{eq:jointrate-infill} add the triangle inequality with \eqref{eq:meshgap},
which supplies the single $\mathcal O(\Delta)$ term. Lemma~\ref{lem:mesh} is what
makes \eqref{eq:meshgap} plausible for a given generator; it is an $L^1$ estimate
upon the increment, and we note that it does not by itself control the product of
the mesh residual with an unbounded feature vector --- for that one needs either
bounded features, a weighted estimate, or the $L^2$ bound which \textup{(R2)}
supplies at fixed $\Delta$.

\emph{Non-Convex Setting.} Write $J_K$ for the objective of
\eqref{eq:fixedpoint2} formed from the computed $(\bar x_k,\gamma_k)$ and
$J_\infty$ for its population counterpart formed from
$(\bar x^\infty_k,\gamma^\infty_k)$. Confine $\Theta$ to the compact box
$\mathcal B_\varrho\times\mathcal C$ of the statement, upon which $z_k(a)$ and
its derivatives are bounded uniformly, the relevant series being geometric
because $\rho(\comp(a))<\varrho<1$. Assume the second-order sufficient condition
and the separation \emph{of $J_\infty$} at each point of the identified set $\Theta^\star$
and the separation of $\Theta^\star$, as in the statement: these supply a
quadratic growth $J_\infty(\Theta)-\min J_\infty\ge\lambda_1\dist(\Theta,\Theta^\star)^2$
upon a neighbourhood of $\Theta^\star$, and the separation confines every global minimiser of $J_K$ to that
neighbourhood once $\sup_{\mathcal B_\varrho\times\mathcal C}|J_K-J_\infty|<\Delta_{\mathrm{sep}}/2$.
Uniform convergence over the box, and not merely a pointwise bound, is what is
needed here, and it follows from the pointwise bounds of the first part together
with the equicontinuity which the uniform bound upon the sensitivities supplies
upon a compact box. The standard argument for $M$-estimators, applied at the point
$\Theta^j\in\Theta^\star$ nearest to $\hat\Theta$, then gives
$\dist(\hat\Theta,\Theta^\star)\le\lambda_1^{-1}\max_j\|\nabla J_K(\Theta^j)-\nabla J_\infty(\Theta^j)\|+o_p(\cdot)$,
the gradient difference being a sum of the same terms multiplied by the bounded
sensitivities $\partial_\Theta(\alpha+\beta\bar x_k+c^\top z_k)$; where a
constraint of $\mathcal B_\varrho$ is active at $\Theta_\star$ the unconstrained
inverse-Hessian expansion must be replaced by the corresponding constrained local
expansion under LICQ and strict complementarity.
\end{proof}

\begin{remark}[The latent extension is conditional, and upon what]\label{rem:latentscope}
Assumption~\ref{ass:sosc2} is stated of the M-step loss \eqref{eq:mstep2-reduced},
whereas the argument above needs quadratic growth and separation of the
population mean loss $J_\infty$ built upon \eqref{eq:fixedpoint2}. These are not
the same function: Proposition~\ref{prop:fixedpoint2} exhibits the former as the
latter plus a slope-proximal quadratic, so a Hessian bound for one does not
transfer to the other. Nor is the transfer merely technical --- the
stationary-versus-fixed distinction of Remark~\ref{rem:infill-fails} and the pole
ambiguity of Remark~\ref{rem:poleswap} both intervene before any statistical
argument can be brought to bear, the second of them showing that the identified set $\Theta_\star$
need not be a singleton in canonical coordinates. We therefore state the latent
half of Theorem~\ref{thm:rate} as conditional upon quadratic growth and
separation of $J_\infty$ about $\Theta^\star$, as a statement about
$\dist(\hat\Theta,\Theta^\star)$, and do not claim to have derived it from
Assumption~\ref{ass:sosc2} as stated of the M-step loss. The logical
structure is then: ergodicity and the effect of the initialisation are proved
for the generating model (Proposition~\ref{prop:ergodic}); persistent
excitation is assumed and checked numerically; quadratic growth and separation
of $J_\infty$ are assumed; and uniqueness holds only modulo the equivalence of
Remark~\ref{rem:poleswap}. For the Euler generator of the experiments
$\varsigma=0$ (Remark~\ref{rem:ratescope}), so the $K^{-1/2}$ term is not the
principal stochastic error there: finite $M$ and the grid generate the
estimation noise, while a longer $K$ supplies excitation and identification
horizon rather than averaging. Two further cautions. Stability of $\comp(a)$
controls the response to \emph{bounded} inputs and furnishes moment bounds under
stochastic assumptions; it does not make the states and their sensitivities
uniformly bounded pathwise for an unbounded Gaussian input sequence, which is why
the box $\mathcal C$ appears in the statement. And the boxes used in the proof
belong in the statement, where we have now put them.
\end{remark}

\begin{corollary}[Balancing, and the $K$-observation variance]\label{cor:balance}
Suppose the premises of \eqref{eq:jointrate-infill} --- the four bounds, the
mesh-gap hypothesis \eqref{eq:meshgap} and uniform conditioning along an
expanding-horizon sequence --- are in force, and take $\tau_R$ negligible beside
$h^{\,r}$. Balancing $\Delta^{-1}M^{-1/2}$ against $\Delta$ gives $\Delta\asymp M^{-1/4}$;
requiring $\Delta^{-1}h^{\,r}\lesssim\Delta$ then gives $h^{\,r}\asymp M^{-1/2}$,
that is $h\asymp M^{-1/(2r)}$, and $K\gtrsim M^{1/2}$ makes the estimation term no
larger. The four contributions of \eqref{eq:jointrate} are then balanced at
$\mathcal O_p(M^{-1/4})$. Since $K=n_{\text{pairs}}\asymp T/\Delta$, the
requirement $K\gtrsim M^{1/2}$ is compatible with $\Delta\asymp M^{-1/4}$ only if
the horizon $T$ grows; the balancing is therefore a statement about an expanding
horizon, and not about the fixed $[0,T]$ of Section~\ref{sec:setting}. It is a
balancing of the stated \emph{upper bounds} under those premises, and no evidence
of minimax optimality; nor is it a prescription until the premises are
established for the design at hand, which Remark~\ref{rem:infill-fails} shows is
a real question and not a formality. If instead
the grid, mesh and Monte-Carlo errors are each $o(K^{-1/2})$, then
\[
\sqrt K\,\bigl(\hat\theta-\theta_\star\bigr)\;\Longrightarrow\;\mathcal N\bigl(0,\;G^{-1}\Omega G^{-1}\bigr),
\]
which is the sandwich form for a regression on $K$ observations. This is the
precise sense of the assertion in Section~\ref{sec:conv} that the estimator
attains the asymptotic variance of a $K$-observation regression however large $M$
and $N$ may be: neither $M$ nor $N$ appears in the limit.
\end{corollary}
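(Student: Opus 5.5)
The corollary has two halves, and I would treat them separately: the balancing is arithmetic upon the upper bounds of Theorem~\ref{thm:rate}, and the central limit theorem is a Slutsky argument upon the identity \eqref{eq:ratedecomp}.

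\emph{Balancing.} Granting \eqref{eq:jointrate-infill} with $\tau_R\ll h^{\,r}$, the error is bounded by
\[
\Delta^{-1}M^{-1/2}+\varsigma K^{-1/2}+\Delta^{-1}h^{\,r}+\Delta .
\]
\begin{itemize}[topsep=2pt,itemsep=1pt,leftmargin=1.4em]
\item Equating the first and last terms gives $\Delta\asymp M^{-1/4}$, at which both are $M^{-1/4}$.
\item Requiring $\Delta^{-1}h^{\,r}\lesssim\Delta$ gives $h^{\,r}\lesssim\Delta^2\asymp M^{-1/2}$, that is $h\asymp M^{-1/(2r)}$.
\item Requiring $\varsigma K^{-1/2}\lesssim M^{-1/4}$ gives $K\gtrsim M^{1/2}$, since $\varsigma$ is a fixed constant.
\end{itemize}
Since $K\asymp T/\Delta\asymp TM^{1/4}$, the condition $K\gtrsim M^{1/2}$ forces $T\gtrsim M^{1/4}\to\infty$. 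This is the expanding-horizon remark; I would record it as a direct consequence of the arithmetic rather than prove anything further. The side conditions of \eqref{eq:jointrate-infill} also hold along this sequence: $\Delta^{-1}M^{-1/2}\asymp M^{-1/4}\to0$ and $\Delta^{-1}h^{\,r}\asymp M^{-1/4}\to0$.

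\emph{Asymptotic normality.} I would take $\theta=\theta_\star$ in \eqref{eq:ratedecomp}, multiply by $\sqrt K$, and write the result as
\[
\sqrt K(\hat\theta-\theta_\star)=(\widehat G_K+\lambda_KI)^{-1}\Bigl\{K^{-1/2}\textstyle\sum_k\bar\phi^\infty_k\zeta_k+R_K\Bigr\}.
\]
The remainder $R_K$ collects three kinds of term.
\begin{itemize}[topsep=2pt,itemsep=1pt,leftmargin=1.4em]
\item The Monte-Carlo and grid residuals, $K^{-1/2}\sum_k\hat\phi_k(e^{\mathrm{mc}}_k+e^{\mathrm{gr}}_k)$. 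By Cauchy--Schwarz with $(K^{-1}\sum_k\|\hat\phi_k\|^2)^{1/2}=\mathcal O_p(1)$, this is $\sqrt K$ times the root-mean-square residual. By hypothesis that residual is $o(K^{-1/2})$, so the term is $o_p(1)$.
\item The feature perturbation $K^{-1/2}\sum_k(\hat\phi_k-\bar\phi^\infty_k)\zeta_k$. Cauchy--Schwarz together with Lemmas~\ref{lem:grid} and~\ref{lem:mc} bounds it by $\sqrt K\,\mathcal O_p(M^{-1/2}+h^{\,r}+\tau_R)\,(K^{-1}\sum_k\zeta_k^2)^{1/2}$. The last factor is $\mathcal O_p(\varsigma)$ by ergodicity, so the term is again $o_p(1)$.
\item The ridge term $\sqrt K\lambda_K\theta_\star=\lambda_\theta K^{-1/2}\theta_\star$, which tends to zero.
\end{itemize}
The mesh enters only through $\zeta_k$, as (R4) stipulates, so it adds nothing here. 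If the centring is at $\theta_0$, the mesh gap $\sqrt K\,\mathcal O(\Delta)$ is also $o(1)$ by assumption.

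\emph{Conclusion, and the main obstacle.} Lemma~\ref{lem:est} gives $K^{-1/2}\sum_k\eta_k\Rightarrow\mathcal N(0,\Omega)$. Hypothesis (R5) gives $\widehat G_K\to G$ with $G$ invertible, and $\lambda_K\to0$. Slutsky's lemma then yields the sandwich $G^{-1}\Omega G^{-1}$. Neither $M$ nor $N$ appears in the limit, because both were absorbed into $o_p(1)$ terms. The step I expect to need most care is the uniformity along the triangular array. Lemma~\ref{lem:est} and the law of large numbers for $\widehat G_K$ must hold with constants uniform in $\Delta$, which is exactly the premise flagged by Remark~\ref{rem:infill-fails}. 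I would therefore invoke it explicitly as part of the stated premises, using a Lindeberg-type CLT for mixing arrays under the uniform $2+\delta$ moment and mixing bounds, rather than attempt to derive it.
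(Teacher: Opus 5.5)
Your proposal is correct and follows the route the paper leaves implicit, since the corollary has no separate proof there. The balancing is the same arithmetic on the four bounds of \eqref{eq:jointrate-infill}, and the limit law is Slutsky applied to the identity \eqref{eq:ratedecomp} from the proof of Theorem~\ref{thm:rate}: Lemma~\ref{lem:est} supplies $\mathcal N(0,\Omega)$, \textup{(R5)} supplies $G$, and the Monte-Carlo, grid, feature and ridge remainders are removed by the same Cauchy--Schwarz bounds.

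Two small tightenings are worth making.

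The $\widehat G_K$ in \eqref{eq:ratedecomp} is built from the measured $\hat\phi_k$, not the exact features. So \textup{(R5)} must be combined with the perturbation bound $\mathcal O_p(M^{-1/2}+h^{\,r}+\tau_R)$ of Lemmas~\ref{lem:grid} and~\ref{lem:mc} before you can say it converges to $G$.

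Your triangular-array concern arises only under infill. At a fixed mesh, centring at $\theta_\star(\Delta)$ makes the mesh term vacuous and Lemma~\ref{lem:est} applies as stated. Under infill, uniform constants give only a self-normalised limit; you would additionally need $G(\Delta)$ and $\Omega(\Delta)$ to converge for a fixed limit $\mathcal N(0,G^{-1}\Omega G^{-1})$ to make sense.
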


\paragraph{The sandwich, checked against replications.}
Corollary~\ref{cor:balance} is an asymptotic statement, and it is worth asking
whether it describes the dispersion actually observed. In the Convex Setting the
estimator is the closed form \eqref{eq:fixedpoint1} and needs no transport, so
replications are cheap; the model there omits the latent factor, so $\varsigma>0$
and the limit is non-degenerate, which is the case the corollary describes. We
drew $150$ independent realisations at $K=999$, $M=2000$, computed the empirical
covariance of $\sqrt K(\hat\theta-\theta_\star)$, and compared it against
$G^{-1}\Omega G^{-1}$ with $\Omega$ estimated by Bartlett-weighted
autocovariances at bandwidth $L$:
\begin{center}\small
\begin{tabular}{l ccccc}
\toprule
& empirical & $L=10$ & $L=25$ & $L=50$ & $L=100$\\
\midrule
intercept & $0.184$ & $0.560$ & $0.342$ & $0.250$ & $0.205$\\
slope     & $0.083$ & $0.141$ & $0.103$ & $0.090$ & $0.086$\\
\bottomrule
\end{tabular}
\end{center}
At $L=100$ the predicted variances exceed the observed by $11\%$ and $4\%$
respectively, which we regard as agreement, the empirical variances themselves carrying a standard error of some 8\% upon 150 replications. The instructive part is the
bandwidth. The usual automatic choice $4(K/100)^{2/9}$ gives $L\approx6$ here and
overstates the asymptotic variance three-fold, because the score $\bar\phi_k\zeta_k$
carries the omitted factor together with the mean-reverting state and its
autocovariances remain appreciable, and partly negative, well beyond a dozen
lags. A practitioner forming confidence intervals from Corollary~\ref{cor:balance}
should choose the bandwidth generously; the error is in the conservative
direction, but by a factor of three rather than a few per cent.
The script is \texttt{code/sandwich.py}.

\begin{remark}[What the theorem does not cover]\label{rem:ratescope}
Three restrictions deserve mention. Under exact specification of the
\emph{discrete} model at the mesh in use --- which is the case for the Euler
generator of \eqref{eq:sim}, though not for the exact transition of the
corresponding diffusion --- one has $\varsigma=0$, the estimation term vanishes
and the fit is exact in the population limit; this is why the residuals observed
in Section~\ref{sec:numerics} are so small, and it is a property of the
simulation rather than of the method. Since $\varsigma=0$ there, the finite-seed
spread reported in Appendix~\ref{app:extra} cannot be attributed to the
estimation term: with $M$ fixed at $600$ the Monte-Carlo, conditioning, clipping
and solver contributions are not extinguished by refining the grid, and the
experiment does not identify which of them dominates. The hypothesis
$\lambda_{\min}(G)\ge\lambda_0$ is a condition of persistent excitation on the
common factor, and fails precisely in the regime of Section~\ref{sec:setting1}(iii),
where $\mathcal S_k$ is degenerate. Finally, the Monte-Carlo term does not improve
with $K$ in \eqref{eq:jointrate}, because the same $M$ copies are used at every
time; when the conditional fluctuations decorrelate over the mesh, the bound
sharpens to $\mathcal O_p((MK)^{-1/2})$ for that component, but we have not
assumed enough to claim it.
\end{remark}

\section{Proof of the NP-hardness theorem}
\label{app:nphard}

We reduce from $\ell^1$-norm rank-one matrix approximation, which is NP-hard
by \citet[Thm.~2]{GillisVavasis2018} already for sign matrices: given
$\mathsf M\in\{\pm1\}^{K\times n}$ with $n\ge2$ and a rational $V$, it is NP-hard to
decide whether
\begin{equation}\label{eq:l1lra}
\min_{v\in\R^{n},\,z\in\R^{K}}\ \|\mathsf M-z v^\top\|_1\ \le\ V .
\end{equation}
The restriction to $\{\pm1\}$ entries matters twice over: it keeps the grid of
the constructed instance of \emph{polynomial} cardinality --- a grid of
$\mathcal O(nB)$ atoms would be only pseudo-polynomial in a binary-encoded
entry bound $B$, since polynomial bit size of the endpoints does not make the
enumeration of the intervening atoms polynomial --- and it makes every matched
displacement of unit magnitude, which is what allows the marginals to be
arranged into a single chain.

That last point is the substance of the construction. The pairs
$(\mu_k,\nu_k)$ supplied to \eqref{eq:qcqp} are not an arbitrary list of
source--destination pairs: they are the \emph{consecutive} marginals of one
observed process, so that $\nu_k=\mu_{k+1}$. A reduction which chose $\nu_k$
freely from row $k$ of $\mathsf M$ while holding $\mu_k$ fixed would not produce an
instance of the problem we have posed. We therefore interleave the encoding
pairs with \emph{reset} pairs, whose contribution to the objective is a constant.

\paragraph{The instance.}
Given such an $\mathsf M$, put
\[
x_i:=4(i-1)+1\quad(1\le i\le n),\qquad
\mathcal X:=\{0,1,\ldots,4n-2\},\qquad D:=4n-2,\qquad S:=D^2 ,
\]
and define the probability measures
\[
\mathsf A:=\frac1n\sum_{i=1}^n\delta_{x_i},
\qquad
\mathsf B_r:=\frac1n\sum_{i=1}^n\delta_{x_i+\mathsf M[r,i]}\quad(1\le r\le K).
\]
All of these atoms lie in $\mathcal X$, whose cardinality $4n-1$ is polynomial in
the size of the instance. The atoms of $\mathsf A$ are congruent to $1$ modulo
$4$ and those of every $\mathsf B_r$ are even, so the two supports are disjoint;
and since $\mathsf M[r,i]\in\{\pm1\}$, the atoms of $\mathsf B_r$ are
$4(i-1)$ or $4(i-1)+2$ and hence strictly increasing in $i$, with consecutive
gaps at least $2$. The gaps of $\mathsf A$ are exactly $4$.

Take as the observed chain of marginals the sequence of $2K+1$ measures
\[
\mathsf B_1,\ \mathsf A,\ \mathsf B_1,\ \mathsf A,\ \mathsf B_2,\ \mathsf A,\
\mathsf B_3,\ \ldots,\ \mathsf A,\ \mathsf B_K ,
\]
whose $2K$ consecutive pairs are $K$ \emph{encoding} pairs
$\mathsf A\to\mathsf B_r$, $r=1,\ldots,K$, and $K$ \emph{reset} pairs
$\mathsf B_r\to\mathsf A$. Take $\varphi$ to be the indicator basis
$\varphi_i(x)=\mathbf 1\{x=x_i\}$ of the $n$ atoms of $\mathsf A$, so that $q=n$;
take $d=1$, $H=v\in\R^{n}$, $a=0$ --- an admissible acquiescent coefficient ---
and $u\equiv0$, $z_0=0$, $\lambda_{\text{reg}}=\lambda_w=0$, and the quadratic
cost $C_{ij}=(x_i-x_j)^2/S$.

Because $\lambda_w=0$ the recursion $z_{k+1}=az_k+w_k$ places no restriction
whatever upon $z_1,z_2,\ldots$; we write $z_r$ for the state at the $r$th
encoding time. The first pair of the chain is a reset pair, so the forced
$z_0=0$ constrains none of the encoding factors. On a reset pair the source
atoms are those of $\mathsf B_r$, where every $\varphi_i$ vanishes, so the model
drift is identically zero there whatever $\Theta$ may be; on the encoding pair
$\mathsf A\to\mathsf B_r$ the model drift at $x_i$ is $z_rv_i$.

\paragraph{Step 1: the transport block is pinned, uniformly in $\Theta$.}
For every pair both marginals are uniform upon $n$ sorted atoms, so the vertices
of $\Pi(\mu_k,\nu_k)$ are the scaled permutation matrices $\frac1n\Pi_\pi$. For a
strictly convex cost upon the line the monotone matching
$P^\star_k=\frac1n\Pi_{\mathrm{id}}$ is optimal, and undoing a single inversion
$i<j$ changes the cost by
\[
\tfrac{2}{nS}\,(x_j-x_i)\bigl(T(x_j)-T(x_i)\bigr)\ \ge\ \sigma,
\qquad \sigma:=\frac{16}{nS} ,
\]
since on either kind of pair one of the two gaps is at least $4$ and the other at
least $2$. Hence $P^\star_k$ is the unique optimal vertex and every other vertex
is worse by at least $\sigma$. Writing a general $P=\sum_v\theta_vv$ as a convex
combination of vertices,
\[
\inner{C}{P}-\inner{C}{P^\star_k}\;\ge\;\sigma\!\!\sum_{v\ne P^\star_k}\!\!\theta_v
\;\ge\;\tfrac{\sigma}{2}\,\|P-P^\star_k\|_1 ,
\]
because $\|P-P^\star_k\|_1\le2\sum_{v\ne P^\star_k}\theta_v$. The slack term of
\eqref{eq:qcqp} at pair $k$ is
$\pi_k(P)=\bigl\|(P\odot\mathrm{inc})\one-g\odot\mu_k\bigr\|_1$ for the drift
vector $g$, and $|\pi_k(P)-\pi_k(P')|\le D\|P-P'\|_1$. Therefore, choosing
\[
\lambda_{\text{drift}}\;:=\;\frac{\sigma}{4D}\;=\;\frac{4}{nSD},
\]
a number of polynomial bit size, we get for every $P\ne P^\star_k$ and every
drift vector $g$
\[
\inner{C}{P}+\lambda_{\text{drift}}\pi_k(P)
-\inner{C}{P^\star_k}-\lambda_{\text{drift}}\pi_k(P^\star_k)
\;\ge\;\Bigl(\tfrac{\sigma}{2}-\lambda_{\text{drift}}D\Bigr)\|P-P^\star_k\|_1
\;>\;0 .
\]
So the inner minimisation over $P_k$ returns the monotone matching whatever
$\Theta$ may be.

\paragraph{Step 2: what is left.}
Under the monotone matchings every matched displacement has magnitude one, so the
total transport cost is $2K/S$, a constant. Upon a reset pair the realised row
displacements are $-\mathsf M[r,i]$ and the model drift is zero, so the slack term is
$\frac1n\sum_i|\mathsf M[r,i]|=1$, again a constant. Upon the encoding pair
$\mathsf A\to\mathsf B_r$ the realised row displacement at $x_i$ is $\mathsf M[r,i]$ and
the model drift is $z_rv_i$, so the slack term is
$\frac1n\sum_i|\mathsf M[r,i]-z_rv_i|$. Adding up,
\begin{equation}\label{eq:optnc-chain}
\mathrm{OPT}_{\mathrm{NC}}
=\frac{2K}{S}+\lambda_{\text{drift}}K
+\frac{\lambda_{\text{drift}}}{n}\min_{z\in\R^K,\,v\in\R^n}\bigl\|\mathsf M-zv^\top\bigr\|_1 ,
\end{equation}
the minimisation being unconstrained because, with $d=1$ and the indicator basis,
$G(\Theta)[r,i]=z_rv_i$ ranges over exactly the matrices of rank at most one, and
because $\lambda_w=0$ leaves the $z_r$ free.

\paragraph{Step 3: the threshold.}
Setting $V':=2K/S+\lambda_{\text{drift}}K+\lambda_{\text{drift}}V/n$, a rational
of polynomial bit size, we have $\mathrm{OPT}_{\mathrm{NC}}\le V'$ if and only if
\eqref{eq:l1lra} holds. The grid has $4n-1$ atoms, the chain has $2K+1$
marginals, and every rational datum has polynomial encoding size, so the
construction is polynomial. \hfill$\square$

\begin{remark}[Numerical check]
The pinning of Step~1 and the identity \eqref{eq:optnc-chain} were verified upon
small instances by linear programming. For $(n,K)\in\{(3,3),(4,3),(5,2)\}$ and
adversarial drift vectors drawn uniformly from $[-8,8]^n$, the optimal coupling
of every pair was the monotone matching in every one of $240$ trials per
instance; and at random $(z,v)$ the value of the constructed programme agreed
with the right-hand side of \eqref{eq:optnc-chain} to $10^{-17}$. The proof, and
not the check, establishes the general claim.
\end{remark}

\begin{remark}[Scope: the basis must be rich]\label{rem:hardness-scope}
Three restrictions should be stated plainly, and the first most narrows the
theorem.

\emph{The basis dimension grows with the instance.} The reduction takes
$\varphi$ to be the indicator basis of the $N_0$ active atoms, so that $q=N_0$
and $b_k(x_i;\Theta)=(Hz_k)_i$ ranges over all rank-one matrices; that richness
is what is exploited. For a \emph{fixed} basis the picture is quite different.
With the affine $\varphi(x)=(1,x)^\top$ of \eqref{eq:lds-affine} one has
$b_k(x_i;\Theta)=\alpha+\beta x_i+c^\top z_k$, and at $\lambda_w=0$ the vector
$z_k$ is unconstrained, so that $s_k:=c^\top z_k$ is merely a free scalar
attached to pair $k$. The programme then reduces, for every $d$, to
\[
\min_{\alpha,\beta,s}\ \sum_{k}\sum_i\mu_k[i]\,\bigl|y_k[i]-\alpha-\beta x_i-s_k\bigr| ,
\]
a linear programme, solvable in polynomial time. The hardness is therefore a
statement about the general model with a basis whose dimension grows, and not
about the affine specialisation used in Section~\ref{sec:numerics}. The
tractability just described belongs to the \emph{intercept-shift} model
$\alpha+\beta x_i+c^\top z_k$, in which the latent state enters additively; it
should not be transferred to the general bilinear parametrisation
$\varphi(x)^\top Hz_k$ with $\varphi=(1,x)^\top$, in which the state modulates
the slope as well as the intercept, and whose complexity at fixed $d$ we have not
settled. What renders
the latter tractable at $\lambda_w=0$ is precisely what renders it uninteresting
there --- a free scalar per pair fits the mean displacements exactly --- and it is
the constraint tying the $s_k$ to a low-order recursion, $\lambda_w>0$ or
$\lambda_w=\infty$, which restores the difficulty.

\emph{The innovations are unconstrained.} Step~2 is where they are used: for a
\emph{deterministic} $d=1$ system $z_k=\lambda^kz_0$, the row factor is geometric
rather than free, and we do not know the complexity of that sub-case. Neither do
we claim hardness for $\lambda_w>0$. One may bound the perturbation of the
objective by $\lambda_wK(1+\|A\|)^2Z^2$ upon a box $\|z_k\|_\infty\le Z$, but a
threshold reduction is not preserved by a small perturbation without a promise
gap and an a-priori bound upon the optimal states, neither of which the
construction supplies; we leave the statement at $\lambda_w=0$. For fixed $d$ and
$\lambda_w=\infty$ the complexity of \eqref{eq:qcqp} is likewise open.

\emph{The hidden state is small.} Letting $d$ grow makes the model so expressive
that the fit is exact and the problem trivial. Hardness is thus a statement about
a \emph{small} hidden state, with unconstrained innovations and a rich output
basis.
\end{remark}

\begin{remark}
Every quantity above has polynomial bit size: $|\mathcal X|=4n-1$, $S=(4n-2)^2$,
and $\sigma^{-1}$, $D$ and $\lambda_{\text{drift}}^{-1}$ are polynomial in $n$.
The reduction uses $\lambda_{\text{drift}}$ small, that is, the regime in which
the transport term dominates. We make no claim about the complementary
hard-constrained regime $\lambda_{\text{drift}}\to\infty$: the construction
specifies every entry of $\mathsf M$, so exact agreement with a rank-one matrix is a rank
test rather than a completion problem, and in a pure feasibility problem the
optimality of one transport vertex does not by itself force that vertex.
\end{remark}

\section{The exact E-step: invariance in \texorpdfstring{$\eps$}{eps}, and fine grids}
\label{app:epsinv}

\subsection*{Invariance of the exact E-step}

\begin{proposition}[The exact E-step does not see $\eps$]\label{prop:epsinvariant}
Fix $\mu_k,\nu_k$ and feasible row destination means $m_i=x_i+\tilde b_k[i]$. The
transport cost and the mass-weighted conditional variance then take the same
value at \emph{every} coupling with those marginals and row means, given in
closed form by \eqref{eq:costinvariant}--\eqref{eq:varinvariant} of
Appendix~\ref{app:epsinv}; and for every $\eps>0$ the minimiser of
$\inner{C}{P}-\eps\mathsf H(P)$ over that set is the same maximum-entropy coupling.
\end{proposition}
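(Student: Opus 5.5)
The plan is to expand the quadratic cost and show that, on the feasible set, it depends on $P$ only through quantities the constraints already fix.

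Write the feasible set as
$\mathcal F:=\{P\ge0:\ P\one=\mu_k,\ P^\top\one=\nu_k,\ \sum_jP[i,j]x_j=\mu_k[i]m_i\ \forall i\}$.
With $C_{ij}=(x_i-x_j)^2/S_0$, where $S_0:=\max_{kl}(x_k-x_l)^2$, expand
$(x_i-x_j)^2=x_i^2-2x_ix_j+x_j^2$. Then
\[
S_0\inner{C}{P}=\sum_i\mu_k[i]x_i^2+\sum_j\nu_k[j]x_j^2-2\sum_ix_i\sum_jP[i,j]x_j .
\]
The first two terms are fixed by the marginal constraints. On $\mathcal F$ the inner sum equals $\mu_k[i]m_i$, so the last term is $-2\sum_i\mu_k[i]x_im_i$. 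This gives the closed form
\[
\inner{C}{P}=S_0^{-1}\Bigl(\E_{\mu_k}[X^2]+\E_{\nu_k}[X^2]-2\sum_i\mu_k[i]x_im_i\Bigr),
\]
which is the content of \eqref{eq:costinvariant}.

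The conditional variance is handled the same way. The mass-weighted conditional variance is
$\sum_i\sum_jP[i,j](x_j-m_i)^2=\sum_j\nu_k[j]x_j^2-\sum_i\mu_k[i]m_i^2$.
Here the cross term $-2\sum_im_i\sum_jP[i,j]x_j$ becomes $-2\sum_i\mu_k[i]m_i^2$ by the row-mean constraint, and $\sum_jP[i,j]=\mu_k[i]$ is used once more. This is \eqref{eq:varinvariant}, and it is non-negative because it is a sum of squares. It is also the convex-order gap of Remark~\ref{rem:feasibility} at $f=x^2$.

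The $\eps$-invariance then follows. Since $\inner{C}{P}$ equals a constant $c_0$ on $\mathcal F$, for every $\eps>0$
\[
\inner{C}{P}-\eps\mathsf H(P)=c_0-\eps\mathsf H(P)\quad\text{on }\mathcal F,
\]
so minimising it over $\mathcal F$ is the same as maximising $\mathsf H$ over $\mathcal F$. The set $\mathcal F$ is a nonempty (feasibility is assumed), compact, convex polytope, and $\mathsf H$ is strictly concave and continuous on the closed orthant. Hence there is a unique maximiser, the maximum-entropy coupling, and it does not depend on $\eps$.

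The one step that needs care is the domain of the row-mean constraint. It is imposed only on rows with $\mu_k[i]>0$. On inactive rows, however, $P\one=\mu_k$ forces $P[i,\cdot]=0$, so those rows contribute nothing to either sum and the identities hold on every row. Apart from that, the only genuine assumption is that $\mathcal F\neq\emptyset$, which is the hypothesis that the $m_i$ are feasible.
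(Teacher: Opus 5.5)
Your proposal is correct and follows the paper's proof. Both expand the square, use the marginal and row-mean constraints to make the cost and the conditional variance constant on the feasible set, and then reduce to maximising the strictly concave entropy over a non-empty convex set. Your treatment of the inactive rows, the normalising constant and the compactness of the polytope fills in details the paper leaves implicit.
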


So neither the quadratic cost nor $\eps$ selects the exact E-step coupling; an
observed $\eps$-dependence is evidence of residual infeasibility.

For feasible row means $m_i$, every coupling with the prescribed marginals
satisfies
\begin{equation}\label{eq:costinvariant}
\sum_{ij}P[i,j](x_j-x_i)^2=\sum_j\nu_k[j]x_j^2+\sum_i\mu_k[i]x_i^2-2\sum_i\mu_k[i]x_im_i,
\end{equation}
\begin{equation}\label{eq:varinvariant}
\sum_i\mu_k[i]\operatorname{Var}_P(Y\mid X=x_i)=\sum_j\nu_k[j]x_j^2-\sum_i\mu_k[i]m_i^2 .
\end{equation}

\begin{proof}[Proof of Proposition~\ref{prop:epsinvariant}]
Expand $(x_j-x_i)^2$ and use the row sums, the column sums and
$\sum_jP[i,j]x_j=\mu_k[i]m_i$; \eqref{eq:varinvariant} follows by subtracting the
squared row means from the second destination moment. The cost being constant,
the programme reduces to maximising $\mathsf H$ over a non-empty convex set, and strict
concavity gives uniqueness. Normalising $C$ by the grid diameter, or replacing
the entropy by $\mathrm{KL}(P\,\|\,\mu_k\nu_k^\top)$, changes only constants.
\end{proof}

Two consequences must be stated plainly. The quadratic transport cost plays no
part in selecting the exact E-step coupling, and neither does $\eps$: what the
E-step returns is the maximum-entropy element of the feasible set. The entropic
parameter therefore cannot be tuned to make $P_k$ a particular bridge, and the
average conditional variance of $P_k$ is determined by the marginals and the
drift before any transport problem is solved. Where an $\eps$-dependence is
observed numerically, it is evidence that the computed plan does not satisfy the
constraints exactly --- through clipping, through the soft marginal update of
Appendix~\ref{app:impl}, or through an unconverged sweep --- and not evidence
about the bridge.

The scope is the exact E-step. The slack-penalised programmes \eqref{eq:lpprimal}
and \eqref{eq:qcqp} do not fix the row means exactly and are not covered, nor is
the transport-pinning mechanism of Appendix~\ref{app:nphard}; and a
finite-tolerance or soft-marginal computation may depend upon $\eps$ strongly.
Identities \eqref{eq:costinvariant} and \eqref{eq:varinvariant} are used in the
reference implementation as unit tests upon the E-step.

\subsection*{The behaviour at fine grids}
\label{app:bign}

The experiments reported above use grids of a few tens of atoms. Since the
estimand is $N$-dependent by \eqref{eq:hierarchy}, and since the grid error of
Theorem~\ref{thm:rate} is the term which refinement is supposed to reduce, it is
worth asking what happens when the grid is made very much finer. We therefore
repeated the study at $N\in\{100,200,300,400\}$, upon five seeds and at two
regularisations --- the $\eps=0.05$ used throughout, and the F\"ollmer value
$\eps^\star=2\sigma^2\Delta/S$ --- holding everything else at the values of
Section~\ref{sec:numerics}, so that $M=600$, $K=24$.  
This study, and only this one, is offered in illustration of
Theorem~\ref{thm:rate}, and its grid is therefore the uniform partition of a
window $[-R,R]$ with $R=6$, fixed before the data are drawn from the stationary
law of \eqref{eq:sim} and not from the sample: three standard deviations of the
factor-driven conditional mean $\E[X\mid Z]$, whose stationary standard
deviation is $1.61$, plus three of the conditional spread $X\mid Z$, whose
standard deviation is $\sigma\sqrt{\Delta}/\sqrt{1-(1+\beta\Delta)^2}=0.44$,
which gives $6.13$, rounded to the integer. Mass beyond $R$ is assigned to the
nearer endpoint, as \textup{(R2)} prescribes, so that $h=2R/(N-1)$ falls from
$0.346\,\sigma\sqrt\Delta$ at $N=100$ to $0.086\,\sigma\sqrt\Delta$ at $N=400$,
and the truncation mass $\tau_R$ of \textup{(R2)} is a known quantity: given the
factor path, $X_{t_k}$ is Gaussian with mean $m_k$ and variance $v_k$, both
computable from $(u_k)$, and $\E_{\mathcal Z}[(|X_{t_k}|-R)_+]$ and
$\mathbb P_{\mathcal Z}(|X_{t_k}|>R)$ are the corresponding Gaussian tail
integrals (\texttt{code/window\_R.py}). Upon four of the five seeds
$\tau_R$ is below $10^{-9}$; upon the fifth (seed $7$), whose factor path drives
the conditional mean to $-4.72$ at the last pair, it is $4.3\cdot10^{-4}$ at
$N=100$ and $2.7\cdot10^{-4}$ at $N=400$, of which $2.2\cdot10^{-4}$ is the
tail excess and the remainder the boundary term $h\,\mathbb P_{\mathcal Z}(|X|>R)$
with $\mathbb P_{\mathcal Z}(|X|>R)=1.7\cdot10^{-3}$. The grid term of
\eqref{eq:jointrate} is therefore $h^{\,r}$ and not $\tau_R$ throughout this
study, the window having been chosen so.
All forty runs
completed; none was abandoned.

\begin{table}[h]
\centering\small
\caption{Medians over five seeds. $\mathrm{rel}$ is the relative discrepancy
between the block iteration and the closed form \eqref{eq:fixedpoint1};
$\mathrm{err}$ is $\|\hat\Theta-\Theta_\star\|$; $\mathrm{icv}$ is the implied
conditional variance of the coupling in units of $\sigma^2\Delta$; the clip is
reported both as the fraction of pairs at which some atom is clipped and as the
$\mu_k$-mass actually displaced.}
\label{tab:bign}
\begin{tabular}{l r r r r r r r}
\toprule
& $N$ & $h/\sigma\sqrt\Delta$ & clip (pairs) & clip (mass) & rel & icv & err \\
\midrule
\multirow{4}{*}{$\eps=0.05$}
 & $100$ & $0.346$ & $0.29$ & $6.9\cdot10^{-4}$ & $3.3\cdot10^{-4}$ & $0.493$ & $0.0634$ \\
 & $200$ & $0.172$ & $0.33$ & $6.3\cdot10^{-4}$ & $3.3\cdot10^{-4}$ & $0.491$ & $0.0612$ \\
 & $300$ & $0.115$ & $0.33$ & $6.3\cdot10^{-4}$ & $3.2\cdot10^{-4}$ & $0.490$ & $0.0597$ \\
 & $400$ & $0.086$ & $0.33$ & $7.6\cdot10^{-4}$ & $3.1\cdot10^{-4}$ & $0.490$ & $0.0597$ \\
\midrule
\multirow{4}{*}{$\eps=\eps^\star$}
 & $100$ & $0.346$ & $0.29$ & $6.9\cdot10^{-4}$ & $1.4\cdot10^{-4}$ & $0.491$ & $0.0625$ \\
 & $200$ & $0.172$ & $0.33$ & $6.3\cdot10^{-4}$ & $1.6\cdot10^{-4}$ & $0.489$ & $0.0611$ \\
 & $300$ & $0.115$ & $0.33$ & $6.3\cdot10^{-4}$ & $1.6\cdot10^{-4}$ & $0.488$ & $0.0607$ \\
 & $400$ & $0.086$ & $0.33$ & $7.6\cdot10^{-4}$ & $1.5\cdot10^{-4}$ & $0.488$ & $0.0621$ \\
\bottomrule
\end{tabular}
\end{table}

\begin{figure}[h]
    \centering
    \includegraphics[width=\textwidth]{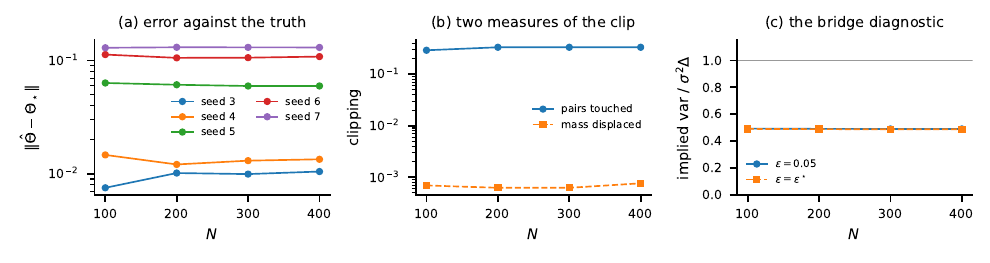}
    \caption{The fine-grid study. (a) the error against the truth, one line per
    seed: flat in $N$, and spread over a decade between seeds. (b) the two ways
    of measuring the clip. (c) the implied conditional variance at the two
    regularisations, with $\sigma^2\Delta$ marked.}
    \label{fig:bign}
\end{figure}

\paragraph{Refinement of the grid changes nothing.}
Every column of Table~\ref{tab:bign} is flat. The comparison across seeds is
uninformative here, the seed-to-seed spread being far the larger; the sensitive
test is the paired one, in which each seed is followed across $N$. At
$\eps=0.05$ the error $\|\hat\Theta-\Theta_\star\|$ is
\[
\begin{array}{lcccc}
\text{seed }3: & 0.0075 & 0.0102 & 0.0099 & 0.0105\\
\text{seed }4: & 0.0147 & 0.0121 & 0.0131 & 0.0134\\
\text{seed }5: & 0.0634 & 0.0612 & 0.0597 & 0.0597\\
\text{seed }6: & 0.1130 & 0.1057 & 0.1061 & 0.1085\\
\text{seed }7: & 0.1298 & 0.1311 & 0.1309 & 0.1307
\end{array}
\qquad (N=100,200,300,400),
\]
so that quadrupling the number of atoms moves the estimate by between $0.0013$
and $0.0073$, whereas changing the seed at fixed $N$ moves it from $0.0075$ to
$0.1311$, a factor of seventeen. This is Theorem~\ref{thm:rate} with the grid term extinguished and, since the
Euler generator makes $\varsigma=0$ (Remark~\ref{rem:ratescope}), the estimation
term absent as well. What remains at fixed $M=600$ and $K=24$ is the
Monte-Carlo term, whose constant $\lambda_{\min}(\widehat G_K)^{-1}$ depends
upon the realisation of the factor path, together with the clipping; the seed
spread is that of the design, and the grid term has long since fallen beneath it. There is accordingly a point --- here at about $N=20$, by
Figure~\ref{fig:diag}(a) --- beyond which refinement of the grid is wasted
effort, and the sentence of Section~\ref{sec:numerics} should be read with that
in mind.

\paragraph{The clip is not what a count of pairs reports.}
The fraction of pairs at which some active atom is clipped does not fall with
$N$: it is $0.29$ to $0.33$ throughout. This is an artefact of the measure. The
indicator fires when \emph{any} active atom is clipped, and as $N$ grows with the
window held fixed there are ever more atoms in the tails carrying almost no
mass, each of which may be clipped without consequence. The $\mu_k$-mass actually
displaced is $7\cdot10^{-4}$, equally flat, and it is that quantity which bounds
the influence of the clip upon the estimate. Counts of pairs elsewhere in this
paper should be read as reporting the support of the clip and not its size.

\paragraph{The bridge diagnostic does not discriminate here.}
The implied conditional variance is $0.490$--$0.493\,\sigma^2\Delta$ at $\eps=0.05$ and
$0.488$--$0.491\,\sigma^2\Delta$ at $\eps^\star$, agreeing to within $0.003$ at every $N$,
which is Proposition~\ref{prop:epsinvariant} and not a property of fine grids,
although the two values of $\eps$ differ by an order of magnitude. It is not a fine-grid phenomenon in which the marginals dominate the entropic
blur. By \eqref{eq:varinvariant} the
mass-weighted conditional variance is a function of the marginals and the row
means alone, upon every grid, so the agreement is exact wherever the constraints
are met and the apparent $\eps$-dependence at $N=14$
(Appendix~\ref{app:extra}) measures the residual infeasibility there. What the
grid does govern is how nearly the row means can be realised at all; the value
$0.49$ is low here because the drift target used in this study is the
\emph{misspecified} closed form of \eqref{eq:fixedpoint1}, which carries no
latent factor. Evaluated at the true drift the same quantity is
$1.15\,\sigma^2\Delta$ at $N=14$ and $1.02\,\sigma^2\Delta$ at $N=100$: the
constrained coupling reproduces the conditional spread of the generating model
without any tuning of $\eps$, which is the statement that survives.

\paragraph{The projection plateaus --- but in which norm?}
At these grids the quantity we had been reporting, namely the largest
row-conditional drift error $\max_i|d_k[i]/\mu_k[i]-\tilde b_k[i]|$ over active
rows, settles at $5\cdot10^{-2}$ to $5\cdot10^{-1}$ and does not improve with
further sweeping. That is a maximum over rows, unweighted by mass, and it is a
poor summary. So too is the scalar check $|\one^\top d_k-\gamma_k|$ used
elsewhere: it tests a single moment and is no certificate of feasibility, since a
plan with large compensating row errors passes it. We therefore report the
constraints separately. At $250$ sweeps, over the $24$ pairs of a run upon the
sample-fitted grid of the other experiments (\texttt{code/estep\_diag.py} and
\texttt{code/cx\_diag.py} both use it), the medians and worst cases are
\begin{center}\footnotesize
\setlength{\tabcolsep}{2.5pt}
\begin{tabular}{@{}l cccccc@{}}
\toprule
& $\|P\one-\mu\|_1$ & $\|P^\top\one-\nu\|_1$ & $\|(P\odot\mathrm{inc})\one-\mu\odot\tilde b\|_1$
& $\max\limits_i|\E_i h|$ & $\sum\limits_i\mu_i|\E_i h|$ & $|\one^\top d-\gamma|$\\
\midrule
$N=100$, median & $3\cdot10^{-16}$ & $2\cdot10^{-5}$ & $1\cdot10^{-16}$ & $1\cdot10^{-16}$ & $2\cdot10^{-17}$ & $1\cdot10^{-16}$\\
\phantom{$N=100$,} worst & $4\cdot10^{-16}$ & $1.4\cdot10^{-3}$ & $4.9\cdot10^{-4}$ & $1.2\cdot10^{-1}$ & $4.9\cdot10^{-4}$ & $7.6\cdot10^{-4}$\\
$N=200$, median & $4\cdot10^{-16}$ & $1\cdot10^{-6}$ & $2\cdot10^{-16}$ & $1\cdot10^{-16}$ & $2\cdot10^{-17}$ & $2\cdot10^{-16}$\\
\phantom{$N=200$,} worst & $5\cdot10^{-16}$ & $1.3\cdot10^{-3}$ & $4.8\cdot10^{-4}$ & $1.1\cdot10^{-1}$ & $4.8\cdot10^{-4}$ & $7.2\cdot10^{-4}$\\
$N=400$, median & $6\cdot10^{-16}$ & $2\cdot10^{-7}$ & $2\cdot10^{-16}$ & $1\cdot10^{-16}$ & $2\cdot10^{-17}$ & $2\cdot10^{-16}$\\
\phantom{$N=400$,} worst & $6\cdot10^{-16}$ & $1.4\cdot10^{-3}$ & $5.2\cdot10^{-4}$ & $1.1\cdot10^{-1}$ & $5.2\cdot10^{-4}$ & $7.9\cdot10^{-4}$\\
\bottomrule
\end{tabular}
\end{center}
The row marginal is satisfied to machine precision by construction, the column
marginal and the drift constraint to $10^{-3}$ and $5\cdot10^{-4}$ upon the worst
pair of the run; and the $10^{-1}$ plateau is confined to the unweighted maximum,
the same quantity weighted by mass being $5\cdot10^{-4}$. The plateau
is an artefact of tail rows, but not a harmless one: the convex-order check
below shows that on those pairs the constraint set is empty by a margin of the
same order, so that the residual is the infeasibility and not the solver. Two further seeds at $N=200$ give
unweighted maxima $4\cdot10^{-2}$ and $6\cdot10^{-2}$ against mass-weighted
$1.4\cdot10^{-4}$ and $1.6\cdot10^{-4}$, so the pattern is not particular to the
seed. On fine grids the two should always be distinguished. The script is
\texttt{code/estep\_diag.py}. 

\paragraph{The convex-order check.}
Remark~\ref{rem:feasibility} makes joint feasibility of an E-step a
convex-order statement, $\eta_k\cx\nu_k$ with $\eta_k=\sum_i\mu_k[i]\delta_{m_i}$
and $m_i=x_i+\tilde b_k[i]$ the shifted and clipped targets, and gives it a
finite test: equality of means, and
\[
V_k:=\max_t\Bigl\{\sum_i\mu_k[i](m_i-t)_+-\sum_j\nu_k[j](x_j-t)_+\Bigr\}\;\le\;0,
\]
the maximum being attained at a knot of the two supports. We computed $V_k$
and the mean defect $D_k:=\sum_i\mu_k[i]m_i-\E_{\nu_k}[X]$ for every E-step of
the runs reported in this paper (\texttt{code/cx\_diag.py}), the targets being
formed exactly as the E-step forms them, by the shift \eqref{eq:delta} followed
by the clip. At the configuration of Section~\ref{sec:numerics} --- seed $3$,
$M=600$, $K=24$ --- and at the fixed point of the iteration, $\max_kV_k$ is
below $10^{-15}$ at $N=100$: every E-step of that run is exactly feasible. At
$N=14$ it is $2.4\cdot10^{-4}$, positive upon $2$ of the $24$ pairs, and upon
both it coincides with $|D_k|$: the violation is entirely a defect of the mean,
and it has a single cause, a sample atom of mass $1/600$ at the edge of the
support of $\mu_k$ whose target lies outside its reachable interval and is
clipped by $0.07$--$0.14$, which undoes the balance that \eqref{eq:delta} had
just restored. At the true parameter the same figures are $2.0\cdot10^{-4}$ at
$N=14$ and below $10^{-15}$ at $N=100$. Upon the fine grids of this appendix,
with the misspecified target used here, $\max_kV_k$ is $3.2$, $2.6$ and
$2.5\cdot10^{-4}$ at $N=100$, $200$, $400$, positive upon $7$--$8$ of the $24$
pairs, again equal to the mean defect except upon one pair where a
shape violation of $4\cdot10^{-5}$ remains; at the true parameter all three
are below $10^{-15}$. The plateau of the table above is therefore accounted
for: a mean defect of $2$--$3\cdot10^{-4}$ makes
$\mathcal A\cap\mathcal B\cap\mathcal D$ empty, the cyclic projection settles
at a best-effort plan whose drift residual, $4.9$--$5.2\cdot10^{-4}$ when
weighted by mass, is of the order of that defect, and no sweep budget can
remove it. The defect enters the estimate only through $\gamma_k$, at
$10^{-4}$ against displacements of order $10^{-1}$, which is why the
estimates of this appendix are unaffected; but it is an infeasibility of the
constraint set and should be named as such. Iterating the shift and the clip
to a common fixed point --- redistributing the mass the clip has moved over
the rows it did not touch --- removes the defect to machine precision and
leaves $\max_kV_k\le4\cdot10^{-5}$ upon the fine grids and below $10^{-15}$
at $N=14$; the runs reported here do not use it.

It does not follow that the budget matters. A study at $N=200$ gives, for
$100,250,500,1000$ sweeps, a relative discrepancy of
$3.467,3.447,3.412,3.347$ in units of $10^{-4}$, a deviation
$\|\bar y-\gamma\|_\infty$ of $3.068,3.042,3.010,2.952$ in units of $10^{-4}$,
and an implied variance of $0.4797,0.4796,0.4796,0.4796$, at a cost rising from
$7.4$ to $52$ seconds; every reported quantity is stable to better than one per
cent from a hundred sweeps onwards. The runs above therefore use $250$ sweeps,
and the experiments of Section~\ref{sec:numerics} are unaffected, retaining the
$2000$ of Appendix~\ref{app:impl}. Since three sweep budgets appear in this
paper, they are collected here once and for all:
\begin{center}\small
\setlength{\tabcolsep}{6pt}
\begin{tabular}{l l l}
\toprule
experiment & E-step sweeps & where set \\
\midrule
Section~\ref{sec:numerics} and Appendices~\ref{app:epsinv}, \ref{app:compare} ($N=14,100$) & $2000$ & default of \texttt{bcd.fit\_bcd\_lds} \\
this appendix, large grids ($N=100$--$400$) & $250$ & \texttt{code/big\_n.py} \\
$L_{\mathrm{inner}}$ of Corollary~\ref{cor:joint} & $15$ & a rate bound, not a run \\
\bottomrule
\end{tabular}
\end{center}
The third is not the configuration of any experiment but the number of inner
sweeps per \emph{outer} iteration at which Corollary~\ref{cor:joint} evaluates
the composite rate; the two former are the budgets of the E-step itself. Cost grows as $N^{1.6}$ to $N^{1.9}$, from
$25$ seconds a run at $N=100$ to $317$ at $N=400$, as the $\mathcal O(N^2)$
sweep of Appendix~\ref{app:impl} would suggest.

\section{Comparison with trajectory inference}
\label{app:compare}

The methods closest to ours in input are those of trajectory inference, which
also take unpaired snapshots and return something one may call a drift. They do
not, however, return the same object, and the differences are instructive enough
to be worth setting out. We compare estimands and hypotheses first, and numbers afterwards, with the
caveats that the numbers require.

\paragraph{What each procedure estimates.}
Write $\mathcal H_k=\sigma(X_{t_k})\vee\mathcal S_k$ for the feature
$\sigma$-field of \eqref{eq:featurefield}.

\begin{itemize}[leftmargin=1.4em,itemsep=2pt,topsep=3pt]
\item \emph{Waddington-OT} \citep{Schiebinger2019} couples consecutive snapshots
by entropically regularised transport with growth, and reads a velocity from the
resulting displacement. The drift it reports at a cell is a conditional mean
displacement per unit time.
\item \emph{Mean-field Langevin in path space} \citep{Chizat2022}, with the
identifiability theory of \citet{lavenant2024}, estimates a path measure by
minimising an entropy against a Wiener reference subject to the observed
marginals. Its drift is that of the minimising measure, obtained
non-parametrically, and is the Markovian projection $\E[a_t\mid X_t]$ of
\citet{Gyongy1986}.
\item \emph{PO-MFL} \citep{Gu2025} extends this to a partially observed state,
regularising the path measure towards a dynamics prior on a latent process.
\item The present paper estimates $\E[a_{t_k}\mid\mathcal H_k]$ within a
parametric family, by constraining the first conditional moment of the coupling.
\end{itemize}

\paragraph{Why a comparison on mean displacements is vacuous.}
It is tempting to compare methods by the displacement they attribute to a
marginal. That comparison cannot separate them. As observed before
Proposition~\ref{prop:fixedpoint1}, $\one^\top d_k=\gamma_k$ for \emph{every}
coupling with marginals $(\mu_k,\nu_k)$, whatever drift it was projected
against and whatever cost, growth term or entropic penalty produced it. The mean
displacement is a functional of the two marginals alone. Any two procedures which
report it will therefore agree exactly, and by Proposition~\ref{prop:fixedpoint1}
our own estimator sees the couplings through $\gamma_k$ and nothing else. This is
borne out to machine precision below: the three coupling-based procedures
reproduce $\gamma_k$ to $4\cdot10^{-16}$, $4\cdot10^{-16}$ and $6\cdot10^{-11}$
respectively. What distinguishes the procedures is not the displacement but what
is regressed upon it, and what is done with the variation \emph{within} a
marginal --- which, by \eqref{eq:annihilation}, our E-step annihilates.

\paragraph{The feature \texorpdfstring{$\sigma$}{sigma}-field is what separates
the estimands.}
The Gy\"ongy projection $\E[a_t\mid X_t]$ estimated by the path-space methods is
our $\E[a_{t_k}\mid\mathcal H_k]$ in the case $\mathcal S_k$ degenerate, that is
case (iii) of Section~\ref{sec:setting1}: independent copies, a deterministic
curve of measures, and a mean-field drift. This is precisely the regime in which
the identifiability theory of \citet{lavenant2024} operates, and in which, by
Section~\ref{sec:ot}, there is no dynamic model left for us to fit. When the
copies share a factor, $\mathcal S_k$ is not degenerate, our estimand is strictly
finer, and the two procedures are estimating different things --- neither
wrongly. It is worth being explicit that the finer estimand is bought by a
parametric hypothesis, and that a non-parametric method cannot be faulted for
failing to recover an object its observation model does not identify.

\paragraph{The latent-state case.}
PO-MFL \citep{Gu2025} is the nearest relative of our Non-Convex Setting, both
admitting a hidden state which the marginals do not observe. The difference is
the direction of the assumption: a \emph{prior} upon the latent dynamics is
imposed there, whereas here those dynamics are the object to be identified. In
the language of \eqref{eq:qcqp} a dynamics prior is a penalty upon the
innovations, and so corresponds to $\lambda_w\in(0,\infty)$, the filtering case
of Section~\ref{sec:qcqp}; our hardness result concerns $\lambda_w=0$, where no
such prior is available, and Theorem~\ref{thm:nphard} may be read as saying what
the prior is buying.

\paragraph{The comparison, and its design.}
Every procedure receives the same input, namely the snapshots $\{\mu_k\}$ upon a
common grid, and returns a drift field $b_k(\cdot)$ at each step. Since the
generating model \eqref{eq:sim} is known, so is the truth,
$b_k(x)=\alpha+\beta x+c^\top Z_k$ exactly, and each method may be scored by
$\|b_k-b_k^{\mathrm{true}}\|_{L^2(\mu_k)}$ averaged over $k$.

\begin{samepage}
What we have implemented are \emph{surrogates} for the three published
procedures, sharing their transport layer but not their statistical formulation,
and the entries of Table~\ref{tab:compare} should be read as such. Precisely:
\begin{itemize}[leftmargin=1.4em,itemsep=2pt,topsep=3pt]
\item \emph{Waddington-OT surrogate}: \emph{balanced} entropic transport between
consecutive snapshots, with the drift read off as the conditional mean
displacement, at $\eps=0.05$ and at $\eps^\star$. The procedure of
\citet{Schiebinger2019} carries a growth term and is unbalanced; we have not
implemented growth, there being none in \eqref{eq:sim} to estimate. One
consequence should be flagged: the mean-displacement identity
$\one^\top d_k=\gamma_k$ holds for any plan with the \emph{prescribed} marginals,
and does not transfer unchanged to an unbalanced formulation whose marginal
constraints are different.
\item \emph{Mean-field Langevin surrogate}: the multi-marginal Schr\"odinger
bridge upon the whole time chain against a discrete Markov reference, every
snapshot imposed \emph{exactly}, solved by iterative proportional fitting in the
log domain to a residual below $10^{-11}$. The estimator of \citet{Chizat2022}
does something different: it trades a data-fitting term against path-space
relative entropy, and its algorithm evolves point clouds in a grid-free
formulation. Imposing every empirical marginal exactly removes the
marginal-denoising problem which their statistical formulation is built to
address, and our surrogate therefore tests the transport layer alone.
\item \emph{PO-MFL surrogate}: an innovation penalty upon the latent recursion of
the small linear system of \eqref{eq:qcqp}. \citet{Gu2025} instead carry a
\emph{known} driving vector field together with an unknown potential and an
observation map, and their latent is attached to each individual whereas ours is
a factor common to the population. We make no claim that their estimator is
equivalent to \eqref{eq:qcqp} at any particular $\lambda_w$; the correspondence
drawn above is one of modelling posture, not of objective.
\end{itemize}
\end{samepage}

\noindent
This is a comparison of surrogates upon a common grid with a common scoring
rule, which is the most that can be said without reimplementing three papers;
the numbers below should not be quoted as the performance of the published
estimators.

\begin{table}[h]
\centering\small
\caption{Drift error $\|b_k-b_k^{\mathrm{true}}\|_{L^2(\mu_k)}$, averaged over
$k$; median over five seeds, with the range beneath. The last column is the
largest deviation of the implied mean displacement from $\gamma_k$. The three
comparators are the surrogates described above, not the published estimators.}
\label{tab:compare}
\setlength{\tabcolsep}{4pt}\begin{tabular}{l cc cc c}
\toprule
& \multicolumn{2}{c}{$N=14$} & \multicolumn{2}{c}{$N=100$} & mean-disp. \\
\cmidrule(lr){2-3}\cmidrule(lr){4-5}
method & median & range & median & range & gap \\
\midrule
ours, Non-Convex Setting & $0.039$ & $0.006$--$0.048$ & $0.020$ & $0.003$--$0.045$ & $3\cdot10^{-2}$\\
PO-MFL surrogate, data prior & $0.024$ & $0.022$--$0.033$ & $0.026$ & $0.022$--$0.033$ & $1\cdot10^{-8}$\\
PO-MFL surrogate, oracle prior & $0.027$ & $0.017$--$0.047$ & $0.023$ & $0.022$--$0.032$ & $1\cdot10^{-8}$\\
mean-field Langevin surrogate (path space) & $0.074$ & $0.069$--$0.077$ & $0.058$ & $0.056$--$0.063$ & $6\cdot10^{-11}$\\
Waddington-OT surrogate, $\eps=\eps^\star$ & $0.074$ & $0.069$--$0.077$ & $0.058$ & $0.056$--$0.063$ & $4\cdot10^{-16}$\\
Waddington-OT surrogate, $\eps=0.05$ & $0.193$ & $0.167$--$0.210$ & $0.190$ & $0.167$--$0.202$ & $4\cdot10^{-16}$\\
ours, Convex Setting & $0.559$ & $0.490$--$0.722$ & $0.557$ & $0.482$--$0.722$ & $1.3$\\
\bottomrule
\end{tabular}
\end{table}

\paragraph{The two baselines are one baseline.}
The path-space and pairwise estimators do not merely agree closely; they agree to
$2\cdot10^{-13}$ at $N=14$ and $2\cdot10^{-12}$ at $N=100$, upon every seed. That
is a theorem and not a coincidence.

\begin{proposition}[The path-space and pairwise bridges have the same drift]
\label{prop:sbchain}
Let $R$ be a Markov reference measure upon $x_0,\dots,x_n$ with transition
kernels $K_k$, and let $\pi$ minimise $\mathrm{KL}(\cdot\,\|\,R)$ subject to
$\pi_k=\mu_k$ for every $k$. Then for each $k$ the joint law of
$(x_k,x_{k+1})$ under $\pi$ is the solution of the two-marginal Schr\"odinger
problem between $\mu_k$ and $\mu_{k+1}$ with reference $R_{k,k+1}(dx,dy)=R_k(dx)K_k(x,dy)$. In particular the
two estimators return the same drift, provided the pairwise method uses that
same reference kernel, which for the quadratic cost means $\eps=\eps^\star$.
\end{proposition}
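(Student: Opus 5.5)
The plan is to exploit the Markov structure of the reference. The first step is to write $R(x_0,\dots,x_n)=R_0(x_0)\prod_k K_k(x_k,x_{k+1})$. The KKT or dual characterisation of the multi-marginal entropic problem then gives the minimiser in product form,
\[
\pi(x_0,\dots,x_n)=R(x_0,\dots,x_n)\prod_{k=0}^n f_k(x_k),
\]
with potentials $f_k>0$ chosen so that every marginal is met. On a finite grid with $\mu_k>0$ on the supports, this follows either from the log-domain IPF fixed point or from Csisz\'ar's I-projection theorem for linear families (already cited for Theorem~\ref{thm:estep}). I would invoke that theorem rather than reprove it, restricting to the supports and assuming only that a plan of finite entropy exists.

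Next, $\pi$ is itself Markov, being a product of pairwise factors along a chain. Marginalising the variables before $k$ and after $k+1$, the two-time law becomes
\[
\pi_{k,k+1}(x,y)=\alpha_k(x)\,K_k(x,y)\,\beta_{k+1}(y).
\]
Here $\alpha_k(x)=f_k(x)\sum_{x_{0:k-1}}R_0\prod_{j<k}K_jf_j$ is the forward message, and $\beta_{k+1}(y)=f_{k+1}(y)\sum_{x_{k+2:n}}\prod_{j>k}K_jf_j$ is the backward message.

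I would absorb $R_k(x)$ into $\alpha_k$, so that $\pi_{k,k+1}=\tilde\alpha_k(x)\,R_{k,k+1}(x,y)\,\beta_{k+1}(y)$ with $\tilde\alpha_k=\alpha_k/R_k$ where $R_k>0$. This has exactly the Schr\"odinger-system form $a(x)b(y)R_{k,k+1}$, and it has marginals $\mu_k,\mu_{k+1}$ by construction. By uniqueness of the two-marginal KL projection, which holds by strict convexity, and because any feasible coupling of this product form is the minimiser, $\pi_{k,k+1}$ is the pairwise bridge. The drift statement then follows at once: both methods read the drift off the same joint law of $(x_k,x_{k+1})$ as its conditional mean displacement.

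The last step identifies the reference with the entropic plan at $\eps=\eps^\star$. Up to the marginal scalings, the pairwise method's Gibbs kernel $e^{-C/\eps}$ with $C_{ij}=(x_i-x_j)^2/S$ and $\eps^\star=2\sigma^2\Delta/S$ is $\exp(-(x_i-x_j)^2/(2\sigma^2\Delta))$. This is the discretised Gaussian transition kernel of the reference. The row normalisation of $K_k$ and the weights $R_k$ are diagonal factors, so they are absorbed into the scalings $u,v$. Hence the two KL problems coincide.

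The main obstacle is the support and positivity bookkeeping. The messages may vanish off the supports and $R_k$ may be zero somewhere, so the factorisation and the division by $R_k$ must be carried out on supports, with feasibility of the multi-marginal problem assumed. I would handle this by working on the finite product of the supports of the $\mu_k$, where the Gaussian kernel is strictly positive.
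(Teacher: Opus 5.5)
Your proposal is correct and follows the paper's own proof. The steps match: the product-form density $\prod_k f_k(x_k)$ with respect to $R$ makes $\pi$ Markov; forward and backward messages then give a $(k,k+1)$-marginal of the form $u(x)K_k(x,y)v(y)$; and uniqueness of the two-marginal Schr\"odinger solution of that form finishes the argument. You add what the paper leaves implicit: the support and positivity bookkeeping, a verification-plus-strict-convexity argument in place of uniqueness of the Sinkhorn scalings, and the explicit check that $C_{ij}/\eps^\star=(x_i-x_j)^2/(2\sigma^2\Delta)$.
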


\begin{proof}
The constraints being upon the one-dimensional marginals, $\pi$ has density
$\prod_k f_k(x_k)$ with respect to $R$, so $\pi$ is Markov and its
$(k,k+1)$-marginal is $A_k(x)f_k(x)\,K_k(x,y)\,f_{k+1}(y)B_{k+1}(y)$, where $A_k$
and $B_{k+1}$ are the leftward and rightward messages. This is of the form
$u(x)K_k(x,y)v(y)$ and has marginals $\mu_k,\mu_{k+1}$ by construction; the
two-marginal Schr\"odinger problem has exactly one solution of that form with
those marginals, the Sinkhorn scalings being unique up to a reciprocal constant.
\end{proof}

Coupling the entire path therefore buys nothing at the level of the drift, and
what separates the two surrogates in Table~\ref{tab:compare} is the choice of
$\eps$ alone: at $\eps=0.05$ the error is three times that at $\eps^\star$.

The scope of that statement wants care. Proposition~\ref{prop:sbchain} is a
statement about a finite positive Markov reference and \emph{exactly} prescribed
marginals; under those hypotheses the entropy chain rule gives
\[
\mathrm{KL}(Q\,\|\,R)\;\ge\;\mathrm{KL}(\mu_0\,\|\,r_0)
+\sum_k\mathrm{KL}\bigl(Q_{k,k+1}\,\|\,\mu_k\otimes K_k\bigr),
\]
with equality for the Markov law glued from the adjacent conditionals, so that
each pair may be optimised alone. What the near-machine-precision agreement in
Table~\ref{tab:compare} validates is therefore this restricted identity between
two hard-constraint implementations. It does \emph{not} show that fitting noisy
marginals jointly buys nothing in the statistically regularised path-space
procedures of \citet{Chizat2022} and \citet{lavenant2024}, where the marginals
are not imposed exactly and the joint problem does not decouple. A reader who
takes Proposition~\ref{prop:sbchain} for a general redundancy of those methods
will have taken more from it than it contains.

Against the two procedures without a latent state the estimator of the Non-Convex
Setting is the more accurate by a factor of two to ten. Against PO-MFL,
which has one, the comparison is close and its direction depends upon the grid:
at $N=14$ PO-MFL is the better, $0.024$ against $0.039$, and at $N=100$ ours is,
$0.020$ against $0.026$. The crossing is instructive. Refinement of the grid
improves our estimate, whose M-step is parametric and gains from better resolved
marginals, and leaves PO-MFL's unchanged, its latent being itself discretised.
Estimating the latent prior from observables rather than granting it the truth
changes little --- $0.024$ against $0.027$ at $N=14$ --- so the comparison does
not rest upon that concession. Three qualifications should be entered, and the
last is the most important. First, the data are generated by the model being
fitted, so the comparison is of a correctly specified parametric procedure
against non-parametric ones; the latter are not thereby shown to be inferior at
what they are for. Secondly, PO-MFL is designed for a latent carried by each
individual, whereas the factor here is common to the population, and its
benchmarks accordingly lie outside the scheme of Section~\ref{sec:ot}; we have
run it upon our data and not ours upon theirs. Thirdly --- and this is visible in the table --- the
advantage is not attributable to the transport layer at all. The Convex Setting
uses the same machinery and is the \emph{worst} entry, at $0.56$, because it has
no latent factor and no input with which to follow $c^\top Z_k$; whereas the
baselines, which estimate a fresh drift field at every step, track it to $0.06$.
What the table measures is the value of a correct parametric hypothesis and of an
observed exogenous input, exactly as Proposition~\ref{prop:fixedpoint1} would
lead one to expect, the couplings having been shown there to be inert. A
procedure which had our model class but no drift constraint would, by that
proposition, do as well.

Two consequences follow for the reading of the table, and we would rather state
them than let a reader find them. The Convex Setting entry is not a fair test of
convex against non-convex identification, because it is denied the very input the
others exploit; the appropriate observed-feature competitor is a finite
impulse response driven by the same $u$, with its order chosen upon held-out
data, and that is the comparison of Figure~\ref{fig:appendix}(b,c), which is the better
starting point. And the essential baseline throughout is the \emph{mean-only}
estimator --- ordinary or ridge least squares for the convex model, and direct
fitting of \eqref{eq:fixedpoint2} for the latent one --- since by
Proposition~\ref{prop:fixedpoint1} it is exactly what our estimator reduces to
once the couplings are seen to be inert. It is reported in
Appendix~\ref{app:extra}, where the block decomposition and the direct fit are
compared upon twenty seeds; it tests the contribution claimed for transport more
directly than any of the entries above, and a reader chiefly interested in that
question should go there first.

\begin{table}[h]
\centering\small
\caption{Drift error $\|b_k-b_k^{\mathrm{true}}\|_{L^2(\mu_k)}$ averaged over
$k$ at $N=100$, and wall-clock seconds per fit: medians over five seeds, with
ranges. The comparators are the surrogates of Appendix~\ref{app:compare}, not
the published estimators; PO-MFL is granted a prior upon the latent dynamics,
which we must identify. Our Non-Convex time includes the $2000$-sweep E-step;
the Convex Setting is the closed form \eqref{eq:fixedpoint1}. }
\label{tab:main}
\setlength{\tabcolsep}{5pt}
\begin{tabular}{l cc cc}
\toprule
& \multicolumn{2}{c}{drift error} & \multicolumn{2}{c}{time (s)} \\
\cmidrule(lr){2-3}\cmidrule(lr){4-5}
method & median & range & median & range \\
\midrule
ours, Non-Convex Setting & $0.020$ & $[0.003,\,0.045]$ & $31.5$ & $[27.7,\,43.2]$\\
PO-MFL surrogate, data prior & $0.026$ & $[0.022,\,0.033]$ & $5.9$ & $[5.1,\,7.5]$\\
PO-MFL surrogate, oracle prior & $0.023$ & $[0.022,\,0.032]$ & $6.3$ & $[5.9,\,7.8]$\\
mean-field Langevin surrogate (path space) & $0.058$ & $[0.056,\,0.063]$ & $0.5$ & $[0.5,\,0.6]$\\
Waddington-OT surrogate, $\eps=\eps^\star$ & $0.058$ & $[0.056,\,0.063]$ & $1.7$ & $[1.7,\,2.7]$\\
Waddington-OT surrogate, $\eps=0.05$ & $0.190$ & $[0.167,\,0.202]$ & $1.7$ & $[1.6,\,1.8]$\\
ours, Convex Setting & $0.557$ & $[0.482,\,0.722]$ & $<0.01$ & --- \\
\bottomrule
\end{tabular}
\end{table}

\paragraph{Pooling across the horizon.}
Table~\ref{tab:pooling} varies the number of copies $M$ and the number of pairs
$K$ at $N=14$. The two surrogates estimate a fresh field at every step, so
their error is that of one displacement field, of order $M^{-1/2}$, and does
not fall with $K$: the PO-MFL surrogate stands between $0.024$ and $0.028$ for
$K=24$, $48$ and $96$ at $M=600$. The parametric fit pools the $K$ residuals
into $2d+2$ parameters and falls from $0.039$ to $0.011$ over the same range,
and at $M=50$ it is about twice as accurate as the PO-MFL surrogate at every
$K$. The comparison is less regular at $K=24$ and at $M=100$, because on some
seeds the M-step lands upon the interchanged allocation of poles of
Remark~\ref{rem:poleswap}, which leaves every mean intact but alters $\beta$
and with it the drift field; the parameter error upon those seeds is $0.12$
against $0.01$--$0.03$ upon the others, and the cells are a numerical instance
of that ambiguity rather than of estimation noise. Two variants which we tried, and which the tables do not
report, may be recorded here. With an unobserved
disturbance upon the latent state (standard deviation $0.2$ per step) the
per-step surrogates tie with or beat the parametric fit in sample, since the
realised disturbance is visible in the next snapshot and only a method which
commits to no recursion can follow it; and with the slow factor of
Figure~\ref{fig:appendix} (poles $0.9$, $0.8$) neither $24$ nor $96$ training
pairs identify the near-unit poles well enough at $N=14$ for the forecast of
Table~\ref{tab:forecast} to separate the methods. The scripts are
\texttt{code/compare\_regimes.py} and \texttt{code/collect\_regimes.py}.

\input{Figures/tab_pooling.tex}

Everywhere above PO-MFL is granted a prior which is close to correct, fitted
either to the true latent or to the observable residuals of $\gamma$ upon
$(1,\bar x)$ --- that is, to the residual of the very regression which
Proposition~\ref{prop:fixedpoint1} says our own fixed point performs; and since our own estimator is granted no prior at all but must
identify $(a,c,\alpha,\beta)$, it is fair to ask what that concession is worth.
We therefore misspecify the prior deliberately, moving the mean-reversion $\phi$
and then the innovation scale $\tau$ away from their fitted values, holding the
latent grid and the input coefficient fixed so that only the assumed dynamics
change. Medians over five seeds:

\begin{center}\small
\setlength{\tabcolsep}{5pt}
\begin{tabular}{l cccccc}
\toprule
$\phi$ & $0.00$ & $0.30$ & $0.50$ & fitted $\approx0.65$ & $0.85$ & $0.95$\\
\midrule
$N=14$  & $0.044$ & $0.029$ & $0.029$ & $0.024$ & $0.027$ & $0.029$\\
$N=100$ & $0.053$ & $0.035$ & $0.027$ & $0.026$ & $0.024$ & $0.022$\\
\bottomrule
\end{tabular}
\quad
\begin{tabular}{l ccccc}
\toprule
$\tau\times$ & $\tfrac14$ & $\tfrac12$ & $1$ & $2$ & $4$\\
\midrule
$N=14$  & $0.068$ & $0.056$ & $0.024$ & $0.092$ & $0.193$\\
$N=100$ & $0.054$ & $0.043$ & $0.026$ & $0.099$ & $0.191$\\
\bottomrule
\end{tabular}
\end{center}

The dependence is strongly asymmetric between the two parameters, and it is the
scale and not the persistence which matters. Misspecifying $\phi$ over its whole
admissible range costs about a  factor of two, and not even monotonically: at
$N=100$ the error falls as $\phi$ is raised past its fitted value, so a prior
which is too persistent is no worse than the right one, and only the
memoryless choice $\phi=0$ is clearly harmful. Misspecifying $\tau$ is another
matter. Understating it by a factor of four doubles to trebles the error; \emph{over}stating
it by a factor of four multiplies the error by eight, to $0.19$, which is worse
than Waddington-OT at $\eps^\star$ and worse than every other entry of
Table~\ref{tab:compare} save our own Convex Setting, which has no latent factor at all. The advantage PO-MFL holds at $N=14$, and the parity it
holds at $N=100$, both disappear once the assumed innovation scale is wrong by a
factor of two.

This is the price of the trade. A dynamics prior buys a great deal when it is
approximately right --- enough to beat us upon coarse grids --- and it is the
quantity hardest to know in advance, namely how much the latent state moves per
step, that it is most sensitive to. Our own estimator makes no such assumption
and cannot be misspecified in that way  --- the dynamics are
estimated rather than assumed, though the order $d$ and the lag length $p$ remain
choices; what it pays instead is the requirement
of persistent excitation, hypothesis \textup{(R5)} of
Appendix~\ref{app:rates}, without which it degrades in its own fashion. Neither
procedure is free, and the two are charged in different coin. The script is
\texttt{code/pomfl\_misspec.py}.

The dynamics prior of PO-MFL occupies the middle ground of
Section~\ref{sec:qcqp} at $\lambda_w\in(0,\infty)$, and Theorem~\ref{thm:nphard},
which concerns $\lambda_w=0$, may be read as saying what that prior buys. We have
no data set in which a common factor is present and known, so the comparison
remains one upon simulated data.

\section{Discussion of the convergence proofs}
\label{app:discussion}

A few comments on the proof are in order.

\subsection*{Which geometry the E-step rate lives in.}
The constraint sets $\mathcal{A},\mathcal{B},\mathcal{D}$ are affine in the
probability coordinates $P$ and, at an interior feasible point, smooth embedded
submanifolds of the positive cone; they are not affine in $L=\log P$, where a
marginal constraint reads $\sum_je^{L_{ij}}=\mu_i$. For convex alternating projections in Euclidean
geometry \cite{BauschkeBorwein1996} yield a linear rate, and for cyclic
I-projections onto affine families \citet{Csiszar1975} yields global
convergence; the proof of Theorem~\ref{thm:estep} in Appendix~\ref{app:proofs}
works directly in the $W$-geometry at the feasible point, which is what a rate
for Kullback--Leibler projections requires, and needs neither convexity in
Bregman coordinates nor the Lewis--Luke--Malick framework. That framework is
retained (Theorem~\ref{thm:llm}) for two purposes: the transversality constant
$c_{\mathcal{A}\mathcal{B}\mathcal{D}}$ of Lemma~\ref{lem:transv}, which is
computable from the data but is not claimed to be the Kullback--Leibler
contraction factor; and the inequality-bounded variant of the projection, in
which $\mathcal{D}$ is replaced by a one-sided polytope --- as required when the
predictable drift is constrained to dominate or be dominated pathwise --- whose
feasible set is no longer affine but remains super-regular, and for which the
affine argument of Appendix~\ref{app:proofs} would have to be replaced.

\subsection*{Role of the global-balance correction.}
The shift $\delta_k$ in~\eqref{eq:delta} restores the \emph{necessary} mean
balance, and it is what the tower property requires; it does not by itself give
$\mathcal{A}\cap\mathcal{B}\cap\mathcal{D}\ne\emptyset$. The exact criterion is one of convex order: with
$m_i=x_i+\tilde b_k[i]$ and $\eta=\sum_i\mu_k[i]\delta_{m_i}$, a coupling with the
prescribed marginals and row means exists if and only if $\eta\le_{\mathrm{cx}}\nu_k$,
by Strassen's theorem; on the line it suffices to check
$\sum_i\mu_k[i](m_i-t)_+\le\sum_j\nu_k[j](x_j-t)_+$ at the knots of the two
supports. Row-wise reachability together with mean balance is necessary and not
sufficient, since it ignores the capacity of the destination marginal: with
$\mu=(0.45,0.10,0.45)$, $\nu=(0.05,0.90,0.05)$ upon $(-1,0,1)$ and
$m=(-0.9,0,0.9)$, every target is interior and the means balance, yet
$\sum_i\mu_im_i^2=0.729>0.1=\sum_j\nu_jx_j^2$, which conditional Jensen forbids.
Independent clipping can moreover destroy the balance it was meant to preserve.
The correct repair is to project the proposed row means onto the barycentric
feasible set, a convex quadratic programme; we note this rather than claim it,
since the fixed-point analysis of Section~\ref{sec:conv} is derived for the
uniform shift and would have to be redone. The shift is the empirical analogue of the tower identity $\mathbb E[r_{k+1}-r_k]=\mathbb E[\mathbb E[r_{k+1}-r_k\mid\mathcal G_k]]$: without it, $\mathcal{D}$ cannot meet the marginal slice $\mathcal{A}\cap\mathcal{B}$ (for instance when the trial parameter $\theta$ is inconsistent with the empirical mean increment), in which case the alternating projection oscillates between an infeasible region and a meaningless ``best-effort'' compromise. To state separately what each device guarantees: the clipping secures row-wise interior reachability, which is \textup{(T2)}; the uniform shift $\delta_k$ restores the aggregate mean identity; neither, separately or jointly, guarantees $\mathcal{A}\cap\mathcal{B}\cap\mathcal{D}\ne\emptyset$, for which the convex-order condition of Remark~\ref{rem:feasibility} is needed; and when that condition holds, the shifted and clipped targets are those to which Theorem~\ref{thm:estep} applies. 

\subsection*{Exact versus approximate projection onto $\mathcal{D}$.}
The argument of Theorem~\ref{thm:estep} requires the alternating step to be a true projection up to a controlled tolerance. The Newton update of $\Gamma$ in \texttt{code/core.py} computes the exact KL projection onto $\mathcal{D}$ row by row to floating-point tolerance, by exploiting the strict monotonicity and convexity of the one-dimensional residual. A naive single-step gradient update yields only an averaged projection with a weaker constant; the Newton variant is therefore the correct choice for the rate of Theorem~\ref{thm:estep} to apply.

\subsection*{Relation to the non-convergence of three-block direct-extension ADMM.}
A natural objection arises in view of the well-known result of  \cite{ChenHeYeYuan2016}: the direct extension of two-block ADMM to three (or more) primal blocks coupled through a single linear constraint and a single dual variable is \emph{not} guaranteed to converge, even on convex problems with $f_i\equiv 0$. Concretely, for
\[
\min_{x_1,x_2,x_3}\sum_{i=1}^3 f_i(x_i)\quad\text{s.t.}\quad A_1x_1+A_2x_2+A_3x_3=b,
\]
they exhibit an instance for which the joint primal--dual iteration matrix of the direct-extension scheme has spectral radius strictly greater than one. The result is sharp: additional structure (strong convexity in a subset of blocks, prox-regularisation, Gaussian back-substitution, smaller dual step) is required to recover convergence.

Two structural features of our procedure ensure that this obstruction does not apply.
\begin{enumerate}[topsep=2pt,itemsep=2pt,leftmargin=2em]
\item \emph{The outer iteration is two-block.} The decision variables split as $(\theta,\{P_k\})$ -- the parametric Markovian-projection drift on one side, the family of couplings on the other -- with the multipliers $\{\Gamma_k\}$ on the predictable-compensator constraints. The plans $\{P_k\}_{k\in\mathcal P}$ are \emph{not} separate ADMM blocks coupled through a single dual variable: given $\theta$, the inner sub-problems decouple completely across $k$, each carrying its own multiplier $\Gamma_k$, and the M-step couples them only through the shared parameter $\theta$. Two-block ADMM/BCD converges under standard hypotheses \cite{GabayMercier1976,Boyd2011ADMM}, and our outer rate in Theorem~\ref{thm:outer} relies on this two-block structure, not on a direct multi-block extension.
\item \emph{The inner iteration is cyclic Bregman projection, not three-block ADMM.} Inside each E-step we project a single coupling $P$ onto three smooth submanifolds $\mathcal{A},\mathcal{B},\mathcal{D}$ in the KL geometry. This is the classical alternating-projection / Dykstra setting \citep{BauschkeBorwein1996,LewisLukeMalick2009}, structurally distinct from a three-block augmented-Lagrangian scheme: there is no shared linear constraint over three primal variables and no single dual, only one primal iterate and three projection operators. The Chen--He--Ye--Yuan counter-example exhibits a spectral radius $>1$ for the joint primal--dual iteration matrix of the direct extension; the analogous quantity in our setting is the contraction factor $c_E<1$ of Theorem~\ref{thm:estep}, the $W$-norm of a product of tangent projections restricted to the complement of the common tangent space, which is strictly below one whenever the intersection is non-empty.
\end{enumerate}
Where the analogy \emph{would} bite is a hypothetical variant in which each transport plan $P_k$ is treated as a separate ADMM block coupled to $\theta$ through a single augmented Lagrangian -- a $(\theta,P_{k_1},\ldots,P_{k_m})$ direct extension with one shared dual variable. By \cite{ChenHeYeYuan2016}, that scheme would not be safe in general. The estimator deliberately avoids this route: $\theta$ is the only second block, and the $\{P_k\}$ are solved as independent inner sub-problems, each as its own constrained Schr\"odinger bridge.

\section{The couplings as transition kernels}
\label{app:kernel}

Proposition~\ref{prop:fixedpoint1} shows that the \emph{estimate} uses the
couplings only through their first moments, and Section~\ref{sec:numerics}
confirms it numerically. That leaves a separate question untouched: whether the
$P_k$ are any good as kernels in their own right, which is what they would have
to be for path interpolation or any downstream use of the fitted dynamics. The
first-moment analysis says nothing either way, and it would be an error to read
``inert at the level of the estimate'' as ``inert''.

The generating model being known, the true row-conditional law of $X_{t_{k+1}}$
given $X_{t_k}=x_i$ is $N\bigl(x_i+b_k(x_i)\Delta,\ \sigma^2\Delta\bigr)$, which
we project upon the grid and compare, row by row and weighted by $\mu_k$, against
$P_k(\cdot\mid x_i)$ --- in total variation and in $W_1$. As a reference we
compute the same distances for the \emph{unconstrained} entropic plan between the
same marginals, that is for the coupling a method which does not impose
\eqref{eq:smdisc} would return.

\begin{table}[h]
\centering\small
\caption{Fidelity of the row-conditional laws to the true transition kernel;
medians over five seeds, at $\eps=0.05$ and at $\eps^\star$.}
\label{tab:kernel}
\setlength{\tabcolsep}{5pt}
\begin{tabular}{l cc cc}
\toprule
& \multicolumn{2}{c}{total variation} & \multicolumn{2}{c}{$W_1$} \\
\cmidrule(lr){2-3}\cmidrule(lr){4-5}
coupling & $N=14$ & $N=100$ & $N=14$ & $N=100$ \\
\midrule
drift-constrained, $\eps=0.05$ & $0.033$ & $0.091$ & $0.023$ & $0.022$\\
drift-constrained, $\eps=\eps^\star$ & $0.034$ & $0.091$ & $0.024$ & $0.022$\\
unconstrained, $\eps=0.05$ & $0.173$ & $0.203$ & $0.168$ & $0.167$\\
unconstrained, $\eps=\eps^\star$ & $0.135$ & $0.134$ & $0.094$ & $0.065$\\
\bottomrule
\end{tabular}
\end{table}

The constraint earns its place here, and by a wide margin: it improves the
kernel by a factor of three to eight in $W_1$ and of one and a half to
five in total variation, uniformly over seeds, grids and both regularisations
--- the $W_1$ comparison being the more meaningful of the two here, for the
reason given below, and also the more favourable. This is the
complement of Proposition~\ref{prop:fixedpoint1} rather than a contradiction of
it. The drift constraint fixes the first conditional moment of every row, which
is exactly the feature of the true kernel that an unconstrained optimal-transport
plan gets wrong --- such a plan being chosen for cheapness of transport and not
for fidelity to a diffusion --- whereas it is also, by \eqref{eq:annihilation},
the one feature the M-step subsequently discards. The transport layer is
therefore idle for the parameter and useful for the kernel, and a practitioner
who wants only $\hat\theta$ may take the two-line regression of
\eqref{eq:fixedpoint1}, while one who wants the couplings should pay for them.

Two smaller points may be read from Table~\ref{tab:kernel}. The unconstrained plan improves
markedly upon moving from $\eps$ to $\eps^\star$, as the F\"ollmer reading would
suggest, whereas the constrained plan is insensitive to $\eps$ --- once the first
moment is pinned, the entropic parameter has little left to do. And the
constrained total variation degrades from $0.03$ to $0.09$ as the grid is refined
while $W_1$ holds at $0.022$: total variation between laws upon a finer grid is
the more demanding comparison, and $W_1$, which respects the geometry of the
line, is the more meaningful of the two here.

\section{The grid, and what would replace it}
\label{app:dimension}

The estimator is Eulerian: it carries a coupling $P_k\in\R^{N\times N}$ upon an
atomic grid, and every statement above is made for a scalar observed process.
Partitioning a $d_x$-dimensional observation space into $n$ bins per axis gives
$N=n^{d_x}$ atoms and couplings with $n^{2d_x}$ entries, so the representation is
confined in practice to $d_x\le2$ or $3$. This is a limitation of the
implementation rather than of the formulation, and it is worth saying which parts
would survive a change of representation.

What is intrinsically low-dimensional is the object the estimator actually uses.
By Proposition~\ref{prop:fixedpoint1} the fixed point depends upon the couplings
only through the pairs $(\bar x_k,\gamma_k)$, and by \eqref{eq:mstep2-reduced} the
M-step has $2d+2$ parameters and $K$ residuals whatever $N$ may be. Neither
scales with the grid at all. It is the E-step alone that carries the $N^2$ burden,
and it does so to produce a quantity which \eqref{eq:annihilation} then discards
down to its first moment.

Three replacements suggest themselves, in increasing order of ambition. A
\emph{particle} or Lagrangian representation would carry the $M$ samples
themselves and solve each pairwise entropic problem by Sinkhorn upon the
$M\times M$ cost, which is dimension-free in $d_x$ though quadratic in $M$;
the drift constraint remains one linear constraint per source particle, and the
global-balance shift \eqref{eq:delta} and the reachability clipping carry over
verbatim. A \emph{low-rank} or tensor-train factorisation of $P_k$ would exploit
the fact that the entropic kernel $\exp(-C/\eps)$ is, for a separable cost,
a product across coordinates, so that Sinkhorn may be run upon factors; the drift
constraint couples coordinates only through a scalar per row and is compatible
with such a factorisation. A \emph{continuous} formulation would dispense with
the grid entirely, parameterising the Schr\"odinger potentials by neural networks
as in the data-driven bridging of Appendix~\ref{app:compare} and imposing the
conditional first moment as a penalty; this is the least conservative option and
would forfeit the certified global solves of Section~\ref{sec:bcd}, which are
what the present paper is able to offer.

The convergence theory is indifferent to which is chosen: Theorem~\ref{thm:estep}
requires only that the three sets be affine in the coupling and pairwise
transversal, and Theorem~\ref{thm:rate} refers to the grid solely through the term
$\mathcal O(\Delta^{-1}h^{\,r})$, which a grid-free representation removes.
What would need re-examination is the feasibility analysis of
Appendix~\ref{app:feasibility}, since the reachable range $[x^-_k-x_i,x^+_k-x_i]$
is a statement about the support of an atomic marginal.

\section{Additional experiments}

\paragraph{The decomposition upon the instance of Section~\ref{sec:numerics}.}
The outer iterates fall by a factor of $700$ at the first step, the reduced
M-step depending upon the E-step through $(\bar x_k,\gamma_k)$ alone and so being
at its fixed point after one exact E-step (Appendix~\ref{app:fixedpoint}); the
E-step residual falls from $8\cdot10^{-2}$ to $8\cdot10^{-11}$ in $43$ sweeps with
slope $c_E=0.62$ (Figure~\ref{fig:main}). We recover $\hat a=(-0.586,0.081)$,
$\hat c=(0.400,0.401)$ against $(-0.6,0.08)$, $(0.4,0.4)$; the pipeline agrees
with the closed form to $10^{-4}$ from $N=20$ onwards (Appendix~\ref{app:epsinv}).

\paragraph{Global versus local solution of the sub-problem, in full.}
Figure~\ref{fig:solvers} compares the solvers of Section~\ref{sec:bcd} on five
instances each. On the acquiescent system they agree to within a few per cent, as
Theorem~\ref{thm:outer2-hmr}(i) predicts. On the non-acquiescent system two
things break: branch and bound restricted to $\mathcal B_\varrho$ returns
$0.42$--$0.93$ against $0.005$--$0.064$ unrestricted, so acquiescence costs two
orders of magnitude and is a genuine restriction; and projected gradient, which
the theory covers only inside $\mathcal B_\varrho$, fails to solve even its own
restricted problem on two of five instances, landing at $24$ and $44$ against
certified $0.93$ and $0.42$, one of which the Ho--Kalman warm start repairs. A
substantial fraction of random restarts land above the certified global value.
These instances use the fixed affine basis and are not in the hard family of
Theorem~\ref{thm:nphard} (Section~\ref{sec:nphard}); what the restarts exhibit is
the bilinear non-convexity of the recursion, which is what the global solver is
for.

\paragraph{What the transport layer contributes, in full.}
Proposition~\ref{prop:fixedpoint1} says the fixed point uses the marginals only
through their means, so the block iteration ought to coincide with a two-line
regression, and it does (Figure~\ref{fig:diag}(a)): the relative gap is
$2\cdot10^{-3}$--$7\cdot10^{-3}$ at $N=8$ and falls to $10^{-4}$ by $N=20$ and stays below $4\cdot10^{-4}$ upon every finer grid we
have examined
(Appendix~\ref{app:bign}). In the Non-Convex Setting the pipeline and an LDS
fitted directly to $(\gamma_k)$ differ by
$1.5\cdot10^{-4}$--$1.3\cdot10^{-3}$ in parameter, with errors against the truth
that agree to three decimals over five seeds. This is the identity above at
work: the transport layer can move the estimate only when the drift target is
infeasible, which on a fine grid it is not.

\paragraph{The decomposition against the complete problem, in full.}
Table~\ref{tab:joint} in Appendix~\ref{app:extra} puts the two side by side on
instances small enough for \eqref{eq:qcqp} to be closed ($N=10$, $K=11$, $d=2$,
$1361$ variables, both with $a\in\mathcal B_\varrho$). They are \emph{not} the
same estimator: branch and bound attains a penalised objective $12$--$26\%$
lower, because the block iteration is not minimising that objective at all ---
by Proposition~\ref{prop:fixedpoint1} its fixed point is a regression of mean
displacements --- yet the distance to the true parameter, which is what matters
for identification, differs by at most $0.011$ --- $21\%$ upon one instance
and under $6\%$ upon the other two --- and orders the two the other way on one
of the three. The decomposition is therefore a fast,
consistent estimator in its own right rather than a solver for
\eqref{eq:qcqp}; conversely \eqref{eq:qcqp} is no dead end, closing in under a
second at this size.

\paragraph{The entropic parameter, in full.}
Figure~\ref{fig:diag}(b,c) sweeps $\eps$ over four decades around
$\eps^\star=2\sigma^2\Delta/S$. These runs must now be read in the light of
Proposition~\ref{prop:epsinvariant}, which says that an exactly feasible plan
does not depend upon $\eps$ at all and that its mass-weighted conditional
variance is fixed by \eqref{eq:varinvariant} before any transport problem is
solved. The variation reported below is therefore a diagnostic of the solver and
of the clipping, not a property of the bridge, and we retain it only as such. On a grid of spacing comparable with the diffusive scale
($h=1.4\,\sigma\sqrt\Delta$ --- not narrower than $\sigma\sqrt\Delta$, and we no
longer describe it as resolving the diffusion) the implied conditional variance
of the coupling is
$0.36,0.60,1.11,1.86,4.00,7.18$ times $\sigma^2\Delta$ at
$\eps^\star/4,\ldots,32\eps^\star$, crossing $1$ at $\eps^\star$ as the Föllmer
reading requires; on a coarse grid ($h=10.4\,\sigma\sqrt\Delta$) it is
$13$--$20\,\sigma^2\Delta$ at every $\eps$, the bin width and not the diffusion
setting the scale. The drift error is flat at $0.002$--$0.010$ across the whole
sweep. The E-step is stiffest at $\eps^\star$: $60$ sweeps there leave a KKT
residual of $43\%$ of $\|\gamma\|_\infty$ and a drift error of $0.25$, against
$0.4\%$ and $0.0033$ at the $2000$ sweeps used throughout
(Appendix~\ref{app:impl}). Taken together with
Proposition~\ref{prop:epsinvariant} this identifies what the sweep measures: at
$N=14$ the clipping is active upon a third of the pairs and the row constraint is
not met exactly, so the computed plan retains an $\eps$-dependence which an
exactly feasible plan could not have. Upon the finer grids of
Appendix~\ref{app:bign}, where the drift residual is smaller, the dependence
disappears --- as the proposition requires. The value $\eps^\star$ is therefore not to be read as that at which $P_k$
becomes the F\"ollmer--Schr\"odinger bridge: matching the unconstrained Gibbs kernel to the diffusion is a statement
about a different coupling from the one the E-step computes.

\paragraph{Scalability, and what the comparison is between.}
In the Convex Setting the complete QP beats the decomposition by two to three
orders of magnitude (Figure~\ref{fig:runtime}, Appendix~\ref{app:extra}). In the
Non-Convex Setting the decomposition keeps the $KN^2$ transport variables out of
branch and bound: at $d=3$, as $N$ grows from $20$ to $80$, the complete QCQP
slows from $12$ to $140$\,s and the decomposition from $11$ to $38$\,s (medians
over three seeds), $2.4$--$3.7\times$ faster for $N\ge30$. At $d=2$ the QCQP
stays faster, though its lead shrinks from $22\times$ to $1.4\times$; at $d=4$
neither dominates.

Three qualifications travel with every one of those ratios, and we repeat them
here rather than leave them to the caption of Figure~\ref{fig:runtime}. The two
sides are \emph{different estimators} solving different criteria, so a faster
solve is not a like-for-like optimisation speed-up; a smaller value of
\eqref{eq:qcqp} attained by the complete programme is not a certificate that the
block estimator has failed to converge, since the block estimator is not
minimising \eqref{eq:qcqp}. And the two sides are not given equal resources: the
complete solves use a commercial solver upon all ten cores, whereas the E-step of
the decomposition is single-threaded \textsc{NumPy}. The figures above are
therefore what a practitioner with this hardware would observe, and not a
measurement of algorithmic work.

\label{app:extra}

\input{Figures/tab_joint.tex}

\begin{figure}[h]
    \centering
    \includegraphics[width=\textwidth]{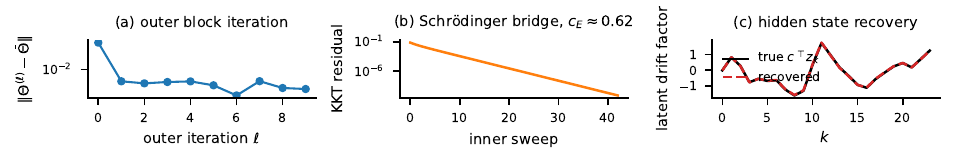}
    \caption{The Non-Convex Setting with a globally solved M-step: outer
    iteration; bridge projection, $c_E$ off the slope; hidden factor and
    reconstruction.}
    \label{fig:main}
\end{figure}

\begin{figure}[h]
    \centering
    \includegraphics[width=\textwidth]{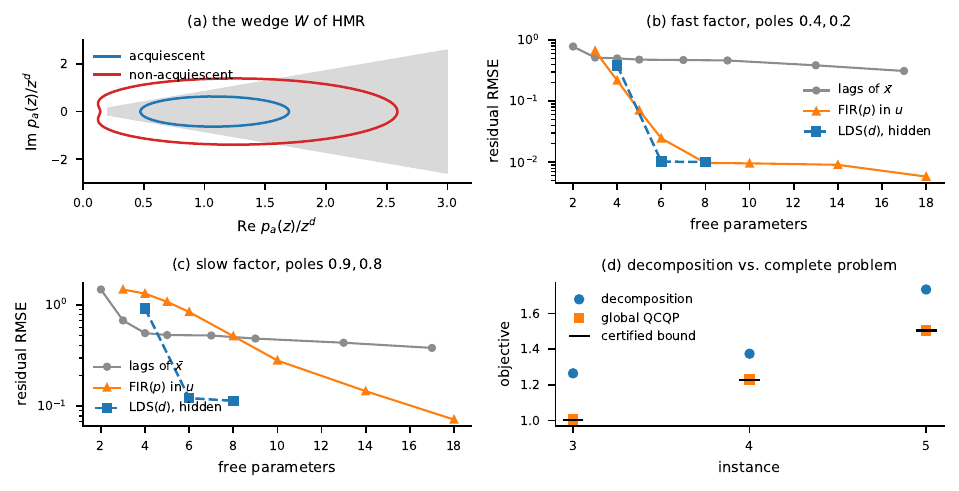}
    \caption{(a) the truncated wedge $\mathcal W$ of \eqref{eq:pacman} and the
    curves $\{p_a(\zeta)/\zeta^d:|\zeta|=\varrho\}$ for an acquiescent and a
    non-acquiescent system; the latter is stable but leaves the wedge.
    (b,c) residual against number of free parameters for three model classes ---
    lags of $\bar x$, a FIR filter of the input $u$ (the memoryless case of
    the Non-Convex Setting, Remark~\ref{rem:I-in-II}), and the latent LDS --- on a
    fast-mixing and a slow-mixing latent factor. The hidden state affords
    parsimony, and the gain grows with the memory of the factor; a baseline
    denied $u$ (grey) is not a fair comparison. (d) objective attained by the
    decomposition and by global branch and bound on \eqref{eq:qcqp}, with the
    certified lower bound.}
    \label{fig:appendix}
\end{figure}

\begin{figure}[h]
    \centering
    \includegraphics[width=\textwidth]{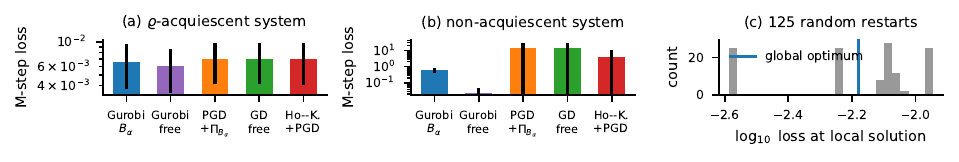}
    \caption{The drift-identification sub-problem \eqref{eq:mstep2-reduced} on an
    acquiescent and on a non-acquiescent system, and the loss reached by
    first-order descent from random restarts against the value certified by
    branch and bound.}
    \label{fig:solvers}
\end{figure}

\begin{figure}[h]
    \centering
    \includegraphics[width=\textwidth]{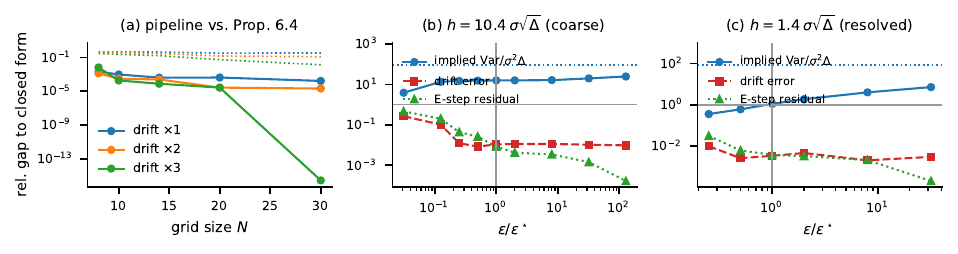}
    \caption{(a) relative gap between the full pipeline and the closed form of
    Proposition~\ref{prop:fixedpoint1} against grid size (solid), with the
    fraction of pairs on which the drift target is clipped (dotted): the gap is
    the clipping and nothing else. (b,c) the conditional variance implied by the
    fitted couplings, in units of $\sigma^2\Delta$, and the drift-parameter
    error, as $\eps$ is swept around $\eps^\star=2\sigma^2\Delta/S$; the dotted
    line is the variance of $\nu_k$, which the marginals impose as a ceiling.
    Panels (b) and (c) are the same process on a coarse
    ($h=10.4\,\sigma\sqrt\Delta$) and on a fine ($h=1.4\,\sigma\sqrt\Delta$)
    grid. The second is not narrower than $\sigma\sqrt\Delta$; what it shows is
    that once $h$ is of the order of $\sigma\sqrt\Delta$ the implied conditional
    variance tracks $\eps$ and crosses $\sigma^2\Delta$ at $\eps^\star$, whereas
    at ten times that spacing the bin width alone sets the scale.}
    \label{fig:diag}
\end{figure}

\begin{figure}[h]
    \centering
    \includegraphics[width=\textwidth]{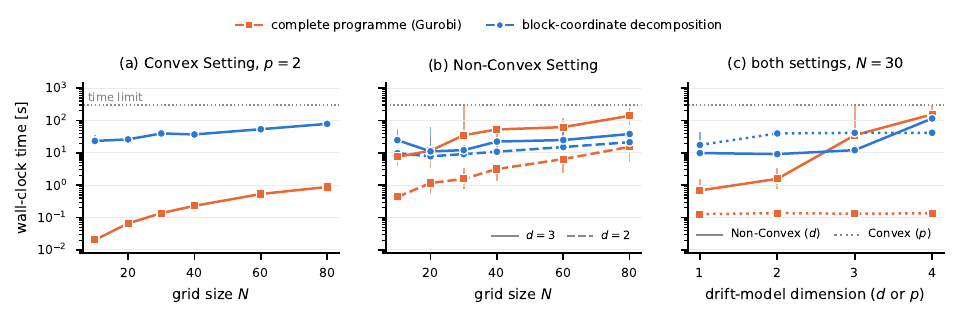}
    \caption{Wall-clock time of the complete programme --- \eqref{eq:lpprimal}
    as a convex QP, \eqref{eq:qcqp} as a non-convex QCQP, both solved by Gurobi on
    all ten cores of an Apple M2 Pro --- and of the block-coordinate
    decomposition, whose E-step is single-threaded Python and whose M-step in the
    Non-Convex Setting is the global QCQP \eqref{eq:mstep2-reduced}. The
    decomposition runs until the relative change in the parameter falls below
    $10^{-4}$, for at most $15$ outer iterations. Population scheme with $K=11$
    pairs and $M=400$ copies, as in Table~\ref{tab:joint}; medians with
    min--max bars over three seeds; the dotted line is the $300$\,s limit
    imposed on the complete solves. (a,b) Against the grid size $N$, i.e.\
    against $KN^2$ transport variables, at $p=2$ and $d\in\{2,3\}$. (c) Against
    the dimension of the drift model at $N=30$: the latent dimension $d$ in the
    Non-Convex Setting and the lag order $p$ in the Convex Setting, the latter on
    the same data. The complete QCQP carries all transport variables into every
    node of the branch and bound, whereas the M-step of the decomposition has
    $\mathcal O(Kd)$ variables whatever $N$ is.}
    \label{fig:runtime}
\end{figure}

\end{document}

%% file: math_commands.tex
\usepackage{amsmath,amsfonts,bm}

\def\eqref#1{equation~\ref{#1}}

\def\1{\bm{1}}

\def\eps{{\epsilon}}

\DeclareMathAlphabet{\mathsfit}{\encodingdefault}{\sfdefault}{m}{sl}
\SetMathAlphabet{\mathsfit}{bold}{\encodingdefault}{\sfdefault}{bx}{n}

\newcommand{\E}{\mathbb{E}}

\newcommand{\R}{\mathbb{R}}

\DeclareMathOperator*{\argmin}{arg\,min}

%% file: Figures/tab_forecast.tex
\begin{table}[t]
\centering\small
\caption{Forecasting the drift from the observed input: each method is fitted upon the first $K_{\mathrm{tr}}$ pairs and predicts $b_k$ for the next $12$ steps from $u$ alone; $\|b_k-b_k^{\mathrm{true}}\|_{L^2(\mu_k)}$ averaged over those steps, median over 5 seeds with the range, at $N=14$, $M=600$ (\texttt{code/compare\_regimes.py}).}
\label{tab:forecast}
\setlength{\tabcolsep}{5pt}
\begin{tabular}{l cc cc}
\toprule
& \multicolumn{2}{c}{$K_{\mathrm{tr}}=24$} & \multicolumn{2}{c}{$K_{\mathrm{tr}}=96$} \\
\cmidrule(lr){2-3}\cmidrule(lr){4-5}
method & median & range & median & range \\
\midrule
ours, Non-Convex Setting (identified $(a,c,\alpha,\beta)$) & $0.043$ & $[0.008,\,0.051]$ & $0.012$ & $[0.002,\,0.013]$\\
FIR of $u$, four lags, with $(1,\bar x_k)$ & $0.050$ & $[0.045,\,0.074]$ & $0.042$ & $[0.042,\,0.047]$\\
PO-MFL-style AR(1) prior on the residual, run forward & $0.351$ & $[0.275,\,0.740]$ & $0.489$ & $[0.295,\,0.526]$\\
persistence: last Waddington-OT field carried forward & $0.589$ & $[0.452,\,0.893]$ & $0.813$ & $[0.496,\,1.016]$\\
Waddington-OT surrogate \emph{given the next snapshot} & $0.073$ & $[0.069,\,0.083]$ & $0.080$ & $[0.076,\,0.088]$\\
\bottomrule
\end{tabular}
\end{table}

%% file: Figures/tab_pooling.tex
\begin{table}[h]
\centering\small
\caption{Drift error $\|b_k-b_k^{\mathrm{true}}\|_{L^2(\mu_k)}$, averaged over $k$, against the number of copies $M$ and the number of pairs $K$, at $N=14$; medians over 5 seeds with the range in brackets. The surrogates are those of Table~\ref{tab:compare}; the PO-MFL surrogate uses the prior estimated from observables. A per-step method sees each pair once and sits at the Monte-Carlo floor of one displacement field whatever $K$ is; the parametric fit pools the $K$ residuals. The script is \texttt{code/compare\_regimes.py}.}
\label{tab:pooling}
\setlength{\tabcolsep}{4pt}
\begin{tabular}{rr ccc}
\toprule
$M$ & $K$ & Waddington-OT, $\eps=\eps^\star$ & PO-MFL surrogate, data prior & ours, Non-Convex Setting \\
\midrule
$50$ & $24$ & $0.097$ {\scriptsize$[0.082,0.101]$} & $0.070$ {\scriptsize$[0.053,0.076]$} & $0.038$ {\scriptsize$[0.029,0.046]$}\\
$50$ & $48$ & $0.099$ {\scriptsize$[0.090,0.111]$} & $0.071$ {\scriptsize$[0.068,0.084]$} & $0.033$ {\scriptsize$[0.027,0.058]$}\\
$50$ & $96$ & $0.103$ {\scriptsize$[0.097,0.112]$} & $0.072$ {\scriptsize$[0.067,0.081]$} & $0.038$ {\scriptsize$[0.015,0.062]$}\\
\addlinespace[2pt]
$100$ & $24$ & $0.087$ {\scriptsize$[0.079,0.091]$} & $0.052$ {\scriptsize$[0.047,0.055]$} & $0.039$ {\scriptsize$[0.024,0.053]$}\\
$100$ & $48$ & $0.090$ {\scriptsize$[0.079,0.092]$} & $0.053$ {\scriptsize$[0.046,0.058]$} & $0.043$ {\scriptsize$[0.023,0.048]$}\\
$100$ & $96$ & $0.093$ {\scriptsize$[0.089,0.098]$} & $0.052$ {\scriptsize$[0.050,0.060]$} & $0.041$ {\scriptsize$[0.004,0.047]$}\\
\addlinespace[2pt]
$200$ & $24$ & $0.079$ {\scriptsize$[0.073,0.085]$} & $0.037$ {\scriptsize$[0.035,0.049]$} & $0.024$ {\scriptsize$[0.010,0.056]$}\\
$200$ & $48$ & $0.082$ {\scriptsize$[0.074,0.088]$} & $0.043$ {\scriptsize$[0.035,0.044]$} & $0.019$ {\scriptsize$[0.009,0.040]$}\\
$200$ & $96$ & $0.087$ {\scriptsize$[0.086,0.091]$} & $0.043$ {\scriptsize$[0.037,0.048]$} & $0.015$ {\scriptsize$[0.006,0.046]$}\\
\addlinespace[2pt]
$600$ & $24$ & $0.074$ {\scriptsize$[0.069,0.077]$} & $0.024$ {\scriptsize$[0.022,0.033]$} & $0.039$ {\scriptsize$[0.006,0.048]$}\\
$600$ & $48$ & $0.076$ {\scriptsize$[0.071,0.082]$} & $0.028$ {\scriptsize$[0.022,0.031]$} & $0.011$ {\scriptsize$[0.005,0.044]$}\\
$600$ & $96$ & $0.082$ {\scriptsize$[0.077,0.083]$} & $0.028$ {\scriptsize$[0.021,0.036]$} & $0.011$ {\scriptsize$[0.001,0.045]$}\\
\bottomrule
\end{tabular}
\end{table}

%% file: Figures/tab_joint.tex
\begin{table}[t]
\centering
\small
\caption{The block-coordinate decomposition next to the globally solved complete
problem \eqref{eq:qcqp} ($N=10$, $K=11$, $d=2$, $a\in\mathcal B_\varrho$ in both;
the QCQP has 1361 variables and 684 constraints). Columns 2--4: the penalised
objective of \eqref{eq:qcqp} at the parameter returned by each method, and the
lower bound certified by spatial branch and bound. Columns 5--6: distance of the
returned parameter to the truth. The two methods optimise different criteria,
so the objective ordering and the accuracy ordering need not agree.}
\label{tab:joint}
\begin{tabular}{lrrrrrrr}
\toprule
& \multicolumn{3}{c}{penalised objective \eqref{eq:qcqp}} & \multicolumn{2}{c}{$\|\hat\Theta-\Theta_\star\|$} & \multicolumn{2}{c}{time (s)}\\
\cmidrule(lr){2-4}\cmidrule(lr){5-6}\cmidrule(lr){7-8}
seed & decomposition & global QCQP & bound & decomp. & QCQP & decomp. & QCQP\\
\midrule
3 & 1.2642 & 1.0047 & 1.0047 & 0.064 & 0.053 & 1.5 & 0.3\\
4 & 1.3741 & 1.2287 & 1.2287 & 0.139 & 0.147 & 1.2 & 0.2\\
5 & 1.7346 & 1.5045 & 1.5045 & 0.119 & 0.116 & 1.6 & 0.4\\
\bottomrule
\end{tabular}
\end{table}